\documentclass[11pt]{amsart}
\usepackage[T1]{fontenc}
\usepackage[utf8]{inputenc} % ok for pdflatex; remove if compiling with lualatex/xelatex
\usepackage[
  backend=biber,
  style=numeric,
  sorting=none,
  giveninits=false,
  doi=true,
  isbn=true,
  url=true,
  eprint=true
]{biblatex}

\usepackage{lmodern}
\usepackage{microtype}

\usepackage{amsmath, amssymb, amsfonts, amsthm}
\usepackage{mathtools}
\usepackage{enumitem}
\usepackage{hyperref}
\usepackage{cleveref}
\usepackage{tikz-cd}
\usepackage{geometry}
\usepackage{thmtools}
\usepackage{float}
\hypersetup{
  colorlinks=true,
  linkcolor=blue,
  citecolor=blue,
  urlcolor=blue,
  pdftitle={Mixed Hodge Modules and Canonical Perverse Extensions},
  pdfauthor={Abdul Rahman}
}
\numberwithin{equation}{section}
\theoremstyle{plain}
\newtheorem{theorem}{Theorem}[section]
\newtheorem{proposition}[theorem]{Proposition}
\newtheorem{lemma}[theorem]{Lemma}
\newtheorem{corollary}[theorem]{Corollary}

\theoremstyle{definition}
\newtheorem{definition}[theorem]{Definition}

\theoremstyle{remark}
\newtheorem{remark}{Remark}[section]

\newcommand{\Q}{\mathbb{Q}}
\newcommand{\Z}{\mathbb{Z}}
\newcommand{\C}{\mathbb{C}}

\newcommand{\Perv}{\mathrm{Perv}}
\newcommand{\Hom}{\mathrm{Hom}}
\newcommand{\Ext}{\mathrm{Ext}}

\newcommand{\can}{\mathrm{can}}
\newcommand{\var}{\mathrm{var}}
\newcommand{\Var}{\mathrm{Var}}
\newcommand{\rat}{\mathrm{rat}}

\newcommand{\End}{\mathrm{End}}
\DeclareMathOperator{\Cone}{Cone}

\DeclareMathOperator{\supp}{supp}
\DeclareMathOperator{\id}{id}

\title[Mixed Hodge Modules and Canonical Perverse Extensions]{Mixed Hodge Modules and Canonical Perverse Extensions for Multi-Node Conifold Degenerations}

\author{Abdul Rahman}
\thanks{Email: arahman@alum.howard.edu}
\subjclass[2020]{14D06, 32S30, 18G80} % edit
\keywords{conifold degeneration, perverse sheaves, Picard--Lefschetz theory, spherical twists} % edit

\begin{document}

\begin{abstract}
We study one-parameter conifold degenerations whose central fiber has finitely many ordinary double points and construct a mixed-Hodge-module refinement of the canonical corrected perverse object associated with the degeneration. At each node we obtain a rank-one point-supported mixed Hodge module, and globally we identify the singular quotient as
\[
\bigoplus_{k=1}^r i_{k*}\Q^H_{\{p_k\}}(-1).
\]
Using Saito's divisor-case gluing formalism, we construct an object
\[
\mathcal P^H \in MHM(X_0)
\]
fitting into an exact sequence
\[
0 \to IC^H_{X_0} \to \mathcal P^H \to \bigoplus_{k=1}^r i_{k*}\Q^H_{\{p_k\}}(-1) \to 0,
\]
and show that its realization is the corrected perverse object. We then relate the same quotient to the finite local vanishing sector in the nearby-cycle formalism and compare the mixed-Hodge-module extension, its realized perverse extension, and the induced extension on hypercohomology carrying the limiting mixed Hodge structure.
\end{abstract}
\maketitle

\tableofcontents
%===================================================
\section{Introduction}

Let
\[
\pi:X\to\Delta
\]
be a one-parameter degeneration whose central fiber \(X_0\) has finitely many ordinary double
points
\[
\Sigma=\{p_1,\dots,p_r\}\subset X_0.
\]
Associated with this degeneration is the canonical corrected perverse object \(\mathcal P\), obtained from the nearby-/vanishing-cycle formalism. In the single-node case, this object was constructed and analyzed in \cite{RahmanSchoberPaper}, and subsequent work identified the corresponding Hodge-theoretic gluing problem inside Saito's nearby-cycle formalism \cite{RahmanPerverseNearbyCycles}.

The purpose of the present paper is to solve that gluing problem in the finite multi-node case. We
construct an object
\[
\mathcal P^H\in MHM(X_0)
\]
whose realization is the corrected perverse object and whose singular quotient is the finite direct
sum
\[
\bigoplus_{k=1}^r i_{k*}\Q^H_{\{p_k\}}(-1).
\]
We then show that the same quotient governs the finite local vanishing sector on the Hodge-theoretic
side through nearby-cycle hypercohomology.

Thus the corrected extension is visible simultaneously at three levels: in \(MHM(X_0)\), after
realization in \(Perv(X_0;\Q)\), and on hypercohomology in the category of mixed Hodge structures.
The finite multi-node case is the first setting in which the local ordinary-double-point blocks, the
global extension problem, and the limiting mixed Hodge structure can all be organized in a single
mixed-Hodge-module framework.

\subsection{Relation to earlier work}

The starting point for the present paper is the single-node construction of the corrected perverse object. In that setting, the corrected object was shown to be perverse, to restrict to the shifted
constant sheaf on the smooth locus, and to fit into the expected point-supported extension sequence. Subsequent work recast that construction in terms of nearby and vanishing cycles and compared the perverse correction with the corresponding degeneration data on the Hodge-theoretic side. What was isolated there, but not constructed, was the internal mixed-Hodge-module object whose realization is the corrected perverse extension.

The role of the present paper is to construct that object in the finite multi-node ordinary double point case. Concretely, we build the mixed-Hodge-module refinement \(\mathcal P^H\), identify its singular quotient, prove the corresponding exact sequence in \(MHM(X_0)\), show that realization recovers the corrected perverse object, and relate the same quotient to the finite local vanishing sector on hypercohomology. Accordingly, the present paper is neither a repetition of the earlier perverse-sheaf construction nor a restatement of the nearby-cycle bridge. Its contribution is the mixed-Hodge-module construction
itself and the resulting comparison framework among mixed Hodge modules, perverse sheaves, and mixed Hodge structures.

\subsection{Focused related work}

The present paper draws on four closely related strands of work.

First, the sheaf-theoretic foundation comes from the formalism of perverse sheaves and recollement.
The basic structural tools are those of Beilinson--Bernstein--Deligne \cite{BBD}, together with the
linear-algebraic descriptions of perverse sheaves in the presence of isolated singularities due to
MacPherson--Vilonen and Gelfand--MacPherson--Vilonen
\cite{MacPhersonVilonen1986,GMV1996}. These results provide the background for the canonical
perverse extension on the singular fiber and its description in terms of open and closed gluing
data.

Second, the local topological input comes from the classical theory of isolated hypersurface singularities. For an ordinary double point, the Milnor fiber has the homotopy type of a sphere in the middle degree, so the local vanishing cohomology is one-dimensional \cite{MilnorSingularPoints,DimcaSheaves}. This rank-one local structure is the source of the point-supported quotient appearing in the corrected perverse object and, on the Hodge side, of the local vanishing contribution to the nearby-cycle formalism.

Third, the Hodge-theoretic framework comes from Saito's theory of mixed Hodge modules
\cite{SaitoMHM,SaitoDuality}. In particular, Saito's formalism provides:
\begin{itemize}
\item the abelian categories \(MHM(X)\),
\item the exact faithful realization functor
\[
\rat:MHM(X)\to\Perv(X;\Q),
\]
\item nearby-cycle and vanishing-cycle functors in \(MHM\),
\item and, crucially, the divisor-case gluing theorem for a principal divisor, expressed in terms of data \((\mathcal M',\mathcal M'',u,v)\) satisfying the relation \(vu=N\).
\end{itemize}
This divisor gluing formalism is the central technical mechanism used in the present paper. For expository background, we also make occasional use of Saito's later overview
\cite{SaitoYoungGuide}.

Fourth, a methodological precedent for placing a perverse object built from nearby-cycle data into a mixed-Hodge-module framework is provided by the work of Banagl--Budur--Maxim \cite{BanaglBudurMaxim}. Although their intersection-space complex is different from the canonical
corrected perverse extension considered here, their paper is important because it shows, in a closely related isolated-singularity setting, how a perverse object constructed from nearby cycles can underlie a mixed Hodge module and thereby acquire canonical Hodge structures on
hypercohomology. That work does not prove the present theorem, but it provides a useful model for the type of Hodge-theoretic refinement one should seek.

These references are the load-bearing ones for the present paper. Broader physical or categorical motivations, such as conifold transitions, perverse schobers, and spherical monodromy, are important for the larger program, but they are not the main technical input in the proofs below.

\subsection{Physical and categorical motivation}

Although the present paper is purely mathematical in its statements and proofs, its motivating geometry comes from conifold degenerations of Calabi--Yau threefolds. In the ordinary double point case, the local degeneration is governed by a collapsing three-sphere and the corresponding rank-one Picard--Lefschetz monodromy on middle homology. In the multi-node case, one obtains a finite collection of such local vanishing sectors, one at each node.

From the categorical side, rank-one monodromy phenomena of an ordinary double point are closely related to spherical twists and, more broadly, to perverse-schober-type structures. The corrected perverse object constructed in the single-node case may therefore be viewed as a decategorified shadow of a local categorical monodromy phenomenon.

The role of the present paper is not to construct those higher-categorical structures, but to supply the mixed-Hodge-module object that refines the corrected perverse extension and identifies the same
finite node-supported quotient on the Hodge-theoretic side. This provides the precise framework needed for later questions about nodewise extension data, limiting mixed Hodge structures, and possible categorical refinements.

\subsection{Main results}

We now summarize the main theorem package of the paper.

\begin{theorem}[Local mixed-Hodge-module ODP block]
\label{thm:intro-local}
Let
\[
\pi:\mathcal X\to\Delta
\]
be a one-parameter degeneration whose central fiber has a single ordinary double point \(p\). Then
there exists a local mixed-Hodge-module extension
\[
0\to IC^H_{\mathrm{loc}}\to \mathcal P^H_{\mathrm{loc}}\to i_*\Q^H_{\{p\}}(-1)\to 0
\]
whose realization is the local corrected perverse extension determined by nearby and vanishing
cycles.
\end{theorem}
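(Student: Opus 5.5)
We construct \(\mathcal P^H_{\mathrm{loc}}\) with Saito's divisor-case gluing theorem, applied to the function \(f=\pi\) on a small neighbourhood \(\mathcal X\) of \(p\) and the divisor \(X_0=f^{-1}(0)\). Let \(j:\mathcal X\setminus X_0\to\mathcal X\) be the open inclusion. On \(\mathcal X\setminus X_0\) we take the constant pure Hodge module \(\Q^H[\dim\mathcal X]\). Its unipotent nearby cycles \(\mathcal M'=\psi_{f,1}\Q^H_{\mathcal X}[n]\) come with the nilpotent endomorphism \(N\) and the monodromy weight filtration.

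The gluing data \((\mathcal M',\mathcal M'',u,v)\) with \(vu=N\) are chosen so that \(\mathcal M''\) is built from the local vanishing sector. For an ordinary double point the Milnor fiber is homotopy equivalent to \(S^{n-1}\). We take \(n\) odd, \(n-1=2m\) even, which is the conifold case. Then \(\phi_{f,1}\Q^H[n]\) is supported at \(p\), has rank one, and equals \(i_*\Q^H_{\{p\}}(-m)\). In the threefold setting, after the normalisation by \(\mathcal X\setminus X_0\) and shifts used in the paper, this Tate twist is \((-1)\). The maps \(\can\) and \(\var\) are the standard ones, and \(\var\circ\can=N\) on \(\psi\). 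Saito's theorem then gives an object \(\mathcal M\in MHM(\mathcal X)\) extending the open data.

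To land on \(X_0\) we apply the gluing in the form used by the paper for the corrected perverse object. The relevant object is assembled from the image of \(N\) (equivalently, from the intermediate extension \(IC^H_{X_0}\)) together with the point-supported summand \(i_*\Q^H_{\{p\}}(-1)\). The extension class comes from \(\can\) and \(\var\). Concretely, \(\mathcal P^H_{\mathrm{loc}}\) is defined as the sub- or quotient object of \(\psi_{f,1}\) (or of the glued object restricted to \(X_0\)) cut out by \(\ker(\var)\) and \(\operatorname{im}(\can)\). This mirrors the sheaf-level construction of \cite{RahmanSchoberPaper,RahmanPerverseNearbyCycles}. Exactness of the sequence in \(MHM(X_0)\) then follows from abelianness of \(MHM\) and the standard facts below:
\begin{itemize}
\item the kernel of \(\var\) on \(\psi_{f,1}\) is the intermediate-extension part;
\item the cokernel of \(\can\) is point-supported;
\item for an isolated singularity, \(\operatorname{im}(\can)\cap\) (point-supported part) is identified with the rank-one vanishing class.
\end{itemize}
The identification \(IC^H_{\mathrm{loc}}=j_{!*}\) of the smooth-locus variation follows because a simple object with no subobject or quotient supported at \(p\) is the intermediate extension.

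Realization is the easy step. \(\rat\) is exact and faithful and commutes with \(\psi_f\), \(\phi_f\), \(\can\), \(\var\) and \(j_{!*}\). Applying \(\rat\) to every step therefore reproduces the perverse construction of the corrected object verbatim, and the resulting perverse extension coincides with the local corrected one. Since \(\Ext^1\) classes also map compatibly, the extension class realizes to the perverse class.

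The main obstacle is the Hodge-theoretic bookkeeping at the node. One must check that the point-supported quotient really has weight \(2\), hence is \(\Q^H(-1)\) and not some other twist. This means computing the Hodge structure on the rank-one \(\phi_{f,1}\) and tracking how the relation \(vu=N\) shifts weights. We would first compute explicitly with the model \(f=\sum x_i^2\), where Steenbrink's spectrum gives the single spectral number, and pin down the Tate twist from it. We would then transport the result to an arbitrary local degeneration with an ODP by the Morse lemma and analytic invariance of \(\phi_f\).
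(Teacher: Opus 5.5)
There is a genuine gap, and it sits at the Tate twist, which you assert by ``normalisation'' and then defer. Your set-up of Saito's theorem is the right one: the open piece lives on $\mathcal X\setminus X_0$ and the glued object on $\mathcal X$. (The paper instead feeds $\Q^H_{U_{\mathrm{loc}}}[3]$ on $X_{0,\mathrm{loc}}\setminus\{p\}$ into the divisor gluing, although $\{p\}$ is not a divisor there.)

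First, your parity is reversed. A conifold degeneration of threefolds has $n=\dim\mathcal X=4$. For $n$ odd the vanishing class has monodromy $(-1)^n=-1$, so $\phi_{f,1}=0$ and the unipotent data in Saito's gluing see nothing at $p$. Your $(-m)=(-1)$ is the answer for surface $A_1$ degenerations ($n=3$). For $n=4$ the class is unipotent, $N=0$ on the rank-one $\phi_{f,1}$, and its weight filtration is centred at $4$. Hence $\phi^H_{f,1}\Q^H_{\mathcal X}[4]\cong i_*\Q^H_{\{p\}}(-2)$, which is exactly what your proposed spectrum computation returns: $H^3$ of the affine quadric $\{\sum_{i=1}^4x_i^2=1\}$ is $\Q(-2)$.

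Second, and decisively, no normalisation can give $(-1)$ while $IC^H_{\mathrm{loc}}$ is the intermediate extension of $\Q^H_{U_{\mathrm{loc}}}[3]$, which is pure of weight $3$. Take any extension $0\to IC^H_{\mathrm{loc}}\to\mathcal E\to i_*\Q^H_{\{p\}}(-1)\to 0$. Strictness of $W$ together with $W_2IC^H_{\mathrm{loc}}=0$ makes $W_2\mathcal E\to i_*\Q^H_{\{p\}}(-1)$ an isomorphism, so the sequence splits. Equivalently, $\Ext^1_{MHM}=\Hom_{MHS}(\Q(-1),H^1i^!IC^H_{\mathrm{loc}})=0$, because $H^1i^!IC^H_{\mathrm{loc}}$ has weights $\geq 4$. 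Its realization is then $IC_{\mathrm{loc}}\oplus i_*\Q_{\{p\}}$, not the non-split corrected extension. So the weight-$2$ check you single out as the main obstacle cannot succeed. You also cannot borrow the twist from the paper, which posits $W^H_{\mathrm{loc}}=i_*\Q^H_{\{p\}}(-1)$ without computing it.

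The construction of $\mathcal P^H_{\mathrm{loc}}$ is also never pinned down, and the facts you invoke fail for a threefold node.
\begin{itemize}
\item Here $\can$ is surjective and $\var$ is injective, since the variation $H^3(F)\to H^3(F,\partial F)$ is an isomorphism. So cutting by $\ker(\var)$ and $\operatorname{im}(\can)$ gives nothing. This is the paper's $A_{\mathrm{loc}}=\operatorname{Im}(\can^H)\cap\ker(\Var^H)$, which is zero.
\item $\ker N=\ker\can=i^*\Q^H_{\mathcal X}[4][-1]=\Q^H_{X_0}[3]$ is not $IC^H_{\mathrm{loc}}$, because the link $S^2\times S^3$ is not a rational homology sphere. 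It is the non-split extension $0\to i_*\Q^H_{\{p\}}(-1)\to\Q^H_{X_0}[3]\to IC^H_{\mathrm{loc}}\to 0$, in which the weight-$2$ point class is a \emph{sub}object.
\item $\operatorname{im}N=\operatorname{im}\Var$ is point-supported, not $IC^H_{\mathrm{loc}}$.
\item Gluing with $(\phi^H_{f,1},\can,\Var)$ just returns $\Q^H_{\mathcal X}[4]$, so that step does no work.
\end{itemize}

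What does work is the following. $\psi^H_{f,1}$ has $\operatorname{Gr}^W_2=i_*\Q^H_{\{p\}}(-1)$, $\operatorname{Gr}^W_3=IC^H_{\mathrm{loc}}$ and $\operatorname{Gr}^W_4=i_*\Q^H_{\{p\}}(-2)$. Set $\mathcal P^H_{\mathrm{loc}}:=\operatorname{coker}(\Var^H)(1)$. Then $\mathcal P^H_{\mathrm{loc}}\cong\psi^H_{f,1}/W_2\cong H^1i^!\Q^H_{\mathcal X}[4](1)\cong\mathbb D(\Q^H_{X_0}[3])(-3)$. It sits in the dual, non-split sequence $0\to IC^H_{\mathrm{loc}}\to\mathcal P^H_{\mathrm{loc}}\to i_*\Q^H_{\{p\}}(-2)\to 0$.

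Your realization paragraph then goes through verbatim: $\rat$ gives $\operatorname{coker}(\var)$, the corrected perverse object. Since $\var$ is injective, this is $\Cone(\var)$; note that $\Cone(\var)[-1]$ is not perverse. But the quotient is $i_*\Q^H_{\{p\}}(-2)$. The twist $(-1)$ is correct only after twisting the whole sequence by $(1)$.
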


\begin{theorem}[Finite multi-node support and extension structure]
\label{thm:intro-support}
Let
\[
\pi:\mathcal X\to\Delta
\]
be a one-parameter degeneration whose central fiber has ordinary double points
\[
\Sigma=\{p_1,\dots,p_r\}.
\]
Then the singular quotient in the mixed-Hodge-module refinement is supported on \(\Sigma\), and its
point-supported contribution is canonically of the form
\[
\bigoplus_{k=1}^r i_{k*}\Q^H_{\{p_k\}}(-1).
\]
Moreover, the corresponding global extension space decomposes nodewise, and on the perverse side
the finite-node corrected extension admits a distinguished nodewise organization by local extension
classes attached to the nodes.
\end{theorem}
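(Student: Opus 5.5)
The plan is to reduce the multi-node statement to the single-node block of Theorem~\ref{thm:intro-local} by localizing at the finitely many nodes, and then to assemble the global object with Saito's divisor-case gluing. First, on \(U=X_0\setminus\Sigma\) the fiber is smooth, so the corrected object restricts to \(\Q^H_U[n]\), which is also the restriction of \(IC^H_{X_0}\). Hence the cokernel \(\mathcal Q^H\) of \(IC^H_{X_0}\to\mathcal P^H\) has \(\rat(\mathcal Q^H)|_U=0\). Since \(\rat\) is exact and faithful, \(\mathcal Q^H|_U=0\), so \(\mathcal Q^H\) is supported on \(\Sigma\). By Kashiwara's equivalence in \(MHM\) for the finite discrete set \(\Sigma\), we get \(\mathcal Q^H\cong\bigoplus_k i_{k*}H_k\) with \(H_k\) mixed Hodge structures. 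Because the points \(p_k\) are pairwise separated, we choose disjoint small balls \(B_k\ni p_k\). Restricting to each \(B_k\) recovers the local block of Theorem~\ref{thm:intro-local}, and this forces \(H_k\cong\Q(-1)\). This identification should be canonical because the local quotient is \(\mathrm{Gr}\) of the vanishing cycle \(\phi_{f,1}\) of an ODP. That vanishing cycle is rank one (Milnor fiber \(\simeq S^{n}\)), and the relation \(vu=N\) fixes its weight and Tate twist as \(\Q(-1)\).

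Second, for the extension space we compute \(\Ext^1_{MHM(X_0)}\bigl(\bigoplus_k i_{k*}\Q^H(-1),IC^H_{X_0}\bigr)\). Additivity in the first variable gives \(\bigoplus_k\Ext^1(i_{k*}\Q^H(-1),IC^H_{X_0})\). By adjunction each summand equals \(\Hom_{D^bMHM(pt)}(\Q^H(-1),i_k^!IC^H_{X_0}[1])\). The functor \(i_k^!\) is local, computed in \(B_k\), so each summand depends only on the germ at \(p_k\). This gives the nodewise decomposition. On the perverse side we do the same computation with \(\rat\). Here \(\Ext^1_{\Perv}(i_{k*}\Q,IC_{X_0})=H^{0}(i_k^!IC_{X_0}[1])\) is computed from the link of the ODP. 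Under the decomposition, the class of \(\rat\mathcal P^H\) has components \(e_k\), and \(e_k\) is the local class of the single-node corrected extension in \(B_k\). This gives the distinguished nodewise organization.

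The main obstacle is step one's canonicity together with compatibility of gluing. We must check that the globally glued object, built from data \((\mathcal M',\mathcal M'',u,v)\) with \(\mathcal M''=\phi_{f,1}\) supported on \(\Sigma\), splits as a direct sum of the local gluing data. We must also check that the extension class is the sum of the local classes, not merely a class whose components are local up to automorphism. To handle this, I would observe that \(\phi_{f,1}\mathbb{Q}^H_X[n+1]\) splits canonically as \(\bigoplus_k i_{k*}\phi_{f,1}(\cdot)_{p_k}\) since its support is discrete. The maps \(u=\mathrm{can}\) and \(v=\mathrm{var}\) are then morphisms between this sum and \(\psi_{f,1}\). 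Restriction to each \(B_k\) is exact, and Saito's gluing is functorial and compatible with open restriction. Together these identify each summand with the local datum of Theorem~\ref{thm:intro-local}, and this yields both the canonical form of the quotient and the nodewise decomposition of the class.
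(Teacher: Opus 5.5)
There is a genuine gap, and it is the step where you justify the twist. Rank one together with $vu=N$ does not give $\Q(-1)$. With the paper's normalization $IC^H_{X_0}|_U\cong\Q^H_U[3]$, no argument can give it.

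\textbf{Why the twist comes out as $(-2)$.} In Saito's conventions take $M=\Q^H_{\mathcal X}[4]$, pure of weight $4$. Then $\psi_{f,1}M$ has weights centred at $3$ and $\phi_{f,1}M$ at $4$.
\begin{itemize}
\item Near $p_k$, the exact sequence $0\to\Q^H_{X_0}[3]\to\psi_{f,1}M\xrightarrow{\can}\phi_{f,1}M\to 0$ shows that the weight-graded pieces of $\psi_{f,1}M$ are $i_{k*}\Q^H(-1)$, $IC^H_{X_0}$, $i_{k*}\Q^H(-2)$ in weights $2,3,4$.
\item The rank-one $\phi_{f,1}M$ at $p_k$ has $N=0$, so it is pure of weight $4$, i.e.\ $i_{k*}\Q^H(-2)$. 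As a check, the affine quadric $x_1^2+\dots+x_4^2=1$ has $H^3\cong\Q(-2)$.
\item The maps $u,v$ are strict for the weight filtration, and $v$ lands in $\psi_{f,1}M(-1)$, whose weights are $4,5,6$. Hence a rank-one point-supported $\mathcal M''$ with $vu=N\neq 0$ must have weight $4$.
\end{itemize}
The $\Q(-1)$ you have in mind is the weight-$2$ node piece of $\psi_{f,1}M$. It lies inside $\Q^H_{X_0}[3]$ as a subobject, not as a quotient.

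Your own Step~2 detects this. We have $\Ext^1_{MHM(X_0)}(i_{k*}\Q^H(-1),IC^H_{X_0})\cong\Hom_{MHS}(\Q(-1),H^1(i_k^!IC^H_{X_0}))\cong\Hom_{MHS}(\Q(-1),H^3(L_k))$. The link $L_k$ is a circle bundle over $\mathbb P^1\times\mathbb P^1$, and the Gysin sequence gives $H^3(L_k)\cong\Q(-2)$, so this group is $0$. Equivalently, in any extension of the weight-$2$ object $i_{k*}\Q^H(-1)$ by the weight-$3$ object $IC^H_{X_0}$, the weight-$\le 2$ part of the middle term maps isomorphically onto the quotient and splits the sequence.

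So the nodewise MHM Ext-groups you set up are all zero, and a locally non-split corrected extension forces $H_k\cong\Q(-2)$. Your reduction ``restriction to $B_k$ forces $H_k\cong\Q(-1)$'' only imports the twist asserted in Theorem~\ref{thm:intro-local}. The paper gives no derivation to fall back on: it \emph{defines} $W^H_{\mathrm{loc}}:=i_{\mathrm{loc}*}\Q^H_{\{p\}}(-1)$ in Section~\ref{sec:local-odp} and sums these blocks in Propositions~\ref{prop:finite-node-block} and~\ref{prop:nodewise-support}. Your mechanism (identify the local quotient with the node's $\phi_{f,1}$ and fix it by weights) is the right one. Carried out correctly, it yields $\bigoplus_k i_{k*}\Q^H_{\{p_k\}}(-2)$.

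\textbf{A smaller problem in Step~3.} Saito's equivalence sends $M\mapsto(M|_{\mathcal X\setminus X_0},\phi_{f,1}M,\can,\var)$. So the datum $(\mathcal M',\phi_{f,1},\can,\var)$ glues back to $\Q^H_{\mathcal X}[4]$ on the total space, not to an object of $MHM(X_0)$ containing $IC^H_{X_0}$. The locality you need comes for free if you form the corrected object directly in $MHM(X_0)$ from $\var^H$. Restriction to $B_k$ is exact and commutes with $\psi_{f,1}$, $\phi_{f,1}$ and $\var$, and $\phi_{f,1}$ splits over the nodes.

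\textbf{What works.} The rest of your outline is sound and in places sharper than the paper.
\begin{itemize}
\item Your cokernel argument gives the support of the quotient itself, whereas Lemma~\ref{lem:vanishing-support-finite} only concerns $\phi$.
\item Your additivity step is the paper's Lemmas~\ref{lem:ext-additivity-perv}--\ref{lem:ext-additivity-mhm}.
\item Your adjunction $\Ext^1_{\Perv}(i_{k*}\Q,IC_{X_0})\cong H^1(i_k^!IC_{X_0})\cong H^3(L_k;\Q)\cong\Q$ gives the exact nodewise dimension at once. It replaces both Lemma~\ref{lem:nodewise-perverse-ext-localization-strong} and the MacPherson--Vilonen bound of Proposition~\ref{prop:local-odp-ext-dim-upper-bound}.
\end{itemize}
You still need local non-splitness to know that $e_k\neq 0$; the paper only asserts this in Proposition~\ref{prop:local-odp-ext-nonzero}.
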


\begin{theorem}[Global gluing and realization]
\label{thm:intro-global}
There exists an object
\[
\mathcal P^H\in MHM(X_0)
\]
fitting into an exact sequence
\[
0\to IC^H_{X_0}\to \mathcal P^H\to \bigoplus_{k=1}^r i_{k*}\Q^H_{\{p_k\}}(-1)\to 0
\]
such that
\[
\rat(\mathcal P^H)\cong \mathcal P.
\]
\end{theorem}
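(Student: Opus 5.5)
We construct \(\mathcal P^H\) by gluing along the finite set \(\Sigma\), using Saito's divisor-case gluing theorem locally at each node and then patching, since the nodes are isolated. Choose pairwise disjoint small balls (or affine neighbourhoods) \(U_k\ni p_k\), so that \(X_0\setminus\Sigma\) is smooth. On each \(U_k\), Theorem~\ref{thm:intro-local} provides the local block
\[
0\to IC^H_{\mathrm{loc},k}\to \mathcal P^H_{\mathrm{loc},k}\to i_{k*}\Q^H_{\{p_k\}}(-1)\to 0 .
\]
Concretely, this block is produced from gluing data \((\mathcal M',\mathcal M'',u,v)\) with \(vu=N\) relative to a local function \(g_k\) with \(g_k^{-1}(0)=\{p_k\}\) near \(p_k\). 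Here \(\mathcal M'=\psi_{g_k,1}\) of the Hodge module on the punctured neighbourhood, and \(\mathcal M''\) is the rank-one point module \(\Q^H(-1)\) enlarged by the image of \(\mathrm{can}\). On the open smooth part \(X_0\setminus\Sigma\) we take the variation \(\Q^H_{X_0\setminus\Sigma}[\dim X_0]\). By construction this restricts, on each \(U_k\setminus\{p_k\}\), to the open part of the local block.

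The first step is to check that these local pieces glue to a global object of \(MHM(X_0)\). Mixed Hodge modules form a stack for the Zariski (and analytic, in the algebraizable setting) topology. The overlaps \(U_k\cap(X_0\setminus\Sigma)=U_k\setminus\{p_k\}\) involve only the smooth part, and the \(U_k\) are mutually disjoint. Thus the only compatibility required is the identification of the restriction of each local block with \(\Q^H[\dim X_0]\) on the punctured neighbourhood. That identification is part of the data of Theorem~\ref{thm:intro-local}, and there are no triple overlaps to check.

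Equivalently, and more cleanly, we apply Saito's gluing once with a single function \(g\) vanishing exactly on \(\Sigma\), locally a product of the \(g_k\). Then \(\psi_{g,1}\), \(\phi_{g,1}\), \(\mathrm{can}\) and \(\mathrm{var}\) are direct sums over \(k\) because they are supported on \(\Sigma\). The gluing data become \(\bigoplus_k(\mathcal M'_k,\mathcal M''_k,u_k,v_k)\), and the relation \(vu=N\) holds componentwise.

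Next we derive the exact sequence. The subobject \(IC^H_{X_0}\) is the intermediate extension, corresponding to the gluing data with \(\mathcal M''=\mathrm{im}(\mathrm{can})\). The quotient of the glued object by it is the cokernel of \(\mathrm{im}(\mathrm{can})\to\mathcal M''\), which equals \(\bigoplus_k i_{k*}\Q^H_{\{p_k\}}(-1)\) by Theorem~\ref{thm:intro-support}. Exactness of the gluing equivalence on the level of the \(\mathcal M''\)-components then gives the stated short exact sequence.

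For realization, \(\rat\) is exact and faithful and commutes with \(\psi_g\), \(\phi_g\), \(\mathrm{can}\) and \(\mathrm{var}\). Hence \(\rat(\mathcal P^H)\) is the perverse sheaf glued, via Beilinson/MacPherson--Vilonen gluing, from the underlying data. By the nearby/vanishing-cycle description of \(\mathcal P\), that perverse sheaf is precisely the corrected object, so \(\rat(\mathcal P^H)\cong\mathcal P\).

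The main obstacle is the last identification. One must show that the perverse gluing data underlying our Hodge data coincide with those defining \(\mathcal P\), and in particular that the extension class in \(\Ext^1\bigl(\bigoplus_k \Q_{p_k},IC_{X_0}\bigr)\) is the corrected one rather than some other class. I would handle this nodewise, using the decomposition of the extension space in Theorem~\ref{thm:intro-support}. Since \(\Hom(\Q_{p_k},-)\)-type data localize at \(p_k\), the global class is the sum of its local components. Each local component matches by Theorem~\ref{thm:intro-local}, and the global isomorphism then follows from the five lemma applied to the two extensions.
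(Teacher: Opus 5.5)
Two of your steps fail, and the second cannot be repaired with the statement as normalized.

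\textbf{The gluing step.} Saito's divisor-case theorem glues an object on $X_0\setminus g^{-1}(0)$ to data on the hypersurface $g^{-1}(0)$. There is no function $g$ (or $g_k$) with $g^{-1}(0)=\Sigma$ near the nodes. Since $X_0$ is equidimensional of dimension $3$, every component of the zero set of a single function through $p_k$ has dimension $2$ (Krull's principal ideal theorem), so an isolated point is never such a zero set. Hence neither your description of the local block by a datum relative to $g_k$ nor the one-shot gluing with $\psi_{g,1},\phi_{g,1}$ supported on $\Sigma$ makes sense. Your Zariski-patching variant is formally fine, but only because it takes Theorem~\ref{thm:intro-local} as a black box; that theorem is itself subject to the weight obstruction below. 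To stay algebraic you would also need neighbourhoods $X_0\setminus(\Sigma\setminus\{p_k\})$ rather than balls. The paper's proof has the analogous defect: it applies $\psi_{\pi,1}$ to $\mathcal M'_U$, which lives on $U\subset X_0$ rather than on $\mathcal X\setminus X_0$, and it takes $\mathcal M''$ to be the quotient itself. Your remark that $\mathcal M''$ must contain $\operatorname{Im}(\can)$ is the correct one.

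\textbf{The realization step.} The step you flagged as the main obstacle is a genuine obstruction. $IC^H_{X_0}$, normalized by $IC^H_{X_0}|_U\cong\Q^H_U[3]$, is pure of weight $3$. The quotient $i_{k*}\Q^H_{\{p_k\}}(-1)$ is pure of weight $2$; under the paper's stated convention that $(-1)$ lowers weights it would be $-2$, and nothing changes. Take any exact sequence $0\to IC^H_{X_0}\to E\to\bigoplus_k i_{k*}\Q^H_{\{p_k\}}(-1)\to 0$ in $MHM(X_0)$. Exactness of $W_2$ gives $W_2E\xrightarrow{\sim}\bigoplus_k i_{k*}\Q^H_{\{p_k\}}(-1)$, which splits the sequence. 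Equivalently, $\Ext^1_{MHM(X_0)}(i_{k*}\Q^H_{\{p_k\}}(-1),IC^H_{X_0})\cong\Hom_{MHS}(\Q(-1),H^1i_k^!IC^H_{X_0})=0$, because $H^1i_k^!IC^H_{X_0}$ is $H^3$ of the link $S^2\times S^3$, of type $\Q(-2)$.

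So every such $E$ realizes to $IC_{X_0}\oplus\bigoplus_k i_{k*}\Q_{\{p_k\}}$, and $\rat$ is zero on this $\Ext^1$. Neither your nodewise matching nor Theorem~\ref{thm:global-realization} (which simply asserts that the realized gluing maps are the corrected ones) can produce the class of $\mathcal P$. The paper itself takes that class to be nonzero (Corollary~\ref{cor:local-odp-ext-dim-one}).

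The repair needs no gluing. Start from $0\to\bigoplus_k i_{k*}\Q^H_{\{p_k\}}(-1)\to\Q^H_{X_0}[3]\to IC^H_{X_0}\to 0$, whose kernel is $H^2$ of the links, dualize it, and put $\mathcal P^H:=\mathbb{D}(\Q^H_{X_0}[3])(-3)$. This gives $0\to IC^H_{X_0}\to\mathcal P^H\to\bigoplus_k i_{k*}\Q^H_{\{p_k\}}(-2)\to 0$. Moreover $\rat(\mathcal P^H)\cong\mathbb{D}(\Q_{X_0}[3])\cong i^!\Q_{\mathcal X}[5]$ when $\mathcal X$ is smooth, and this is the perverse cone of $\var$. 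Thus the statement holds with twist $(-2)$, not $(-1)$.
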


\begin{theorem}[Hypercohomology and the vanishing part of the LMHS]
\label{thm:intro-lmhs}
The point-supported quotient
\[
\bigoplus_{k=1}^r i_{k*}\Q^H_{\{p_k\}}(-1)
\]
realizes the rank-\(r\) local vanishing contribution in the nearby-cycle formalism. Moreover, the
hypercohomology of \(\mathcal P^H\) is functorially related to the limiting mixed Hodge structure
through the same nearby-cycle mixed-Hodge-module construction, and the quotient contributes the
vanishing part of the limiting mixed Hodge structure on hypercohomology.

Equivalently, the corrected extension is organized simultaneously at three levels: as a mixed-Hodge
-module extension class, as its realized perverse extension class, and as an induced extension class
in mixed Hodge structures on hypercohomology.
\end{theorem}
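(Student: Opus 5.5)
We prove Theorem~\ref{thm:intro-lmhs} by working entirely inside Saito's nearby/vanishing-cycle formalism and transporting the extension of Theorem~\ref{thm:intro-global} through the exact functors available. Let \(f=\pi\) and \(\mathcal M=\Q^H_{\mathcal X}[n+1]\), where \(n=\dim X_0\). Step one is a local computation at each node \(p_k\). Since \(f\) has an isolated Morse singularity there, \(\phi_{f,1}\mathcal M\) is supported at \(p_k\), and its realization is the reduced cohomology of the Milnor fiber \(S^{n}\) in middle degree, which has rank one. Saito's description of the Hodge filtration for an ordinary double point (weight spectrum of the quadratic singularity) identifies this germ with \(\Q^H(-1)\) up to the normalization fixed in Theorem~\ref{thm:intro-local}. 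Gluing over the finitely many disjoint nodes gives
\[
\phi_{f,1}\mathcal M \cong \bigoplus_{k=1}^r i_{k*}\Q^H_{\{p_k\}}(-1),
\]
and the support statement of Theorem~\ref{thm:intro-support} identifies this with the singular quotient of \(\mathcal P^H\). This proves the first sentence of the theorem.

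Step two relates \(\mathcal P^H\) to \(\psi_{f,1}\mathcal M\). By construction (divisor-case gluing with data \((\mathcal M',\mathcal M'',u,v)\), \(vu=N\)), \(\mathcal P^H\) arises from the \(\can/\var\) diagram \(\psi_{f,1}\mathcal M\rightleftarrows\phi_{f,1}\mathcal M\). In particular, we expect canonical morphisms \(IC^H_{X_0}\to\mathcal P^H\) and \(\psi_{f,1}\mathcal M\to\mathcal P^H\) (or maps of the reverse type) that are compatible with the quotient map to \(\phi_{f,1}\mathcal M\). I will write these maps down from the gluing data and check that the square relating the \(\can\)-sequence to
\[
0\to IC^H\to\mathcal P^H\to\phi_{f,1}\mathcal M\to 0
\]
commutes. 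Step three applies \(H^\bullet(X_0,-)=H^\bullet(a_{X_0*}-)\), which lands in mixed Hodge structures. Since \(H^\bullet(X_0,\psi_{f}\mathcal M)\) is the limiting mixed Hodge structure (Saito), we obtain a morphism of long exact sequences in MHS. The quotient term contributes \(\bigoplus_k\Q(-1)\) in the middle degree, which is exactly the span of the vanishing cycles, i.e.\ the vanishing part of the LMHS. The three-level statement then follows formally: \(\rat\) is exact and faithful, so it sends the class in \(\Ext^1_{MHM}(\phi,IC^H)\) to the perverse class. Applying \(a_*\) and taking the connecting maps yields the induced MHS extension. Naturality of the Yoneda pairing under these exact or triangulated functors guarantees compatibility.

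The main obstacle is step two: showing that the comparison maps between \(\mathcal P^H\) and \(\psi_{f,1}\mathcal M\) are genuine morphisms in \(MHM\) with the correct Tate twists, rather than only morphisms of perverse sheaves. Weights are the crux here, since \(N\) maps \(\mathrm{Gr}^W\) with a shift of \(-2\), which is where the twist \((-1)\) enters. To handle this, I would first verify the statement in the local model \(\sum x_i^2=t\) using Theorem~\ref{thm:intro-local}. I would then globalize using the fact that all discrepancies are point-supported at \(\Sigma\), and \(\Hom\) into or out of point-supported modules decomposes nodewise by Theorem~\ref{thm:intro-support}.
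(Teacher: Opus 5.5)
Your Step~1 makes a concrete error, and it carries into Step~3.

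\textbf{The Tate twist in Step~1.} Take $\mathcal M=\Q^H_{\mathcal X}[4]$, which is pure of weight $4$. In Saito's normalization (the one with $\Var:\phi_{f,1}\to\psi_{f,1}(-1)$ used throughout the paper), the weight filtration on $\phi_{f,1}\mathcal M$ is the monodromy filtration centered at $4$. Since $N=0$ on each rank-one stalk, $\phi_{f,1}\mathcal M$ is pure of weight $4$, and each stalk is the rank-one Hodge structure $\Q(-2)$, not $\Q(-1)$. This agrees with the classical conifold picture, where $\can$ maps $\mathrm{Gr}^W_4H^3_{\lim}$, of type $(2,2)$, onto the vanishing sector. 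The twist $(-1)$ is the nodal-curve case. Separately, for even $n$ the Milnor monodromy is $-1$ and $\phi_{f,1}\mathcal M=0$, so your general-$n$ phrasing is also off.

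So the computation you invoke gives $\phi_{f,1}\mathcal M\cong\bigoplus_k i_{k*}\Q^H_{\{p_k\}}(-2)$, and ``up to the normalization'' cannot absorb the difference. Nor is the difference cosmetic. Here $i_{k*}\Q^H_{\{p_k\}}(-1)$ is pure of weight $2$ and $IC^H_{X_0}$ is pure of weight $3$. For any $0\to IC^H_{X_0}\to\mathcal E\to i_{k*}\Q^H_{\{p_k\}}(-1)\to 0$, exactness of $W_2$ makes $W_2\mathcal E$ map isomorphically onto the quotient. Hence $\Ext^1_{MHM(X_0)}\bigl(i_{k*}\Q^H_{\{p_k\}}(-1),IC^H_{X_0}\bigr)=0$. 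Your Step~3 would then send the MHM class to $0$ in $\Perv(X_0;\Q)$, contradicting the non-splitness of the corrected perverse extension.

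The twist $(-1)$ does occur, but as a \emph{subobject} of nearby cycles. Since $\mathcal M$ has no sub or quotient supported on $X_0$, $\can$ is onto and $\var$ is injective. Then $N^2=0$, and $W_2\psi_{f,1}\mathcal M=\operatorname{Im}(N)(1)\cong\bigoplus_k i_{k*}\Q^H_{\{p_k\}}(-1)$, the span of the vanishing cycles.

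\textbf{Step~2 and the rest of Step~3.} Step~2, which you flag yourself, is never constructed, and the square you describe is the wrong one. One has $\ker(\can)=i^*\mathcal M[-1]=\Q^H_{X_0}[3]$, not $IC^H_{X_0}$, and the surjection $\Q^H_{X_0}[3]\twoheadrightarrow IC^H_{X_0}$ has kernel exactly $W_2\psi_{f,1}\mathcal M$. Both problems go away if you \emph{define} $\mathcal P^H:=\psi_{f,1}\mathcal M/W_2\cong\operatorname{coker}(\var)(1)$; its realization $\operatorname{coker}(\var_F)$ is a non-split extension of $\bigoplus_k i_{k*}\Q_{\{p_k\}}$ by $IC_{X_0}$. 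The sequence $0\to\Q^H_{X_0}[3]\to\psi_{f,1}\mathcal M\xrightarrow{\can}\phi_{f,1}\mathcal M\to0$ then maps canonically onto $0\to IC^H_{X_0}\to\mathcal P^H\to\phi_{f,1}\mathcal M\to0$, with the identity on the right. Hypercohomology gives a morphism of long exact sequences whose left vertical map is $H^{m+3}(X_0)\to IH^{m+3}(X_0)$, not an identity. But the quotient is then $\bigoplus_k i_{k*}\Q^H_{\{p_k\}}(-2)$, not the $(-1)$ object in the statement.

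Step~3 also overclaims. The map $\can:H^3_{\lim}\to\mathbb H^0(\phi_{f,1}\mathcal M)$ is $x\mapsto(\langle x,\delta_k\rangle)_k$, and its rank is $\dim\operatorname{span}\{\delta_k\}$. This is $<r$ whenever the vanishing cycles are dependent in $H_3(X_t;\Q)$. Moreover, the span of the vanishing cycles is $W_2$, a subobject, not the quotient term.

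\textbf{Comparison with the paper.} The paper never computes $\phi_{f,1}$; Remark~\ref{rem:not-whole-vanishing-cycle} explicitly declines to identify it with the node blocks. It imports the $(-1)$ quotient from Section~4 and asserts Lemma~\ref{lem:comparison-diagram} with identities on the $IC$ and quotient terms. It then obtains Theorem~\ref{thm:hypercohomology-lmhs} essentially by Definition~\ref{def:lmhs-extension-class}. Your Step~2 corresponds to that lemma, so the paper does not supply the missing construction either, and the weight obstruction above applies to its version as well.
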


Taken together, these results construct the mixed-Hodge-module refinement of the corrected perverse extension in the finite multi-node ordinary double point case, identify its singular quotient, and
relate the same quotient to the finite local vanishing sector on hypercohomology. They also isolate the nodewise extension-theoretic structure that underlies the later directions discussed at the end
of the paper.

\subsection{Proof strategy}

The proof has four stages.

First, we work in the local ordinary double point model and construct the rank-one point-supported
mixed-Hodge-module block attached to a single node. This identifies the local singular term and the
corresponding gluing morphisms inside Saito's nearby-cycle formalism.

Second, we pass to the finite multi-node setting and show that the singular quotient is the finite
direct sum
\[
\bigoplus_{k=1}^r i_{k*}\Q^H_{\{p_k\}}(-1).
\]
We then analyze the corresponding extension spaces and isolate the nodewise structure of the finite
corrected extension.

Third, we assemble the local gluing data into a global divisor-case gluing datum on \(X_0\). Saito's
gluing theorem then produces the global mixed Hodge module
\[
\mathcal P^H \in MHM(X_0),
\]
and realization identifies its underlying perverse sheaf with the canonical corrected perverse object
\(\mathcal P\).

Finally, we apply hypercohomology to relate the global mixed-Hodge-module extension to the limiting
mixed Hodge structure. This identifies the same point-supported quotient with the finite local
vanishing sector in the nearby-cycle formalism and yields the comparison among the mixed-Hodge-module
extension, its realized perverse extension, and the induced extension on hypercohomology.

\subsection{Scope and organization}

The paper is confined to the case of finitely many ordinary double points. This is the natural next
setting after the single-node theorem of \cite{RahmanSchoberPaper} and the nearby-cycle bridge
theorem of \cite{RahmanPerverseNearbyCycles}. In particular, we do not attempt here a full
treatment of arbitrary higher-dimensional singular strata, nor do we address in full generality the
K\"ahler-package questions for the hypercohomology of the corrected object. Those problems remain
downstream of the mixed-Hodge-module existence and realization theorem proved here.

Section~2 recalls the geometric setup of multi-node conifold degenerations, nearby and vanishing
cycles, and the local topology of ordinary double points. Section~3 reviews the mixed-Hodge-module
background needed later, with emphasis on nearby cycles, realization, and Saito's divisor-case
gluing theorem. Section~4 constructs the local mixed-Hodge-module ODP block. Section~5 proves the
finite multi-node support decomposition and analyzes the corresponding extension structure.
Section~6 gives the global gluing theorem and the realization theorem. Section~7 studies
hypercohomology and proves the comparison with the vanishing part of the limiting mixed Hodge
structure. Section~8 records auxiliary structural results and interprets the extension-theoretic
output in quiver-type and domain-wall language. Section~9 discusses consequences and further
directions.

%===================================================================
\section{Geometric setup and nearby-cycle background}

Throughout the paper we work over the field \(\Q\), and all varieties are complex algebraic
varieties. Let
\[
\pi:\mathcal X\to\Delta
\]
be a one-parameter degeneration, where \(\Delta\) is a small complex disk and \(\Delta^*=\Delta
\setminus\{0\}\). We write
\[
X_t:=\pi^{-1}(t)\qquad (t\in\Delta)
\]
for the fibers, and we assume that \(X_t\) is smooth for \(t\neq 0\). The central fiber is
denoted
\[
X_0:=\pi^{-1}(0).
\]
Later, when projective Hodge-theoretic tools such as Hard Lefschetz are invoked, projectivity
will be imposed explicitly. Until then, the constructions are formulated at the level of nearby
cycles, perverse sheaves, and mixed Hodge modules for algebraic degenerations.

\subsection{Conventions and normalizations}

We use the middle perversity \(t\)-structure on the constructible derived category
\(D^b_c(X;\Q)\), and write \(\Perv(X;\Q)\) for its heart \cite{BBD,KS}. For a complex
threefold, the shifted constant sheaf \(\Q_X[3]\) is perverse on the smooth locus. Since the
degenerations considered in this paper are degenerations of complex threefolds, we fix the
normalization
\[
F:=\Q_{\mathcal X}[3].
\]
With this convention, the nearby-cycle and vanishing-cycle objects attached to \(F\) lie in the
perverse heart after the standard normalization of the functors \({}^p\psi_\pi\) and
\({}^p\phi_\pi\) \cite[Chapter~6]{DimcaSheaves}.

To simplify notation, we suppress the superscript \({}^p\) and write
\[
\psi_\pi(F),\qquad \phi_\pi(F)
\]
for the perverse nearby-cycle and vanishing-cycle objects. Likewise, we write
\[
\can_F:\psi_\pi(F)\to \phi_\pi(F),
\qquad
\var_F:\phi_\pi(F)\to \psi_\pi(F)
\]
for the canonical and variation morphisms. These satisfy the usual relation
\[
\can_F\circ \var_F = T-\id,
\qquad
\var_F\circ \can_F = T-\id,
\]
where \(T\) denotes the local monodromy operator on nearby cycles \cite[Chapter~6]{DimcaSheaves}.
All cones appearing below are taken in the derived category \(D^b_c(X_0;\Q)\).

\subsection{Multi-node conifold degenerations}

We now specialize to the case in which the central fiber \(X_0\) has only finitely many ordinary
double points
\[
\Sigma=\{p_1,\dots,p_r\}\subset X_0.
\]
We refer to such a degeneration as a finite multi-node conifold degeneration. Write
\[
U:=X_0\setminus \Sigma
\]
for the smooth locus of the central fiber, and let
\[
j:U\hookrightarrow X_0,
\qquad
i_k:\{p_k\}\hookrightarrow X_0
\]
denote the natural inclusions.

The point of this setup is that the singular locus is zero-dimensional, so the singular
contribution to the nearby-cycle formalism is concentrated at finitely many isolated points. In
particular, the corrected object we shall construct differs from the intersection complex of
\(X_0\) only by a finite point-supported contribution. \cite{RahmanSchoberPaper} establishes this in the single-node
case, while \cite{RahmanPerverseNearbyCycles} isolates the corresponding multi-node structure at the level of perverse
sheaves and nearby-cycle/Hodge-theoretic compatibility
\cite{RahmanSchoberPaper,RahmanPerverseNearbyCycles}.

\subsection{Local topology of an ordinary double point}

Let \(p\in X_0\) be an ordinary double point. Then locally near \(p\), the singularity is an
isolated hypersurface singularity, and its Milnor fiber \(F_p\) has the homotopy type of a sphere
of real dimension \(3\) in the threefold case \cite{MilnorSingularPoints,DimcaSheaves}. Hence
\[
\widetilde H^k(F_p;\Q)\cong
\begin{cases}
\Q & k=3,\\
0 & k\neq 3.
\end{cases}
\]
Equivalently, the local vanishing cohomology is one-dimensional and concentrated in the middle
degree.

It follows that the vanishing-cycle contribution of an ordinary double point is rank one. After
the perverse normalization adopted above, this gives a point-supported perverse object whose local
stalk at the node is one-dimensional. In the finite multi-node case, each node therefore
contributes one rank-one local vanishing summand. This is the local topological source of the
singular quotient that appears later in both the perverse-sheaf and mixed-Hodge-module exact
sequences.

\subsection{Nearby cycles, vanishing cycles, and variation}

Let
\[
F:=\Q_{\mathcal X}[3].
\]
The nearby-cycle and vanishing-cycle functors fit into the standard distinguished triangles in
\(D^b_c(X_0;\Q)\); see, for example, \cite[Chapter~6]{DimcaSheaves}. In particular, there are
functorial morphisms
\[
\can_F:\psi_\pi(F)\to \phi_\pi(F),
\qquad
\var_F:\phi_\pi(F)\to \psi_\pi(F),
\]
which encode the relation between the specialization to the singular fiber and the local
vanishing contribution.

In the threefold ordinary double point case, the object \(\phi_\pi(F)\) is supported on the finite
set \(\Sigma\), and each local contribution is one-dimensional by the Milnor-fiber calculation
above. Thus the nontrivial singular contribution of the degeneration enters entirely through the
variation morphism between the vanishing-cycle and nearby-cycle objects.

This is the formal setting in which the corrected perverse object is defined.

\subsection{The corrected perverse object in the multi-node setting}

We define the corrected object by
\[
\mathcal P:=\Cone(\var_F:\phi_\pi(F)\to\psi_\pi(F))[-1].
\]
This is the multi-node analogue of the canonical object constructed in the single-node case in
\cite{RahmanSchoberPaper}. In that paper, the single-node ordinary double point case was treated
in detail: the corresponding object was shown to be perverse  and to fit into
an exact sequence
\[
0\to IC_{X_0}\to \mathcal P\to i_*\Q_{\{p\}}\to 0.
\]
In \cite{RahmanPerverseNearbyCycles}, $\mathcal P$ was shown to be Verdier self-dual and the construction was placed in the framework of nearby-cycle formalism and limiting mixed Hodge theory, and the existence of a fully internal mixed-Hodge-module refinement was identified as the main open gluing problem.

The purpose of the present paper is to solve that gluing problem in the finite multi-node case.
Thus \(\mathcal P\) is not merely an auxiliary perverse object: it is the constructible/perverse
shadow of the mixed-Hodge-module object that we shall construct later.

\subsection{Picard--Lefschetz data and the vanishing lattice}

For each node \(p_k\in \Sigma\), let
\[
\delta_k\in H_3(X_t,\Z)
\]
denote a corresponding local vanishing cycle. The local monodromy about \(t=0\) is governed by
the Picard--Lefschetz formula, and the span of the \(\delta_k\) in middle homology carries the
vanishing-cycle lattice of the degeneration \cite{MilnorSingularPoints,DimcaSheaves}. In
particular, the collection
\[
\delta_1,\dots,\delta_r
\]
determines a rank-\(r\) local vanishing contribution together with the intersection pairing among
the vanishing cycles.

For the present paper, we use this data only at the level of geometric motivation and support
decomposition: each \(\delta_k\) gives one point-supported rank-one contribution on the singular
fiber. The more refined question of how the intersection relations among the vanishing cycles are
reflected in the global extension data will reappear later when we discuss the global gluing
problem and its possible quiver-theoretic shadow.

%-------------------------------------------------------------
\section{Mixed Hodge modules and divisor gluing}

The purpose of this section is to record the mixed-Hodge-module formalism that will be used in the
construction of the global refinement
\[
\mathcal P^H\in MHM(X_0).
\]
We restrict attention to the exact properties of Saito's theory that are needed later: the existence
of the abelian categories \(MHM(X)\), the realization functor, nearby and vanishing cycles in the
mixed-Hodge-module setting, and the principal-divisor gluing formalism. These are the ingredients
already isolated in \cite{RahmanPerverseNearbyCycles} as the formal input required for a full
Hodge-theoretic refinement of the corrected perverse extension. 

\subsection{Mixed Hodge modules and realization}

Let \(X\) be a complex algebraic variety. Saito constructs an abelian category \(MHM(X)\) of mixed
Hodge modules on \(X\), together with an exact and faithful functor
\[
\rat:MHM(X)\to\Perv(X;\Q),
\]
to the category of rational perverse sheaves on \(X\) \cite{SaitoMHM,SaitoDuality}. We refer to
\(\rat\) as the realization functor. In particular, every mixed Hodge module has an underlying
rational perverse sheaf, and exact sequences in \(MHM(X)\) remain exact after applying \(\rat\).

For smooth varieties, Saito's theory extends the classical category of admissible variations of
mixed Hodge structure. More generally, it provides a Hodge-theoretic enhancement of the formalism
of perverse sheaves and regular holonomic \(D\)-modules compatible with the standard functorial
operations. For the present paper, the key point is that the category \(MHM(X_0)\) is rich enough
to support both the intersection complex
\[
IC^H_{X_0}
\]
and the point-supported mixed Hodge modules
\[
i_{k*}\Q^H_{\{p_k\}}(-1)
\]
that will appear as the singular quotient in the corrected extension.

\subsection{Nearby and vanishing cycles in $MHM$}

Let $\pi:\mathcal X\to\Delta$ be a one-parameter degeneration, and let $X_0=\pi^{-1}(0)$. Saito defines nearby-cycle and
vanishing-cycle functors in the mixed-Hodge-module setting,
\[
\psi_\pi^H,\qquad \phi_\pi^H,
\]
which are compatible with the corresponding perverse nearby-cycle and vanishing-cycle functors under
the realization functor \cite{SaitoMHM,SaitoDuality}. Thus, for every object \(\mathcal M\in
MHM(\mathcal X)\), one has
\[
\rat\!\bigl(\psi_\pi^H(\mathcal M)\bigr)\cong \psi_\pi\!\bigl(\rat(\mathcal M)\bigr),
\qquad
\rat\!\bigl(\phi_\pi^H(\mathcal M)\bigr)\cong \phi_\pi\!\bigl(\rat(\mathcal M)\bigr),
\]
with the usual normalized perverse nearby-cycle and vanishing-cycle functors on the right-hand side.

These functors carry canonical and variation morphisms,
\[
\can^H:\psi_\pi^H(\mathcal M)\to \phi_\pi^H(\mathcal M),
\qquad
\var^H:\phi_\pi^H(\mathcal M)\to \psi_\pi^H(\mathcal M),
\]
whose realizations are the corresponding canonical and variation morphisms of perverse sheaves.
Moreover, the nilpotent monodromy operator
\[
N:=\log T_u
\]
acts in the mixed-Hodge-module formalism and controls the relation between nearby and vanishing
cycles. This is the Hodge-theoretic refinement of the same nearby-cycle/vanishing-cycle mechanism
that defines the corrected perverse object
\[
\mathcal P=\Cone(\var_F)[-1]
\]
on the perverse side.

In particular, if
\[
F=\Q_{\mathcal X}[3],
\]
then the nearby-cycle and vanishing-cycle mixed Hodge modules attached to \(F\) refine the perverse
objects \(\psi_\pi(F)\) and \(\phi_\pi(F)\), and the realization functor carries the mixed-Hodge-module
variation morphism \(\var_F^H\) to the perverse variation morphism \(\var_F\).

\subsection{Saito's divisor gluing theorem}

The formal engine of the present paper is Saito's gluing theorem for a principal divisor
\cite[Prop.~0.3]{SaitoMHM}. Let \(g:X\to\C\) be a regular function, and let
\[
Y:=g^{-1}(0), \qquad U:=X\setminus Y.
\]
Then mixed Hodge modules on \(X\) whose behavior is controlled along the divisor \(Y\) may be
described in terms of gluing data consisting of:
\begin{itemize}
\item an object on the complement \(U\),
\item an object on the divisor \(Y\),
\item and morphisms
\[
u:\psi_{g,1}(\mathcal M')\to \mathcal M'',
\qquad
v:\mathcal M''\to \psi_{g,1}(\mathcal M')(-1),
\]
satisfying the relation
\[
vu=N,
\]
where \(N\) is the nilpotent monodromy operator.
\end{itemize}
Here \(\psi_{g,1}\) denotes the unipotent nearby-cycle part.

In the present situation, the central fiber
\[
X_0=\pi^{-1}(0)
\]
is a principal divisor in \(\mathcal X\). Consequently, the problem of constructing a mixed-Hodge-module
refinement of the corrected perverse object is reduced to the explicit identification of the gluing
datum
\[
(\mathcal M',\mathcal M'',u,v)
\]
whose realization recovers the variation morphism
\[
\var_F:\phi_\pi(F)\to \psi_\pi(F)
\]
and hence the cone object
\[
\mathcal P=\Cone(\var_F)[-1].
\]
This is precisely the gluing problem isolated in \cite{RahmanPerverseNearbyCycles} in the single-node
case, and the present paper solves it in the finite multi-node ordinary double point setting. 

\subsection{Point-supported mixed Hodge modules}

For each node \(p_k\in\Sigma\), let
\[
i_k:\{p_k\}\hookrightarrow X_0
\]
denote the inclusion. The point \(\{p_k\}\) supports the pure Hodge module
\[
\Q^H_{\{p_k\}},
\]
whose realization is the skyscraper perverse sheaf \(\Q_{\{p_k\}}\). We write
\[
i_{k*}\Q^H_{\{p_k\}}(-1)
\]
for its Tate-twisted pushforward to \(X_0\). Its realization is
\[
\rat\!\bigl(i_{k*}\Q^H_{\{p_k\}}(-1)\bigr)\cong i_{k*}\Q_{\{p_k\}},
\]
since the Tate twist modifies the Hodge-theoretic structure but does not alter the underlying
rational perverse sheaf.

These point-supported mixed Hodge modules are the natural Hodge-theoretic refinements of the
rank-one local singular contributions appearing in the corrected perverse extension. In the
single-node case, \cite{RahmanSchoberPaper} and \cite{RahmanPerverseNearbyCycles} identify the quotient
\[
i_*\Q_{\{p\}}
\]
as the point-supported rank-one perverse correction, and the corresponding mixed-Hodge-module
object
\[
i_*\Q^H_{\{p\}}(-1)
\]
was isolated there as the expected singular quotient of the missing Hodge-theoretic refinement.
The multi-node theorem of the present paper will show that in the finite-node case the singular
quotient is precisely
\[
\bigoplus_{k=1}^r i_{k*}\Q^H_{\{p_k\}}(-1).
\]

\subsection{Tate twists, weights, and normalization conventions}

We now fix the Hodge-theoretic normalization used throughout the paper. We follow Saito's Tate
twist conventions: for a mixed Hodge module \(\mathcal M\), the twist \(\mathcal M(-1)\) lowers
weights by \(2\), and the nilpotent monodromy operator satisfies
\[
N:\psi_{\pi,1}^H(\mathcal M)\longrightarrow \psi_{\pi,1}^H(\mathcal M)(-1).
\]
Accordingly, the point-supported rank-one singular term refining a local vanishing contribution is
written
\[
i_{k*}\Q^H_{\{p_k\}}(-1),
\]
not as a half-integral Tate object. This convention is essential: all Hodge-theoretic statements in
the paper are formulated inside the standard category of rational mixed Hodge modules and rational
mixed Hodge structures, where Tate twists are indexed by integers.

In particular, when we speak of the rank-one local vanishing contribution of an ordinary double
point, we mean the point-supported mixed-Hodge-module summand
\[
i_{k*}\Q^H_{\{p_k\}}(-1)
\]
and its contribution under hypercohomology to the limiting mixed Hodge structure. The present paper
does not use any fractional Tate twist notation. All later exact sequences, support decompositions,
and hypercohomology comparisons are written with this normalization.

This choice is compatible with the role of the monodromy operator in Saito's formalism and with the
point-supported quotient anticipated in \cite{RahmanPerverseNearbyCycles}. It is also the
normalization needed for the global exact sequence in \(MHM(X_0)\) proved later in the paper.

%----------------------------------------------------------
\section{Local ODP construction via nearby-cycle quotients}
\label{sec:local-odp}

In this section we construct the local mixed-Hodge-module block attached to an
ordinary double point by an explicit quotient procedure in the nearby/vanishing-cycle
formalism. The key point is that the desired local point-supported object is not
introduced abstractly by a lifting ansatz, but is extracted canonically from the
$\can/\Var/N$ package together with the one-dimensional vanishing sector of the
ordinary double point.

\subsection{Local analytic model and notation}

Let
\[
\pi_{\mathrm{loc}}:\mathcal X_{\mathrm{loc}}\to \Delta
\]
be a local one-parameter degeneration whose central fiber
\[
X_{0,\mathrm{loc}}:=\pi_{\mathrm{loc}}^{-1}(0)
\]
has a single ordinary double point at
\[
p\in X_{0,\mathrm{loc}}.
\]
Write
\[
U_{\mathrm{loc}}:=X_{0,\mathrm{loc}}\setminus\{p\},
\qquad
j_{\mathrm{loc}}:U_{\mathrm{loc}}\hookrightarrow X_{0,\mathrm{loc}},
\qquad
i_{\mathrm{loc}}:\{p\}\hookrightarrow X_{0,\mathrm{loc}}.
\]
Let
\[
F_{\mathrm{loc}}:=\Q_{\mathcal X_{\mathrm{loc}}}[3].
\]

We work with the unipotent nearby- and vanishing-cycle functors in Saito's category
of mixed Hodge modules:
\[
\psi^H_{\pi_{\mathrm{loc}},1}(F_{\mathrm{loc}}),
\qquad
\phi^H_{\pi_{\mathrm{loc}},1}(F_{\mathrm{loc}}),
\]
together with the canonical maps
\[
\can^H:\psi^H_{\pi_{\mathrm{loc}},1}(F_{\mathrm{loc}})
\longrightarrow
\phi^H_{\pi_{\mathrm{loc}},1}(F_{\mathrm{loc}}),
\qquad
\Var^H:\phi^H_{\pi_{\mathrm{loc}},1}(F_{\mathrm{loc}})
\longrightarrow
\psi^H_{\pi_{\mathrm{loc}},1}(F_{\mathrm{loc}})(-1),
\]
satisfying
\[
\Var^H\circ \can^H = N
\]
on the unipotent nearby-cycle object \cite{SaitoRefinedCycleMaps2002}.

\subsection{The local point-supported target}

The Milnor fiber of an ordinary double point has one-dimensional vanishing
cohomology in middle degree. Equivalently, the local vanishing sector is a
one-dimensional mixed Hodge structure of Tate type \cite{PetersSteenbrink2008}.
Accordingly, we define the point-supported local mixed Hodge module
\[
W^H_{\mathrm{loc}}:=i_{\mathrm{loc}*}\Q^H_{\{p\}}(-1).
\]

Its underlying rational perverse sheaf is the point-supported rank-one perverse
block
\[
K_{\mathrm{ODP}}:=\rat\bigl(W^H_{\mathrm{loc}}\bigr).
\]

\begin{lemma}[Local ODP zig-zag block]
\label{lem:local-odp-zigzag}
The point-supported local perverse block \(K_{\mathrm{ODP}}\) has
MacPherson--Vilonen zig-zag
\[
Z(K_{\mathrm{ODP}})\cong (0,\Q,\Q,0,\id,0).
\]
In particular, the corresponding map
\[
\beta:H^0(i_{\mathrm{loc}}^!K_{\mathrm{ODP}})
\longrightarrow
H^0(i_{\mathrm{loc}}^*K_{\mathrm{ODP}})
\]
is an isomorphism, and therefore
\[
\End_{\Perv(X_{0,\mathrm{loc}};\Q)}(K_{\mathrm{ODP}})\cong \Q.
\]
\end{lemma}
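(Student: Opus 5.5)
Since \(W^H_{\mathrm{loc}}\) is point-supported at \(p\) and the Tate twist does not change the underlying perverse sheaf (as noted in the subsection on point-supported mixed Hodge modules), we may work entirely with \(K_{\mathrm{ODP}}\cong i_{\mathrm{loc}*}\Q_{\{p\}}\). The plan is to compute the MacPherson--Vilonen zig-zag directly from this description, using the convention of \cite{MacPhersonVilonen1986,GMV1996} for a perverse sheaf \(K\) on a space stratified by \(U_{\mathrm{loc}}\) and the isolated point \(p\). In that convention the zig-zag records the restriction \(K|_{U_{\mathrm{loc}}}\), the two point invariants \(H^0(i_{\mathrm{loc}}^!K)\) and \(H^0(i_{\mathrm{loc}}^*K)\), the link/boundary datum, and the maps between them. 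The tuple \((0,\Q,\Q,0,\id,0)\) should then be read as follows: zero local system on the open stratum, \(\Q\) in both point slots, zero link contribution, and \(\beta=\id\) with the remaining connecting map zero.

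The first step is to compute \(j_{\mathrm{loc}}^*K_{\mathrm{ODP}}=0\), since the support is \(\{p\}\). Every entry built from the open stratum, namely the local system and the cohomology of the link with its coefficients, therefore vanishes. The second step is to use \(i_{\mathrm{loc}}^*i_{\mathrm{loc}*}=\id\) and \(i_{\mathrm{loc}}^!i_{\mathrm{loc}*}=\id\), which hold because \(i_{\mathrm{loc}}\) is a closed embedding. These give
\[
H^0(i_{\mathrm{loc}}^!K_{\mathrm{ODP}})\cong\Q\cong H^0(i_{\mathrm{loc}}^*K_{\mathrm{ODP}}),
\]
in the degree normalization where a skyscraper is perverse in degree \(0\). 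The third step is to identify \(\beta\) with the natural map \(i^!\to i^*\) induced by adjunction \(i_*i^!\to \id\to i_*i^*\). On \(i_*\Q\) both adjunction maps are isomorphisms, so \(\beta\) is the identity under the above identifications. The remaining connecting maps land in or come from zero groups, so they vanish. This yields the stated zig-zag.

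For the endomorphism statement, I would use that the MV/GMV description is an equivalence of categories between perverse sheaves and zig-zag data. Morphisms of zig-zags are then tuples of linear maps commuting with the structure maps, and since all open-stratum entries vanish, such a morphism is a pair \((a,b)\in\Q^2\) with \(b\circ\id=\id\circ a\). Hence \(a=b\) and \(\End\cong\Q\). Alternatively, and more robustly, one can argue directly: \(\End(i_*\Q)=\Hom(\Q,i^!i_*\Q)=\End_{\Q}(\Q)=\Q\) by adjunction and full faithfulness of \(i_*\).

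The main obstacle I expect is bookkeeping with conventions rather than mathematics. One has to match the paper's six-entry ordering and degree shifts in the MV zig-zag, so that the nonzero slots are exactly those of \(H^0(i^!)\) and \(H^0(i^*)\) and \(\beta\) is the fifth entry. I would state the convention explicitly and then check, from the triangle \(i_*i^!K\to K\to j_*j^*K\), that \(\beta\) is an isomorphism precisely because \(j^*K=0\). This is the conceptual reason behind the identity entry.
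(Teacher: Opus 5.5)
Your proposal is correct and follows essentially the same route as the paper: $j_{\mathrm{loc}}^*K_{\mathrm{ODP}}=0$ kills the open-stratum and link terms, so the exact sequence collapses to $\beta:H^0(i_{\mathrm{loc}}^!K_{\mathrm{ODP}})\xrightarrow{\sim}H^0(i_{\mathrm{loc}}^*K_{\mathrm{ODP}})$, and you are more explicit than the paper in computing both point groups as $\Q$ via $i^!i_*\cong i^*i_*\cong\id$. One caution: the zig-zag functor is neither an equivalence nor even faithful on all of $\Perv$ (on $\C$ it kills the nonzero composite $Rj_*\Q_{\C^*}[1]\twoheadrightarrow i_*\Q\hookrightarrow j_!\Q_{\C^*}[1]$, and the genuine MacPherson--Vilonen equivalence uses quadruples $(P_U,V,m,n)$ with $nm=T$), so for $\End(K_{\mathrm{ODP}})\cong\Q$ rely on your adjunction argument $\End(i_*\Q)\cong\Hom(\Q,i^!i_*\Q)\cong\Q$, which is what the paper's appeal to zig-zags \emph{in this situation} actually amounts to.
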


\begin{proof}
Since \(K_{\mathrm{ODP}}\) is point-supported, one has
\[
j_{\mathrm{loc}}^*K_{\mathrm{ODP}}=0.
\]
Hence
\[
i_{\mathrm{loc}}^*Rj_{\mathrm{loc}*}j_{\mathrm{loc}}^*K_{\mathrm{ODP}}=0,
\]
and the MacPherson--Vilonen zig-zag exact sequence reduces to
\[
0\longrightarrow H^0(i_{\mathrm{loc}}^!K_{\mathrm{ODP}})
\xrightarrow{\beta}
H^0(i_{\mathrm{loc}}^*K_{\mathrm{ODP}})
\longrightarrow 0.
\]
Thus \(\beta\) is an isomorphism. Since both middle terms are one-dimensional,
the zig-zag is
\[
(0,\Q,\Q,0,\id,0).
\]
MacPherson--Vilonen then imply that the local Hom-space is computed exactly by
zig-zags in this situation, so the endomorphism ring is one-dimensional
\cite{MacPhersonVilonen1986}.
\end{proof}

Lemma~\ref{lem:local-odp-zigzag} rigidifies the rational perverse target:
any nonzero morphism to or from \(K_{\mathrm{ODP}}\) is unique up to scalar.

\subsection{The local quotient object}

The ambient nearby/vanishing-cycle package contains more information than the local
ODP correction block. The correct local quotient is extracted from the
$\can/\Var$ interface.

\begin{definition}
\label{def:Aloc}
Define
\[
A_{\mathrm{loc}}
:=
\operatorname{Im}(\can^H)\cap \ker(\Var^H)
\subset
\phi^H_{\pi_{\mathrm{loc}},1}(F_{\mathrm{loc}}).
\]
\end{definition}

This is the local object that survives both the incoming nearby-cycle map and the
vanishing under the outgoing variation map. It is the natural local analogue of the
monodromy-controlled quotient constructions that appear in the isolated-singularity
setting \cite{BanaglBudurMaxim}.

\begin{proposition}[Point-level ODP quotient]
\label{prop:point-level-odp-quotient}
The point-level mixed Hodge structure
\[
H^0\bigl(i_{\mathrm{loc}}^*A_{\mathrm{loc}}\bigr)
\]
admits a nonzero quotient
\[
q_{\mathrm{pt}}:
H^0\bigl(i_{\mathrm{loc}}^*A_{\mathrm{loc}}\bigr)
\twoheadrightarrow
\Q^H(-1)
\]
corresponding to the one-dimensional ordinary-double-point vanishing line.
Consequently, by adjunction for the closed immersion \(i_{\mathrm{loc}}\), there is an induced nonzero morphism
\[
q_{\mathrm{loc}}:
A_{\mathrm{loc}}
\longrightarrow
i_{\mathrm{loc}*}\Q^H_{\{p\}}(-1)
=
W^H_{\mathrm{loc}}.
\]
\end{proposition}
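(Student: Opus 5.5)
The plan is to reduce everything to the stalk at \(p\). Both \(A_{\mathrm{loc}}\) and \(\phi^H_{\pi_{\mathrm{loc}},1}(F_{\mathrm{loc}})\) are point-supported. The quotient \(q_{\mathrm{pt}}\) then comes from identifying the one-dimensional vanishing line inside \(H^0(i_{\mathrm{loc}}^*\phi^H_{\pi_{\mathrm{loc}},1}(F_{\mathrm{loc}}))\) and checking that it survives in \(A_{\mathrm{loc}}\). The morphism \(q_{\mathrm{loc}}\) is then formal.

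First, I will identify the vanishing object itself. Since \(p\) is an isolated singularity of \(\pi_{\mathrm{loc}}\), \(\phi^H_{\pi_{\mathrm{loc}}}(F_{\mathrm{loc}})\) is supported on \(\{p\}\). Kashiwara's equivalence in \(MHM\) then gives
\[
\phi^H_{\pi_{\mathrm{loc}},1}(F_{\mathrm{loc}})\cong i_{\mathrm{loc}*}V
\]
for a mixed Hodge structure \(V\), namely the unipotent part of the reduced middle cohomology of the Milnor fibre \(F_p\). By the computation recalled in Section~2, \(\widetilde H^3(F_p;\Q)\cong\Q\). For a quadratic singularity in four variables the local monodromy on this line is \((-1)^4=1\), so the whole vanishing line lies in the unipotent part. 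Its Hodge structure is of Tate type (Steenbrink spectrum concentrated at a single integer), and with the normalization fixed in Section~3 it is \(\Q^H(-1)\). This gives \(V\cong\Q^H(-1)\) and \(\dim V=1\).

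Second, I will show that \(A_{\mathrm{loc}}=\operatorname{Im}(\can^H)\cap\ker(\Var^H)\) is nonzero and that its stalk surjects onto \(V\). Since \(V\) is one-dimensional, \(A_{\mathrm{loc}}\) is either \(0\) or all of \(i_{\mathrm{loc}*}V\), so it suffices to prove two things:
\begin{itemize}
\item \(\can^H\) is surjective;
\item \(\Var^H\) kills the vanishing line.
\end{itemize}
Surjectivity of \(\can\) is equivalent to the absence of nonzero quotients of \(F_{\mathrm{loc}}\) supported on \(X_{0,\mathrm{loc}}\). This holds because \(\Q_{\mathcal X_{\mathrm{loc}}}\) is, up to shift, the intersection complex of a smooth irreducible space. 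For \(\Var^H\), I would use the Picard--Lefschetz description: on the stalk, \(\Var\) sends \(\delta\) to \(\pm\langle\cdot,\delta\rangle\delta\), and \(\langle\delta,\delta\rangle=0\) for the \(3\)-sphere, so \(\Var\circ\can=N\) vanishes on the vanishing line. Granting these, taking \(H^0 i_{\mathrm{loc}}^*\) of \(A_{\mathrm{loc}}=i_{\mathrm{loc}*}V\) produces \(q_{\mathrm{pt}}:V\xrightarrow{\sim}\Q^H(-1)\), which is in particular a nonzero quotient.

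Finally, adjunction for the closed immersion gives
\[
\Hom(A_{\mathrm{loc}},i_{\mathrm{loc}*}\Q^H(-1))\cong\Hom(H^0 i_{\mathrm{loc}}^*A_{\mathrm{loc}},\Q^H(-1)),
\]
where \(H^0\) is taken in the \(MHM\) sense on the point. Under this isomorphism \(q_{\mathrm{pt}}\) corresponds to \(q_{\mathrm{loc}}\). The morphism \(q_{\mathrm{loc}}\) is nonzero because its restriction to stalks is \(q_{\mathrm{pt}}\). Lemma~\ref{lem:local-odp-zigzag} then shows that \(q_{\mathrm{loc}}\) is unique up to scalar after realization.

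The main obstacle is the \(\ker(\Var^H)\) step, and I expect it may fail as stated. Saito's criterion says \(\Var\) is injective exactly when the module has no nonzero subobject supported on the divisor. For the constant module on a smooth total space this suggests \(\Var^H\) is injective on \(\phi^H_{\pi_{\mathrm{loc}},1}(F_{\mathrm{loc}})\), which would force \(A_{\mathrm{loc}}=0\). The Picard--Lefschetz argument above only shows that \(N=\Var\circ\can\) vanishes on the line, which is a strictly weaker statement. So the first thing I would do is compute \(\Var\) on the one-dimensional stalk directly, as the topological variation map \(H^3_c\to H^3\) of the Milnor fibre. If it is nonzero, I would fall back to replacing \(A_{\mathrm{loc}}\) by \(\operatorname{Im}(\can^H)\), or by \(\operatorname{coker}(\Var^H)\) on the \(\psi\)-side. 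On either of these the rank-one Tate quotient exists by the first step, and the adjunction step goes through unchanged.
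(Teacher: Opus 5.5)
Your suspicion in the last paragraph is correct, and it settles the matter: the proposition is false as stated, so neither your plan nor the paper's argument can prove it.

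In the standard local model $\pi_{\mathrm{loc}}=x_1^2+x_2^2+x_3^2+x_4^2$ the total space is smooth and connected. So the constant module (in Saito's normalization $\Q^H_{\mathcal X_{\mathrm{loc}}}[4]$) is simple with strict support $\mathcal X_{\mathrm{loc}}$. It therefore has no nonzero subobject supported on $X_{0,\mathrm{loc}}$, and by Saito's criterion $\Var^H$ is injective on $\phi^H_{\pi_{\mathrm{loc}},1}(F_{\mathrm{loc}})$. This is the same argument that gives your surjectivity of $\can^H$. Hence $\ker(\Var^H)=0$, $A_{\mathrm{loc}}=0$, and $H^0(i_{\mathrm{loc}}^*A_{\mathrm{loc}})=0$, so no nonzero $q_{\mathrm{pt}}$ or $q_{\mathrm{loc}}$ exists.

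The paper's proof never confronts this. It passes directly from the definition $A_{\mathrm{loc}}=\operatorname{Im}(\can^H)\cap\ker(\Var^H)$ and the rank-one Milnor-fibre computation to the existence of the quotient. That is exactly the step you could not justify. Your Picard--Lefschetz bullet is the wrong heuristic. The vanishing $\langle\delta,\delta\rangle=0$ shows that $N=\can^H\circ\Var^H$ vanishes on $\phi$ (equivalently $N^2=0$ on $\psi$), not that $\Var^H$ vanishes. Topologically, the variation map $H_3(F_p,\partial F_p)\to H_3(F_p)$ of an isolated hypersurface singularity is an isomorphism, which matches the injectivity above.

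On your fallback: since $\can^H$ is surjective, $\operatorname{Im}(\can^H)=\phi^H_{\pi_{\mathrm{loc}},1}(F_{\mathrm{loc}})=i_{\mathrm{loc}*}V$. So this replacement does give a rank-one point-supported object, and your adjunction step is fine. However, $V\cong\Q^H(-1)$ ``by the normalization of Section~3'' is not a computation, and it is wrong in Saito's conventions:
\begin{enumerate}
\item $\Q^H_{\mathcal X_{\mathrm{loc}}}[4]$ is pure of weight $4$.
\item For a module of weight $w$, the weight filtration on $\phi_{f,1}$ is the monodromy filtration centred at $w$.
\item $N=0$ on the rank-one $\phi$.
\end{enumerate}
Hence $V$ is pure of weight $4$, i.e.\ $V\cong\Q^H(-2)$ of type $(2,2)$. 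This agrees with Steenbrink's mixed Hodge structure on $H^3$ of the Milnor fibre, and with $H^3$ of the affine quadric $\{x_1^2+\dots+x_4^2=1\}$.

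Morphisms of mixed Hodge structures between pure structures of different weights vanish, so $\Hom(V,\Q^H(-1))=0$. The fallback therefore repairs the statement only if you both replace $A_{\mathrm{loc}}$ by $\operatorname{Im}(\can^H)$ and change the target twist to $(-2)$.
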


\begin{proof}
For an ordinary double point, the Milnor fiber has one-dimensional vanishing cohomology in middle degree. The local nearby/vanishing-cycle formalism therefore determines a one-dimensional vanishing mixed Hodge structure of Tate type \(\Q^H(-1)\) on the local ODP sector. Since
\[
A_{\mathrm{loc}}
=
\operatorname{Im}(\can^H)\cap \ker(\Var^H),
\]
its point-level fiber
\[
H^0\bigl(i_{\mathrm{loc}}^*A_{\mathrm{loc}}\bigr)
\]
admits a nonzero quotient onto that one-dimensional vanishing line. This defines
\[
q_{\mathrm{pt}}:
H^0\bigl(i_{\mathrm{loc}}^*A_{\mathrm{loc}}\bigr)
\twoheadrightarrow
\Q^H(-1).
\]
Adjunction for \(i_{\mathrm{loc}}\) then yields the induced morphism
\[
q_{\mathrm{loc}}:
A_{\mathrm{loc}}
\to
i_{\mathrm{loc}*}\Q^H_{\{p\}}(-1).
\]
Since \(q_{\mathrm{pt}}\) is nonzero, so is \(q_{\mathrm{loc}}\).
\end{proof}
\begin{remark}
The quotient \(q_{\mathrm{pt}}\) is canonical only relative to the distinguished one-dimensional
ordinary-double-point vanishing line singled out by the local nearby-/vanishing-cycle formalism.
Thus the construction is canonical up to the standard scalar normalization already inherent in the
rank-one ODP sector.
\end{remark}
\begin{definition}
\label{def:uloc}
Define
\[
u_{\mathrm{loc}}
:=
q_{\mathrm{loc}}\circ \can^H:
\psi^H_{\pi_{\mathrm{loc}},1}(F_{\mathrm{loc}})
\longrightarrow
W^H_{\mathrm{loc}}.
\]
\end{definition}

\begin{proposition}[Nonvanishing of the realized local map]
\label{prop:uloc-nonzero}
The realized morphism
\[
\rat(u_{\mathrm{loc}}):
\psi_{\pi_{\mathrm{loc}},1}(F_{\mathrm{loc}})
\longrightarrow
K_{\mathrm{ODP}}
\]
is nonzero.
\end{proposition}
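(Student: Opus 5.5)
The plan is to reduce the nonvanishing of \(\rat(u_{\mathrm{loc}})\) to two facts about the factors of
\[
u_{\mathrm{loc}}=q_{\mathrm{loc}}\circ\can^H .
\]
The first is that \(\can^H\) maps onto \(\operatorname{Im}(\can^H)\). The second is that \(q_{\mathrm{loc}}\) is nonzero on the part of \(\operatorname{Im}(\can^H)\) it sees, namely \(A_{\mathrm{loc}}\).

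We then transport both facts through the exact faithful functor \(\rat\). Faithfulness gives \(\rat(u_{\mathrm{loc}})\neq 0\) as soon as \(u_{\mathrm{loc}}\neq 0\) in \(MHM(X_{0,\mathrm{loc}})\). So it suffices to prove \(u_{\mathrm{loc}}\neq 0\) in \(MHM(X_{0,\mathrm{loc}})\). Exactness of \(\rat\) lets us check the same statement on underlying perverse sheaves whenever that is more convenient.

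\textbf{Step 1 (factor through the image).} Write \(\can^H=\iota\circ c\), where \(c:\psi^H_{\pi_{\mathrm{loc}},1}(F_{\mathrm{loc}})\twoheadrightarrow\operatorname{Im}(\can^H)\) is an epimorphism in the abelian category \(MHM\). Since \(c\) is epi, \(u_{\mathrm{loc}}\neq 0\) as soon as the restriction of \(q_{\mathrm{loc}}\) to \(\operatorname{Im}(\can^H)\) is nonzero.

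Here a domain issue must be settled first. Proposition~\ref{prop:point-level-odp-quotient} defines \(q_{\mathrm{loc}}\) only on \(A_{\mathrm{loc}}\), so Definition~\ref{def:uloc} implicitly requires \(\can^H\) to land in \(A_{\mathrm{loc}}\), i.e.\ \(\operatorname{Im}(\can^H)\subset\ker(\Var^H)\).

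\textbf{Step 2 (verify \(\operatorname{Im}(\can^H)\subset\ker\Var^H\)).} I would check this in the ODP case. By the Milnor-fibre computation of \S2, \(\phi^H_{\pi_{\mathrm{loc}},1}(F_{\mathrm{loc}})\) is point-supported of rank one. The relation \(\Var^H\circ\can^H=N\) reduces the inclusion to the vanishing of \(N\) on the image of \(\can^H\).

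I would read this off the Picard--Lefschetz formula in dimension three. There \(T-\id\) is \(x\mapsto\pm\langle x,\delta\rangle\delta\), and \(\langle\delta,\delta\rangle=0\) for odd-dimensional spheres. Hence \((T-\id)^2=0\) and \(N=T-\id\). \(N\) sends everything into the line spanned by \(\delta\) and kills that line.

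On the realized side, \(\can\) surjects onto the skyscraper \(\phi\). The composite \(\Var\circ\can\) factors as \(N\), and \(N\) sends the vanishing line to zero. If instead \(\Var^H\) is nonzero on \(\phi\), Step 2 fails. In that case I would redefine \(q_{\mathrm{loc}}\) directly on \(\phi^H_{\pi_{\mathrm{loc}},1}(F_{\mathrm{loc}})\) and use \(u_{\mathrm{loc}}=q\circ\can^H\) with that \(q\).

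\textbf{Step 3 (nonvanishing, and the main obstacle).} With the domain fixed, \(q_{\mathrm{loc}}\) is adjoint to the nonzero \(q_{\mathrm{pt}}\). So \(q_{\mathrm{loc}}\) is nonzero on \(A_{\mathrm{loc}}\), which contains \(\operatorname{Im}(\can^H)\).

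The main obstacle is showing that \(q_{\mathrm{loc}}\) does not vanish on \(\operatorname{Im}(\can^H)\) itself. My approach would be to show that \(\can^H\) is an epimorphism onto \(\phi^H_{\pi_{\mathrm{loc}},1}(F_{\mathrm{loc}})\) in the ODP case. Then \(A_{\mathrm{loc}}=\operatorname{Im}(\can^H)\), and nonvanishing follows from Step 1.

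Surjectivity of \(\can\) is a statement about perverse sheaves, since \(\rat\) is exact and faithful and therefore reflects epimorphisms. It is equivalent to \(F_{\mathrm{loc}}=\Q[3]\) having no nonzero quotient supported on \(X_{0,\mathrm{loc}}\), as in the standard surjectivity criterion for \(\can\). That holds because \(\mathcal X_{\mathrm{loc}}\) is smooth, so \(\Q[3]\) is a simple perverse sheaf with full support.

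Finally, by Lemma~\ref{lem:local-odp-zigzag}, \(\End(K_{\mathrm{ODP}})\cong\Q\). So the nonzero morphism \(\rat(u_{\mathrm{loc}})\) is, up to a scalar, the unique surjection onto \(K_{\mathrm{ODP}}\).
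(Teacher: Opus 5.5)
Your Step~2 is false, and the main line of the argument collapses with it. The simplicity argument you use in Step~3 has a dual half. The shifted constant sheaf on the smooth local total space has no nonzero quotient \emph{and} no nonzero subobject supported on \(X_{0,\mathrm{loc}}\), since it is the minimal extension of its restriction to \(\mathcal X_{\mathrm{loc}}\setminus X_{0,\mathrm{loc}}\). So \(\can^H\) is an epimorphism \emph{and} \(\Var^H\) is a monomorphism. Hence \(\ker(\Var^H)=0\), the nonzero object \(\operatorname{Im}(\can^H)=\phi^H_{\pi_{\mathrm{loc}},1}(F_{\mathrm{loc}})\) is not contained in it, and \(A_{\mathrm{loc}}=0\).

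Your Picard--Lefschetz check conflates the two composites. What vanishes is \(\can^H\circ\Var^H=N\) on \(\phi^H\): monodromy on the vanishing line of a threefold node is trivial, equivalently \(N^2=0\) on \(\psi^H\). Your inclusion instead needs \(\Var^H\circ\can^H=N\) on \(\psi^H\) to vanish. That composite is \(\psi^H\twoheadrightarrow\phi^H\hookrightarrow\psi^H(-1)\), an epimorphism followed by a monomorphism through a nonzero object, hence nonzero. This is invisible stalkwise, because \(T\) acts trivially on all of \(H^\ast(F_p)\).

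Consequently the identification \(A_{\mathrm{loc}}=\operatorname{Im}(\can^H)\) in Step~3 fails. No nonzero \(q_{\mathrm{loc}}\) as in Proposition~\ref{prop:point-level-odp-quotient} can exist on \(A_{\mathrm{loc}}=0\), so \(u_{\mathrm{loc}}=q_{\mathrm{loc}}\circ\can^H\) is not even defined. The paper's own proof does not get around this. It asserts \(q_{\mathrm{loc}}\neq0\) and that \(\can^H\) is nontrivial on the ODP sector, then concludes the composite is nonzero with no epimorphism argument. It also cites exactness where, as you correctly note, faithfulness is what is needed.

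Only your fallback is then relevant, and its logic is sound. A nonzero \(q:\phi^H_{\pi_{\mathrm{loc}},1}(F_{\mathrm{loc}})\to W^H_{\mathrm{loc}}\) gives \(q\circ\can^H\neq0\) because \(\can^H\) is epi, and then \(\rat(q\circ\can^H)\neq0\) by faithfulness. But this proves the claim for a redefined \(u_{\mathrm{loc}}\), and it presupposes a nonzero \(q\) in \(MHM\), which you never construct. This is exactly where the argument cannot be completed.

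By adjunction, \(\Hom_{MHM}(\psi^H_{\pi_{\mathrm{loc}},1}(F_{\mathrm{loc}}),W^H_{\mathrm{loc}})\cong\Hom_{MHS}(H^0i_{\mathrm{loc}}^*\psi^H_{\pi_{\mathrm{loc}},1}(F_{\mathrm{loc}}),\Q(-1))\). Here \(H^0i_{\mathrm{loc}}^*\psi^H\cong H^0i_{\mathrm{loc}}^*\phi^H\) is the vanishing line \(H^3(F_p)\) with its limit mixed Hodge structure, which is \(\Q(-2)\). Indeed, with the conventions the paper adopts (untwisted \(\can^H\), and \(\Var^H\) landing in \(\psi^H(-1)\)), \(\phi^H_{\pi,1}\) of the weight-\(4\) constant Hodge module on the four-dimensional total space is pure of weight \(4\), because \(N=0\) on it.

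The twist \((-1)\) belongs to the \emph{subobject} \(\operatorname{Im}(\Var^H)(1)\subset\psi^H\), the weight-\(2\) vanishing class of the conifold limit MHS. It does not belong to the unique rank-one point-supported quotient \(\phi^H\cong i_{\mathrm{loc}*}\Q^H_{\{p\}}(-2)\). So every morphism \(\psi^H_{\pi_{\mathrm{loc}},1}(F_{\mathrm{loc}})\to W^H_{\mathrm{loc}}\) in \(MHM\) is zero, and the proposition cannot be proved as stated. Your fallback goes through verbatim once the target is \(i_{\mathrm{loc}*}\Q^H_{\{p\}}(-2)\) and \(q\) is the isomorphism \(\phi^H\cong i_{\mathrm{loc}*}\Q^H_{\{p\}}(-2)\).
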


\begin{proof}
By Proposition~\ref{prop:point-level-odp-quotient}, the morphism
\[
q_{\mathrm{loc}}:A_{\mathrm{loc}}\to W^H_{\mathrm{loc}}
\]
is nonzero and is induced from the ordinary-double-point vanishing line. Since \(\can^H\) is the canonical nearby-to-vanishing morphism in the unipotent local formalism, its restriction to the local ODP sector is nontrivial. Therefore
\[
u_{\mathrm{loc}}=q_{\mathrm{loc}}\circ \can^H
\]
is nonzero on that sector. Exactness of realization then implies that
\[
\rat(u_{\mathrm{loc}})
\]
is nonzero.
\end{proof}

\begin{proposition}[Uniqueness of \(u_{\mathrm{loc}}\)]
\label{prop:uloc-unique}
The morphism \(u_{\mathrm{loc}}\) is unique up to nonzero scalar.
\end{proposition}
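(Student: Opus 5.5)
The plan is to show that the relevant Hom-space containing \(u_{\mathrm{loc}}\) is one-dimensional, by reducing it to a Hom-space at the point \(p\) and then using the rigidity of the target from Lemma~\ref{lem:local-odp-zigzag}. Since \(u_{\mathrm{loc}}\) is nonzero by Proposition~\ref{prop:uloc-nonzero}, a one-dimensional ambient space forces uniqueness up to a nonzero scalar.

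The natural ambient space is
\[
\Hom_{MHM}\bigl(\psi^H_{\pi_{\mathrm{loc}},1}(F_{\mathrm{loc}}),\,W^H_{\mathrm{loc}}\bigr),
\]
but for a smooth total space this space is in general larger than a line. It sees every Tate-type quotient of the top cohomology \(H^0 i^*_{\mathrm{loc}}\) of the nearby cycles, and not only the vanishing line. So I would prove uniqueness in the class where it is actually meaningful: morphisms \(u\) that factor as \(u=q\circ\can^H\), with \(q\colon A_{\mathrm{loc}}\to W^H_{\mathrm{loc}}\) (extended over \(\operatorname{Im}\can^H\)). In other words, I would prove that the space of such \(q\), or of the resulting \(u\), is one-dimensional.

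First I would use the adjunction \(i^*_{\mathrm{loc}}\dashv i_{\mathrm{loc}*}\) for the closed point. It gives
\[
\Hom\bigl(A_{\mathrm{loc}},\,i_{\mathrm{loc}*}\Q^H(-1)\bigr)\cong \Hom_{MHS}\bigl(H^0 i^*_{\mathrm{loc}}A_{\mathrm{loc}},\,\Q^H(-1)\bigr),
\]
because \(A_{\mathrm{loc}}\subset\phi^H\) is point-supported, so \(i^*_{\mathrm{loc}}A_{\mathrm{loc}}\) is concentrated in degree \(0\). Next I would show that \(H^0 i^*_{\mathrm{loc}}A_{\mathrm{loc}}\) has dimension at most one. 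This holds because \(\phi_{\pi,1}\) at an ODP of a threefold is the rank-one Milnor vanishing line (Section~2). Combined with \(q_{\mathrm{pt}}\) being surjective and nonzero, \(A_{\mathrm{loc}}\) is then exactly that line, isomorphic to \(\Q^H(-1)\), and the Hom-space above is \(\End(\Q^H(-1))=\Q\). Composing with \(\can^H\), I would then check that \(q\mapsto q\circ\can^H\) is injective. This uses the fact that the image of \(\can^H\) contains \(A_{\mathrm{loc}}\) by definition, so a \(q\) that vanishes after precomposition vanishes on \(A_{\mathrm{loc}}\). The space of admissible \(u\) is therefore a line. Finally, faithfulness of \(\rat\) and Lemma~\ref{lem:local-odp-zigzag}, which gives \(\End(K_{\mathrm{ODP}})=\Q\), show that any two choices differ by an automorphism of the target, which is a nonzero scalar.

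The main obstacle is the step identifying \(A_{\mathrm{loc}}\) with the rank-one line. It requires showing that \(\Var^H\) kills the vanishing line, or at least that \(\operatorname{Im}\can^H\cap\ker\Var^H\) is nonzero and still of rank at most one. For the ODP in odd fiber dimension, \(N\) on the vanishing cycle is \(0\), because \(\delta\cdot\delta=0\) and \(T=\id\) on \(H^3\). Using \(\Var^H\circ\can^H=N\), I would first try to exploit this to get \(\operatorname{Im}\can^H\subset\ker\Var^H\) on the unipotent part. If that fails, I would settle for the weaker statement that the canonical line is well-defined up to scalar, which matches the preceding Remark on normalization.
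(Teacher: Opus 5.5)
There is a genuine gap, and it starts with your first move. You claim that $\Hom_{MHM}(\psi^H_{\pi_{\mathrm{loc}},1}(F_{\mathrm{loc}}),W^H_{\mathrm{loc}})$ is in general larger than a line, and you retreat to the subclass of morphisms $q\circ\can^H$. In this local model the full space is at most a line.

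Write $\psi$ for $\psi_{\pi_{\mathrm{loc}},1}(F_{\mathrm{loc}})$; this is all of the nearby cycles, since the local monodromy is unipotent. Because $i^*_{\mathrm{loc}}$ of a perverse sheaf lies in degrees $\le 0$, adjunction gives $\Hom_{\Perv}(\psi,i_{\mathrm{loc}*}\Q_{\{p\}})\cong\Hom_{\Q}(H^0 i^*_{\mathrm{loc}}\psi,\Q)$. Moreover $H^0 i^*_{\mathrm{loc}}\psi_{\pi_{\mathrm{loc}}}(\Q_{\mathcal X_{\mathrm{loc}}}[3])=H^3(F_p;\Q)\cong\Q$. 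The top stalk cohomology of nearby cycles at an isolated singular point is the whole vanishing cohomology, which for an ODP is exactly the vanishing line, so there are no other Tate quotients for the Hom-space to see. Faithfulness of $\rat$ then bounds the MHM Hom-space by one, and $u_{\mathrm{loc}}\neq 0$ spans it.

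This is the paper's proof: faithfulness of $\rat$, a one-dimensional perverse target, and nonvanishing of $\rat(u_{\mathrm{loc}})$. It gives uniqueness among \emph{all} morphisms $\psi^H\to W^H$, which is what is stated; your restricted class yields only a weaker result. Note that the paper justifies the one-dimensionality by Lemma~\ref{lem:local-odp-zigzag}, and your last sentence leans on the same lemma. But $\End(K_{\mathrm{ODP}})=\Q$ does not by itself imply that two maps $\psi\to K_{\mathrm{ODP}}$ differ by a scalar; the stalk computation above is the missing input in both.

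Your route through $A_{\mathrm{loc}}$ also cannot be completed, and your proposed fix for the obstacle goes the wrong way. The vanishing of $N$ you have is on $\phi$: there $\can^H\circ\Var^H=0$, because $T=\id$ on $H^3(F_p)$. This gives $\operatorname{Im}\Var^H\subseteq\ker\can^H$, not $\operatorname{Im}\can^H\subseteq\ker\Var^H$. On $\psi$ one has instead $\Var^H\circ\can^H=N\neq 0$.

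Concretely, take the standard model $x_1^2+x_2^2+x_3^2+x_4^2=t$, which has smooth total space. The third terms of the two standard triangles containing $\can$ and $\Var$ are, up to shift and twist, $\Q_{X_{0,\mathrm{loc}}}[3]$ and its Verdier dual. Both are perverse because $X_{0,\mathrm{loc}}$ is a hypersurface. So these triangles are short exact sequences, with $\can$ surjective and $\Var$ injective. Hence $\operatorname{Im}\can^H\cap\ker\Var^H=0$, that is, $A_{\mathrm{loc}}=0$.

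Consequently the surjection $q_{\mathrm{pt}}$ you import from Proposition~\ref{prop:point-level-odp-quotient} cannot identify $A_{\mathrm{loc}}$ with the vanishing line. Your fallback, that the line is ``well defined up to scalar,'' does not give uniqueness of $u_{\mathrm{loc}}$ either. Drop the detour through $A_{\mathrm{loc}}$: the stalk computation together with faithfulness of $\rat$ is all that is needed.
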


\begin{proof}
Consider the realization map
\[
\rat:
\Hom_{MHM(X_{0,\mathrm{loc}})}
\bigl(
\psi^H_{\pi_{\mathrm{loc}},1}(F_{\mathrm{loc}}),W^H_{\mathrm{loc}}
\bigr)
\longrightarrow
\Hom_{\Perv(X_{0,\mathrm{loc}};\Q)}
\bigl(
\psi_{\pi_{\mathrm{loc}},1}(F_{\mathrm{loc}}),K_{\mathrm{ODP}}
\bigr).
\]
Realization is faithful, hence injective on Hom-sets. The target is one-dimensional by the local ODP zig-zag rigidity. Since \(\rat(u_{\mathrm{loc}})\neq 0\) by Proposition~\ref{prop:uloc-nonzero}, it spans the target. Therefore \(u_{\mathrm{loc}}\) is unique up to nonzero scalar.
\end{proof}

\subsection{The dual local map}

The dual local map is constructed symmetrically from the same point-supported block.

\begin{definition}
\label{def:vloc}
Let
\[
v_{\mathrm{loc}}:
W^H_{\mathrm{loc}}
\longrightarrow
\psi^H_{\pi_{\mathrm{loc}},1}(F_{\mathrm{loc}})(-1)
\]
denote the unique nonzero morphism induced from \(\Var^H\) on the local ODP sector.
\end{definition}

\begin{proposition}[Existence and uniqueness of the dual local map]
\label{prop:vloc}
There exists a nonzero morphism
\[
v_{\mathrm{loc}}:
W^H_{\mathrm{loc}}
\longrightarrow
\psi^H_{\pi_{\mathrm{loc}},1}(F_{\mathrm{loc}})(-1)
\]
induced from \(\Var^H\) on the local ODP sector. It is unique up to nonzero scalar.
\end{proposition}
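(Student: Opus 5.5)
The plan mirrors the construction of \(u_{\mathrm{loc}}\), with the roles of source and target exchanged, and then uses duality together with the rigidity of Lemma~\ref{lem:local-odp-zigzag} to obtain uniqueness.

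\textbf{Existence.} First I will produce a nonzero morphism into the twisted nearby cycles. By Proposition~\ref{prop:point-level-odp-quotient}, the one-dimensional ODP vanishing line appears inside \(\phi^H_{\pi_{\mathrm{loc}},1}(F_{\mathrm{loc}})\) as a point-supported subquotient of type \(\Q^H(-1)\) at \(p\). Since \(\phi^H\) is supported at \(p\) in the local model, Kashiwara's equivalence identifies \(\phi^H_{\pi_{\mathrm{loc}},1}(F_{\mathrm{loc}})\) with \(i_{\mathrm{loc}*}\) of a mixed Hodge structure. For the ODP this structure is one-dimensional, so the vanishing line gives an isomorphism \(s:W^H_{\mathrm{loc}}\xrightarrow{\sim}\phi^H_{\pi_{\mathrm{loc}},1}(F_{\mathrm{loc}})\), up to the same scalar normalization noted in the remark after Proposition~\ref{prop:point-level-odp-quotient}. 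I then set
\[
v_{\mathrm{loc}}:=\Var^H\circ s .
\]
For this to be nonzero I need \(\Var^H\neq 0\) on the vanishing line. I will get this from Picard--Lefschetz theory: \(\var\) sends the generator of the reduced Milnor cohomology to the class of the vanishing cycle \(\delta\) in the nearby cycles. That class is nonzero because \(N\delta\neq 0\) in the threefold case, where \(N\) is the nilpotent part of \(T-\id\) and \(T\) is the Picard--Lefschetz transvection. Alternatively, I can argue by Verdier duality. \(\mathbb D\) exchanges \(\can\) and \(\Var\) up to twist, and \(\mathbb D W^H_{\mathrm{loc}}\cong W^H_{\mathrm{loc}}\) up to Tate twist. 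So \(\mathbb D(u_{\mathrm{loc}})\), suitably twisted, is a morphism \(W^H_{\mathrm{loc}}\to\psi^H(-1)\), and it is nonzero by Proposition~\ref{prop:uloc-nonzero}. Since \(\mathbb D\) is an anti-equivalence, either route yields a nonzero map. Faithfulness and exactness of \(\rat\) then show that \(\rat(v_{\mathrm{loc}})\neq 0\).

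\textbf{Uniqueness.} As in Proposition~\ref{prop:uloc-unique}, faithfulness of \(\rat\) embeds
\[
\Hom_{MHM}\bigl(W^H_{\mathrm{loc}},\psi^H_{\pi_{\mathrm{loc}},1}(F_{\mathrm{loc}})(-1)\bigr)
\]
into
\[
\Hom_{\Perv}\bigl(K_{\mathrm{ODP}},\psi_{\pi_{\mathrm{loc}},1}(F_{\mathrm{loc}})\bigr).
\]
By adjunction the latter equals \(H^0\bigl(i_{\mathrm{loc}}^!\psi_{\pi_{\mathrm{loc}},1}(F_{\mathrm{loc}})\bigr)\), so it remains to show this group is at most one-dimensional.

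\textbf{Main obstacle.} This dimension bound is where I expect the real difficulty. Here \(\psi\) is not point-supported, so the zig-zag rigidity of Lemma~\ref{lem:local-odp-zigzag} does not apply directly. My first attempt is duality: \(\mathbb D\) identifies the Hom-space above with \(\Hom(\psi,K_{\mathrm{ODP}})\), since \(\psi\) is self-dual up to twist. That space was already asserted to be one-dimensional in the proof of Proposition~\ref{prop:uloc-unique}, so uniqueness up to scalar follows. If that assertion needs independent support, I would compute \(H^0(i^!\psi)\) directly from the MacPherson--Vilonen data of \(\psi\) at \(p\). Locally at \(p\), \(\psi\) is the nearby-cycle complex of the Milnor fibration. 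Its costalk at \(p\) in degree \(0\) is one-dimensional, spanned by the compactly supported class dual to the vanishing sphere. This follows because \(\tilde H^\ast\) of the Milnor fiber is \(\Q\) in degree \(3\) only, and the link of the ODP contributes nothing in the relevant degree.
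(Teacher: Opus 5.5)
\textbf{Existence fails at the isomorphism $s$, and the paper's proof has the same gap.} Your outline matches the paper's proof: take $\Var^H$ on the vanishing line, then get uniqueness from faithfulness of $\rat$ into a one-dimensional perverse Hom. The paper's proof simply asserts that $\Var^H$ induces a map out of $W^H_{\mathrm{loc}}$. Your proposal makes this concrete as $s$ and inherits the problem.

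Write $\psi^H,\phi^H$ for $\psi^H_{\pi_{\mathrm{loc}},1}(F_{\mathrm{loc}}),\phi^H_{\pi_{\mathrm{loc}},1}(F_{\mathrm{loc}})$. Kashiwara's equivalence and rank one give only $\phi^H\cong i_{\mathrm{loc}*}\Q^H_{\{p\}}(-k)$ for \emph{some} $k$. Proposition~\ref{prop:point-level-odp-quotient} cannot fix $k$: in the model $\pi=x_1^2+\dots+x_4^2$ the total space is smooth, so $\Var^H$ is injective and $A_{\mathrm{loc}}=0$.

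Compute $k$ with Saito's conventions and $F^H_{\mathrm{loc}}=\Q^H_{\mathcal X_{\mathrm{loc}}}[4]$. This is the normalization in which $\psi^H$ restricts to $\Q^H_{U_{\mathrm{loc}}}[3]$, as Theorem~\ref{thm:local-odp-gluing-datum} uses. The monodromy on $\tilde H^3(F_p)$ is $(-1)^4=1$, so $N=0$ on $\phi^H$, whose weight filtration is centred at $4$. Hence $\phi^H\cong i_{\mathrm{loc}*}\Q^H_{\{p\}}(-2)$, i.e.\ Steenbrink type $(2,2)$. Equivalently, $\phi_{\pi,1}$ commutes with $\mathbb{D}$ and $\mathbb{D}F^H_{\mathrm{loc}}\cong F^H_{\mathrm{loc}}(4)$, which forces $k=2$.

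Meanwhile $\psi^H$ has weights $2,3,4$: its weight filtration is the monodromy filtration centred at $3$, and $N^2=0$. So $\psi^H(-1)$ has weights $4,5,6$, and strictness gives $\Hom_{MHM}\bigl(i_{\mathrm{loc}*}\Q^H_{\{p\}}(-1),\psi^H(-1)\bigr)=0$. No argument can produce a nonzero $v_{\mathrm{loc}}$ with the twists as written. What $\Var^H$ actually gives is $i_{\mathrm{loc}*}\Q^H_{\{p\}}(-2)\hookrightarrow\psi^H(-1)$, equivalently $W^H_{\mathrm{loc}}\hookrightarrow\psi^H$ (untwisted) as its weight-$2$ part.

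Your duality route does not repair this. Since $\mathbb{D}W^H_{\mathrm{loc}}\cong W^H_{\mathrm{loc}}(2)$ and $\mathbb{D}\psi^H\cong\psi^H(3)$, the map $\mathbb{D}(u_{\mathrm{loc}})$, once untwisted, goes $W^H_{\mathrm{loc}}\to\psi^H(1)$, not into $\psi^H(-1)$.

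\textbf{Nonvanishing and uniqueness.} Your Picard--Lefschetz reason is wrong. For the threefold node the intersection form on $H_3$ is skew, so $\langle\delta,\delta\rangle=0$, $T\delta=\delta$ and $N\delta=0$; $\delta$ spans $\operatorname{Im}N$ rather than being moved by $N$. The correct reason $\var\neq0$ is that $\var$ is injective, because $\Q_{\mathcal X_{\mathrm{loc}}}[4]$ has no subobject supported on $X_{0,\mathrm{loc}}$. Classically, the variation $\tilde H^3(F_p)\to H^3(F_p,\partial F_p)$ is an isomorphism.

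For uniqueness, your reduction to $\dim H^0(i^!_{\mathrm{loc}}\psi)\le 1$ is the right one. It is more careful than the paper, whose appeal to $\End(K_{\mathrm{ODP}})\cong\Q$ does not control $\Hom(K_{\mathrm{ODP}},\psi)$. Two of your supporting steps fail, though:
\begin{enumerate}
\item Falling back on Proposition~\ref{prop:uloc-unique} reuses that same unjustified appeal to rigidity.
\item ``The link contributes nothing'' is false. In $H^2(\partial F_p)\to H^3(F_p,\partial F_p)\to H^3(F_p)\to H^3(\partial F_p)$ with $\partial F_p\cong S^2\times S^3$, the last map is injective. So the entire costalk comes from $H^2(\partial F_p)\cong\Q$.
\end{enumerate}
Either of two arguments works instead. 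One is Lefschetz duality, $H^3(F_p,\partial F_p)\cong H_3(F_p)\cong\Q$. The other is to apply $\Hom(K_{\mathrm{ODP}},-)$ to $0\to\phi\xrightarrow{\var}\psi\to\iota^!\Q_{\mathcal X_{\mathrm{loc}}}[5]\to0$, where $\iota\colon X_{0,\mathrm{loc}}\hookrightarrow\mathcal X_{\mathrm{loc}}$. The third Hom is $\Hom(K_{\mathrm{ODP}},\iota^!\Q_{\mathcal X_{\mathrm{loc}}}[5])\cong H^5_{\{p\}}(\mathcal X_{\mathrm{loc}};\Q)=0$. This second argument also shows that the line is spanned by $\var$.

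Once the twist is corrected (source $i_{\mathrm{loc}*}\Q^H_{\{p\}}(-2)$, or target $\psi^H$), your existence and uniqueness argument goes through.
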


\begin{proof}
The ordinary-double-point local sector is one-dimensional, and the point-supported target block
\[
W^H_{\mathrm{loc}}=i_{\mathrm{loc}*}\Q^H_{\{p\}}(-1)
\]
is uniquely determined by that sector. The variation morphism
\[
\Var^H:
\phi^H_{\pi_{\mathrm{loc}},1}(F_{\mathrm{loc}})
\longrightarrow
\psi^H_{\pi_{\mathrm{loc}},1}(F_{\mathrm{loc}})(-1)
\]
is nontrivial on the same local ODP vanishing line, hence induces a nonzero morphism
\[
v_{\mathrm{loc}}:
W^H_{\mathrm{loc}}
\to
\psi^H_{\pi_{\mathrm{loc}},1}(F_{\mathrm{loc}})(-1).
\]
After realization, this yields a nonzero morphism from the rigid perverse block \(K_{\mathrm{ODP}}\). Since the corresponding perverse Hom-space is one-dimensional, uniqueness up to scalar follows. Faithfulness of realization then implies the same uniqueness upstairs in \(MHM\).
\end{proof}

\subsection{Normalization by monodromy}

\begin{proposition}[Local normalization]
\label{prop:local-normalization}
After rescaling \(u_{\mathrm{loc}}\) and \(v_{\mathrm{loc}}\), one may arrange that
\[
v_{\mathrm{loc}}u_{\mathrm{loc}}=N
\]
on the local ordinary-double-point sector.
\end{proposition}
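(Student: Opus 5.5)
The plan is to reduce the identity \(v_{\mathrm{loc}}u_{\mathrm{loc}}=N\) on the local ODP sector to a comparison of two morphisms that both factor through the rank-one block \(W^H_{\mathrm{loc}}\). That one-dimensional Hom-space then forces them to agree up to a scalar, which we normalize away. The starting point is Saito's relation \(\Var^H\circ\can^H=N\) on \(\psi^H_{\pi_{\mathrm{loc}},1}(F_{\mathrm{loc}})\), recalled at the beginning of this section. Restricting to the local ODP sector means precomposing with the inclusion of, or projecting onto, the part of \(\phi^H_{\pi_{\mathrm{loc}},1}(F_{\mathrm{loc}})\) that maps onto the vanishing line, i.e.\ passing through \(A_{\mathrm{loc}}\) and \(q_{\mathrm{loc}}\). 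On that sector we can write
\[
N=\Var^H\circ\can^H ,\qquad v_{\mathrm{loc}}u_{\mathrm{loc}}=v_{\mathrm{loc}}\circ q_{\mathrm{loc}}\circ\can^H .
\]
So it suffices to compare \(\Var^H\) and \(v_{\mathrm{loc}}\circ q_{\mathrm{loc}}\) as morphisms from the ODP sector of \(A_{\mathrm{loc}}\) to \(\psi^H_{\pi_{\mathrm{loc}},1}(F_{\mathrm{loc}})(-1)\).

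\textbf{Step 1: proportionality.} The restriction of \(\Var^H\) to the ODP sector factors through the point-supported quotient \(q_{\mathrm{loc}}\). This is because the sector is, by Proposition~\ref{prop:point-level-odp-quotient}, the rank-one Tate line \(\Q^H(-1)\) at \(p\), and the way \(v_{\mathrm{loc}}\) was defined in Definition~\ref{def:vloc} and Proposition~\ref{prop:vloc} gives exactly this factorization. Hence \(\Var^H|_{\mathrm{sector}}=c\,(v_{\mathrm{loc}}\circ q_{\mathrm{loc}})\) for some morphism \(W^H_{\mathrm{loc}}\to\psi^H(-1)\) in the one-dimensional space of Proposition~\ref{prop:vloc}. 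The uniqueness statement of Proposition~\ref{prop:vloc}, together with faithfulness of \(\rat\), shows that \(c\in\Q\) is a scalar. Composing with \(\can^H\) gives
\[
N|_{\mathrm{sector}}=c\,v_{\mathrm{loc}}u_{\mathrm{loc}}.
\]

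\textbf{Step 2: \(c\neq 0\).} This is the main obstacle, since it needs actual topological input beyond rigidity. I would verify it after realization using the classical fact that for an isolated hypersurface singularity the variation map \(H^3(F_p,\partial F_p)\to H_3(F_p)\) is an isomorphism. For the ODP, Picard--Lefschetz gives \(T(x)=x\pm\langle x,\delta\rangle\delta\) in odd fiber dimension. Then \(T-\id\), and hence \(N=\log T_u\), is nonzero on the vanishing line, because the local pairing identifies the relative class dual to \(\delta\) with \(\pm\delta\). Consequently \(\rat(N|_{\mathrm{sector}})\neq 0\), which forces \(c\neq0\); this is consistent with Proposition~\ref{prop:uloc-nonzero}.

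\textbf{Step 3: rescaling.} Replace \(u_{\mathrm{loc}}\) by \(c\,u_{\mathrm{loc}}\), or equivalently \(v_{\mathrm{loc}}\) by \(c\,v_{\mathrm{loc}}\). Since \(c\in\Q^\times\), multiplication by \(c\) is an automorphism in \(MHM\) compatible with the Tate twists, so the rescaled maps remain morphisms of mixed Hodge modules. They stay unique up to scalar by Propositions~\ref{prop:uloc-unique} and~\ref{prop:vloc}, and they now satisfy \(v_{\mathrm{loc}}u_{\mathrm{loc}}=N\) on the ODP sector.

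The remaining care point is to state precisely what "on the local ODP sector" means: restriction along \(A_{\mathrm{loc}}\subset\phi^H\) and projection through \(q_{\mathrm{loc}}\). The identity is asserted only there, not on all of \(\psi^H_{\pi_{\mathrm{loc}},1}(F_{\mathrm{loc}})\).
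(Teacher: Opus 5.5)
Your outline (proportionality on the one-dimensional sector, then rescaling) is the same as the paper's. However, the two steps you add to make it rigorous both fail, and they contradict each other. In Step~1 you compare \(\Var^H\) with \(v_{\mathrm{loc}}\circ q_{\mathrm{loc}}\) as morphisms out of the ODP sector of \(A_{\mathrm{loc}}\). By Definition~\ref{def:Aloc}, \(A_{\mathrm{loc}}=\operatorname{Im}(\can^H)\cap\ker(\Var^H)\), so \(\Var^H\) is identically zero on \(A_{\mathrm{loc}}\). Now \(q_{\mathrm{loc}}\) is a nonzero map to the simple object \(W^H_{\mathrm{loc}}\), hence an epimorphism, and \(v_{\mathrm{loc}}\neq 0\). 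So \(v_{\mathrm{loc}}\circ q_{\mathrm{loc}}\neq 0\), and your relation \(\Var^H|_{\mathrm{sector}}=c\,(v_{\mathrm{loc}}\circ q_{\mathrm{loc}})\) forces \(c=0\). Said differently: you need \(\can^H\) to land in \(A_{\mathrm{loc}}\) for \(v_{\mathrm{loc}}u_{\mathrm{loc}}=v_{\mathrm{loc}}\circ q_{\mathrm{loc}}\circ\can^H\) to make sense, and wherever it does, \(N=\Var^H\circ\can^H\) vanishes. Taken literally, Step~1 proves \(N|_{\mathrm{sector}}=0\). Then no rescaling of nonzero \(u_{\mathrm{loc}},v_{\mathrm{loc}}\) can produce \(v_{\mathrm{loc}}u_{\mathrm{loc}}=N\).

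Step~2 is false on its own terms as well.
\begin{itemize}
\item For a threefold ODP the Milnor fiber is \(T^*S^3\), and the intersection form on \(H_3(F_p)\) is skew-symmetric. Hence \(\langle\delta,\delta\rangle=0\), and Picard--Lefschetz gives \(T(\delta)=\delta\). Equivalently, the monodromy of \(z_1^2+z_2^2+z_3^2+z_4^2\) on \(\widetilde H^3(F_p;\Q)\) is \((-1)^4=+1\).
\item The variation isomorphism \(H_3(F_p,\partial F_p)\to H_3(F_p)\) does not rescue this. On absolute classes \(T-\id=\Var\circ j\), where \(j(x)=\langle x,\delta\rangle\,\delta^\vee\) kills \(\delta\).
\item So \(T-\id\) and \(N\) act by zero on the vanishing line and on the cohomology of every stalk of \(\psi_{\pi_{\mathrm{loc}},1}(F_{\mathrm{loc}})\). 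No Picard--Lefschetz computation on the vanishing line can give \(c\neq 0\).
\end{itemize}
For a smooth total space, \(N\) is nonetheless a nonzero morphism of perverse sheaves. The reason is that it factors as \(\can\), which is surjective onto the nonzero point-supported \(\phi_{\pi_{\mathrm{loc}},1}(F_{\mathrm{loc}})\), followed by \(\Var\), which is injective. But that injectivity gives \(\ker(\Var^H)=0\), hence \(A_{\mathrm{loc}}=0\). So this genuine source of nonvanishing cannot be combined with an argument routed through \(A_{\mathrm{loc}}\) and \(q_{\mathrm{loc}}\).

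The paper's proof does neither of your Steps~1--2. It takes proportionality directly from Propositions~\ref{prop:uloc-unique} and~\ref{prop:vloc} and asserts that the scalar is nonzero without further argument. You are right that this nonvanishing is the crux, but the justification you supply does not establish it.
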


\begin{proof}
In Saito's local formalism one has
\[
\Var^H\circ \can^H = N.
\]
By Propositions~\ref{prop:uloc-unique} and \ref{prop:vloc}, both \(u_{\mathrm{loc}}\) and \(v_{\mathrm{loc}}\) are nonzero and unique up to scalar on the one-dimensional ODP sector. Hence the composite \(v_{\mathrm{loc}}u_{\mathrm{loc}}\) differs from the induced action of \(N\) on that sector by a nonzero scalar. Rescaling one of the two maps removes that scalar.
\end{proof}

\subsection{Local gluing datum}

We now have the local data required for Saito's gluing formalism. At this point all hypotheses of Saito’s local divisor gluing formalism are verified on the ODP sector.

\begin{theorem}[Local ODP gluing datum]
\label{thm:local-odp-gluing-datum}
The quadruple
\[
\bigl(
\Q^H_{U_{\mathrm{loc}}}[3],\,
W^H_{\mathrm{loc}},\,
u_{\mathrm{loc}},\,
v_{\mathrm{loc}}
\bigr)
\]
is a local gluing datum on the ordinary-double-point sector, with
\[
W^H_{\mathrm{loc}}=i_{\mathrm{loc}*}\Q^H_{\{p\}}(-1),
\qquad
v_{\mathrm{loc}}u_{\mathrm{loc}}=N.
\]
\end{theorem}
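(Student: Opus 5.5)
The statement is essentially an assembly of the local results of this section, so the proof will verify, one by one, the axioms of Saito's divisor gluing datum \cite[Prop.~0.3]{SaitoMHM} recalled in Section~3, applied to the principal divisor \(X_{0,\mathrm{loc}}=\pi_{\mathrm{loc}}^{-1}(0)\).

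\emph{Open piece.} The first entry \(\mathcal M'=\Q^H_{U_{\mathrm{loc}}}[3]\) is a mixed Hodge module. It is the constant pure Hodge module on the smooth part, with the normalization \(F_{\mathrm{loc}}=\Q_{\mathcal X_{\mathrm{loc}}}[3]\) of Section~2. I will take as given that its unipotent nearby cycles are the object \(\psi^H_{\pi_{\mathrm{loc}},1}(F_{\mathrm{loc}})\) appearing in Definition~\ref{def:uloc} and Proposition~\ref{prop:vloc}. This follows because \(\psi_{g,1}\) depends only on the restriction of \(F_{\mathrm{loc}}\) off the divisor.

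\emph{Divisor piece and maps.} The second entry \(\mathcal M''=W^H_{\mathrm{loc}}=i_{\mathrm{loc}*}\Q^H_{\{p\}}(-1)\) is a mixed Hodge module on \(X_{0,\mathrm{loc}}\), as recalled in the subsection on point-supported modules. The morphism \(u_{\mathrm{loc}}\colon\psi^H_{\pi_{\mathrm{loc}},1}(F_{\mathrm{loc}})\to W^H_{\mathrm{loc}}\) is a morphism in \(MHM\), because it is the composite \(q_{\mathrm{loc}}\circ\can^H\) of \(MHM\) morphisms (Proposition~\ref{prop:point-level-odp-quotient}). The morphism \(v_{\mathrm{loc}}\colon W^H_{\mathrm{loc}}\to\psi^H_{\pi_{\mathrm{loc}},1}(F_{\mathrm{loc}})(-1)\) is supplied by Proposition~\ref{prop:vloc}. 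Its twist matches the target \(\psi_{g,1}(\mathcal M')(-1)\) required by Saito. The twists are consistent with the convention \(N\colon\psi^H_{\pi,1}\to\psi^H_{\pi,1}(-1)\) fixed in Section~3.

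\emph{Relation.} Proposition~\ref{prop:local-normalization} gives \(v_{\mathrm{loc}}u_{\mathrm{loc}}=N\) after rescaling. Both maps are only determined up to scalars (Propositions~\ref{prop:uloc-unique} and \ref{prop:vloc}), so I will fix the scalar normalization once. I will also note that rescaling \(u_{\mathrm{loc}}\) by \(\lambda\) and \(v_{\mathrm{loc}}\) by \(\lambda^{-1}\) yields an isomorphic gluing datum, via the automorphism \(\lambda\) of \(W^H_{\mathrm{loc}}\). Hence the datum is well defined up to isomorphism.

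\emph{Main obstacle.} The delicate point is the relation itself. Saito's axiom requires \(vu=N\) as an endomorphism of all of \(\psi^H_{\pi_{\mathrm{loc}},1}(F_{\mathrm{loc}})\). Proposition~\ref{prop:local-normalization}, however, only gives it on the ODP sector. The composite \(v_{\mathrm{loc}}u_{\mathrm{loc}}\) factors through the rank-one module \(W^H_{\mathrm{loc}}\), so the identity holds globally exactly when \(N\) itself factors through the vanishing line. I would first try to show this using \(N=\Var^H\circ\can^H\) together with the isolated-singularity structure. The image of \(\can^H\) lies in \(\phi^H_{\pi_{\mathrm{loc}},1}\), which is supported at \(p\) and is one-dimensional by the Milnor fiber computation of Section~2. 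Therefore \(N\) factors through a point-supported rank-one object, and it has rank at most one (Picard--Lefschetz). The goal is then to identify this rank-one factor with \(W^H_{\mathrm{loc}}\) via \(q_{\mathrm{loc}}\), and to use the rigidity of Lemma~\ref{lem:local-odp-zigzag} to match the factorizations up to scalar. If that identification fails, the statement should be read, as its wording suggests, as a gluing datum restricted to the ODP sector, meaning the summand on which the remaining hypotheses have already been verified.
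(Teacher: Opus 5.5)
\textbf{Verdict: genuine gap.} Your outline follows the paper's own route. The paper gives no argument for this theorem beyond assembling Propositions~\ref{prop:uloc-unique}, \ref{prop:vloc} and \ref{prop:local-normalization} and declaring Saito's hypotheses ``verified on the ODP sector''. You correctly identify where this breaks, but your proposal leaves that step open, and it cannot be filled.

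\emph{The relation $vu=N$.} Saito's gluing theorem has no sectorwise version. The relation must hold as an equality of morphisms $\psi_{g,1}\mathcal M'\to\psi_{g,1}\mathcal M'(-1)$, so your fallback reading (a datum ``restricted to the ODP sector'') proves nothing.

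\emph{The open piece.} Your identification of the open piece is also wrong. In Saito's theorem $\mathcal M'$ lives on $\mathcal X_{\mathrm{loc}}\setminus X_{0,\mathrm{loc}}=\pi_{\mathrm{loc}}^{-1}(\Delta^*)$, and the glued object lives on $\mathcal X_{\mathrm{loc}}$. But $U_{\mathrm{loc}}$ lies \emph{inside} the divisor, where $\pi_{\mathrm{loc}}\equiv 0$. So $\Q^H_{U_{\mathrm{loc}}}[3]$ is not a restriction of $F_{\mathrm{loc}}$ off the divisor, and ``$\psi_{g,1}$ only depends on that restriction'' identifies nothing. The correct open piece is $F_{\mathrm{loc}}|_{\pi_{\mathrm{loc}}^{-1}(\Delta^*)}$.

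\emph{Why the gap cannot be filled.} Carrying your own factorization idea to the end shows the relation is unattainable with the stated data. Put $\Phi:=\phi^H_{\pi_{\mathrm{loc}},1}(F_{\mathrm{loc}})$; it has rank one at $p$, so it is simple.
\begin{itemize}
\item $\can^H$ is onto: its cokernel vanishes because $\Q_{X_{0,\mathrm{loc}}}[3]$ is perverse.
\item $\Var^H\neq 0$: variation is an isomorphism for an isolated singularity.
\end{itemize}
Hence $N=\Var^H\can^H\neq 0$, with $\can^H$ epi and $\Var^H$ mono. If $u,v\neq 0$, then since $W^H_{\mathrm{loc}}$ is simple, $u$ is epi and $v$ is mono. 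So $vu=N$ would be a second image factorization of $N$, and uniqueness of image factorizations forces $(W^H_{\mathrm{loc}},u,v)\cong(\Phi,\can^H,\Var^H)$. Three consequences follow.

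(i) $q_{\mathrm{loc}}$ cannot supply your identification, because its source is $A_{\mathrm{loc}}\subseteq\ker\Var^H=0$.

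(ii) The Tate twist is wrong. Take Saito's normalization $F_{\mathrm{loc}}=\Q^H_{\mathcal X_{\mathrm{loc}}}[4]$, of weight $4$; then the weight filtration on $\Phi$ is centered at $4$. Since $N=0$ on $\Phi$ (the monodromy on $\widetilde H^3(F_p)$ is $(-1)^4=1$), we get $\Phi\cong i_{\mathrm{loc}*}\Q^H_{\{p\}}(-2)$. Concretely, every morphism $\psi^H_{\pi_{\mathrm{loc}},1}(F_{\mathrm{loc}})\to i_{\mathrm{loc}*}\Q^H_{\{p\}}(-1)$ factors through $\can^H$, because the kernel $\Q^H_{X_{0,\mathrm{loc}}}[3]$ has no point-supported quotient. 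Such a morphism therefore vanishes for weight reasons, so $u_{\mathrm{loc}}=0$. The module $i_{\mathrm{loc}*}\Q^H_{\{p\}}(-1)$ occurs only as the \emph{sub}object $\operatorname{Im}(\Var^H)(1)$ of $\psi^H_{\pi_{\mathrm{loc}},1}(F_{\mathrm{loc}})$.

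(iii) Even with the twist corrected, the datum is just $F_{\mathrm{loc}}$'s own canonical datum. Saito's theorem glues it back to $F_{\mathrm{loc}}$ on $\mathcal X_{\mathrm{loc}}$, not to a corrected object on $X_{0,\mathrm{loc}}$.

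So the statement cannot be established as formulated, and neither your argument nor the paper's assembly proves it.
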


\begin{lemma}[Filtered structure on the point-supported ODP block]
\label{lem:point-supported-filtered-block}
Let
\[
W^H_{\mathrm{loc}}:=i_{\mathrm{loc}*}\Q^H_{\{p\}}(-1).
\]
Then:

\begin{enumerate}
\item \(W^H_{\mathrm{loc}}\) is a pure Hodge module supported at \(p\).

\item Its underlying filtered \(\mathcal D\)-module is the standard point-supported delta module
with the Tate-twisted Hodge filtration.

\item The induced Kashiwara--Malgrange \(V\)-filtration on \(W^H_{\mathrm{loc}}\) is the standard
point-supported \(V\)-filtration. In particular, \(W^H_{\mathrm{loc}}\) is an admissible target
for the unipotent nearby-/vanishing-cycle morphisms appearing in Saito's divisor-case gluing
formalism.

\item The morphisms landing in or originating from \(W^H_{\mathrm{loc}}\) that are induced from the
unipotent nearby-/vanishing-cycle formalism are morphisms in the filtered \((\mathcal D,V)\)-module
setting.
\end{enumerate}
\end{lemma}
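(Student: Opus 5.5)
The plan is to deduce all four items from Saito's general formalism for direct images under closed immersions, applied to the inclusion of a point. We never analyze the ODP geometry directly.

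\textbf{Item (1).} Start from the Hodge structure \(\Q^H(-1)\) on the point \(\{p\}\). This is a one-dimensional pure Hodge structure of type \((1,1)\), hence of weight \(2\), and it is polarizable. Via Saito's equivalence \(MHM(\mathrm{pt})\simeq\) (graded-polarizable mixed Hodge structures), it is a pure polarizable Hodge module of weight \(2\) on \(\{p\}\). Saito's theory shows that \(i_{\mathrm{loc}*}\) for a closed immersion (indeed any proper morphism) preserves pure polarizable Hodge modules of a given weight. It follows that \(W^H_{\mathrm{loc}}\) is pure of weight \(2\) with support exactly \(\{p\}\).

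\textbf{Item (2).} Compute the underlying filtered \(\mathcal D\)-module. Since \(X_{0,\mathrm{loc}}\) is singular, work through a local embedding \(X_{0,\mathrm{loc}}\subset\mathcal X_{\mathrm{loc}}\) (or an ambient \(\C^4\)) with coordinates \(x_1,\dots,x_n\) centered at \(p\). By Kashiwara's equivalence, the closed-immersion direct image is
\[
B_p=\C[\partial_1,\dots,\partial_n]\,\delta .
\]
Its Hodge filtration is given by total \(\partial\)-degree, shifted by the codimension convention (\(F_k\) spanned by \(\partial^\alpha\delta\) with \(|\alpha|\le k-n\), up to Saito's normalization). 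The twist \((-1)\) shifts \(F_\bullet\) by one more index. This verifies (2). The answer is independent of the chosen embedding by the usual compatibility of Saito's construction with embeddings.

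\textbf{Item (3).} Take the coordinate \(t=\pi\) and compute the \(V\)-filtration along \(t\) on the graph embedding of \(B_p\) (or directly on \(B_p\) when \(t\) is a coordinate function vanishing at \(p\)). Here \(t\) acts nilpotently on every element and \(\partial_t\) acts freely. The Kashiwara--Malgrange filtration is therefore the standard one: \(\mathrm{gr}_V^\alpha\) is nonzero only for \(\alpha\) in a single integer shift, with \(t\partial_t\) acting semisimply by the appropriate integer eigenvalue. As a consequence, \(\psi_{t,1}\) of this module vanishes and \(\phi_{t,1}\) returns the module itself. This is exactly the condition for a module supported on \(Y=t^{-1}(0)\) to occupy the \(\mathcal M''\) slot of Saito's divisor gluing, which gives the admissibility claim.

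\textbf{Item (4).} The maps \(u_{\mathrm{loc}}\) and \(v_{\mathrm{loc}}\) are built out of \(\can^H\), \(\Var^H\), and the adjunction morphism \(q_{\mathrm{loc}}\). Each of these is a morphism in \(MHM\). Morphisms of mixed Hodge modules are by definition filtered morphisms of the underlying \(\mathcal D\)-modules, and by Saito they are strictly compatible with \(F\) and \(V\). Since compositions of such morphisms remain in \(MHM\), (4) follows formally. The only point needing care is that the Tate twists match: \(\Var^H\) lands in \(\psi(-1)\), consistent with the \((-1)\) on \(W^H_{\mathrm{loc}}\).

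The main obstacle is (3). There is a sign/index bookkeeping problem in Saito's \(V\)-filtration conventions (left versus right modules, the shift \(\alpha\) versus \(\alpha-1\)) that has to be handled carefully. The first thing I would try is to reduce to the one-variable case \(\C[\partial_t]\delta\), using a product decomposition of \(B_p\) in the coordinates transverse to \(t\), and compute the \(V\)-filtration there by hand.
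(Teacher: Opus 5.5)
Your proposal is correct and follows the paper's route item by item: purity from the closed-immersion pushforward of the point module \(\Q^H(-1)\), the delta module with shifted Hodge filtration for (2), a direct computation of the \(V\)-filtration on the delta module for (3), and the fact that morphisms of mixed Hodge modules are filtered and strictly compatible with \(F\) and \(V\) for (4).

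Your treatment of (3) is in fact more careful than the paper's. You take the \(V\)-filtration along \(t=\pi\) via the graph embedding, whereas the paper uses a smooth hypersurface through \(p\), although \(\pi\) has a critical point at the node. The hand computation you plan gives \(t\partial_t\delta=-\delta\), i.e.\ \(\partial_t t\,\delta=0\), so that \(\mathrm{gr}_V^{-k}=\C\,\partial_t^k\delta\) for every \(k\ge 0\) in Saito's left-module convention; hence \(\psi_t=0\) and \(\phi_{t,1}=\mathrm{gr}_V^0\) recovers the module. This corrects three claims in the paper: that \(t\partial_t\delta_p=0\), that \(\delta_p\) lies in \(V^1\) with eigenvalue \(1\) (which would force \(\psi_{t,1}\neq 0\)), and that the module sits in a single \(V\)-degree. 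For the same reason, your own phrase ``a single integer shift'' should be read as a single \(\Z\)-coset rather than a single degree.
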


\begin{proof}
Since \(\Q^H_{\{p\}}\) is the constant pure Hodge module on a point, its Tate twist
\(\Q^H_{\{p\}}(-1)\) is again a pure Hodge module on a point. Pushing forward along the closed
immersion
\[
i_{\mathrm{loc}}:\{p\}\hookrightarrow X_{\mathrm{loc}}
\]
gives
\[
W^H_{\mathrm{loc}}=i_{\mathrm{loc}*}\Q^H_{\{p\}}(-1)\in MHM(X_{\mathrm{loc}}),
\]
which proves (1).

Its underlying filtered \(\mathcal D\)-module is the standard point-supported delta module with the induced Tate-twisted Hodge filtration, proving (2). For (3), let \(t\) be a local defining function for a smooth hypersurface germ through \(p\). On the point-supported delta module \(\delta_p\), one has
\[
t\cdot \delta_p = 0
\]
in the local \(\mathcal D\)-module model. In the local ring $\mathcal{O}_{X,p}$, the delta module $\delta_p$ satisfies $t \cdot \delta_p = 0$, which forces $t\partial_t \cdot \delta_p = 0$ and places $\delta_p$ in $V^1$ with $t\partial_t$-eigenvalue $1$.

Consequently the Kashiwara--Malgrange filtration is the standard point-supported \(V\)-filtration: the module is concentrated in the single admissible \(V\)-degree for the unipotent nearby-cycle formalism, equivalently in the degree characterized by the action of \(t\partial_t\) on the point-supported block; compare the standard local description of the \(V\)-filtration for point-supported regular holonomic \(\mathcal D\)-modules. This proves (3).

For (4), the canonical and variation morphisms in Saito's nearby-/vanishing-cycle formalism are
morphisms in the filtered \((\mathcal D,V)\)-module setting by construction. Therefore any
morphisms induced from them on the local ordinary-double-point sector, in particular the maps used
to define \(u_{\mathrm{loc}}\) and \(v_{\mathrm{loc}}\), remain morphisms in that setting.
\end{proof}

\begin{proposition}[Filtered admissibility of the local ODP gluing datum]
\label{prop:local-filtered-admissibility}
The local datum
\[
\bigl(\Q^H_{U_{\mathrm{loc}}}[3],\,W^H_{\mathrm{loc}},\,u_{\mathrm{loc}},\,v_{\mathrm{loc}}\bigr)
\]
is admissible for Saito's divisor-case gluing formalism. More precisely:

\begin{enumerate}
\item \(\Q^H_{U_{\mathrm{loc}}}[3]\) is a mixed Hodge module on the smooth locus
\(U_{\mathrm{loc}}\);

\item \(W^H_{\mathrm{loc}}\) is a point-supported mixed Hodge module with the standard filtered
\((\mathcal D,V)\)-module structure of Lemma~\ref{lem:point-supported-filtered-block};

\item the morphisms \(u_{\mathrm{loc}}\) and \(v_{\mathrm{loc}}\) are morphisms in the filtered
\((\mathcal D,V)\)-module setting and are strict with respect to the Hodge filtration; and

\item the relation
\[
v_{\mathrm{loc}}u_{\mathrm{loc}}=N
\]
holds on the local ordinary-double-point sector in the filtered setting required by Saito's gluing
theorem.
\end{enumerate}
\end{proposition}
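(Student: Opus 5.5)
The plan is to verify the four items in order, reducing each to an earlier result of this section together with Saito's formalism, so that admissibility follows formally from the hypotheses of \cite[Prop.~0.3]{SaitoMHM}.

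\emph{Item (1).} On the smooth locus \(U_{\mathrm{loc}}\), the constant sheaf \(\Q_{U_{\mathrm{loc}}}\) underlies the constant pure Hodge module. Indeed, \(U_{\mathrm{loc}}\) is a smooth complex threefold, so \(\Q^H_{U_{\mathrm{loc}}}[3]\) is the trivial variation of Hodge structure of weight \(0\), shifted to weight \(3\). This is a pure, hence mixed, Hodge module by Saito's basic theory \cite{SaitoMHM}.

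\emph{Item (2).} This is exactly the content of Lemma~\ref{lem:point-supported-filtered-block}(1)--(3), and I will simply quote it.

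\emph{Item (3).} There are two parts. The \((\mathcal D,V)\)-compatibility of \(u_{\mathrm{loc}}=q_{\mathrm{loc}}\circ\can^H\) and of \(v_{\mathrm{loc}}\) comes from Lemma~\ref{lem:point-supported-filtered-block}(4). The new ingredient is strictness with respect to \(F\). For this I will use that \(u_{\mathrm{loc}}\) and \(v_{\mathrm{loc}}\) are morphisms in \(MHM(X_{0,\mathrm{loc}})\): by Definition~\ref{def:uloc} and Proposition~\ref{prop:vloc} they are composites of the MHM morphisms \(\can^H\), \(\Var^H\) and the adjunction morphism \(q_{\mathrm{loc}}\). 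Saito's theory then gives strictness, since every morphism of mixed Hodge modules is bistrict with respect to \((F,W)\) on the underlying filtered \(\mathcal D\)-modules \cite{SaitoMHM}. This is the point I must check carefully. One has to confirm that \(v_{\mathrm{loc}}\), defined as ``induced from \(\Var^H\) on the ODP sector,'' is genuinely a morphism in \(MHM\) and not merely a map of perverse sheaves. I would do this by realizing it as a composite
\[
W^H_{\mathrm{loc}}\to \phi^H_{\pi_{\mathrm{loc}},1}(F_{\mathrm{loc}})\xrightarrow{\Var^H}\psi^H_{\pi_{\mathrm{loc}},1}(F_{\mathrm{loc}})(-1),
\]
where the first arrow is the inclusion of the point-supported vanishing line. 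That line is a sub-MHM because \(\phi^H_{\pi_{\mathrm{loc}},1}\) is point-supported with one-dimensional stalk of type \(\Q^H(-1)\). I expect this identification of the ODP sector as an honest subobject/quotient in \(MHM\) to be the main obstacle. If it is only available after realization, I would fall back on faithfulness of \(\rat\) together with the one-dimensionality of the relevant Hom-spaces (Lemma~\ref{lem:local-odp-zigzag}) to lift it.

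\emph{Item (4).} Proposition~\ref{prop:local-normalization} gives \(v_{\mathrm{loc}}u_{\mathrm{loc}}=N\) after rescaling. Both sides are morphisms
\[
\psi^H_{\pi_{\mathrm{loc}},1}(F_{\mathrm{loc}})\to\psi^H_{\pi_{\mathrm{loc}},1}(F_{\mathrm{loc}})(-1)
\]
in \(MHM\); for \(N\) this is Saito's convention recorded in the section on Tate twists. Faithfulness of \(\rat\) therefore upgrades the equality from realizations to the filtered setting, and the rescaling by a rational scalar preserves filtrations. Combining (1)--(4) shows the datum satisfies the hypotheses of Saito's divisor gluing proposition, which is the claimed admissibility.
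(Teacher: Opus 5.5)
Your item-by-item reduction is the same as the paper's. Items (1) and (2) are handled identically. In (3) you replace the paper's claim that the ODP line is a direct summand of $\phi^H:=\phi^H_{\pi_{\mathrm{loc}},1}(F_{\mathrm{loc}})$ by the cleaner remark that morphisms in $MHM$ are automatically strict. That substitution would be fine. But everything rests on the step you flag yourself: that $u_{\mathrm{loc}}$ and $v_{\mathrm{loc}}$ are nonzero morphisms of mixed Hodge modules. There is a genuine gap there, and neither your fallback nor the paper closes it. The paper's ``direct summand'' is all of the rank-one $\phi^H$.

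Faithfulness of $\rat$ only makes $\Hom_{MHM}\to\Hom_{\Perv}$ injective. It cannot lift a perverse morphism, and here the lift does not exist. Take the conifold model $x_1^2+x_2^2+x_3^2+x_4^2=t$. The vanishing cohomology $\widetilde H^3(F_p;\Q)$ carries Steenbrink's mixed Hodge structure $\Q(-2)$, of type $(2,2)$, not $\Q(-1)$ as you assume. Hence $\phi^H\cong i_{\mathrm{loc}*}\Q^H_{\{p\}}(-2)$, and $H^0(i_{\mathrm{loc}}^*\psi^H)\cong\Q(-2)$, where $\psi^H:=\psi^H_{\pi_{\mathrm{loc}},1}(F_{\mathrm{loc}})$. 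By adjunction and weights,
\[
\Hom_{MHM}\bigl(\psi^H,W^H_{\mathrm{loc}}\bigr)\cong\Hom_{MHS}\bigl(\Q(-2),\Q(-1)\bigr)=0,
\qquad
\Hom_{MHM}\bigl(W^H_{\mathrm{loc}},\psi^H(-1)\bigr)\cong\Hom_{MHS}\bigl(\Q(-1),\Q(-2)\bigr)=0,
\]
whereas the corresponding perverse Hom-spaces are one-dimensional. So your ``inclusion of the vanishing line'' $W^H_{\mathrm{loc}}\to\phi^H$ is not a morphism in $MHM$. Any $u_{\mathrm{loc}}$ or $v_{\mathrm{loc}}$ with the stated source and target is zero.

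This is fatal for (4) as well. The constant module on the smooth total space $\mathcal X_{\mathrm{loc}}$ is simple, so it has no sub or quotient supported on $X_{0,\mathrm{loc}}$. Hence $\can^H$ is surjective, $\Var^H$ is injective, and $N=\Var^H\circ\can^H\neq0$, while $v_{\mathrm{loc}}u_{\mathrm{loc}}=0$. No rescaling repairs this, and there is no equality of realizations for faithfulness to upgrade. The same fact gives $A_{\mathrm{loc}}=\operatorname{Im}(\can^H)\cap\ker(\Var^H)=0$. So $q_{\mathrm{loc}}=0$, and the factorization $u_{\mathrm{loc}}=q_{\mathrm{loc}}\circ\can^H$ you invoke in (3) is vacuous.

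Finally, your concluding sentence needs an argument that neither you nor the paper supplies. Saito's proposition for $g\colon X\to\C$ takes the open datum on $X\setminus g^{-1}(0)$, with $u$ defined on $\psi_{g,1}$ of that datum. Here the open datum lives on $X_{0,\mathrm{loc}}\setminus\{p\}$, and a point is not the zero locus of a single function on a threefold. Meanwhile $u_{\mathrm{loc}}$ is defined on $\psi_{\pi_{\mathrm{loc}},1}(F_{\mathrm{loc}})$, not on $\psi_{g,1}$ of the open datum. What is missing is therefore not a filtered-compatibility check but a correctly typed gluing datum with the correct Tate twist. Verifying (1)--(4) as stated cannot establish admissibility.
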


\begin{proof}
Part (1) is immediate since \(U_{\mathrm{loc}}\) is smooth. Part (2) is Lemma~4.11.

For part (3), the morphisms \(u_{\mathrm{loc}}\) and \(v_{\mathrm{loc}}\) are induced from the
unipotent nearby-/vanishing-cycle formalism on the one-dimensional ordinary-double-point sector.
The canonical and variation morphisms in that formalism are morphisms in the filtered
\((\mathcal D,V)\)-module setting by construction, and morphisms in \(MHM\) are strict with respect to the Hodge filtration. It therefore remains to note the \(V\)-compatibility of the quotient step defining \(u_{\mathrm{loc}}\). By construction,
\[
u_{\mathrm{loc}}=q_{\mathrm{loc}}\circ \mathrm{can}^H,
\]
where \(q_{\mathrm{loc}}\) extracts the one-dimensional ordinary-double-point vanishing line inside the local vanishing-cycle block. On the local ODP sector this vanishing line is the distinguished one-dimensional summand singled out by the nearby-/vanishing-cycle formalism, hence the quotient map is compatible with the induced \(V\)-filtration on that sector. Since the ordinary-double-point vanishing line is a direct summand of $\varphi^H_{\pi_{\mathrm{loc}},1}(F_{\mathrm{loc}})$, the projection $q_{\mathrm{loc}}$ onto it is strict with respect to the V-filtration, and therefore the composite $u_{\mathrm{loc}} = q_{\mathrm{loc}} \circ \mathrm{can}^H$ is strictly V-compatible. Therefore \(u_{\mathrm{loc}}\) is strictly \(V\)-compatible, and the same argument applies to \(v_{\mathrm{loc}}\).

For part (4), the nearby-cycle formalism gives
\[
\mathrm{Var}^H\circ \mathrm{can}^H = N
\]
on the unipotent nearby-cycle object. By the construction of \(u_{\mathrm{loc}}\) and \(v_{\mathrm{loc}}\) from the local ordinary-double-point sector, together with the normalization of Proposition~4.9, the same relation becomes
\[
v_{\mathrm{loc}}u_{\mathrm{loc}}=N
\]
on that sector in the filtered setting. Therefore the local datum satisfies the hypotheses of
Saito's divisor-case gluing theorem.
\end{proof}
\begin{remark}[Scope of the \(V\)-filtration verification]
\label{rem:v-filtration-scope}
The verification in Lemma~\ref{lem:point-supported-filtered-block} and
Proposition~\ref{prop:local-filtered-admissibility} addresses the specific case of a Tate-twisted
point-supported module arising from the one-dimensional ordinary-double-point vanishing sector.
In general, Saito's divisor-case gluing theorem requires filtered compatibility for arbitrary mixed
Hodge modules along a divisor, and verifying this may require substantially more information about
the underlying filtered \(\mathcal D\)-module structure. In the present setting, however, the
point-supported nature of \(W^H_{\mathrm{loc}}\) reduces the \(V\)-filtration issue to the standard
point-supported case, which is why an explicit local verification is possible here.
\end{remark}
\subsection{The local mixed-Hodge-module correction block}

\begin{theorem}[Local ODP mixed-Hodge-module extension]
\label{thm:local-odp-extension}
There exists a local mixed Hodge module
\[
\mathcal P^H_{\mathrm{loc}}
\]
fitting into an exact sequence
\[
0\to IC^H_{\mathrm{loc}}\to \mathcal P^H_{\mathrm{loc}}\to W^H_{\mathrm{loc}}\to 0,
\]
where
\[
W^H_{\mathrm{loc}}=i_{\mathrm{loc}*}\Q^H_{\{p\}}(-1).
\]
\end{theorem}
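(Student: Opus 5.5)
The plan is to apply Saito's divisor-case gluing theorem \cite[Prop.~0.3]{SaitoMHM} to the local datum of Theorem~\ref{thm:local-odp-gluing-datum}. The gluing equivalence requires the hypotheses of Proposition~\ref{prop:local-filtered-admissibility}, namely $v_{\mathrm{loc}}u_{\mathrm{loc}}=N$ together with filtered and $V$-compatibility. Given these, it produces an object
\[
\mathcal P^H_{\mathrm{loc}}:=\mathrm{Glue}\bigl(\Q^H_{U_{\mathrm{loc}}}[3],W^H_{\mathrm{loc}},u_{\mathrm{loc}},v_{\mathrm{loc}}\bigr)
\]
in $MHM(X_{0,\mathrm{loc}})$, or more precisely on the ambient space in which the divisor is taken. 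The restriction of this object to $U_{\mathrm{loc}}$ is $\Q^H_{U_{\mathrm{loc}}}[3]$, and its unipotent vanishing-cycle piece is $W^H_{\mathrm{loc}}$. The exact sequence will then be extracted from the functoriality of gluing. A morphism of gluing data is a pair of morphisms, one on $U$ and one on the divisor, intertwining the $u$'s and $v$'s. The equivalence is exact, because kernels and cokernels of gluing data are computed componentwise.

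The first step is to identify $IC^H_{\mathrm{loc}}$ as a glued object. The intermediate extension $j_{!*}\Q^H_{U_{\mathrm{loc}}}[3]$ corresponds to the gluing datum whose divisor component is the minimal one, namely $\operatorname{Im}(N)$, with $u=N$ (coimage) and $v$ the inclusion. This is the standard characterization in Saito's formalism: intermediate extension is the unique gluing with $u$ surjective and $v$ injective. The second step is to write down the two morphisms of gluing data
\[
(\id,\iota):\bigl(\Q^H_{U}[3],\operatorname{Im}N,N,\mathrm{incl}\bigr)\to\bigl(\Q^H_{U}[3],\operatorname{Im}N\oplus W^H_{\mathrm{loc}},u,v\bigr),
\]
together with the projection $(0,\mathrm{pr})$ onto $(0,W^H_{\mathrm{loc}},0,0)$. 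This step means the divisor term of the glued datum should really be taken as an extension of $W^H_{\mathrm{loc}}$ by the minimal part. Concretely, the divisor component will be taken as the subobject $\operatorname{Im}N+\operatorname{Im}(\can^H)\cap\ker(\Var^H)$ of $\phi^H$, which surjects onto $W^H_{\mathrm{loc}}$ via $q_{\mathrm{loc}}$ from Proposition~\ref{prop:point-level-odp-quotient}. The object $(0,W^H_{\mathrm{loc}},0,0)$ glues to $i_{\mathrm{loc}*}\Q^H_{\{p\}}(-1)$. Componentwise exactness of $0\to\operatorname{Im}N\to M''\to W^H_{\mathrm{loc}}\to0$, together with $\id$ on $U$, then gives the short exact sequence in $MHM$.

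The main obstacle is precisely this second step. The datum in Theorem~\ref{thm:local-odp-gluing-datum} lists $W^H_{\mathrm{loc}}$ alone as the divisor component. With that literal choice, the glued object would have vanishing-cycle piece only $W^H_{\mathrm{loc}}$ and would not contain $IC^H_{\mathrm{loc}}$ as a subobject unless $\operatorname{Im}N$ on the ODP sector is absorbed. I would try first to show that locally $\operatorname{Im}(N)$ vanishes. For an ODP in odd relative dimension, $N$ on the unipotent nearby cycles of the isolated singularity is zero, since $T$ has finite order $\pm1$. In that case $IC^H_{\mathrm{loc}}=j_{!*}$ corresponds to the datum $(\Q^H_U[3],0,0,0)$. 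The inclusion $(\id,0)$ and the projection $(0,\id_W)$ are then morphisms of gluing data, the compatibilities being trivial, and they give the sequence directly. The verification that these maps intertwine $u,v$, which here reduces to $vu=N=0$ being consistent with Proposition~\ref{prop:local-normalization}, is the point to check carefully. Finally, I would note that applying $\rat$, via exactness (\S3.1), recovers the local perverse sequence.
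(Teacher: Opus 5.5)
Your outline is the paper's: glue $(\Q^H_{U_{\mathrm{loc}}}[3],W^H_{\mathrm{loc}},u_{\mathrm{loc}},v_{\mathrm{loc}})$ with Saito's divisor formalism, then identify the kernel of $\mathcal P^H_{\mathrm{loc}}\to W^H_{\mathrm{loc}}$ with the middle extension. The paper simply asserts that identification, and you are right to distrust it. A glued object $(\mathcal M',\mathcal M'',u,v)$ has a filtration with graded pieces $(0,\ker v\cap\operatorname{Im}u,0,0)$, $j_{!*}\mathcal M'$, $(0,\operatorname{coker}u,0,0)$. So with $\mathcal M''=W^H_{\mathrm{loc}}$ simple, $u\neq0$ and $vu=N\neq0$, the datum glues to the intermediate extension itself. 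Your repair, however, fails at three points.

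(i) $\operatorname{Im}N\neq0$. That $T=+1$ on $\widetilde H^3(F_p;\Q)$ only gives $\can\circ\Var=0$ on $\phi$; the operator in the gluing is $N=\Var\circ\can$ on $\psi_{\pi,1}$. In the model $x_1^2+\dots+x_4^2=t$ the total space is smooth, so $\Q^H_{\mathcal X_{\mathrm{loc}}}[4]$ is the intermediate extension across $X_{0,\mathrm{loc}}$. Its unipotent vanishing cycles, the rank-one line at $p$, are therefore isomorphic to $\operatorname{Im}N$. Equivalently, $\Var$ is injective because the classical variation $H^3(F_p)\to H^3_c(F_p)$ is an isomorphism. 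For the same reason $\ker\Var^H=0$, so $A_{\mathrm{loc}}=0$, and your variant with divisor term $\operatorname{Im}N+A_{\mathrm{loc}}$ has nothing mapping onto $W^H_{\mathrm{loc}}$.

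(ii) Even if $N$ were $0$, the compatibilities are not automatic. The intertwining condition for $u$ reads $0=u_{\mathrm{loc}}$ for the inclusion $(\id,0)$, and $\id_{W}\circ u_{\mathrm{loc}}=0$ for the projection $(0,\id_W)$. Both force $u_{\mathrm{loc}}=0$, contradicting Proposition~\ref{prop:uloc-nonzero}; the condition is not merely $vu=0$.

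(iii) You conflate two decompositions. Gluing along $\pi_{\mathrm{loc}}^{-1}(0)$ has open part $\mathcal X_{\mathrm{loc}}\setminus X_{0,\mathrm{loc}}$ and produces modules on the fourfold $\mathcal X_{\mathrm{loc}}$; its minimal datum is $\Q^H_{\mathcal X_{\mathrm{loc}}}[4]$, not $IC^H_{\mathrm{loc}}$. Conversely, $\{p\}$ is not a principal divisor in the threefold $X_{0,\mathrm{loc}}$, so $\psi_{g,1}(\Q^H_{U_{\mathrm{loc}}}[3])$ is undefined. Your identification of $IC^H_{\mathrm{loc}}$ with $(\operatorname{Im}N,N,\mathrm{incl})$ therefore has no content. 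This is inherited from Theorem~\ref{thm:local-odp-gluing-datum}, but it is not a technicality.

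What does prove the statement as written is the split object $IC^H_{\mathrm{loc}}\oplus W^H_{\mathrm{loc}}$, and nothing else is possible. Since $H^k i_{\mathrm{loc}}^!IC^H_{\mathrm{loc}}=0$ for $k\le 0$, adjunction gives
\[
\Ext^1_{MHM}\bigl(W^H_{\mathrm{loc}},IC^H_{\mathrm{loc}}\bigr)\cong\Hom_{MHS}\bigl(\Q(-1),H^1 i_{\mathrm{loc}}^!IC^H_{\mathrm{loc}}\bigr).
\]
Because $IC^H_{\mathrm{loc}}$ is pure of weight $3$ and $i^!$ does not lower weights, $H^1 i_{\mathrm{loc}}^!IC^H_{\mathrm{loc}}$ has weights $\ge 4$; it is $H^3$ of the link $S^2\times S^3$, namely $\Q(-2)$. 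So this group is zero and every extension in the statement splits. Consequently, if the corrected local perverse extension is nonsplit, as Proposition~\ref{prop:local-odp-ext-nonzero} asserts, your closing claim that $\rat$ recovers it fails for every object satisfying the statement, whatever gluing produced it. With a rank-one point-supported quotient, a nonsplit lift would have to use $i_{\mathrm{loc}*}\Q^H_{\{p\}}(-2)$.
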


\begin{proof}
By Proposition~\ref{prop:local-filtered-admissibility}, the local datum
\[
\bigl(\Q^H_{U_{\mathrm{loc}}}[3],\,W^H_{\mathrm{loc}},\,u_{\mathrm{loc}},\,v_{\mathrm{loc}}\bigr)
\]
satisfies the filtered compatibility hypotheses required by Saito's divisor-case gluing theorem
\cite[Prop.~0.3]{SaitoMHM}. Hence that theorem yields an object
\[
\mathcal P^H_{\mathrm{loc}}\in MHM(X_{0,\mathrm{loc}})
\]
whose restriction to the smooth local locus is
\[
\mathcal P^H_{\mathrm{loc}}\big|_{U_{\mathrm{loc}}}\cong \Q^H_{U_{\mathrm{loc}}}[3]
\]
and whose singular quotient is
\[
W^H_{\mathrm{loc}}=i_{\mathrm{loc}*}\Q^H_{\{p\}}(-1).
\]

The kernel of the quotient morphism is the middle extension of the open-part object. Since the
middle extension of \(\Q^H_{U_{\mathrm{loc}}}[3]\) across the isolated local node is
\(IC^H_{\mathrm{loc}}\), we obtain a short exact sequence
\[
0\to IC^H_{\mathrm{loc}}\to \mathcal P^H_{\mathrm{loc}}\to W^H_{\mathrm{loc}}\to 0.
\]
This is exactly the required local mixed-Hodge-module extension.
\end{proof}

\subsection{Realization and the local corrected perverse object}

\begin{theorem}[Local realization]
\label{thm:local-odp-realization}
The realization of \(\mathcal P^H_{\mathrm{loc}}\) is the corrected local perverse object:
\[
\rat(\mathcal P^H_{\mathrm{loc}})
\cong
\mathcal P_{\mathrm{loc}}.
\]
\end{theorem}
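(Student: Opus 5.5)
The plan is to compare the two objects through their exact sequences and then use the rigidity of the point-supported block to pin down the extension class. Apply the exact faithful functor \(\rat\) to the sequence of Theorem~\ref{thm:local-odp-extension}. Since \(\rat(IC^H_{\mathrm{loc}})\cong IC_{X_{0,\mathrm{loc}}}\) and \(\rat(W^H_{\mathrm{loc}})=K_{\mathrm{ODP}}\cong i_{\mathrm{loc}*}\Q_{\{p\}}\), this gives an exact sequence in \(\Perv(X_{0,\mathrm{loc}};\Q)\):
\[
0\to IC_{X_{0,\mathrm{loc}}}\to \rat(\mathcal P^H_{\mathrm{loc}})\to K_{\mathrm{ODP}}\to 0 .
\]
The local corrected object \(\mathcal P_{\mathrm{loc}}=\Cone(\var_{F_{\mathrm{loc}}})[-1]\) fits into the analogous sequence by the single-node result of \cite{RahmanSchoberPaper}. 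It therefore suffices to show that the two extension classes in
\[
\Ext^1_{\Perv}\bigl(K_{\mathrm{ODP}},IC_{X_{0,\mathrm{loc}}}\bigr)
\]
agree up to a nonzero scalar. Rescaling an extension class by a nonzero scalar does not change the isomorphism class of the middle term, so this would finish the proof.

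To compare the classes, I would use the MacPherson--Vilonen description of \(\Perv(X_{0,\mathrm{loc}};\Q)\) for an isolated point, as in Lemma~\ref{lem:local-odp-zigzag}. Equivalently, I would use the Beilinson/Saito gluing description. In that description, a perverse sheaf is determined by its open part \(\Q_{U_{\mathrm{loc}}}[3]\), a vector space at \(p\), and maps \(u,v\) with \(vu=N\) (or \(T-1\) in the topological version). Saito's gluing theorem \cite[Prop.~0.3]{SaitoMHM} is compatible with \(\rat\). Hence \(\rat(\mathcal P^H_{\mathrm{loc}})\) is the perverse sheaf glued from
\[
\bigl(\Q_{U_{\mathrm{loc}}}[3],\,K_{\mathrm{ODP}},\,\rat(u_{\mathrm{loc}}),\,\rat(v_{\mathrm{loc}})\bigr).
\]
On the other hand, \(\mathcal P_{\mathrm{loc}}\) has a gluing description read off from the cone of \(\var\). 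Its vanishing datum at \(p\) is the one-dimensional ODP line of \(\phi_{\pi_{\mathrm{loc}}}(F_{\mathrm{loc}})\), and its gluing maps are the restrictions of \(\can\) and \(\var\) to that line.

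The comparison then reduces to three steps.
\begin{enumerate}
\item The point-level spaces agree, since both are one-dimensional: this is the Milnor fiber computation.
\item The gluing maps agree up to scalar. The maps \(\rat(u_{\mathrm{loc}})\) and \(\rat(v_{\mathrm{loc}})\) are nonzero by Propositions~\ref{prop:uloc-nonzero} and~\ref{prop:vloc}. The corresponding Hom-spaces are one-dimensional by Lemma~\ref{lem:local-odp-zigzag} and the uniqueness in Proposition~\ref{prop:uloc-unique}. So each map agrees with the restricted \(\can\) or \(\var\) up to a nonzero scalar.
\item The scalars are compatible. The normalization \(v_{\mathrm{loc}}u_{\mathrm{loc}}=N\) of Proposition~\ref{prop:local-normalization}, matched with \(\var\circ\can=T-\id\), fixes the product of the two scalars. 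The remaining one-parameter ambiguity \((u,v)\mapsto(\lambda u,\lambda^{-1}v)\) is absorbed by the automorphism \(\lambda\) of the point-supported summand. This automorphism is available because \(\End(K_{\mathrm{ODP}})\cong\Q\).
\end{enumerate}
Gluing data that are isomorphic produce isomorphic perverse sheaves, which gives \(\rat(\mathcal P^H_{\mathrm{loc}})\cong\mathcal P_{\mathrm{loc}}\).

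I expect step (3) to be the main obstacle. It requires reconciling the Hodge normalization \(\Var^H\circ\can^H=N=\log T_u\) with the topological relation \(\var\circ\can=T-\id\). On the ODP sector this should be harmless: the monodromy is unipotent there and \(N\) is nilpotent, and \(\log T_u\) and \(T_u-\id\) differ by the automorphism \(\frac{T_u-\id}{N}\) of \(\psi_{\pi_{\mathrm{loc}},1}\), which acts trivially on the one-dimensional ODP line. I would make this explicit first. A secondary point is to confirm that the cone \(\Cone(\var)[-1]\) of \cite{RahmanSchoberPaper} is indeed glued from the ODP line and not from all of \(\phi\). Here I would use the fact that \(\phi_{\pi_{\mathrm{loc}}}(F_{\mathrm{loc}})\) is itself supported at \(p\) and is rank one.
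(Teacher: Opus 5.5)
Your route is essentially the paper's own: realize the gluing datum, use that the relevant perverse Hom spaces are one-dimensional, and fix scalars by $vu=N$. Both arguments fail at the same step. You need the realized class of $\rat(\mathcal P^H_{\mathrm{loc}})$ to be nonzero, and with the twist $(-1)$ that class is forced to vanish, so no rescaling can help. Indeed,
\[
\Ext^1_{MHM(X_{0,\mathrm{loc}})}\bigl(i_{\mathrm{loc}*}\Q^H_{\{p\}}(a),IC^H_{\mathrm{loc}}\bigr)
\cong\Hom_{MHS}\bigl(\Q(a),H^1(i_{\mathrm{loc}}^!IC^H_{\mathrm{loc}})\bigr)
=\Hom_{MHS}\bigl(\Q(a),H^3(L)\bigr),
\]
where $L\simeq S^2\times S^3$ is the link and $H^3(L)\cong\Q(-2)$. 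This follows from the Gysin sequence of $L\to\mathbb P^1\times\mathbb P^1$, or from duality with $H^2(L)\cong\Q(-1)$. The group therefore vanishes for $a=-1$.

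Equivalently, $W^H_{\mathrm{loc}}$ is pure of weight $2$ and $IC^H_{\mathrm{loc}}$ is pure of weight $3$. Hence $W_2\mathcal P^H_{\mathrm{loc}}\to W^H_{\mathrm{loc}}$ is an isomorphism and splits the sequence of Theorem~\ref{thm:local-odp-extension}. By exactness of $\rat$, you get $\rat(\mathcal P^H_{\mathrm{loc}})\cong IC_{\mathrm{loc}}\oplus i_{\mathrm{loc}*}\Q_{\{p\}}$. But $\mathcal P_{\mathrm{loc}}$ is the nonsplit generator of $\Ext^1_{\Perv}(i_{\mathrm{loc}*}\Q_{\{p\}},IC_{\mathrm{loc}})\cong H^3(L;\Q)\cong\Q$, so the two cannot agree.

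Concretely, your step (2) relies on a nonvanishing that is false. There is an exact sequence $0\to\Q^H_{X_{0,\mathrm{loc}}}[3]\to\psi^H_{\pi_{\mathrm{loc}},1}(F_{\mathrm{loc}})\xrightarrow{\can^H}\phi^H_{\pi_{\mathrm{loc}},1}(F_{\mathrm{loc}})\to 0$. Moreover $\phi^H_{\pi_{\mathrm{loc}},1}(F_{\mathrm{loc}})\cong i_{\mathrm{loc}*}\Q^H_{\{p\}}(-2)$, since it has rank one, $N=0$, and weight $4$. Together these give $H^0(i_{\mathrm{loc}}^*\psi^H_{\pi_{\mathrm{loc}},1}(F_{\mathrm{loc}}))\cong\Q(-2)$. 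Hence $\Hom_{MHM}(\psi^H_{\pi_{\mathrm{loc}},1}(F_{\mathrm{loc}}),W^H_{\mathrm{loc}})\cong\Hom_{MHS}(\Q(-2),\Q(-1))=0$. So $u_{\mathrm{loc}}=0$, and dually $v_{\mathrm{loc}}=0$, contrary to Propositions~\ref{prop:uloc-nonzero} and~\ref{prop:vloc}. In fact already $A_{\mathrm{loc}}=0$, because $\Var^H$ is injective on the rank-one $\phi^H$.

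Independently, the gluing description you invoke does not apply as stated. Beilinson/Saito gluing is along a principal divisor $g^{-1}(0)$. Its second datum is $\phi_{g,1}$ of the glued object, and $u$ is defined on $\psi_{g,1}$ of the open part. But $\{p\}$ is not a principal divisor of the threefold $X_{0,\mathrm{loc}}$. Also, $u_{\mathrm{loc}}$ is defined on $\psi_{\pi_{\mathrm{loc}},1}(F_{\mathrm{loc}})$, i.e.\ relative to $X_{0,\mathrm{loc}}\subset\mathcal X_{\mathrm{loc}}$, and gluing along that divisor produces objects on $\mathcal X_{\mathrm{loc}}$, not on $X_{0,\mathrm{loc}}$. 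Across an isolated point the relevant description is MacPherson--Vilonen data. There the extension part lies in $B/\operatorname{Im}\beta\subseteq H^3(L)$, which is exactly where the weight obstruction sits.

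What is true is the statement with twist $(-2)$. The object $i_{\mathrm{loc}*}\Q^H_{\{p\}}(-1)$ is $W_2\psi^H_{\pi_{\mathrm{loc}},1}(F_{\mathrm{loc}})=\operatorname{Im}(\Var^H)(1)$, which is a subobject, not a quotient. The quotient $\psi^H_{\pi_{\mathrm{loc}},1}(F_{\mathrm{loc}})/W_2\cong\operatorname{coker}(\Var^H)(1)$ is a nonsplit extension of $i_{\mathrm{loc}*}\Q^H_{\{p\}}(-2)$ by $IC^H_{\mathrm{loc}}$, and it realizes to $\operatorname{coker}(\var)$. On your secondary point: since $\var$ is injective here, the perverse object is $\Cone(\var)\cong\operatorname{coker}(\var)$, whereas $\Cone(\var)[-1]$ is not perverse.
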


\begin{proof}
The realized local datum is
\[
\bigl(
\Q_{U_{\mathrm{loc}}}[3],\,
K_{\mathrm{ODP}},\,
\rat(u_{\mathrm{loc}}),\,
\rat(v_{\mathrm{loc}})
\bigr),
\]
where \(K_{\mathrm{ODP}}\) is the rigid local point-supported perverse block and
\(\rat(u_{\mathrm{loc}})\), \(\rat(v_{\mathrm{loc}})\) are the unique nonzero local perverse maps by
Propositions~\ref{prop:uloc-nonzero}, \ref{prop:uloc-unique},
\ref{prop:vloc}. Since realization is exact and carries the mixed-Hodge-module gluing maps to the corresponding perverse gluing maps, the realized extension is precisely the corrected local perverse ODP extension.
\end{proof}

\subsection{Cone description}

\begin{proposition}[Local cone description]
\label{prop:local-odp-cone}
In the derived category of mixed Hodge modules, one has
\[
\mathcal P^H_{\mathrm{loc}}
\simeq
\Cone(v_{\mathrm{loc}})[-1].
\]
\end{proposition}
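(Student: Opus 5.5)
The plan is to prove the cone description by building a comparison morphism in \(D^b MHM(X_{0,\mathrm{loc}})\) first, and only then checking that it is an isomorphism after realization. The reason for this order is that the derived realization functor \(\rat: D^b MHM \to D^b_c(\,\cdot\,;\Q)\) is conservative. This holds because \(\rat\) is exact and faithful on the heart, so a cone vanishes in \(D^b MHM\) as soon as its realization vanishes. Conservativity can certify that a given morphism is an isomorphism, but it cannot produce one. So the whole proof hinges on writing down a morphism upstairs.

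First I would set up the gluing description of \(\mathcal P^H_{\mathrm{loc}}\) from Theorem~\ref{thm:local-odp-extension}. Saito's divisor gluing \cite[Prop.~0.3]{SaitoMHM} presents the glued module through the Beilinson-type complex
\[
\psi^H_{\pi_{\mathrm{loc}},1}\;\xrightarrow{(u_{\mathrm{loc}},\,\cdot\,)}\; W^H_{\mathrm{loc}}\oplus(\text{open part})\;\xrightarrow{(v_{\mathrm{loc}},\,\cdot\,)}\;\psi^H_{\pi_{\mathrm{loc}},1}(-1).
\]
The glued object is the middle cohomology of this complex. From it one reads off a natural morphism \(\rho:\mathcal P^H_{\mathrm{loc}}\to W^H_{\mathrm{loc}}\), namely the quotient map of Theorem~\ref{thm:local-odp-extension}. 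The same complex shows that \(v_{\mathrm{loc}}\circ\rho\) is null-homotopic: the middle terms have been quotiented by the image of \(u_{\mathrm{loc}}\), and elements of the middle cohomology are killed by the second differential. A choice of null-homotopy, supplied by the open-part component, lifts \(\rho\) to a morphism
\[
\tilde\rho:\mathcal P^H_{\mathrm{loc}}\to \Cone(v_{\mathrm{loc}})[-1]
\]
in \(D^b MHM\). This uses the distinguished triangle \(\Cone(v_{\mathrm{loc}})[-1]\to W^H_{\mathrm{loc}}\to \psi^H_{\pi_{\mathrm{loc}},1}(F_{\mathrm{loc}})(-1)\xrightarrow{+1}\).

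Second, I would apply \(\rat\). By Theorem~\ref{thm:local-odp-realization}, \(\rat(\mathcal P^H_{\mathrm{loc}})\cong\mathcal P_{\mathrm{loc}}=\Cone(\var_F)[-1]\). By Propositions~\ref{prop:uloc-unique} and \ref{prop:vloc} together with Lemma~\ref{lem:local-odp-zigzag}, \(\rat(v_{\mathrm{loc}})\) agrees up to a nonzero scalar with the restriction of \(\var_F\) to the rank-one ODP line. Using the normalization of Proposition~\ref{prop:local-normalization}, the scalar is \(1\). I would then compare the two triangles with the octahedral axiom and check on stalks. At points of \(U_{\mathrm{loc}}\) both sides reduce to \(\Q[3]\). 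At \(p\), the MacPherson--Vilonen data of Lemma~\ref{lem:local-odp-zigzag} computes both stalk complexes. From this one checks that \(\rat(\tilde\rho)\) induces isomorphisms on all perverse cohomology sheaves, and conservativity then gives \(\tilde\rho\) as an isomorphism.

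The main obstacle is the first step: producing \(\tilde\rho\), and in particular making sure \(\Cone(v_{\mathrm{loc}})[-1]\) is really concentrated in degree \(0\). Taken literally, \(v_{\mathrm{loc}}\) maps a point-supported rank-one object into \(\psi^H(-1)\), and its cone has \(H^0=\ker v_{\mathrm{loc}}\) and \(H^1=\operatorname{coker}v_{\mathrm{loc}}\). For this to match an object of the heart, the cokernel must be absorbed by the open-part summand of the gluing complex. So I would first try to interpret \(v_{\mathrm{loc}}\) as the full second differential of Saito's gluing complex, including the open component, and only then restrict it to the ODP sector. If that identification fails, the statement would have to be weakened to an isomorphism of \(H^0\)-objects only, namely the identification \(\mathcal P^H_{\mathrm{loc}}\cong \mathcal H^0(\Cone(v_{\mathrm{loc}})[-1])\).
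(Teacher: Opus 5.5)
There is a genuine gap, and it is the obstruction you flag at the end. It is fatal, not technical. In $D^bMHM(X_{0,\mathrm{loc}})$ a morphism between objects of the heart is just a morphism in $MHM$, so there is no null-homotopy to choose: lifting $\rho$ to $\tilde\rho$ requires $v_{\mathrm{loc}}\circ\rho=0$ on the nose, and that fails. $W^H_{\mathrm{loc}}=i_{\mathrm{loc}*}\Q^H_{\{p\}}(-1)$ is simple and $v_{\mathrm{loc}}\neq 0$, so $v_{\mathrm{loc}}$ is a monomorphism; since $\rho$ is surjective, $v_{\mathrm{loc}}\circ\rho\neq 0$. Your gluing-complex argument only shows that $v_{\mathrm{loc}}\circ\mathrm{pr}_W$ cancels against the open-part component of the second differential, not that it vanishes. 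More decisively, $\ker v_{\mathrm{loc}}=0$ gives $\Cone(v_{\mathrm{loc}})[-1]\cong\operatorname{coker}(v_{\mathrm{loc}})[-1]$, which is concentrated in degree $1$. Moreover $\operatorname{coker}(v_{\mathrm{loc}})\neq 0$, because $\psi^H_{\pi_{\mathrm{loc}},1}(F_{\mathrm{loc}})$ restricts to a nonzero object on $U_{\mathrm{loc}}$. Hence $\Hom(\mathcal P^H_{\mathrm{loc}},\Cone(v_{\mathrm{loc}})[-1])=\Ext^{-1}(\mathcal P^H_{\mathrm{loc}},\operatorname{coker}v_{\mathrm{loc}})=0$, so the only comparison map is zero. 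Your perverse-cohomology check would accordingly fail in degree $0$, where one side is $\rat(\mathcal P^H_{\mathrm{loc}})\neq 0$ and the other is $\rat(\ker v_{\mathrm{loc}})=0$. Your fallback fails for the same reason, since $\mathcal H^0(\Cone(v_{\mathrm{loc}})[-1])=\ker v_{\mathrm{loc}}=0$. Reading $v_{\mathrm{loc}}$ as the full second differential $d_2$ of a Beilinson-type complex does not help either: the glued object is $\ker d_2/\operatorname{Im}d_1$, i.e.\ the whole three-term complex, whereas $\mathcal H^0(\Cone(d_2)[-1])=\ker d_2$.

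The paper's own proof does not supply the missing step. It simply asserts that the gluing datum yields a distinguished triangle $W^H_{\mathrm{loc}}\to\psi^H_{\pi_{\mathrm{loc}},1}(F_{\mathrm{loc}})(-1)\to\mathcal P^H_{\mathrm{loc}}[1]\xrightarrow{+1}$ and reads off the cone. But the cohomology sequence of such a triangle would be $0\to\mathcal P^H_{\mathrm{loc}}\to W^H_{\mathrm{loc}}\xrightarrow{v_{\mathrm{loc}}}\psi^H_{\pi_{\mathrm{loc}},1}(F_{\mathrm{loc}})(-1)\to 0$. That would make $\mathcal P^H_{\mathrm{loc}}$ a subobject of a rank-one point-supported module, contradicting $IC^H_{\mathrm{loc}}\subset\mathcal P^H_{\mathrm{loc}}$. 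So with the shift $[-1]$ and $v_{\mathrm{loc}}$ as defined, the claimed isomorphism is false, and no choice of comparison map can repair it.

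A correct version should instead use Saito's variation triangle for the closed immersion $\iota:X_{0,\mathrm{loc}}\hookrightarrow\mathcal X_{\mathrm{loc}}$: for $M$ a mixed Hodge module on $\mathcal X_{\mathrm{loc}}$, one has $\iota^!M\cong\Cone\bigl(\Var^H:\phi^H_{\pi_{\mathrm{loc}},1}M\to\psi^H_{\pi_{\mathrm{loc}},1}M(-1)\bigr)[-1]$. For the constant Hodge module on a smooth total space, $\Var^H$ is injective, so the object in the heart is $\Cone(\Var^H)\cong\operatorname{coker}(\Var^H)$, with no shift. That is where an extension of a rank-one point-supported module by $IC^H_{\mathrm{loc}}$ actually lives, and identifying it with the glued $\mathcal P^H_{\mathrm{loc}}$ would be the real content of a cone description.
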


\begin{proof}
The local gluing datum determines a distinguished triangle
\[
W^H_{\mathrm{loc}}
\longrightarrow
\psi^H_{\pi_{\mathrm{loc}},1}(F_{\mathrm{loc}})(-1)
\longrightarrow
\mathcal P^H_{\mathrm{loc}}[1]
\overset{+1}{\longrightarrow}.
\]
By definition of cone in the triangulated category, the third term is
\[
\Cone(v_{\mathrm{loc}}),
\]
hence
\[
\mathcal P^H_{\mathrm{loc}}
\simeq
\Cone(v_{\mathrm{loc}})[-1].
\]
\end{proof}

\subsection{Local rigidity}

\begin{proposition}[Local rigidity]
\label{prop:local-odp-rigidity}
Any local mixed Hodge module \(\mathcal E^H\) whose realization is the corrected local
perverse ODP block and whose singular quotient is \(W^H_{\mathrm{loc}}\) is isomorphic
to \(\mathcal P^H_{\mathrm{loc}}\).
\end{proposition}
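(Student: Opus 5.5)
The plan is to reduce the rigidity statement to the uniqueness of the local gluing datum, using Saito's divisor-case gluing theorem \cite[Prop.~0.3]{SaitoMHM} as an equivalence of categories rather than only as an existence result. Let \(\mathcal E^H\) be as in the statement. First, I restrict \(\mathcal E^H\) to \(U_{\mathrm{loc}}\). Its realization restricts to \(\rat(\mathcal P^H_{\mathrm{loc}})|_{U_{\mathrm{loc}}}\cong \Q_{U_{\mathrm{loc}}}[3]\). On the smooth locus a mixed Hodge module whose underlying perverse sheaf is the shifted constant sheaf is a variation of mixed Hodge structure on \(\Q\). For the local model, where \(U_{\mathrm{loc}}\) is a punctured cone neighborhood, I will use the weight and singular-quotient hypotheses to pin the Hodge structure down to \(\Q^H_{U_{\mathrm{loc}}}[3]\). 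This gives the open-part component \(\mathcal M'\) of the gluing datum of \(\mathcal E^H\). The closed-part component is \(W^H_{\mathrm{loc}}\) by hypothesis.

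Second, I compare the gluing morphisms. Write \((u_{\mathcal E},v_{\mathcal E})\) for the morphisms in the gluing datum attached to \(\mathcal E^H\) by the inverse of Saito's gluing equivalence. These satisfy \(v_{\mathcal E}u_{\mathcal E}=N\). Their realizations are perverse morphisms \(\psi_{\pi_{\mathrm{loc}},1}(F_{\mathrm{loc}})\to K_{\mathrm{ODP}}\) and \(K_{\mathrm{ODP}}\to\psi_{\pi_{\mathrm{loc}},1}(F_{\mathrm{loc}})(-1)\). Both are nonzero, because their realization is the corrected local block, which by Theorem~\ref{thm:local-odp-realization} is a non-split extension with the same open and closed pieces as \(\rat(\mathcal P^H_{\mathrm{loc}})\). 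By the rigidity of Lemma~\ref{lem:local-odp-zigzag}, and the one-dimensionality of the Hom-spaces used in Propositions~\ref{prop:uloc-unique} and~\ref{prop:vloc}, faithfulness of \(\rat\) gives \(u_{\mathcal E}=\lambda u_{\mathrm{loc}}\) and \(v_{\mathcal E}=\mu v_{\mathrm{loc}}\) with \(\lambda,\mu\in\Q^\times\). The relation \(v_{\mathcal E}u_{\mathcal E}=N=v_{\mathrm{loc}}u_{\mathrm{loc}}\) of Proposition~\ref{prop:local-normalization} then forces \(\lambda\mu=1\), provided \(N\neq 0\) on the ODP sector.

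Third, I exhibit an isomorphism of gluing data. Take the identity on \(\mathcal M'\) and multiplication by \(\lambda\) on \(W^H_{\mathrm{loc}}\). This intertwines \((u_{\mathrm{loc}},v_{\mathrm{loc}})\) with \((\lambda u_{\mathrm{loc}},\lambda^{-1}v_{\mathrm{loc}})\). Since Saito's gluing is an equivalence, this yields \(\mathcal E^H\cong\mathcal P^H_{\mathrm{loc}}\). As a consistency check, the isomorphism respects the exact sequence of Theorem~\ref{thm:local-odp-extension}, because the kernel is in both cases the middle extension \(IC^H_{\mathrm{loc}}\).

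I expect two places to be the main obstacles. The first is the degenerate case \(N=0\) on the local sector, where \(\lambda\mu=1\) cannot be extracted from the relation. There I would instead use that \(\rat(\mathcal E^H)\cong\rat(\mathcal P^H_{\mathrm{loc}})\) to match the extension classes directly in \(\Ext^1(K_{\mathrm{ODP}},IC_{\mathrm{loc}})\) via the zig-zag description. The second is the Hodge-theoretic identification of the open part. If this fails, I would strengthen the hypothesis to assume \(\mathcal E^H|_{U_{\mathrm{loc}}}\cong\Q^H_{U_{\mathrm{loc}}}[3]\), which is the reading implicit in \emph{singular quotient}.
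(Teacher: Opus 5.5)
Your three steps are the paper's proof written out in more detail. The paper argues that the realized datum is rigid, that $u$ and $v$ are unique up to scalar, and that $vu=N$ fixes the scalars. Your Step~3 (rescaling $W^H_{\mathrm{loc}}$ by $\lambda$) is a correct way to conclude once Steps~1--2 hold. But Steps~1--2 rest on two identifications that fail, and the paper's proof makes the same two tacitly.

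\textbf{First gap: the closed component.} The inverse of Saito's equivalence sends $\mathcal M$ to $(\mathcal M|_{X\setminus Y},\phi_{g,1}\mathcal M,\can,\Var)$. So the closed component attached to $\mathcal E^H$ is $\phi_{g,1}\mathcal E^H$, not its singular quotient, and ``the closed-part component is $W^H_{\mathrm{loc}}$ by hypothesis'' does not follow. (The equivalence you invert is also not literally available: $\{p\}$ is not a principal divisor of $X_{0,\mathrm{loc}}$, and $\psi^H_{\pi_{\mathrm{loc}},1}(F_{\mathrm{loc}})$ is nearby cycles along $X_{0,\mathrm{loc}}\subset\mathcal X_{\mathrm{loc}}$, not $\psi_{g,1}$ of an object on $U_{\mathrm{loc}}$.) Even working formally, the identification fails exactly where you need it.
\begin{enumerate}
\item Suppose $0\to j_{!*}\mathcal M'\to\mathcal M\to i_*W\to 0$. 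Exactness of $\phi_{g,1}$ and $\phi_{g,1}(j_{!*}\mathcal M')\cong\operatorname{Im}(N)$ give $0\to\operatorname{Im}(N)\to\phi_{g,1}\mathcal M\to W\to 0$. So the closed component is $W$ only if $N=0$, and then $\can_{\mathcal M}=0$, since by naturality it factors through $\phi_{g,1}(j_{!*}\mathcal M')=0$.
\item Conversely, let $\mathcal M''=W^H_{\mathrm{loc}}$, which is simple, with $u,v\neq 0$ and $vu=N$. Then $u$ is an epimorphism and $v$ is a monomorphism with image $\operatorname{Im}(vu)=\operatorname{Im}(N)$. Hence $(\id,v)$ is an isomorphism of gluing data onto $(\mathcal M',\operatorname{Im}(N),\mathrm{coim},\mathrm{incl})$, which is the datum of $j_{!*}\mathcal M'$.
\end{enumerate}
So when $N\neq 0$, the case in which your derivation of $\lambda\mu=1$ works, the data you compare glue to $IC^H_{\mathrm{loc}}$, which has no point-supported quotient. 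They are not the datum of $\mathcal E^H$. The same remark applies to the datum of Theorem~\ref{thm:local-odp-gluing-datum}.

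\textbf{Second gap: the open part.} You were right to worry about Step~1; it cannot be carried out. Realization fixes $\mathcal E^H|_{U_{\mathrm{loc}}}$ only up to a Tate twist: a rank-one admissible variation on the constant local system is $\Q^H_{U_{\mathrm{loc}}}[3](-a)$ for some $a\in\Z$, since a rank-one rational Hodge structure is $\Q(-a)$. The weights argue against $a=0$, not for it. If $a=0$, the kernel $IC^H_{\mathrm{loc}}$ is pure of weight $3$, while $W^H_{\mathrm{loc}}$ is pure of some weight $w<3$ ($w=2$ in Saito's normalization). Morphisms of mixed Hodge modules are strict for the weight filtration. So the weight-$\le w$ part of $\mathcal E^H$ meets $IC^H_{\mathrm{loc}}$ in zero and maps isomorphically onto $W^H_{\mathrm{loc}}$. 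Hence $0\to IC^H_{\mathrm{loc}}\to\mathcal E^H\to W^H_{\mathrm{loc}}\to 0$ splits, and so does its realization, contradicting Proposition~\ref{prop:local-odp-ext-nonzero}.

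Your fallback hypothesis $\mathcal E^H|_{U_{\mathrm{loc}}}\cong\Q^H_{U_{\mathrm{loc}}}[3]$ therefore makes the hypotheses of the statement inconsistent rather than repairing Step~1. Under that reading the conclusion is actually trivial. $\Ext^1_{MHM(X_{0,\mathrm{loc}})}(W^H_{\mathrm{loc}},IC^H_{\mathrm{loc}})=0$, so $\mathcal E^H$, and $\mathcal P^H_{\mathrm{loc}}$ via Theorem~\ref{thm:local-odp-extension}, are both isomorphic to $IC^H_{\mathrm{loc}}\oplus W^H_{\mathrm{loc}}$. A non-split extension of a pure point-supported module by $IC^H_{\mathrm{loc}}$ needs a quotient of weight $>3$.

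Any rigidity argument along your lines must first determine the twist on the open part and the actual vanishing-cycle component $\phi_{g,1}\mathcal E^H$. Neither your proposal nor the paper's proof does this.
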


\begin{proof}
The realized local datum is rigid by Lemma~\ref{lem:local-odp-zigzag}, and the
corresponding mixed-Hodge-module maps are unique up to scalar by
Propositions~\ref{prop:uloc-unique} and \ref{prop:vloc}.
Therefore the gluing datum is unique up to the same scalar normalizations already fixed by
Proposition~\ref{prop:local-normalization}. Hence
\[
\mathcal E^H \cong \mathcal P^H_{\mathrm{loc}}.
\]
\end{proof}

%------------------------------------------------------------
\section{Finite multi-node support decomposition}

We now pass from the local ordinary double point block to the finite multi-node setting. Let
\[
\Sigma=\{p_1,\dots,p_r\}\subset X_0
\]
be the set of ordinary double points of the central fiber, and write
\[
U:=X_0\setminus\Sigma,
\qquad
j:U\hookrightarrow X_0,
\qquad
i_k:\{p_k\}\hookrightarrow X_0.
\]
The purpose of this section is to identify the singular contribution relevant to the global
mixed-Hodge-module refinement as a finite direct sum of the local rank-one point-supported blocks
constructed in Section~4, and to describe the corresponding global extension space in nodewise form.
This is the first genuine local-to-global step in the paper: the local ODP mixed-Hodge-module
building block is now assembled over the finite singular set.

\subsection{Support of the vanishing-cycle object}

Let
\[
F:=\Q_{\mathcal X}[3].
\]
Since the central fiber \(X_0\) has only isolated ordinary double points, the singular locus of the
degeneration is the finite set \(\Sigma\). It follows from the constructibility of nearby and
vanishing cycles that the vanishing-cycle object
\[
\phi_\pi^H(F)
\]
is supported on \(\Sigma\). Equivalently, away from the finite node set there is no local vanishing
contribution.

\begin{lemma}
\label{lem:vanishing-support-finite}
The support of the vanishing-cycle mixed Hodge module \(\phi_\pi^H(F)\) is contained in the finite
set \(\Sigma\).
\end{lemma}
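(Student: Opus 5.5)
The plan is to reduce the statement to the corresponding topological fact about the realized vanishing-cycle perverse sheaf, and then lift it to \(MHM\) using faithfulness of realization. Since \(\rat\) is exact and faithful, a mixed Hodge module \(\mathcal M\) with \(\rat(\mathcal M)=0\) is itself zero. Support commutes with restriction to opens, so it suffices to show
\[
\phi_\pi^H(F)\big|_{\mathcal X\setminus\Sigma}=0 .
\]
By the compatibility \(\rat(\phi_\pi^H(\mathcal M))\cong\phi_\pi(\rat\mathcal M)\) recorded in the nearby/vanishing-cycle subsection, applied to \(\Q^H_{\mathcal X}[3]\), this is equivalent to the vanishing of \(\phi_\pi(\Q_{\mathcal X}[3])\) on \(X_0\setminus\Sigma\). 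That vanishing is a purely local, pointwise statement.

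The key steps run as follows.

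\begin{enumerate}
\item \textbf{Local nature of vanishing cycles.} Vanishing cycles commute with restriction to open subsets of \(\mathcal X\). So for \(x\in X_0\setminus\Sigma\) we may compute the stalk of \(\phi_\pi(F)\) at \(x\) on a small ball \(B_x\subset\mathcal X\) around \(x\) that is disjoint from \(\Sigma\).

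\item \textbf{Submersivity away from \(\Sigma\).} We use that \(\Sigma\) is exactly the critical locus of \(\pi\) on \(X_0\). This is the standing hypothesis that \(\mathcal X\) is smooth near \(X_0\) and \(X_0\) has only the nodes \(p_k\) as singularities, so \(d\pi\neq0\) on \(X_0\setminus\Sigma\). By the holomorphic implicit function theorem, near \(x\) we have \(\pi\) equal to a coordinate projection \(\C^3\times\Delta\to\Delta\).

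\item \textbf{Milnor fiber at a smooth point.} In these coordinates the Milnor fiber of \(\pi\) at \(x\) is a ball, hence contractible. Its reduced cohomology therefore vanishes, so \(\phi_\pi(F)_x=0\). Equivalently, the specialization map \(i^*F[-1]\to\psi_\pi(F)\) is an isomorphism at \(x\), and this holds in every degree.

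\item \textbf{Conclusion.} Constructible complexes with vanishing stalks are zero. Hence \(\rat(\phi_\pi^H(F))\) is supported on \(\Sigma\), and by faithfulness and the restriction argument above, so is \(\phi_\pi^H(F)\).
\end{enumerate}

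The main obstacle is making step 2 honest. The phrase ``the singular locus of the degeneration is \(\Sigma\)'' must mean the critical locus of \(\pi\), not merely \(\operatorname{Sing}(X_0)\). I would argue as follows. If \(\mathcal X\) is smooth along \(X_0\), then \(X_0\) being reduced and singular only at \(\Sigma\) forces \(d\pi\neq0\) at smooth points of \(X_0\). The reason is that \(X_0=\{\pi=0\}\) is a reduced hypersurface, and at such a point its singular locus coincides with the vanishing locus of \(d\pi\).

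If one prefers to avoid assuming reducedness, the fallback is Saito's characterization of \(\phi_{\pi}^H\) via the \(V\)-filtration: on the locus where \(\pi\) is smooth, \(\mathrm{Gr}_V\) of the vanishing part is zero. I expect the topological route to suffice given the setup.
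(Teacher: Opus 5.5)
Your proposal is correct and follows the same route as the paper's proof: show that $\phi_\pi(F)$ vanishes away from $\Sigma$ because $\pi$ is locally topologically trivial there, then pass to $\phi^H_\pi(F)$ through compatibility with $\rat$. You also fill in the details the paper leaves implicit, namely submersivity of $\pi$ off $\Sigma$, the contractible Milnor fiber, and faithfulness of $\rat$. One caveat: your $V$-filtration fallback does not remove the reducedness hypothesis, since for a non-reduced fiber (e.g.\ $\pi=g^2$ locally) $\pi$ is nowhere submersive along $X_0$ and $\phi_\pi(F)\neq 0$ there, so reducedness of the scheme-theoretic fiber, which is implicit in the paper's setup, is genuinely needed.
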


\begin{proof}
Vanishing cycles detect the failure of local topological triviality of the degeneration. Since the
fibers are smooth away from the nodes of the central fiber, the only possible nontrivial vanishing
contribution occurs at the points \(p_k\in\Sigma\). Therefore
\[
\supp\bigl(\phi_\pi^H(F)\bigr)\subseteq \Sigma.
\]
The same statement holds on the perverse side, and compatibility with realization gives the
mixed-Hodge-module version.
\end{proof}

\subsection{The finite direct sum of local ODP blocks}

Section~4 constructs, at each node \(p_k\), a point-supported local mixed-Hodge-module block
\[
W_k^H:=i_{k*}\Q^H_{\{p_k\}}(-1),
\]
whose realization is the rigid local ODP perverse block at \(p_k\). These local blocks are
supported on pairwise disjoint closed points, so they assemble canonically into a finite
point-supported mixed Hodge module
\[
Q_\Sigma^H:=\bigoplus_{k=1}^r W_k^H
=
\bigoplus_{k=1}^r i_{k*}\Q^H_{\{p_k\}}(-1).
\]

\begin{proposition}
\label{prop:finite-node-block}
There is a canonically defined finite point-supported mixed Hodge module
\[
Q_\Sigma^H
=
\bigoplus_{k=1}^r i_{k*}\Q^H_{\{p_k\}}(-1)
\]
obtained by assembling the local ODP quotient blocks of Section~4 over the finite node set
\(\Sigma\).
\end{proposition}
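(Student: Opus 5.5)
The plan is to build \(Q_\Sigma^H\) one node at a time, take the finite direct sum in the abelian category \(MHM(X_0)\), and then check that the result does not depend on any of the auxiliary choices made along the way.

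\emph{Step 1 (local blocks).} Since \(\Sigma\) is finite, I will choose pairwise disjoint open neighbourhoods \(X_{0,k}\ni p_k\) in \(X_0\), each containing no other node. On each \(X_{0,k}\) the degeneration restricts to a local model with a single ordinary double point. Section~4 then applies directly and produces the point-supported block \(W^H_{\mathrm{loc},k}=i_{\mathrm{loc}*}\Q^H_{\{p_k\}}(-1)\). By Lemma~\ref{lem:point-supported-filtered-block}(1), this block is a pure Hodge module supported at \(p_k\).

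\emph{Step 2 (globalization).} The pushforward \(i_{k*}\) along the closed immersion \(\{p_k\}\hookrightarrow X_0\) is exact on \(MHM\) and commutes with restriction to open sets. Hence \(W_k^H:=i_{k*}\Q^H_{\{p_k\}}(-1)\) is a global object of \(MHM(X_0)\) whose restriction to \(X_{0,k}\) is \(W^H_{\mathrm{loc},k}\) and whose restriction to \(X_0\setminus\{p_k\}\) is zero. In particular, the global object is independent of the chosen neighbourhood \(X_{0,k}\).

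\emph{Step 3 (direct sum).} Since \(MHM(X_0)\) is abelian, the finite direct sum \(Q_\Sigma^H=\bigoplus_k W_k^H\) exists. Because the supports are pairwise disjoint, \(\Hom(W_k^H,W_l^H)=0\) for \(k\neq l\); this can be checked after realization using faithfulness of \(\rat\) and the vanishing of Homs between skyscrapers at distinct points. Consequently
\[
\End(Q_\Sigma^H)\cong\prod_{k=1}^r\End(W_k^H)\cong\Q^r,
\]
where the second isomorphism uses Lemma~\ref{lem:local-odp-zigzag} and faithfulness. Applying the exact functor \(\rat\) to the sum gives \(\bigoplus_k i_{k*}\Q_{\{p_k\}}\), which is supported on \(\Sigma\), compatible with Lemma~\ref{lem:vanishing-support-finite}.

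\emph{Step 4 (canonicity).} I interpret ``canonically defined'' to mean two things. First, the object is independent of the neighbourhoods chosen in Step~1; this was settled in Step~2. Second, it is independent of the ordering of \(\Sigma\), which holds because direct sums are canonical up to unique isomorphism compatible with the inclusions of the summands.

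I expect the main obstacle to be the meaning of ``assembling the local ODP quotient blocks''. That phrase refers to the quotient structure coming from \(q_{\mathrm{loc}}\), not just to the objects themselves. By the Remark following Proposition~\ref{prop:point-level-odp-quotient}, that structure is canonical only up to a nonzero scalar at each node. So I will state canonicity at two levels. The \emph{object} \(Q_\Sigma^H\) is canonical up to unique isomorphism. The \emph{assembled quotient maps} \(\bigoplus_k q_k\) are canonical up to the action of \((\Q^\times)^r\subset\End(Q_\Sigma^H)^\times\), and this ambiguity is removed once each node is normalized as in Proposition~\ref{prop:local-normalization}.
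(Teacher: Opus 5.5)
Your proposal is correct and follows the paper's proof, with more detail: the paper takes the Section~4 blocks $W_k^H=i_{k*}\Q^H_{\{p_k\}}(-1)$, notes that their supports are pairwise disjoint, sums them in $MHM(X_0)$, and calls the result canonical once the local blocks are fixed, which is what your Steps~1--4 verify (your Hom-vanishing and $\End(Q_\Sigma^H)\cong\Q^r$ checks are valid additions). One claim, peripheral to the statement, needs correcting: the normalization $v_ku_k=N$ of Proposition~\ref{prop:local-normalization} fixes only the product of the scalars on $u_k$ and $v_k$, so the rescaling $(u_k,v_k)\mapsto(\lambda_k u_k,\lambda_k^{-1}v_k)$ by $(\lambda_k)\in(\Q^\times)^r=\mathrm{Aut}(Q_\Sigma^H)$ survives it; this leftover ambiguity is harmless, because it acts through automorphisms of $Q_\Sigma^H$ and gives isomorphic gluing data, but it is not removed.
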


\begin{proof}
For each node \(p_k\), Section~4 constructs the local point-supported block
\[
W_k^H=i_{k*}\Q^H_{\{p_k\}}(-1).
\]
Since the supports \(\{p_k\}\) are pairwise disjoint, these objects may be summed in
\(MHM(X_0)\), giving the finite point-supported object
\[
Q_\Sigma^H:=\bigoplus_{k=1}^r W_k^H.
\]
This construction is canonical once the local ODP blocks are fixed.
\end{proof}

\subsection{Node-wise singular quotient theorem}

The role of the preceding proposition is not to identify the whole vanishing-cycle mixed Hodge
module, but rather to identify the singular quotient object that enters the global corrected
extension.

\begin{proposition}
\label{prop:nodewise-support}
The singular quotient in the finite multi-node mixed-Hodge-module refinement is
\[
Q_\Sigma^H
=
\bigoplus_{k=1}^r i_{k*}\Q^H_{\{p_k\}}(-1).
\]
\end{proposition}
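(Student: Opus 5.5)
The plan is to reduce the global statement to the local one proved in Section~4, using that everything singular is concentrated on the finite, discrete set \(\Sigma\). The singular quotient of the refinement is, by construction, the cokernel of the inclusion of the middle extension \(IC^H_{X_0}=j_{!*}\Q^H_U[3]\) into the glued object. Such a cokernel is supported on \(X_0\setminus U=\Sigma\), since \(j^*\) is exact and the inclusion becomes an isomorphism on \(U\). The work is to identify the cokernel summand by summand.

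\textbf{Step 1: splitting over \(\Sigma\).} I would first invoke Kashiwara's equivalence for mixed Hodge modules. Any object of \(MHM(X_0)\) supported on the discrete set \(\Sigma\) decomposes canonically as \(\bigoplus_k i_{k*}H_k\), with \(H_k\) a mixed Hodge structure obtained as \(H^0(i_k^*(-))\). So the quotient is \(\bigoplus_k i_{k*}H_k\), and it remains to show \(H_k\cong\Q^H(-1)\) for each \(k\).

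\textbf{Step 2: reduction to the local model.} Choose pairwise disjoint small neighbourhoods \(B_k\ni p_k\). Restriction to the open set \(B_k\) is exact and commutes with nearby cycles, vanishing cycles, \(\can^H\), \(\Var^H\) and \(N\). It therefore carries the global gluing datum to the local datum of Theorem~\ref{thm:local-odp-gluing-datum} at \(p_k\). The singular quotient of that local datum is \(W_k^H=i_{k*}\Q^H_{\{p_k\}}(-1)\) by Theorem~\ref{thm:local-odp-extension}. The local rigidity statement, Proposition~\ref{prop:local-odp-rigidity}, ensures this identification does not depend on the choice of local normalizations of \(u_{\mathrm{loc}}\) and \(v_{\mathrm{loc}}\). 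Hence \(H_k\cong\Q^H(-1)\).

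\textbf{Step 3: assembly.} Summing over \(k\) gives the quotient \(Q_\Sigma^H\) of Proposition~\ref{prop:finite-node-block}. As a sanity check at the level of realization, \(\rat\) is exact and faithful, and Lemma~\ref{lem:vanishing-support-finite} together with the Milnor-fibre computation show the perverse quotient is \(\bigoplus_k i_{k*}\Q_{\{p_k\}}\), of rank one at each node. This matches \(\rat(Q_\Sigma^H)\).

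\textbf{Main obstacle.} The step I expect to be hardest is the compatibility in Step~2. One must check that the global quotient, formed from the global \(\psi^H_{\pi,1}\) and \(\phi^H_{\pi,1}\), restricts to exactly the local quotient built from \(A_{\mathrm{loc}}\) and \(q_{\mathrm{loc}}\). In particular, the nodewise scalar normalizations must be choosable independently at each node. My first attempt would use that \(\phi^H_{\pi,1}(F)\) itself splits as \(\bigoplus_k i_{k*}\phi_k\) by Step~1. Then \(\Hom\)-spaces between point-supported objects at distinct nodes vanish, so all gluing maps are block-diagonal over \(\Sigma\). The normalization of Proposition~\ref{prop:local-normalization} can then be carried out one node at a time.
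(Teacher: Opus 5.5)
Your proposal is correct and follows the same route as the paper. The paper's proof invokes the local theorem of Section~4 to identify the block at each node with $W_k^H=i_{k*}\Q^H_{\{p_k\}}(-1)$, and then sums over the pairwise disjoint nodes. Your Kashiwara-equivalence splitting of the point-supported cokernel and your explicit restriction to neighbourhoods $B_k$ only make explicit what that two-line argument takes for granted; the paper records the restriction compatibility later, in Proposition~\ref{prop:global-restricts-local-odp}.
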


\begin{proof}
At each node \(p_k\), the local ordinary double point theorem of Section~4 identifies the singular
quotient block as
\[
W_k^H=i_{k*}\Q^H_{\{p_k\}}(-1).
\]
The finite node set \(\Sigma\) is a disjoint union of these closed points, so the total singular
quotient object appearing in the global corrected extension is the direct sum of the local quotient
blocks:
\[
Q_\Sigma^H
=
\bigoplus_{k=1}^r W_k^H
=
\bigoplus_{k=1}^r i_{k*}\Q^H_{\{p_k\}}(-1).
\]
\end{proof}

\begin{remark}
\label{rem:not-whole-vanishing-cycle}
Proposition~\ref{prop:nodewise-support} does \emph{not} assert that the entire vanishing-cycle mixed
Hodge module \(\phi_\pi^H(F)\) splits as the direct sum of the node blocks. Rather, it identifies
the finite point-supported singular quotient object relevant to the corrected extension constructed
from the local ODP sectors.
\end{remark}

\subsection{Nodewise decomposition of the extension space}

The global corrected extension is classified by an extension class whose source is the finite
point-supported singular quotient object \(Q_\Sigma^H\) and whose target is the bulk
intersection-complex Hodge module \(IC^H_{X_0}\). We first record the formal additivity of the
relevant Ext-groups over the finite node set.

\begin{lemma}
\label{lem:ext-additivity-perv}
In the abelian category \(\Perv(X_0;\Q)\), one has a natural isomorphism
\[
\Ext^1_{\Perv(X_0;\Q)}
\Bigl(
\bigoplus_{k=1}^r i_{k*}\Q_{\{p_k\}},
IC_{X_0}
\Bigr)
\cong
\bigoplus_{k=1}^r
\Ext^1_{\Perv(X_0;\Q)}
\bigl(
i_{k*}\Q_{\{p_k\}},
IC_{X_0}
\bigr).
\]
\end{lemma}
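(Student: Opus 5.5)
The isomorphism is the standard additivity of $\Ext^1$ in the first variable over finite direct sums, valid in any abelian category. The plan is to realize $\Ext^1_{\Perv(X_0;\Q)}(-,IC_{X_0})$ as a contravariant additive functor and check that it converts finite biproducts into finite biproducts. Concretely, I would identify $\Ext^1$ in the heart $\Perv(X_0;\Q)$ with $\Hom_{D^b_c(X_0;\Q)}(-,IC_{X_0}[1])$; this holds for the heart of any t-structure and is recorded in \cite{BBD}. Alternatively, one can work directly with Yoneda extensions.

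The key steps, in order, are as follows.

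First, observe that the direct sum $Q_\Sigma:=\bigoplus_{k=1}^r i_{k*}\Q_{\{p_k\}}$ is a biproduct in $\Perv(X_0;\Q)$. It comes with inclusions $\iota_k$ and projections $\varpi_k$ satisfying $\varpi_k\iota_l=\delta_{kl}\,\id$ and $\sum_k \iota_k\varpi_k=\id$.

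Second, define the natural map
\[
\Ext^1\bigl(Q_\Sigma,IC_{X_0}\bigr)\longrightarrow \bigoplus_{k=1}^r \Ext^1\bigl(i_{k*}\Q_{\{p_k\}},IC_{X_0}\bigr),\qquad \xi\mapsto (\iota_k^*\xi)_k .
\]
Here $\iota_k^*$ denotes pullback of extensions along $\iota_k$, or equivalently precomposition with $\iota_k$ in the derived category. Define the candidate inverse by $(\xi_k)_k\mapsto\sum_k \varpi_k^*\xi_k$, where the sum is Baer sum.

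Third, verify that the two maps are mutually inverse. This uses only the biproduct identities together with bilinearity and functoriality of $\Ext^1$, that is, the fact that $\Hom_{D^b_c}(-,IC_{X_0}[1])$ is an additive functor. Additive functors preserve finite biproducts, and naturality in the first variable is immediate from the construction.

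There is no real obstacle here; the only point needing care is the identification of $\Ext^1$ in the heart with the derived-category Hom. If one prefers to avoid that identification, the Yoneda description suffices: Baer sum makes $\Ext^1(-,IC_{X_0})$ an additive functor on any abelian category, and the argument above goes through verbatim.

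I would also remark that the supports $\{p_k\}$ are disjoint, so each summand can be computed locally near $p_k$. This remark is not needed for the isomorphism itself, which is purely formal.
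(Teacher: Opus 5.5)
Your proposal is correct and is essentially the paper's argument: additivity of $\Ext^1_{\Perv(X_0;\Q)}(-,IC_{X_0})$ over finite direct sums in the first variable. Your justification, via Baer sum or via $\Hom_{D^b_c(X_0;\Q)}(-,IC_{X_0}[1])$ together with the biproduct identities, is in fact cleaner than the paper's appeal to the ``first right-derived functor'' of $\Hom(-,IC_{X_0})$, since that phrasing tacitly requires enough projectives or injectives in $\Perv(X_0;\Q)$, which are not available in general.
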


\begin{proof}
The functor \(\Hom_{\Perv(X_0;\Q)}(-,IC_{X_0})\) sends finite direct sums in the first variable to
finite direct products, and for a finite index set these products coincide with direct sums.
Passing to the first right-derived functor yields the stated decomposition of \(\Ext^1\).
\end{proof}

\begin{lemma}
\label{lem:ext-additivity-mhm}
In the abelian category \(MHM(X_0)\), one has a natural isomorphism
\[
\Ext^1_{MHM(X_0)}
\bigl(
Q_\Sigma^H,
IC^H_{X_0}
\bigr)
\cong
\bigoplus_{k=1}^r
\Ext^1_{MHM(X_0)}
\bigl(
i_{k*}\Q^H_{\{p_k\}}(-1),
IC^H_{X_0}
\bigr).
\]
\end{lemma}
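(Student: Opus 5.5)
The plan follows the proof of Lemma~\ref{lem:ext-additivity-perv}, carried out inside the abelian category \(MHM(X_0)\). There is one extra point to address. Ext-groups of mixed Hodge modules are Yoneda Ext-groups in an abelian category that need not have enough injectives. So instead of ``right-derived functors'' I will argue directly with Yoneda \(\Ext^1\), or equivalently with \(\Hom_{D^bMHM(X_0)}(-,-[1])\). Saito's theorem identifies the latter with \(\Ext^1\) in the abelian category, since \(\Ext^1\) agrees with Yoneda \(\Ext^1\) in the heart of any triangulated category with a t-structure.

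First, I will use that \(\Ext^1_{\mathcal A}(-,B)\) is an additive functor \(\mathcal A^{\mathrm{op}}\to\mathrm{Ab}\) for any abelian category \(\mathcal A\) and object \(B\). Concretely, I will argue in \(D^bMHM(X_0)\), which is additive, so \(\Hom(-,IC^H_{X_0}[1])\) converts the finite biproduct \(Q_\Sigma^H=\bigoplus_k W_k^H\) into a finite product. The isomorphism is then given explicitly by pulling back along the inclusions \(\iota_k:W_k^H\to Q_\Sigma^H\):
\[
\xi\longmapsto (\iota_k^*\xi)_{k=1}^r .
\]
Its inverse sends \((\xi_k)\) to \(\sum_k p_k^*\xi_k\), where \(p_k\) are the projections and the sum is the Baer sum. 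The identities \(\sum_k\iota_kp_k=\id\) and \(p_k\iota_l=\delta_{kl}\id\) show that the two maps are mutually inverse. Naturality follows because pullback is functorial.

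Second, I will take \(W_k^H=i_{k*}\Q^H_{\{p_k\}}(-1)\) from Proposition~\ref{prop:finite-node-block}. With this choice the right-hand side is exactly the stated sum, and the finite product coincides with the direct sum because \(r<\infty\).

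There is no real obstacle. The only delicate point is justifying that \(\Ext^1_{MHM(X_0)}\) is the Yoneda \(\Ext^1\) (or derived-category Hom) and is therefore additive, without appealing to injective resolutions. As noted above, I will handle this through Saito's identification of \(\Ext\) with Hom in \(D^bMHM(X_0)\). As a consistency check, applying the exact faithful functor \(\rat\) makes the isomorphism compatible with Lemma~\ref{lem:ext-additivity-perv}, since \(\rat\) commutes with the pullbacks \(\iota_k^*\).
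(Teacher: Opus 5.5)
Your proposal is correct and follows the same route as the paper: both reduce the claim to the additivity of \(\Ext^1_{MHM(X_0)}(-,IC^H_{X_0})\) on the finite biproduct \(Q_\Sigma^H=\bigoplus_{k} i_{k*}\Q^H_{\{p_k\}}(-1)\). Your version is more careful than the paper's appeal to ``the first derived functor of \(\Hom_{MHM(X_0)}(-,IC^H_{X_0})\)'', which tacitly presumes enough injectives; you instead work with Yoneda \(\Ext^1\), equivalently \(\Hom_{D^bMHM(X_0)}(-,-[1])\) (an identification that is general homological algebra rather than a specific theorem of Saito), and give the explicit inverse \((\xi_k)\mapsto\sum_k p_k^*\xi_k\), which makes the argument fully rigorous.
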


\begin{proof}
The same finite-additivity argument applies in the abelian category \(MHM(X_0)\), using the finite
direct-sum decomposition
\[
Q_\Sigma^H=\bigoplus_{k=1}^r i_{k*}\Q^H_{\{p_k\}}(-1).
\]
Applying the first derived functor of \(\Hom_{MHM(X_0)}(-,IC^H_{X_0})\) gives the result.
\end{proof}

\begin{proposition}
\label{prop:nodewise-ext-decomposition}
The global corrected extension class lies in
\[
\Ext^1_{MHM(X_0)}
\bigl(
Q_\Sigma^H,
IC^H_{X_0}
\bigr),
\]
and under the decomposition of Lemma~\ref{lem:ext-additivity-mhm} it determines, and is
equivalently determined by, a tuple of nodewise extension classes
\[
(\epsilon_1^H,\dots,\epsilon_r^H),
\qquad
\epsilon_k^H\in
\Ext^1_{MHM(X_0)}
\bigl(
i_{k*}\Q^H_{\{p_k\}}(-1),
IC^H_{X_0}
\bigr).
\]
\end{proposition}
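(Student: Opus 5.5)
The plan has two parts. First we show that the corrected global extension defines a class in $\Ext^1_{MHM(X_0)}(Q_\Sigma^H,IC^H_{X_0})$. Then we transport that class through the finite additivity isomorphism of Lemma~\ref{lem:ext-additivity-mhm}.

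For the first part, we take the global corrected extension as a short exact sequence in $MHM(X_0)$,
\[
0\to IC^H_{X_0}\to \mathcal P^H\to Q_\Sigma^H\to 0.
\]
Its quotient term is identified with $Q_\Sigma^H$ by Proposition~\ref{prop:nodewise-support}. Since $MHM(X_0)$ is abelian, we use the Yoneda description of $\Ext^1$ as equivalence classes of such sequences. This gives a well-defined class $\epsilon^H\in\Ext^1_{MHM(X_0)}(Q_\Sigma^H,IC^H_{X_0})$.

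For the second part, we make the isomorphism of Lemma~\ref{lem:ext-additivity-mhm} explicit at the Yoneda level. Let $\iota_k:i_{k*}\Q^H_{\{p_k\}}(-1)\hookrightarrow Q_\Sigma^H$ be the inclusion of the $k$-th summand. We set $\epsilon_k^H:=\iota_k^*\epsilon^H$, the pullback of the extension along $\iota_k$. Concretely, $\epsilon_k^H$ is the fibre product $\mathcal P^H\times_{Q_\Sigma^H} i_{k*}\Q^H_{\{p_k\}}(-1)$, which is again an extension of $i_{k*}\Q^H_{\{p_k\}}(-1)$ by $IC^H_{X_0}$ in $MHM(X_0)$.

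The map $\epsilon^H\mapsto(\iota_k^*\epsilon^H)_k$ is exactly the isomorphism of Lemma~\ref{lem:ext-additivity-mhm}. This holds because $\Ext^1(-,IC^H_{X_0})$ is additive in the first variable, so it sends the biproduct $\bigoplus_k$ to $\prod_k$ with projections $\iota_k^*$, and finite products coincide with finite sums. Hence the class $\epsilon^H$ determines the tuple $(\epsilon_1^H,\dots,\epsilon_r^H)$.

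The inverse direction says that the tuple determines $\epsilon^H$. Given the tuple, we take the direct sum of the nodewise extensions. This is an extension of $Q_\Sigma^H$ by $IC^H_{X_0}^{\oplus r}$, which we then push out along the sum map $IC^H_{X_0}^{\oplus r}\to IC^H_{X_0}$. The result is the Baer-sum description of the inverse isomorphism. Bijectivity is then formal from additivity, so no geometric input is needed here.

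The main obstacle is not the formal Ext decomposition but the first step: knowing that the corrected extension genuinely lives in $MHM(X_0)$ with quotient exactly $Q_\Sigma^H$. At this point in the paper the local blocks of Section~4 (Theorem~\ref{thm:local-odp-extension}) exist, but the global object is only constructed in Section~6.

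I would handle this by one of two routes. The first is to state the proposition conditionally on the global sequence of Theorem~\ref{thm:intro-global}. The second is to argue directly that the local extensions $\mathcal P^H_{\mathrm{loc}}$ at each $p_k$ agree with $IC^H_{X_0}$ away from $p_k$. They can then be glued along the disjoint opens, giving a global extension whose restriction near $p_k$ has class $\epsilon_k^H$. A minor point is to check that the local classes match the restriction classes $\iota_k^*\epsilon^H$. This uses that $\Hom$ and $\Ext^1$ from a point-supported object to $IC^H_{X_0}$ depend only on a neighbourhood of that point, since $i_k^!$ is local.
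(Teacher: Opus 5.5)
Your proposal is correct and follows the paper's proof: you identify the quotient via Proposition~\ref{prop:nodewise-support}, take the Yoneda class of the resulting sequence, and transport it through Lemma~\ref{lem:ext-additivity-mhm}. You make that lemma explicit by pulling back along the inclusions $\iota_k$ and inverting by the Baer-sum pushout, which is more careful than the paper's ``first derived functor'' phrasing. The paper's proof is implicitly conditional on the global sequence (``any such global corrected extension''), exactly as in your first route. Your analytic-gluing alternative is therefore unnecessary, and it would need extra care, since the Section~4 blocks live on analytic neighbourhoods while $MHM(X_0)$ is algebraic.
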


\begin{proof}
By Proposition~\ref{prop:nodewise-support}, the singular quotient object in the global corrected
extension is \(Q_\Sigma^H\). Any such global corrected extension therefore defines a class in
\[
\Ext^1_{MHM(X_0)}(Q_\Sigma^H,IC^H_{X_0}).
\]
Lemma~\ref{lem:ext-additivity-mhm} identifies this group with the direct sum of the nodewise
Ext-groups, so the global class determines, and is equivalently determined by, a tuple of nodewise
classes \((\epsilon_1^H,\dots,\epsilon_r^H)\).
\end{proof}

\begin{remark}
\label{rem:nodewise-class-meaning}
Proposition~\ref{prop:nodewise-ext-decomposition} should be understood as saying that the global
corrected extension is assembled from localized node-to-bulk coupling data. At this level of
generality we do not yet claim a basis theorem or dimension count for the individual summands;
those belong to the stronger refinement developed below.
\end{remark}

\subsection{Strong nodewise Ext theorem}

We now sharpen the nodewise extension-theoretic picture on the perverse side with the goal of
showing that each ordinary double point contributes a one-dimensional local extension channel.

\begin{lemma}
\label{lem:nodewise-perverse-ext-localization-strong}
For each node \(p_k\in \Sigma\), the perverse extension group
\[
\Ext^1_{\Perv(X_0;\Q)}
\bigl(
i_{k*}\Q_{\{p_k\}},
IC_{X_0}
\bigr)
\]
is identified with the corresponding local extension group in a sufficiently small analytic
neighborhood of \(p_k\).
\end{lemma}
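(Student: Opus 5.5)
The plan is to rewrite the perverse \(\Ext^1\) as a Hom in the derived category and then localize using adjunction for the closed point. Since \(\Perv(X_0;\Q)\) is the heart of a bounded t-structure on \(D^b_c(X_0;\Q)\), Yoneda \(\Ext^1\) in the heart agrees with the derived-category group:
\[
\Ext^1_{\Perv(X_0;\Q)}\bigl(i_{k*}\Q_{\{p_k\}},IC_{X_0}\bigr)\cong \Hom_{D^b_c(X_0;\Q)}\bigl(i_{k*}\Q_{\{p_k\}},IC_{X_0}[1]\bigr).
\]
This holds for \(\Ext^1\), though not necessarily for higher Ext groups \cite{BBD}. The adjunction \(i_{k*}\dashv i_k^!\) for the closed immersion \(i_k\) then rewrites the right-hand side as \(\Hom_{D^b(\{p_k\})}(\Q,i_k^!IC_{X_0}[1])=H^1(i_k^!IC_{X_0})\). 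This group is the costalk cohomology of \(IC_{X_0}\) at \(p_k\).

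Next we show that this costalk depends only on a small neighbourhood of \(p_k\). Choose an open analytic (or Zariski) neighbourhood \(V_k\ni p_k\) with \(V_k\cap\Sigma=\{p_k\}\), and write \(\iota_k:V_k\hookrightarrow X_0\). The functor \(i^!\) commutes with restriction to open sets, and \(\iota_k^*IC_{X_0}\cong IC_{V_k}\) because intermediate extension is local. Hence \(H^1(i_k^!IC_{X_0})\cong H^1(i_{k}^{V_k,!}IC_{V_k})\). Running the same derived-category argument backwards on \(V_k\) identifies this with \(\Ext^1_{\Perv(V_k;\Q)}(\Q_{\{p_k\}},IC_{V_k})\), which is the local extension group. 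To obtain the identification as restriction of extensions, we check that the composite isomorphism equals the map \(\iota_k^*\) on \(\Ext^1\). This follows from naturality of the adjunction isomorphisms under open restriction.

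As a sanity check, and for later use in the strong nodewise theorem, we can compute this local group. By the isolated-singularity description, \(H^1(i^!IC)\) is identified via the attaching triangle \(i^!\to i^*\to i^*Rj_*j^*\) with \(H^0(i^*Rj_*\Q_U[3])\) modulo the truncation. This is \(H^3\) of the link \(S^2\times S^3\) of a threefold ODP, which is \(\Q\). So the localized group is one-dimensional, consistent with the zig-zag picture of Lemma~\ref{lem:local-odp-zigzag} and \cite{MacPhersonVilonen1986}. The explicit degree bookkeeping here needs care, but it is not needed for the localization statement itself.

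The main obstacle is the first step: justifying that \(\Ext^1\) in the perverse heart equals \(\Hom(-,-[1])\) in \(D^b_c\). We will cite BBD's general fact that \(\Ext^1\) in the heart of a t-structure is computed in the ambient triangulated category. We will also make sure the analytic-versus-algebraic neighbourhood issue is harmless, since both sides are constructible and the costalk is topological.
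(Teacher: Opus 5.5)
Your proof is correct, but it takes a different route from the paper's.

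The paper argues directly with extensions. Since the quotient $i_{k*}\Q_{\{p_k\}}$ is supported at $p_k$, every extension is canonically $IC_{X_0}$ away from $p_k$, so restricting to a small neighbourhood ``preserves the extension problem.'' Made precise, this relies on two facts: restriction to an open set is exact on $\Perv$, and perverse sheaves and their morphisms glue (the stack property of \cite{BBD}). Together these make restriction bijective on extension classes.

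You instead identify the group with $\Hom_{D^b_c}(i_{k*}\Q_{\{p_k\}},IC_{X_0}[1])$. You then use $i_{k*}\dashv i_k^!$ to reduce to the costalk $H^1(i_k^!IC_{X_0})$, and invoke locality of $i_k^!$ and of intermediate extension. All of these steps are standard. The compatibility with restriction that you flag becomes automatic once you note two things: $i_{k*}\Q_{\{p_k\}}\cong\iota_{k!}$ of the skyscraper on $V_k$, and $\iota_k^!=\iota_k^*$. Then the adjunction isomorphism $\Hom_{X_0}(\iota_{k!}C,B[1])\cong\Hom_{V_k}(C,\iota_k^*B[1])$ is literally restriction.

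The paper's argument is shorter and needs no computation. Yours is fully rigorous and works for any open neighbourhood of $p_k$. It also yields the explicit formula $H^1(i_k^!IC_{X_0})\cong H^0(i_k^*Rj_*\Q_U[3])\cong H^3(S^2\times S^3;\Q)\cong\Q$. This gives the dimension count of Corollary~\ref{cor:local-odp-ext-dim-one} at once, bypassing the zig-zag map of Proposition~\ref{prop:local-odp-ext-zigzag-map} and the bound of Proposition~\ref{prop:local-odp-ext-dim-upper-bound}. You would still need the non-splitness of Proposition~\ref{prop:local-odp-ext-nonzero} to conclude that the corrected class is a generator.
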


\begin{proof}
Because \(i_{k*}\Q_{\{p_k\}}\) is supported at the single point \(p_k\), every extension of
\(i_{k*}\Q_{\{p_k\}}\) by \(IC_{X_0}\) is local near \(p_k\). Restricting to a sufficiently small
analytic neighborhood of \(p_k\) therefore preserves the extension problem, and the global
extension group identifies with the corresponding local perverse extension group.
\end{proof}

\begin{proposition}
\label{prop:local-odp-ext-nonzero}
Let \(p\) be an ordinary double point and let \(X_{0,\mathrm{loc}}\) denote a sufficiently small
analytic neighborhood of \(p\) in \(X_0\). Then the corrected local perverse ODP extension of
Section~4 defines a nonzero class
\[
e_{\mathrm{loc}}
\in
\Ext^1_{\Perv(X_{0,\mathrm{loc}};\Q)}
\bigl(
i_*\Q_{\{p\}},
IC_{\mathrm{loc}}
\bigr).
\]
\end{proposition}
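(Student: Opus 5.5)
The plan is to show that the extension constructed in Theorem~\ref{thm:local-odp-extension} and realized in Theorem~\ref{thm:local-odp-realization} does not split in $\Perv(X_{0,\mathrm{loc}};\Q)$. I would use the standard criterion for a short exact sequence
\[
0\to IC_{\mathrm{loc}}\to \mathcal P_{\mathrm{loc}}\to i_*\Q_{\{p\}}\to 0 .
\]
Its class $e_{\mathrm{loc}}$ vanishes if and only if the quotient map admits a section, that is, if and only if
\[
\Hom(i_*\Q_{\{p\}},\mathcal P_{\mathrm{loc}})\to \Hom(i_*\Q_{\{p\}},i_*\Q_{\{p\}})\cong\Q
\]
is surjective. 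By adjunction, $\Hom_{\Perv}(i_*\Q_{\{p\}},\mathcal P_{\mathrm{loc}})\cong H^0(i^!\mathcal P_{\mathrm{loc}})$. So it suffices to show that $H^0(i^!\mathcal P_{\mathrm{loc}})=0$, or at least that no element of it maps to the identity of the skyscraper. Equivalently, $\mathcal P_{\mathrm{loc}}$ should have no nonzero subobject supported at $p$.

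To compute $i^!\mathcal P_{\mathrm{loc}}$, I would use the cone description $\mathcal P_{\mathrm{loc}}=\Cone(\var_F)[-1]$ (on the Hodge side, Proposition~\ref{prop:local-odp-cone}). Applying the exact functor $i^!$ to this triangle gives a long exact sequence
\[
H^{-1}(i^!\phi)\to H^{-1}(i^!\psi)\to H^0(i^!\mathcal P_{\mathrm{loc}})\to H^0(i^!\phi)\xrightarrow{\;H^0(i^!\var)\;} H^0(i^!\psi).
\]
Since $\phi_\pi(F)$ is the skyscraper $i_*\Q_{\{p\}}$, we have $i^!\phi=\Q$ in degree $0$, so $H^{-1}(i^!\phi)=0$. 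The costalk $H^{-1}(i^!\psi)$ is the degree-$(-1)$ costalk of the nearby cycles. I would identify it with local cohomology of the Milnor fiber relative to its boundary in the appropriate degree, and check from the topology of a threefold ODP (Milnor fiber $\simeq S^3$, boundary $S^2\times S^3$) that the relevant piece vanishes or injects, so that $H^0(i^!\mathcal P_{\mathrm{loc}})$ reduces to $\ker H^0(i^!\var)$.

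The key input is that $H^0(i^!\var)$ is nonzero, hence injective on the one-dimensional vanishing line. Under the standard identification this map is the classical variation map $H^3_c(F_p)\to H^3(F_p)$, or equivalently $H^3(F_p,\partial F_p)\to H^3(F_p)$, of the isolated singularity. For an isolated hypersurface singularity this map is an isomorphism, since it is Poincaré--Lefschetz dual to the intersection form composed with the Picard--Lefschetz variation. For an ODP in odd complex dimension it sends the generator $\delta$ to $\pm\delta$. This is also the nontriviality of $\Var^H$ on the ODP line already used in Proposition~\ref{prop:vloc}. Granting this, $H^0(i^!\mathcal P_{\mathrm{loc}})=0$, the extension does not split, and $e_{\mathrm{loc}}\neq 0$.

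The main obstacle is the bookkeeping of the costalk $H^{-1}(i^!\psi)$ with correct perverse degree shifts. If it does not vanish outright, I would argue instead that its image in $H^0(i^!\mathcal P_{\mathrm{loc}})$ composes to zero with the quotient map to $i_*\Q_{\{p\}}$, because it comes from $IC_{\mathrm{loc}}$. As a fallback, I would use the MacPherson--Vilonen zig-zag of $\mathcal P_{\mathrm{loc}}$: a split extension would have zig-zag equal to the direct sum of the zig-zag of $IC_{\mathrm{loc}}$ and $(0,\Q,\Q,0,\id,0)$ from Lemma~\ref{lem:local-odp-zigzag}. I would then show that the map $\beta$ in the zig-zag of $\mathcal P_{\mathrm{loc}}$ differs from that direct sum because of the nonzero variation, which again contradicts splitting.
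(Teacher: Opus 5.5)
Your route is genuinely different from the paper's, and it is the more substantive one. The paper's proof only asserts that the Section~4 extension is non-split ``by construction,'' and nothing in Section~4 actually verifies this. You instead reduce non-splitting to the vanishing of $\Hom(i_*\Q_{\{p\}},\mathcal P_{\mathrm{loc}})\cong H^0(i^!\mathcal P_{\mathrm{loc}})$, and you compute that costalk from the variation triangle. This makes the claim rest on one classical input: $\var_F$ is a monomorphism at the node. Equivalently, the variation of the isolated singularity is an isomorphism, or the constant perverse sheaf on the smooth total space has no nonzero subobject supported on $X_0$. You also obtain the stronger fact that $\mathcal P_{\mathrm{loc}}$ has no nonzero point-supported subobject.

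Two points need repair.

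\textbf{The shift.} The shift you inherit from the paper is wrong. $\Cone(\var_F)[-1]$ restricts to $\Q_U[2]$ on the smooth locus, so it is not perverse and cannot be the middle term of the extension. Your adjunction $\Hom_{\Perv}(i_*\Q_{\{p\}},-)\cong H^0(i^!-)$ is therefore being applied to the wrong object. The perverse object is $\Cone(\var_F)\cong\operatorname{coker}(\var_F)$. For it the exact sequence is $H^0(i^!\phi)\to H^0(i^!\psi)\to H^0(i^!\mathcal P_{\mathrm{loc}})\to H^1(i^!\phi)=0$, so $H^0(i^!\mathcal P_{\mathrm{loc}})=\operatorname{coker}H^0(i^!\var_F)$. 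This cokernel vanishes for two reasons. First, $H^0(i^!\var_F)$ is injective, because $\var_F$ is a monomorphism of perverse sheaves and $H^0 i^!$ is left exact on them. Second, both sides are one-dimensional, with $H^0(i^!\psi)\cong H^3(F_p,\partial F_p)\cong\Q$. Your ``main obstacle'' then plays no role. In any case $H^{-1}(i^!\psi)\cong H^2(F_p,\partial F_p)\cong H_4(F_p)=0$.

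\textbf{The direction of the variation.} $H^0(i^!\var_F)$ is the cohomological variation $H^3(F_p)\to H^3(F_p,\partial F_p)$. The map $H^3(F_p,\partial F_p)\to H^3(F_p)$ that you call equivalent is the forgetful map. That map is the intersection form on $H_3(F_p)$, which is zero for a threefold node since $\delta\cdot\delta=0$. Using it would kill the argument.
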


\begin{proof}
Section~4 constructs the corrected local perverse ODP extension with point-supported quotient
\(i_*\Q_{\{p\}}\). By construction, this extension is non-split, hence determines a nonzero class in
the stated local extension group.
\end{proof}

\begin{proposition}
\label{prop:local-odp-ext-zigzag-map}
There is a natural map from the local perverse extension group
\[
\Ext^1_{\Perv(X_{0,\mathrm{loc}};\Q)}
\bigl(
i_*\Q_{\{p\}},
IC_{\mathrm{loc}}
\bigr)
\]
to the quotient space
\[
B/\operatorname{Im}(\beta),
\]
where
\[
Z(IC_{\mathrm{loc}})=(L,A,B,\alpha,\beta,\gamma)
\]
is the MacPherson--Vilonen zig-zag of the local intersection-complex perverse sheaf.
\end{proposition}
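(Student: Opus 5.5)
The natural route is the MacPherson--Vilonen description of perverse sheaves on a space stratified by an open stratum and an isolated point. In that description a perverse sheaf \(K\) on \(X_{0,\mathrm{loc}}\) corresponds to its zig-zag
\[
Z(K)=(L,A,B,\alpha,\beta,\gamma),
\]
where \(L=j^*_{\mathrm{loc}}K\) is the local system on \(U_{\mathrm{loc}}\), and \(A=H^0(i_{\mathrm{loc}}^!K)\), \(B=H^0(i_{\mathrm{loc}}^*K)\) with the connecting maps among them. This is the same formalism used in Lemma~\ref{lem:local-odp-zigzag}. The plan is to translate an extension class in \(\Ext^1_{\Perv}(i_*\Q_{\{p\}},IC_{\mathrm{loc}})\) into a zig-zag-level datum and to read off a well-defined element of \(B/\operatorname{Im}(\beta)\).

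Given an extension \(0\to IC_{\mathrm{loc}}\to E\to i_*\Q_{\{p\}}\to 0\), first apply the exact functor \(j_{\mathrm{loc}}^*\); it gives \(j_{\mathrm{loc}}^*E\cong L\), so the open-stratum data of \(E\) and \(IC_{\mathrm{loc}}\) agree. Next apply \(i_{\mathrm{loc}}^*\) and \(i_{\mathrm{loc}}^!\). Using the zig-zag of \(K_{\mathrm{ODP}}=i_*\Q_{\{p\}}\) from Lemma~\ref{lem:local-odp-zigzag}, namely \((0,\Q,\Q,0,\id,0)\), the long exact sequences give short exact sequences
\[
0\to A\to A_E\to\Q\to 0,\qquad B\to B_E\to \Q\to 0 .
\]
Left exactness at \(A\) holds because \(IC_{\mathrm{loc}}\) has no subobject supported at \(p\). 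Choose \(a\in A_E\) lifting \(1\in\Q\). Its image \(\beta_E(a)\in B_E\) maps to \(1\) in the \(\Q\) quotient, by compatibility with the identity map on the point block. Hence \(\beta_E(a)-b_1\), for a chosen lift \(b_1\) of \(1\) in \(B_E\), lies in the image of \(B\). Rather than choosing \(b_1\), the cleaner formulation is to use the morphism of zig-zags. The map \(E\to i_*\Q_{\{p\}}\) gives a commutative square, and a diagram chase produces an element of \(B\) determined up to \(\beta(A)\) (from changing the lift \(a\)) and up to the splitting ambiguity. We then define the map by \([E]\mapsto [\text{this element}]\in B/\operatorname{Im}(\beta)\).

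Two checks remain. \textbf{Well-definedness}: changing the lift \(a\) by an element of \(A\) changes the output by an element of \(\operatorname{Im}\beta\), and isomorphic extensions give the same class. A split extension gives a lift with \(\beta_E(a)\) in the split copy of \(\Q\), hence the output \(0\). \textbf{Naturality and linearity}: Baer sums correspond to summing lifts, and scalar multiplication is compatible. Both follow from functoriality of \(i^*,i^!\) and of the natural transformation \(i^!\to i^*\) defining \(\beta\).

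An alternative, more canonical description is to apply \(\Hom(i_*\Q_{\{p\}},-)\) to the triangle \(i_*i^!IC\to IC\to Rj_*j^*IC\). Using \(\Ext^1(i_*\Q,IC)\cong H^1(i^!IC_{\mathrm{loc}})\), which is computed by the cokernel of \(H^0(i^*IC)\to H^0(i^*Rj_*L)\), and identifying this cokernel with \(B/\operatorname{Im}\beta\) in MacPherson--Vilonen's normalization, would give the map directly. I expect the main obstacle to be matching conventions: the \(B\) of the zig-zag as defined in the statement must be identified with the correct cohomology group (\(H^0(i^*Rj_*j^*)\) versus \(H^0(i^*)\)), so that the chase lands in \(B\) modulo \(\beta(A)\) and not in some other quotient. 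I would first fix this via the MacPherson--Vilonen exact sequence used in the proof of Lemma~\ref{lem:local-odp-zigzag}.
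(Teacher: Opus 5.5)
Your main route is the same as the paper's: lift $1\in\Q$ through $A_E\to\Q$ and $B_E\to\Q$, then read off the discrepancy in $B$ modulo $\operatorname{Im}\beta$. It fails at the step ``choose $a\in A_E$ lifting $1$''. By adjunction $A_E=H^0(i^!E)=\Hom(i_*\Q_{\{p\}},E)$, and the map $A_E\to\Q=\Hom(i_*\Q_{\{p\}},i_*\Q_{\{p\}})$ is composition with $E\to i_*\Q_{\{p\}}$. A lift of $1$ is therefore exactly a splitting of $E$. The correct sequence is
\[
0\to A\to A_E\to\Q\xrightarrow{\ \delta\ }\Ext^1_{\Perv}\bigl(i_*\Q_{\{p\}},IC_{\mathrm{loc}}\bigr),\qquad \delta(1)=[E],
\]
so $A_E\to\Q$ is surjective only when $[E]=0$, and your recipe is defined on the zero class alone. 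There is a second problem with the argument as written: the auxiliary lift $b_1\in B_E$ is determined only up to all of $B$. So even when $a$ exists, the output is controlled modulo $B$, not modulo $\operatorname{Im}\beta$; the ``splitting ambiguity'' you mention is never removed. The paper's proof makes the same move and so does not supply the missing step. It declares $0\to A\to A_E\to\Q\to 0$ exact, splits it, and dismisses the change $u'=u+b-\beta(a)$ (with $b\in B$ arbitrary) by assertion.

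With the conventions you and the paper use ($A=H^0(i^!K)$, $B=H^0(i^*K)$), the target is in fact zero. Near the isolated point $IC_{\mathrm{loc}}=\tau_{\le -1}Rj_*\Q_{U_{\mathrm{loc}}}[3]$, so $B=H^0(i^*IC_{\mathrm{loc}})=0$. Likewise $A=\Hom(i_*\Q_{\{p\}},IC_{\mathrm{loc}})=0$; this, rather than left exactness, is what your remark that $IC_{\mathrm{loc}}$ has no subobject supported at $p$ actually proves. Hence the statement holds only vacuously, via the zero map. No map to $B/\operatorname{Im}\beta$ can be injective on the one-dimensional $\Ext^1$, which is what the paper's subsequent dimension bound implicitly needs.

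Your alternative paragraph has the right computation, using $H^1(i^*IC_{\mathrm{loc}})=0$:
\[
\Ext^1_{\Perv}\bigl(i_*\Q_{\{p\}},IC_{\mathrm{loc}}\bigr)\cong H^1(i^!IC_{\mathrm{loc}})\cong\operatorname{coker}\bigl(\gamma:B\to H^0(i^*Rj_*j^*IC_{\mathrm{loc}})\bigr)\cong H^3(S^2\times S^3;\Q)\cong\Q.
\]
But the identification you then hope for is false. Exactness of the zig-zag gives $B/\operatorname{Im}\beta\cong\operatorname{Im}\gamma$, which is precisely the part of $H^0(i^*Rj_*j^*IC_{\mathrm{loc}})$ that $\operatorname{coker}\gamma$ discards. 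Your worry about which cohomology group the zig-zag term denotes is the correct diagnosis. The natural map, which is in fact an isomorphism, goes to $\operatorname{coker}\gamma$, not to $B/\operatorname{Im}\beta$.
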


\begin{proof}
Let
\[
0\to IC_{\mathrm{loc}}\to \mathcal E\to i_*\Q_{\{p\}}\to 0
\]
be a local perverse extension. Applying the MacPherson--Vilonen zig-zag construction yields a
middle zig-zag
\[
Z(\mathcal E)=(L_E,A_E,B_E,\alpha_E,\beta_E,\gamma_E).
\]
Since the quotient \(i_*\Q_{\{p\}}\) has zero open part, its zig-zag is
\[
Z(i_*\Q_{\{p\}})=(0,\Q,\Q,0,\id,0),
\]
so the open part of the middle zig-zag is forced to agree with that of \(IC_{\mathrm{loc}}\):
\[
L_E\cong L.
\]
Thus all extension data occur in the point terms \(A_E,B_E\) and the map \(\beta_E\).

Choose temporary splittings of the short exact sequences of vector spaces
\[
0\to A\to A_E\to \Q\to 0,
\qquad
0\to B\to B_E\to \Q\to 0,
\]
so that
\[
A_E\cong A\oplus \Q,
\qquad
B_E\cong B\oplus \Q.
\]
With respect to these splittings, the quotient condition implies that \(\beta_E\) induces
\(\id:\Q\to\Q\) on the quotient, hence \(\beta_E\) has block form
\[
\beta_E=
\begin{pmatrix}
\beta & u\\
0 & 1
\end{pmatrix},
\qquad
u\in \Hom(\Q,B)\cong B.
\]

We claim that the class of \(u\) modulo \(\operatorname{Im}(\beta)\) is independent of the chosen
splittings. Indeed, replacing the chosen splitting of \(A_E\) by one differing by
\(a\in \Hom(\Q,A)\cong A\), and the chosen splitting of \(B_E\) by one differing by
\(b\in \Hom(\Q,B)\cong B\), changes the block matrix of \(\beta_E\) by conjugation with the
corresponding upper-triangular change-of-basis matrices. A direct calculation shows that the new
off-diagonal term is
\[
u' = u + b - \beta(a).
\]
In particular, changing the splitting of \(A_E\) changes \(u\) by an element of
\(\operatorname{Im}(\beta)\), while changing the splitting of \(B_E\) changes the chosen
representative of the same extension class in \(B\). Thus the induced class
\[
[u]\in B/\operatorname{Im}(\beta)
\]
does not depend on the choice of splittings.

Finally, if two perverse extensions are equivalent, then the induced isomorphism of middle terms
respects the subobject \(IC_{\mathrm{loc}}\) and the quotient \(i_*\Q_{\{p\}}\), and therefore
induces the same class \([u]\) in the quotient \(B/\operatorname{Im}(\beta)\). Hence the
assignment
\[
[\mathcal E]\longmapsto [u]
\]
is well defined on extension classes. This defines the required natural map.
\end{proof}
Thus the local perverse extension class is controlled by a single quotient of the zig-zag point data,
so the ordinary double point contributes at most one local extension channel.
\begin{proposition}
\label{prop:local-odp-ext-dim-upper-bound}
One has
\[
\dim_{\Q}
\Ext^1_{\Perv(X_{0,\mathrm{loc}};\Q)}
\bigl(
i_*\Q_{\{p\}},
IC_{\mathrm{loc}}
\bigr)
\le 1.
\]
\end{proposition}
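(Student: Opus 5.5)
The plan is to bypass the zig-zag map of Proposition~\ref{prop:local-odp-ext-zigzag-map}, whose injectivity would need separate proof. Instead I would compute the extension group directly through the derived category. By \cite[3.1.17]{BBD}, for objects of the heart one has
\[
\Ext^1_{\Perv(X_{0,\mathrm{loc}};\Q)}\bigl(i_*\Q_{\{p\}},IC_{\mathrm{loc}}\bigr)\cong \Hom_{D^b_c(X_{0,\mathrm{loc}};\Q)}\bigl(i_*\Q_{\{p\}},IC_{\mathrm{loc}}[1]\bigr).
\]
Adjunction $(i_*,i^!)$ turns the right-hand side into $H^1(i^!IC_{\mathrm{loc}})$, a finite-dimensional $\Q$-vector space. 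So it suffices to show that $\dim_\Q H^1(i^!IC_{\mathrm{loc}})\le 1$.

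Next I would apply $i^*$ to the attaching triangle $i_*i^!\to \id\to Rj_*j^*\xrightarrow{+1}$ for $IC_{\mathrm{loc}}$. This gives the triangle
\[
i^!IC_{\mathrm{loc}}\to i^*IC_{\mathrm{loc}}\to i^*Rj_*\Q_{U_{\mathrm{loc}}}[3]\xrightarrow{+1}.
\]
The defining support and cosupport conditions of the intermediate extension across an isolated point say that $i^*IC_{\mathrm{loc}}$ lives in degrees $\le -1$ and $i^!IC_{\mathrm{loc}}$ in degrees $\ge 1$. Taking the long exact cohomology sequence in degrees $0$ and $1$, the vanishing of $H^0$ and $H^1$ of $i^*IC_{\mathrm{loc}}$ yields
\[
H^1(i^!IC_{\mathrm{loc}})\cong H^0\bigl(i^*Rj_*\Q_{U_{\mathrm{loc}}}[3]\bigr)\cong H^3(L_p;\Q).
\]
Here $L_p$ is the link of $p$ in $X_{0,\mathrm{loc}}$, since the stalk of $Rj_*$ at $p$ computes the cohomology of a punctured conic neighborhood.

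Finally I would identify the link. For the threefold ordinary double point $\{x_1^2+\dots+x_4^2=0\}\subset\C^4$, the link is a $5$-manifold which, by Milnor's description \cite{MilnorSingularPoints}, is the unit tangent bundle of $S^3$, i.e.\ $S^2\times S^3$. Hence $H^3(L_p;\Q)\cong\Q$. This gives $\dim=1$, and in particular the upper bound. Combined with Proposition~\ref{prop:local-odp-ext-nonzero}, it also shows that $e_{\mathrm{loc}}$ spans the group.

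The main obstacle I expect is the degree bookkeeping. I must check that the normalization $F=\Q[3]$ and the BBD conventions put the truncation of $IC_{\mathrm{loc}}$ exactly at $i^*\in D^{\le -1}$ and $i^!\in D^{\ge 1}$, so that the boundary map really identifies $H^1(i^!)$ with $H^0$ of the stalk of $Rj_*$, which is $H^3$ of the link. If I get an off-by-one, I would land on $H^2(L_p)$ or $H^4(L_p)$, which are also one-dimensional. So the bound is robust, but the precise identification should be stated carefully. As a consistency check with Proposition~\ref{prop:local-odp-ext-zigzag-map}, I would verify that this one-dimensional space matches $B/\operatorname{Im}(\beta)$ for the zig-zag of $IC_{\mathrm{loc}}$, i.e.\ $\operatorname{coker}(\beta)$.
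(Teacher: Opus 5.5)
Your argument is correct, and it takes a different route from the paper's. The paper never computes $\Ext^1$ directly. It sends an extension through the zig-zag assignment $[\mathcal E]\mapsto[u]\in B/\operatorname{Im}(\beta)$ of Proposition~\ref{prop:local-odp-ext-zigzag-map}. It then bounds $B/\operatorname{Im}(\beta)\cong\operatorname{Im}(\gamma)\subseteq H^0(i^*Rj_*j^*IC_{\mathrm{loc}})\cong H^3(S^2\times S^3;\Q)\cong\Q$ and transfers this bound to $\Ext^1$, which tacitly assumes the assignment is injective; that is never proved. You replace this with a chain of isomorphisms, $\Ext^1_{\Perv}(i_*\Q_{\{p\}},IC_{\mathrm{loc}})\cong\Hom(i_*\Q_{\{p\}},IC_{\mathrm{loc}}[1])\cong H^1(i^!IC_{\mathrm{loc}})\cong H^0(i^*Rj_*\Q_{U_{\mathrm{loc}}}[3])\cong H^3(L_p;\Q)$. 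Your degree bookkeeping ($i^*IC_{\mathrm{loc}}\in D^{\le -1}$, $i^!IC_{\mathrm{loc}}\in D^{\ge 1}$) is exactly right. Both proofs rest on the same topological input, the link $S^2\times S^3$. Yours needs no injectivity statement and gives $\dim=1$ on its own, so Proposition~\ref{prop:local-odp-ext-nonzero} is needed only to see that $e_{\mathrm{loc}}$ is a generator.

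Your computation also shows that your proposed consistency check would fail, and why the zig-zag route is problematic as written. For the intermediate extension across a point, the point terms $A=H^0(i^!IC_{\mathrm{loc}})$ and $B=H^0(i^*IC_{\mathrm{loc}})$ both vanish, so $B/\operatorname{Im}(\beta)=0$. The one-dimensional space you found is instead $\operatorname{coker}(\gamma)=H^0(i^*Rj_*j^*IC_{\mathrm{loc}})/\operatorname{Im}(\gamma)\cong H^1(i^!IC_{\mathrm{loc}})$, the last term of the MacPherson--Vilonen six-term sequence. So the correct comparison is with $\operatorname{coker}(\gamma)$, not $\operatorname{coker}(\beta)$. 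One small slip: $H^4(S^2\times S^3;\Q)=0$, not $\Q$. This only touches your hypothetical off-by-one scenario, not the bound.
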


\begin{proof}
By Proposition~\ref{prop:local-odp-ext-zigzag-map}, a local perverse extension class determines a
class in
\[
B/\operatorname{Im}(\beta),
\]
where
\[
Z(IC_{\mathrm{loc}})=(L,A,B,\alpha,\beta,\gamma).
\]
By exactness of the MacPherson--Vilonen zig-zag sequence, one has
\[
B/\operatorname{Im}(\beta)\cong \operatorname{Im}(\gamma)
\subseteq
H^0(i^*Rj_*j^*IC_{\mathrm{loc}}).
\]
It is therefore enough to bound the dimension of
\[
H^0(i^*Rj_*j^*IC_{\mathrm{loc}}).
\]

Now
\[
j^*IC_{\mathrm{loc}}=\Q_{U_{\mathrm{loc}}}[3],
\]
so
\[
H^0(i^*Rj_*j^*IC_{\mathrm{loc}})
\cong
H^0(i^*Rj_*\Q_{U_{\mathrm{loc}}}[3])
\cong
H^3(U_{\mathrm{loc}}\cap B;\Q),
\]
for a sufficiently small Milnor ball \(B\) around the node. Here
\(U_{\mathrm{loc}}\cap B\) is homotopy equivalent to the local link complement, and in the
ordinary double point case on a threefold this link is homotopy equivalent to \(S^2\times S^3\).
Consequently,
\[
H^3(U_{\mathrm{loc}}\cap B;\Q)\cong \Q.
\]
In particular,
\[
\dim_{\Q}H^0(i^*Rj_*j^*IC_{\mathrm{loc}})=1.
\]
Since
\[
B/\operatorname{Im}(\beta)\cong \operatorname{Im}(\gamma)
\subseteq
H^0(i^*Rj_*j^*IC_{\mathrm{loc}}),
\]
it follows that
\[
\dim_{\Q}(B/\operatorname{Im}(\beta))\le 1.
\]
Therefore the local perverse extension group also has dimension at most one.
\end{proof}

\begin{corollary}
\label{cor:local-odp-ext-dim-one}
One has
\[
\dim_{\Q}
\Ext^1_{\Perv(X_{0,\mathrm{loc}};\Q)}
\bigl(
i_*\Q_{\{p\}},
IC_{\mathrm{loc}}
\bigr)
=1,
\]
and the local corrected perverse ODP class \(e_{\mathrm{loc}}\) is its generator.
\end{corollary}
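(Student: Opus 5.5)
The plan is to squeeze the dimension between the upper bound of Proposition~\ref{prop:local-odp-ext-dim-upper-bound} and a lower bound coming from the nonvanishing of $e_{\mathrm{loc}}$ in Proposition~\ref{prop:local-odp-ext-nonzero}. Proposition~\ref{prop:local-odp-ext-dim-upper-bound} gives
\[
\dim_{\Q}\Ext^1_{\Perv(X_{0,\mathrm{loc}};\Q)}\bigl(i_*\Q_{\{p\}},IC_{\mathrm{loc}}\bigr)\le 1.
\]
Proposition~\ref{prop:local-odp-ext-nonzero} gives a nonzero element $e_{\mathrm{loc}}$ of this group, so the dimension is at least one. Hence the dimension is exactly one. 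Any nonzero vector in a one-dimensional $\Q$-vector space spans it, so $e_{\mathrm{loc}}$ is a generator.

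The upper bound rests on the natural map of Proposition~\ref{prop:local-odp-ext-zigzag-map}, which sends $\Ext^1$ to $B/\operatorname{Im}(\beta)$. That map is only shown to be well defined, not injective, so I would first check injectivity. The approach is MacPherson--Vilonen's equivalence between perverse sheaves with fixed open part and zig-zag data. If $[u]=0$, the splittings can be adjusted so that $u=0$. Then $\beta_E=\beta\oplus\id$, so $Z(\mathcal E)\cong Z(IC_{\mathrm{loc}})\oplus Z(i_*\Q_{\{p\}})$, compatibly with the sub and quotient. By the equivalence, the extension splits. This upgrades the upper bound on $B/\operatorname{Im}(\beta)$ to an upper bound on $\Ext^1$ itself.

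The main obstacle is the lower bound, namely that $e_{\mathrm{loc}}\neq 0$. This means showing that the local corrected extension $\mathcal P^H_{\mathrm{loc}}$ of Theorem~\ref{thm:local-odp-extension}, after realization via Theorem~\ref{thm:local-odp-realization}, is non-split. I would argue by contradiction. If the extension split, we would have $\mathcal P_{\mathrm{loc}}\cong IC_{\mathrm{loc}}\oplus i_*\Q_{\{p\}}$. Then by Proposition~\ref{prop:local-odp-cone}, the map $\rat(v_{\mathrm{loc}})$ would factor so that its composite with $\rat(u_{\mathrm{loc}})$ is zero. That contradicts the normalization $v_{\mathrm{loc}}u_{\mathrm{loc}}=N$ of Proposition~\ref{prop:local-normalization}, together with the nonvanishing in Proposition~\ref{prop:uloc-nonzero}, provided $N$ acts nontrivially on the ODP sector. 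That proviso is supplied by the rank-one Picard--Lefschetz monodromy.

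As a cross-check, I would also test the zig-zag directly. The exactness step in the upper bound identifies $B/\operatorname{Im}(\beta)$ with $\operatorname{Im}(\gamma)\subseteq H^3(S^2\times S^3;\Q)\cong\Q$. So I would verify that $\gamma$ for $IC_{\mathrm{loc}}$ is nonzero, i.e. that $IC_{\mathrm{loc}}$ stops at truncation degree below $H^3$ of the link. Then I would check that the off-diagonal entry $u$ for $\mathcal P_{\mathrm{loc}}$ maps to the class of the vanishing $3$-sphere, which generates $H^3$ of the link.
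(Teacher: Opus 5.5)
Your squeeze is the paper's own argument. You are also right that Proposition~\ref{prop:local-odp-ext-dim-upper-bound} only bounds $B/\operatorname{Im}(\beta)$, not $\Ext^1$. But the injectivity you propose to prove is false, because its target is zero. Near $p$ one has $IC_{\mathrm{loc}}=\tau_{\le -1}Rj_*\Q_{U_{\mathrm{loc}}}[3]$, so $A=H^0(i^!IC_{\mathrm{loc}})=0$ and $B=H^0(i^*IC_{\mathrm{loc}})=0$. Your argument (``$[u]=0$ forces a splitting'') would therefore show that every extension splits, contradicting the nonzero $e_{\mathrm{loc}}$ you need for the lower bound.

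The error is in the block form of $\beta_E$. On perverse sheaves $H^0(i^!-)$ is only left exact and $H^0(i^*-)$ only right exact, so $0\to A\to A_E\to\Q\to 0$ need not be exact. Concretely, for $0\to IC_{\mathrm{loc}}\to\mathcal E\to i_*\Q_{\{p\}}\to 0$ one gets $0\to H^0(i^!\mathcal E)\to\Q\xrightarrow{\delta}H^1(i^!IC_{\mathrm{loc}})$, with $\delta$ equal to the extension class. So a non-split $\mathcal E$ has $A_E=0$, and the extension is visible in $\gamma_E$, not in an off-diagonal entry of $\beta_E$. Your cross-check points the same way: $IC_{\mathrm{loc}}$ stopping below $H^3$ of the link means exactly $B=0$, hence $\gamma=0$, the opposite of what you meant to verify.

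The bound, and in fact the equality, should be computed directly. One has $\Ext^1_{\Perv}(i_*\Q_{\{p\}},IC_{\mathrm{loc}})=\Hom(i_*\Q_{\{p\}},IC_{\mathrm{loc}}[1])=H^1(i^!IC_{\mathrm{loc}})$. The triangle $i^!\to i^*\to i^*Rj_*j^*$ gives $H^1(i^!IC_{\mathrm{loc}})\cong\operatorname{coker}(\gamma)=H^0(i^*Rj_*j^*IC_{\mathrm{loc}})\cong H^3(S^2\times S^3;\Q)\cong\Q$. So the extension data live in $\operatorname{coker}(\gamma)$, and $\dim\Ext^1=1$ holds with no input from $e_{\mathrm{loc}}$.

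What remains is $e_{\mathrm{loc}}\neq 0$, and your argument for it does not go through. The passage from a splitting to $\rat(v_{\mathrm{loc}})\circ\rat(u_{\mathrm{loc}})=0$ is not justified, and Proposition~\ref{prop:local-odp-cone} cannot supply it. Since $W^H_{\mathrm{loc}}$ is simple and $v_{\mathrm{loc}}\neq 0$, $v_{\mathrm{loc}}$ is a monomorphism, so $\Cone(v_{\mathrm{loc}})[-1]$ is its cokernel placed in degree $1$, not an object of the heart.

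Nor is $N$ nonzero on the one-dimensional vanishing line. For a threefold node, locally $\sum_{i=1}^4 x_i^2$, the monodromy on $\widetilde H^3(F_p;\Q)$ is $(-1)^4=1$, because the vanishing $3$-sphere has self-intersection $0$. Hence $\can\circ\var=0$ on $\phi_{\pi_{\mathrm{loc}}}(F_{\mathrm{loc}})$. Rank-one Picard--Lefschetz monodromy is a global phenomenon: it needs classes in $H^3(X_t)$ pairing nontrivially with $\delta$, and it is invisible in the Milnor ball.

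Use instead the criterion above: the extension is non-split if and only if $H^0(i^!\mathcal P_{\mathrm{loc}})=0$. Because $\var$ is a monomorphism of perverse sheaves, the corrected local object is $\Cone(\var)\cong\operatorname{coker}(\var)$; the extra $[-1]$ in the definition of $\mathcal P$ would leave the heart. Up to Tate twist, $\operatorname{coker}(\var)$ is the Verdier dual of $\Q_{X_{0,\mathrm{loc}}}[3]$. Hence $H^0(i^!\mathcal P_{\mathrm{loc}})\cong H^0(i^*\Q_{X_{0,\mathrm{loc}}}[3])^{\vee}=0$, and $e_{\mathrm{loc}}$ generates.
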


\begin{proof}
By Proposition~\ref{prop:local-odp-ext-nonzero}, the local extension group is nonzero. By
Proposition~\ref{prop:local-odp-ext-dim-upper-bound}, it has dimension at most one. Hence it is
one-dimensional, and the nonzero class \(e_{\mathrm{loc}}\) is a generator.
\end{proof}

\begin{corollary}
\label{cor:nodewise-perverse-ext-dim-one-strong}
For each node \(p_k\in \Sigma\), one has
\[
\dim_{\Q}
\Ext^1_{\Perv(X_0;\Q)}
\bigl(
i_{k*}\Q_{\{p_k\}},
IC_{X_0}
\bigr)
=1.
\]
Moreover, the image of the local corrected perverse ODP class defines a generator
\[
e_k\in
\Ext^1_{\Perv(X_0;\Q)}
\bigl(
i_{k*}\Q_{\{p_k\}},
IC_{X_0}
\bigr).
\]
\end{corollary}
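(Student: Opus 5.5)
The plan is to transport the local result of Corollary~\ref{cor:local-odp-ext-dim-one} to the global setting through the localization identification of Lemma~\ref{lem:nodewise-perverse-ext-localization-strong}. Fix a node \(p_k\in\Sigma\) and choose a small analytic neighborhood \(X_{0,\mathrm{loc}}^{(k)}\) of \(p_k\) in \(X_0\) containing no other node. On this neighborhood the degeneration \(\pi\) is a local one-parameter degeneration with a single ordinary double point, so the local model of Section~4 applies verbatim. In particular, \(IC_{X_0}\) restricts to the local intersection complex \(IC_{\mathrm{loc}}\), and \(i_{k*}\Q_{\{p_k\}}\) restricts to the local skyscraper \(i_*\Q_{\{p\}}\).

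The argument then has three steps.
\begin{enumerate}
\item By Lemma~\ref{lem:nodewise-perverse-ext-localization-strong}, restriction gives an isomorphism
\[
\rho_k:\Ext^1_{\Perv(X_0;\Q)}\bigl(i_{k*}\Q_{\{p_k\}},IC_{X_0}\bigr)\xrightarrow{\ \sim\ }\Ext^1_{\Perv(X_{0,\mathrm{loc}}^{(k)};\Q)}\bigl(i_*\Q_{\{p_k\}},IC_{\mathrm{loc}}\bigr).
\]
\item Corollary~\ref{cor:local-odp-ext-dim-one} says the target of \(\rho_k\) is one-dimensional and is generated by the local corrected class \(e_{\mathrm{loc}}\). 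Hence the source is one-dimensional.
\item Define \(e_k:=\rho_k^{-1}(e_{\mathrm{loc}})\). This is the image of the local class under the inverse of the localization isomorphism, and it is a generator because \(\rho_k\) is a linear isomorphism.
\end{enumerate}

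I would also make explicit how the inverse of \(\rho_k\) is realized, since that is what ``image of the local class'' should mean. Gluing the local extension \(\mathcal E_{\mathrm{loc}}\) near \(p_k\) with \(IC_{X_0}|_{X_0\setminus\{p_k\}}\) along the overlap, where both agree with \(\Q[3]\) because the quotient vanishes there, produces a global perverse sheaf. This works since perverse sheaves form a stack. The global object is an extension of \(i_{k*}\Q_{\{p_k\}}\) by \(IC_{X_0}\), and its restriction recovers \(\mathcal E_{\mathrm{loc}}\).

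The main obstacle is justifying the localization step, in particular injectivity of \(\rho_k\). An extension that splits near \(p_k\) is split on the whole of \(X_0\), because away from \(p_k\) the quotient is zero and the extension is canonically \(IC_{X_0}\). Two local splittings differ by a morphism \(i_{k*}\Q_{\{p_k\}}\to IC_{\mathrm{loc}}\), which is zero since \(IC\) has no point-supported subobjects. Consequently the local splitting is unique and glues with the trivial splitting on the complement.

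Equivalently, I would argue via the standard identification
\[
\Ext^1\bigl(i_{k*}\Q_{\{p_k\}},IC_{X_0}\bigr)\cong H^1\bigl(i_k^!IC_{X_0}\bigr),
\]
which depends only on a neighborhood of \(p_k\). This is the point I would check most carefully, since it uses that \(\Ext^1\) in \(\Perv\) agrees with \(\Hom^1\) in \(D^b_c\).
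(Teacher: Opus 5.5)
Your proposal is correct and follows the paper's proof, which simply combines the localization Lemma~\ref{lem:nodewise-perverse-ext-localization-strong} with the local count of Corollary~\ref{cor:local-odp-ext-dim-one} and takes $e_k$ to be the transported local class. Your extra material makes explicit the localization step that the paper only asserts: the stack-gluing inverse, the injectivity argument, and the identification $\Ext^1_{\Perv}(i_{k*}\Q_{\{p_k\}},IC_{X_0})\cong H^1(i_k^!IC_{X_0})$.
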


\begin{proof}
Apply Lemma~\ref{lem:nodewise-perverse-ext-localization-strong} together with
Corollary~\ref{cor:local-odp-ext-dim-one}.
\end{proof}

\begin{definition}
\label{def:nodewise-perverse-generator}
For each node \(p_k\in\Sigma\), let
\[
e_k\in
\Ext^1_{\Perv(X_0;\Q)}
\bigl(
i_{k*}\Q_{\{p_k\}},
IC_{X_0}
\bigr)
\]
denote the generator defined by the local corrected perverse ODP extension.
\end{definition}

\begin{corollary}
\label{cor:perverse-ext-node-basis-strong}
There is a nodewise decomposition
\[
\Ext^1_{\Perv(X_0;\Q)}
\bigl(
Q_\Sigma,
IC_{X_0}
\bigr)
\cong
\bigoplus_{k=1}^r \Q\, e_k,
\qquad
Q_\Sigma:=\bigoplus_{k=1}^r i_{k*}\Q_{\{p_k\}}.
\]
In particular, the node set \(\Sigma\) indexes a distinguished basis of local extension channels on
the perverse side.
\end{corollary}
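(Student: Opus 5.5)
The plan is to combine the finite additivity of $\Ext^1$ in the first variable with the nodewise one-dimensionality already established. First I will apply Lemma~\ref{lem:ext-additivity-perv} to the decomposition
\[
Q_\Sigma=\bigoplus_{k=1}^r i_{k*}\Q_{\{p_k\}} .
\]
This gives a natural isomorphism
\[
\Ext^1_{\Perv(X_0;\Q)}(Q_\Sigma,IC_{X_0})\cong\bigoplus_{k=1}^r\Ext^1_{\Perv(X_0;\Q)}\bigl(i_{k*}\Q_{\{p_k\}},IC_{X_0}\bigr).
\]
Concretely, it is induced by pulling back a global extension along the inclusions $i_{k*}\Q_{\{p_k\}}\hookrightarrow Q_\Sigma$. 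Its inverse is the Baer sum of the pushouts along the projections $Q_\Sigma\twoheadrightarrow i_{k*}\Q_{\{p_k\}}$, that is, the external direct sum of extensions followed by pullback along the diagonal on $IC_{X_0}$.

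Second, I will invoke Corollary~\ref{cor:nodewise-perverse-ext-dim-one-strong} for each $k$. Each summand is then one-dimensional and spanned by the class $e_k$ of Definition~\ref{def:nodewise-perverse-generator}, so each summand equals $\Q\,e_k$. Substituting into the decomposition gives $\bigoplus_k\Q\,e_k$, with total dimension $r$. The set $\{e_1,\dots,e_r\}$, each $e_k$ viewed through the $k$-th summand inclusion, is therefore a basis indexed by $\Sigma$. This is the sense in which the node set indexes a distinguished basis of local extension channels.

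The one point needing care is that the basis is genuinely \emph{distinguished}, meaning it does not depend on auxiliary choices beyond the nodewise scalar normalization. For this I will note three things. The isomorphism of Lemma~\ref{lem:ext-additivity-perv} is canonical, since it comes only from the biproduct structure. Each $e_k$ is defined by the local corrected extension, transported through the localization of Lemma~\ref{lem:nodewise-perverse-ext-localization-strong}. Because the supports are disjoint, the pullback of the $k$-th summand extension to $i_{l*}\Q_{\{p_l\}}$ splits for $l\neq k$. This last fact is what rules out any mixing between different nodes.

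I expect the main obstacle to be checking that localization and summand inclusion are compatible. Concretely, the local class $e_{\mathrm{loc}}$ at $p_k$ must extend to a global extension of $i_{k*}\Q_{\{p_k\}}$ by $IC_{X_0}$ that is trivial away from $p_k$. I would obtain it by gluing, in the MacPherson--Vilonen description, the local zig-zag modification at $p_k$ to the unmodified $IC_{X_0}$ away from $p_k$. This is legitimate because the modification only changes the point data $(A,B,\beta)$ at $p_k$.
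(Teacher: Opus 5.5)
Your proposal is correct and follows the paper's proof: combine the additivity isomorphism of Lemma~\ref{lem:ext-additivity-perv} with the nodewise one-dimensionality and the generator $e_k$ from Corollary~\ref{cor:nodewise-perverse-ext-dim-one-strong}, and your extra remarks on transporting $e_{\mathrm{loc}}$ to a global class only spell out what the paper packages into Lemma~\ref{lem:nodewise-perverse-ext-localization-strong} and Definition~\ref{def:nodewise-perverse-generator}. There is one slip in your explicit description of the inverse map, which has the variances mixed up: the inverse sends $(\epsilon_k)_k$ to the Baer sum of the \emph{pullbacks} $\pi_k^*\epsilon_k$ along the projections $\pi_k\colon Q_\Sigma\to i_{k*}\Q_{\{p_k\}}$, equivalently the external direct sum followed by \emph{pushout} along the sum map $IC_{X_0}^{\oplus r}\to IC_{X_0}$, not pushouts along the projections followed by a pullback along a diagonal.
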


\begin{proof}
This follows from Lemma~\ref{lem:ext-additivity-perv} and
Corollary~\ref{cor:nodewise-perverse-ext-dim-one-strong}.
\end{proof}

\begin{proposition}
\label{prop:global-perverse-class-node-expansion-strong}
The global corrected perverse extension class is determined by a unique expansion
\[
[\mathcal P]_{\mathrm{perv}}
=
\sum_{k=1}^r c_k\, e_k
\]
for uniquely determined coefficients \(c_k\in\Q\).
\end{proposition}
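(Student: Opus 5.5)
The plan is to first make sure the global corrected perverse object $\mathcal P$ actually defines a class in $\Ext^1_{\Perv(X_0;\Q)}(Q_\Sigma,IC_{X_0})$, and then read off the coefficients from the basis of Corollary~\ref{cor:perverse-ext-node-basis-strong}.

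For the first step, I would realize the mixed-Hodge-module picture. Alternatively, I would use the perverse-side description directly: $\mathcal P=\Cone(\var_F)[-1]$ restricts on $U$ to $\Q_U[3]$, its intermediate-extension part is $IC_{X_0}$, and its point-supported quotient is $Q_\Sigma=\bigoplus_k i_{k*}\Q_{\{p_k\}}$. The latter is the realization of Proposition~\ref{prop:nodewise-support}, using that $\rat$ is exact and that $\rat(i_{k*}\Q^H_{\{p_k\}}(-1))\cong i_{k*}\Q_{\{p_k\}}$. This yields a short exact sequence
\[
0\to IC_{X_0}\to\mathcal P\to Q_\Sigma\to 0
\]
in $\Perv(X_0;\Q)$. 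Its Yoneda class is $[\mathcal P]_{\mathrm{perv}}$.

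For the second step, I would invoke Lemma~\ref{lem:ext-additivity-perv} together with Corollary~\ref{cor:nodewise-perverse-ext-dim-one-strong}. These identify the Ext-group with the $r$-dimensional $\Q$-vector space $\bigoplus_k\Q e_k$, in which the $e_k$ form a basis. Any element then has a unique expansion $\sum_k c_k e_k$. Concretely, $c_k$ is the coordinate of the $k$-th component of $[\mathcal P]_{\mathrm{perv}}$, namely its pullback along the summand inclusion $i_{k*}\Q_{\{p_k\}}\hookrightarrow Q_\Sigma$, with respect to the generator $e_k$ of the one-dimensional space $\Ext^1(i_{k*}\Q_{\{p_k\}},IC_{X_0})$. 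Uniqueness follows from linear independence of the basis.

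The main obstacle is the first step: checking that the subobject of $\mathcal P$ is exactly $IC_{X_0}$ and that the quotient is exactly $Q_\Sigma$, with the identifications fixed canonically enough that the class is well defined. I would handle this node by node. Lemma~\ref{lem:nodewise-perverse-ext-localization-strong} allows restriction to small neighborhoods of each $p_k$, where Theorem~\ref{thm:local-odp-realization} identifies $\mathcal P$ with the local corrected block.

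As a byproduct, this comparison suggests that each $c_k$ equals $1$ under the normalization of Definition~\ref{def:nodewise-perverse-generator}. The statement, however, requires only existence and uniqueness of the $c_k$, so this normalization need not be established here.
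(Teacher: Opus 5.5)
Your proposal is correct and follows the paper's route: the paper's proof just invokes Corollary~\ref{cor:perverse-ext-node-basis-strong} (which is Lemma~\ref{lem:ext-additivity-perv} plus Corollary~\ref{cor:nodewise-perverse-ext-dim-one-strong}) to get the basis $\{e_1,\dots,e_r\}$ of $\Ext^1_{\Perv(X_0;\Q)}(Q_\Sigma,IC_{X_0})$ and reads off unique coordinates, exactly as in your second step. Your first step, checking that $\mathcal P$ really defines a class $[\mathcal P]_{\mathrm{perv}}$ in that group, is something the paper simply presupposes (the sequence $0\to IC_{X_0}\to\mathcal P\to Q_\Sigma\to 0$ is taken from earlier perverse-side work and reappears as \eqref{eq:perv-extension-class}), so it is extra care rather than a different argument.
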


\begin{proof}
By Corollary~\ref{cor:perverse-ext-node-basis-strong}, the global perverse extension space has
basis \(\{e_1,\dots,e_r\}\). Hence the global corrected perverse extension class admits a unique
expansion in that basis.
\end{proof}

\begin{remark}
\label{rem:nodewise-physics-shadow}
Corollary~\ref{cor:perverse-ext-node-basis-strong} and
Proposition~\ref{prop:global-perverse-class-node-expansion-strong} give the first explicit
mathematical shadow of the later domain-wall interpretation: each node contributes a distinguished
one-dimensional local coupling channel, and the global corrected class is encoded by the coefficient
vector
\[
(c_1,\dots,c_r).
\]
A fuller Hodge-theoretic and limiting-mixed-Hodge-structure interpretation of these coefficients
belongs to a stronger refinement of the theory.
\end{remark}

\subsection{Realization and compatibility with the perverse extension space}

Applying the realization functor to the finite point-supported quotient object of
Proposition~\ref{prop:nodewise-support} gives
\[
\rat(Q_\Sigma^H)
\cong
\bigoplus_{k=1}^r i_{k*}\Q_{\{p_k\}}.
\]
Thus the node-wise singular quotient in the mixed-Hodge-module setting is the precise Hodge-theoretic
refinement of the finite direct sum of skyscraper perverse sheaves appearing on the perverse side.

Moreover, exactness of realization induces a natural map
\[
\Ext^1_{MHM(X_0)}
\bigl(
Q_\Sigma^H,
IC^H_{X_0}
\bigr)
\longrightarrow
\Ext^1_{\Perv(X_0;\Q)}
\bigl(
Q_\Sigma,
IC_{X_0}
\bigr),
\]
where
\[
Q_\Sigma:=\bigoplus_{k=1}^r i_{k*}\Q_{\{p_k\}}.
\]
Under Lemma~\ref{lem:ext-additivity-perv}, the target decomposes as
\[
\Ext^1_{\Perv(X_0;\Q)}
\bigl(
Q_\Sigma,
IC_{X_0}
\bigr)
\cong
\bigoplus_{k=1}^r
\Ext^1_{\Perv(X_0;\Q)}
\bigl(
i_{k*}\Q_{\{p_k\}},
IC_{X_0}
\bigr).
\]
Hence the global mixed-Hodge-module extension to be constructed in the next section is a genuine
refinement of the finite multi-node corrected perverse extension
\[
0\to IC_{X_0}\to \mathcal P\to \bigoplus_{k=1}^r i_{k*}\Q_{\{p_k\}}\to 0.
\]
%----------------------------------------------------
\section{Global gluing theorem}

We now pass from the local ordinary double point model to the global finite multi-node setting.
Section~4 constructs the local mixed-Hodge-module correction block at a single node, while
Section~5 identifies the finite point-supported singular quotient object
\[
Q_\Sigma^H=\bigoplus_{k=1}^r i_{k*}\Q^H_{\{p_k\}}(-1).
\]
The remaining task is to assemble the local gluing data over all nodes and thereby construct the
global corrected mixed Hodge module on \(X_0\).

\subsection{The global singular object}

Let
\[
\Sigma=\{p_1,\dots,p_r\}\subset X_0
\]
be the finite set of ordinary double points of the central fiber, and let
\[
U:=X_0\setminus \Sigma,
\qquad
j:U\hookrightarrow X_0,
\qquad
i_k:\{p_k\}\hookrightarrow X_0
\]
be the corresponding inclusions. Set
\[
\mathcal M'_U:=\Q^H_U[3],
\qquad
\mathcal M''_\Sigma:=Q_\Sigma^H
=
\bigoplus_{k=1}^r i_{k*}\Q^H_{\{p_k\}}(-1).
\]

\begin{lemma}
\label{lem:global-singular-object}
The finite family of local singular objects
\[
\{i_{k*}\Q^H_{\{p_k\}}(-1)\}_{k=1}^r
\]
defines canonically the global point-supported mixed Hodge module
\[
\mathcal M''_\Sigma
=
\bigoplus_{k=1}^r i_{k*}\Q^H_{\{p_k\}}(-1).
\]
\end{lemma}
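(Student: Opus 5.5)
The plan is to reduce the statement to three standard facts from Saito's formalism: each summand is an object of \(MHM(X_0)\); finite direct sums exist in this abelian category; and the sum is independent of choices because the supports are disjoint.

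\textbf{Step 1: each summand is a mixed Hodge module.} For each \(k\), the object \(\Q^H_{\{p_k\}}(-1)\) is the one-dimensional Hodge structure of type \((1,1)\), viewed as a pure Hodge module on a point. The closed immersion \(i_k\) has an exact direct image \(i_{k*}:MHM(\{p_k\})\to MHM(X_0)\). This is recorded in Lemma~\ref{lem:point-supported-filtered-block}(1), applied nodewise after restricting to a neighborhood of \(p_k\) and then extending by zero. The result is that each \(i_{k*}\Q^H_{\{p_k\}}(-1)\) is a pure Hodge module of weight \(2\) supported at \(p_k\).

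\textbf{Step 2: form the sum.} Since \(MHM(X_0)\) is abelian, the finite biproduct
\[
\mathcal M''_\Sigma=\bigoplus_{k=1}^r i_{k*}\Q^H_{\{p_k\}}(-1)
\]
exists and is determined up to unique isomorphism compatible with the structure maps. This identifies \(\mathcal M''_\Sigma\) with the object \(Q_\Sigma^H\) of Proposition~\ref{prop:finite-node-block}.

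\textbf{Step 3: canonicity.} Let \(i_\Sigma:\Sigma\hookrightarrow X_0\) be the inclusion of the finite reduced set. Then \(MHM(\Sigma)\simeq\prod_k MHM(\{p_k\})\), because \(\Sigma\) is a disjoint union of points. Under this equivalence the tuple \((\Q^H_{\{p_k\}}(-1))_k\) is the single object \(\Q^H_\Sigma(-1)\). Exactness and additivity of \(i_{\Sigma*}\) then give a canonical isomorphism
\[
i_{\Sigma*}\Q^H_\Sigma(-1)\cong\mathcal M''_\Sigma,
\]
and this presentation involves no choice of ordering or splitting. The same conclusion can be reached more directly: for \(k\neq l\),
\[
\Hom_{MHM(X_0)}\bigl(i_{k*}\Q^H_{\{p_k\}}(-1),\,i_{l*}\Q^H_{\{p_l\}}(-1)\bigr)=0,
\]
since realization is faithful and the underlying skyscraper perverse sheaves have disjoint supports. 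So the decomposition into summands is canonical, and each summand is recovered as the part of \(\mathcal M''_\Sigma\) supported at \(p_k\), namely \(H^0i_k^!\) or \(H^0 i_k^*\) applied to it.

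Finally, applying \(\rat\) gives \(\bigoplus_k i_{k*}\Q_{\{p_k\}}\). This confirms compatibility with the perverse-side quotient \(Q_\Sigma\).

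\textbf{Main obstacle.} Only the canonicity claim requires real care: one must make sure the Tate twist and the local blocks \(W^H_k\) from Section~4 match the summands here. I would handle this by noting that each \(W_k^H\) was defined to be exactly \(i_{k*}\Q^H_{\{p_k\}}(-1)\), and that this object has only scalar endomorphisms (via Lemma~\ref{lem:local-odp-zigzag} together with faithfulness). Hence any identification between them is unique up to \(\Q^\times\) per node, which is the same normalization ambiguity already fixed in Proposition~\ref{prop:local-normalization}.
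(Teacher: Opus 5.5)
Your proposal is correct and follows essentially the same route as the paper. The paper identifies $\mathcal M''_\Sigma$ with the object $Q^H_\Sigma$ of Proposition~\ref{prop:finite-node-block} and attributes the canonicity of the finite direct sum in $MHM(X_0)$ to the disjointness of the supports. Your Step~3, with the presentation $i_{\Sigma*}\Q^H_\Sigma(-1)$ and the vanishing of cross-$\Hom$s via faithfulness of $\rat$, simply makes the word ``canonical'' precise.

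The detour through Lemma~\ref{lem:point-supported-filtered-block} in Step~1 is unnecessary, since $i_{k*}$ is defined directly for the closed immersion. The closing appeal to Proposition~\ref{prop:local-normalization} is also unnecessary, because $W^H_k$ is by definition $i_{k*}\Q^H_{\{p_k\}}(-1)$. That normalization would not fix the per-node scalar anyway: transporting along $\lambda\cdot\id_{W^H_k}$ sends $(u_k,v_k)$ to $(\lambda u_k,\lambda^{-1}v_k)$, which leaves $v_ku_k$ unchanged.
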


\begin{proof}
This is exactly the finite point-supported mixed Hodge module of Proposition~\ref{prop:finite-node-block}. Since the node set \(\Sigma\) is a finite disjoint union
of points, the corresponding point-supported mixed Hodge modules have disjoint support and therefore
form a canonical finite direct sum in \(MHM(X_0)\).
\end{proof}

\subsection{Assembly of the local gluing morphisms}

For each node \(p_k\), Section~4 provides a local point-supported block
\[
W_k^H=i_{k*}\Q^H_{\{p_k\}}(-1)
\]
together with local gluing morphisms
\[
u_k:\psi_{\pi,1}^H(\mathcal M'_U)\longrightarrow W_k^H,
\qquad
v_k:W_k^H\longrightarrow \psi_{\pi,1}^H(\mathcal M'_U)(-1),
\]
satisfying
\[
v_k u_k = N_k
\]
on the corresponding local ordinary-double-point sector.

\begin{proposition}
\label{prop:global-gluing-morphisms}
The local gluing morphisms assemble to global morphisms
\[
u_\Sigma:\psi_{\pi,1}^H(\mathcal M'_U)\longrightarrow \mathcal M''_\Sigma,
\qquad
v_\Sigma:\mathcal M''_\Sigma\longrightarrow \psi_{\pi,1}^H(\mathcal M'_U)(-1).
\]
\end{proposition}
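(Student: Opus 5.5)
The plan is to use the universal properties of the finite direct sum $\mathcal M''_\Sigma=\bigoplus_{k=1}^r W_k^H$ in the abelian category $MHM(X_0)$, which is simultaneously a product and a coproduct. First I will make precise where the local maps live. Each $u_k,v_k$ is produced in Section~4 on a small analytic neighborhood $X_{0,k}$ of $p_k$, as a morphism between $\psi^H_{\pi,1}(\mathcal M'_U)|_{X_{0,k}}$ and $W_k^H$. Since $W_k^H=i_{k*}\Q^H_{\{p_k\}}(-1)$ is supported at $p_k$, I will extend these germs to global morphisms on $X_0$. For $u_k$, adjunction for the closed immersion $i_k$ reduces a morphism $\psi^H_{\pi,1}(\mathcal M'_U)\to i_{k*}\Q^H_{\{p_k\}}(-1)$ to a morphism of mixed Hodge structures $H^0 i_k^*\psi^H_{\pi,1}(\mathcal M'_U)\to\Q^H(-1)$. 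This datum depends only on the germ at $p_k$, so the local $u_{\mathrm{loc}}$ of Definition~\ref{def:uloc} at $p_k$ determines a global $u_k$. Dually, $v_k$ corresponds via $i_{k*}\dashv i_k^!$ to $\Q^H(-1)\to H^0 i_k^!\psi^H_{\pi,1}(\mathcal M'_U)(-1)$, again a datum at the stalk of $p_k$. This gives a global $v_k$ from the local $v_{\mathrm{loc}}$ of Proposition~\ref{prop:vloc}.

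With global $u_k$ and $v_k$ in hand, I define
\[
u_\Sigma:=(u_1,\dots,u_r):\psi^H_{\pi,1}(\mathcal M'_U)\to\prod_k W_k^H=\mathcal M''_\Sigma,
\]
using the product property. I also define
\[
v_\Sigma:=\sum_k v_k\circ\mathrm{pr}_k:\mathcal M''_\Sigma\to\psi^H_{\pi,1}(\mathcal M'_U)(-1),
\]
using the coproduct property. These are morphisms in $MHM(X_0)$ because $MHM(X_0)$ is additive. Realization is additive, so $\rat(u_\Sigma)$ and $\rat(v_\Sigma)$ are the corresponding assembled perverse maps. Each component is the unique-up-to-scalar nonzero map of Propositions~\ref{prop:uloc-unique} and~\ref{prop:vloc}. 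I also record the local compatibility: restricted to $X_{0,k}$, the composite $\mathrm{pr}_k\circ u_\Sigma$ is $u_k$, and $v_\Sigma\circ\iota_k$ is $v_k$. This holds because the other summands $W_l^H$, $l\neq k$, vanish on $X_{0,k}$ once the neighborhoods are chosen pairwise disjoint.

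The main obstacle is the extension step from the analytic neighborhood to all of $X_0$. The category $MHM$ is algebraic, and Section~4 works on an analytic germ. So I must justify that the relevant stalk Hom-spaces are unchanged by the localization. For $u_k$ I would argue that $H^0 i_k^*$ of an algebraic mixed Hodge module carries a canonical mixed Hodge structure computed from any neighborhood. The morphism $q_{\mathrm{pt}}$ of Proposition~\ref{prop:point-level-odp-quotient} is a morphism of these mixed Hodge structures, so it is intrinsic to the stalk. For $v_k$ the same holds with $i_k^!$.

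A secondary point is that the normalization $v_ku_k=N_k$ of Proposition~\ref{prop:local-normalization} should survive assembly. The statement only asks for the existence of $u_\Sigma$ and $v_\Sigma$, so I would only note that the scalars are fixed independently node by node. The composite $v_\Sigma u_\Sigma=\sum_k v_ku_k$ is then compatible with $N$ on each local sector. A full check of the global relation with $N$ is deferred to the gluing theorem itself.
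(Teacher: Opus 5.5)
Your proposal is correct and follows the paper's argument. The paper's proof also invokes the universal property of the finite direct sum $\mathcal M''_\Sigma=\bigoplus_{k=1}^r W_k^H$ to produce $u_\Sigma$ with $k$-th component $u_k$ and $v_\Sigma$ with restriction $v_k$ to the $k$-th summand. Your extra step, which promotes the Section~4 germs to global morphisms via the adjunctions $i_k^*\dashv i_{k*}\dashv i_k^!$ and the locality of the stalk data at $p_k$, is something the paper takes for granted by simply asserting that Section~4 supplies $u_k,v_k$ with global source and target $\psi^H_{\pi,1}(\mathcal M'_U)$, so it is a sound supplement rather than a different route.
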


\begin{proof}
For each \(k\), the local map \(u_k\) has target \(W_k^H\). Since
\[
\mathcal M''_\Sigma=\bigoplus_{k=1}^r W_k^H,
\]
the universal property of the direct sum yields a unique morphism
\[
u_\Sigma:\psi_{\pi,1}^H(\mathcal M'_U)\to \mathcal M''_\Sigma
\]
whose \(k\)-th component is \(u_k\).

Similarly, each \(v_k\) has source \(W_k^H\), so the universal property of the direct sum yields a
unique morphism
\[
v_\Sigma:\mathcal M''_\Sigma\to \psi_{\pi,1}^H(\mathcal M'_U)(-1)
\]
whose restriction to the \(k\)-th summand is \(v_k\).
\end{proof}

\begin{proposition}
\label{prop:global-monodromy-relation}
The global morphisms \(u_\Sigma\) and \(v_\Sigma\) satisfy
\[
v_\Sigma u_\Sigma = N
\]
on the direct sum of the local ordinary-double-point vanishing sectors of
\(\psi_{\pi,1}^H(\mathcal M'_U)\).
\end{proposition}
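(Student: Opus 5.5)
The plan is to reduce the global identity to the nodewise identities \(v_ku_k=N_k\) from Section~4, using the direct-sum structure of \(\mathcal M''_\Sigma\) and the locality of the nearby-cycle formalism near the nodes. First, by Proposition~\ref{prop:global-gluing-morphisms} and the universal property of finite biproducts in the additive category \(MHM(X_0)\), write \(u_\Sigma=\sum_k \iota_k u_k\) and \(v_\Sigma=\sum_k v_k\pi_k\), where \(\iota_k,\pi_k\) are the inclusions and projections of the summand \(W_k^H\). Since \(\pi_k\iota_l=\delta_{kl}\,\id_{W_k^H}\), this gives
\[
v_\Sigma u_\Sigma=\sum_{k,l} v_k\pi_k\iota_l u_l=\sum_{k=1}^r v_ku_k .
\]
So the global composite is the sum of the local composites. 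Only the diagonal terms survive, because the supports \(\{p_k\}\) are pairwise disjoint.

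Next, I would decompose the source sector. Let \(S_k\subset\psi^H_{\pi,1}(\mathcal M'_U)\) denote the local ordinary-double-point vanishing sector at \(p_k\), meaning the sector on which Proposition~\ref{prop:local-normalization} was applied, transported to \(X_0\) by restriction to a small neighborhood of \(p_k\). By Lemma~\ref{lem:vanishing-support-finite}, the vanishing data is concentrated on \(\Sigma\). Since the \(p_k\) are distinct, the sectors \(S_k\) are independent, and the relevant domain is \(\bigoplus_k S_k\). I then need two facts for \(x\in S_l\):
\begin{enumerate}
\item \(u_k(x)=0\) for \(k\ne l\);
\item \(v_lu_l(x)=N x\).
\end{enumerate}
Fact (2) is the local normalization \(v_lu_l=N_l\) together with the identification \(N_l=N|_{S_l}\). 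The latter holds because the monodromy logarithm \(N\) commutes with restriction to open subsets, so its action on \(S_l\) is computed in the local model at \(p_l\). Fact (1) follows because \(u_k=q_k\circ\can^H\) factors through the stalk at \(p_k\), by the adjunction used in Proposition~\ref{prop:point-level-odp-quotient}, and \(S_l\) is supported in a neighborhood of \(p_l\) that does not contain \(p_k\). Combining the two facts, \(v_\Sigma u_\Sigma(x)=v_lu_l(x)=Nx\) on each \(S_l\), and by additivity the identity holds on \(\bigoplus_l S_l\).

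I expect the main obstacle to be making fact (1) and the sector decomposition precise. One has to justify that the global unipotent nearby-cycle object restricts near \(p_k\) to the local object of Section~4, and that the local sectors at different nodes do not interact under \(u_k\). My first approach would be to cover \(X_0\) by \(U\) together with disjoint small neighborhoods \(B_k\) of the nodes. I would then use that \(\psi^H_{\pi,1}\) commutes with restriction to open sets, and that morphisms into point-supported modules are determined on any open set containing the point. This lets both facts be checked on the \(B_k\) separately. A secondary issue is the consistency of scalar normalizations. Each \(u_k,v_k\) is normalized independently in Proposition~\ref{prop:local-normalization}, so no cross-node compatibility of scalars is needed, and this should go through directly.
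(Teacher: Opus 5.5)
Your proposal is correct and is essentially the paper's own argument: the biproduct identities reduce \(v_\Sigma u_\Sigma\) to \(\sum_k v_ku_k\), and the nodewise normalization \(v_ku_k=N_k\) together with the identification of \(N\) with \(\sum_k N_k\) on the node-supported sector gives the claim. Your explicit checks that \(u_k\) kills the sector \(S_l\) for \(k\neq l\), and that morphisms into or out of \(W_k^H\) are determined on a neighborhood of \(p_k\), make precise what the paper compresses into that last identification; note that the vanishing of the cross terms \(\pi_k\iota_l\) for \(k\neq l\) is just the biproduct identity and does not rely on disjoint supports.
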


\begin{proof}
For each node \(p_k\), Section~4 gives the normalized relation
\[
v_k u_k=N_k
\]
on the corresponding local ODP sector. Because the node supports are pairwise disjoint and
\[
\mathcal M''_\Sigma=\bigoplus_{k=1}^r W_k^H,
\]
the assembled maps satisfy
\[
v_\Sigma u_\Sigma=\sum_{k=1}^r v_k u_k=\sum_{k=1}^r N_k.
\]
On the finite node-supported ordinary-double-point sector, the global nilpotent monodromy operator
is exactly the sum of these local nilpotent operators, hence
\[
v_\Sigma u_\Sigma=N.
\]
\end{proof}
\begin{proposition}[Filtered admissibility of the global nodewise gluing datum]
\label{prop:global-filtered-admissibility}
The global datum
\[
(\mathcal M'_U,\mathcal M''_\Sigma,u_\Sigma,v_\Sigma)
\]
is admissible for Saito's divisor-case gluing formalism. More precisely:

\begin{enumerate}
\item \(\mathcal M'_U=\Q^H_U[3]\) is a mixed Hodge module on the smooth locus \(U\);

\item
\[
\mathcal M''_\Sigma=\bigoplus_{k=1}^r i_{k*}\Q^H_{\{p_k\}}(-1)
\]
is a mixed Hodge module on \(X_0\) whose filtered structure is obtained componentwise from the
local point-supported blocks of Lemma~\ref{lem:point-supported-filtered-block};

\item the assembled morphisms \(u_\Sigma\) and \(v_\Sigma\) are morphisms in the filtered
\((\mathcal D,V)\)-module setting and are strict with respect to the Hodge filtration; and

\item the relation
\[
v_\Sigma u_\Sigma = N
\]
holds on the finite ordinary-double-point sector in the filtered setting required by Saito's gluing
theorem.
\end{enumerate}
\end{proposition}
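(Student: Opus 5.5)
The plan is to reduce each of the four items to its local counterpart, Proposition~\ref{prop:local-filtered-admissibility}. The mechanism is that every construction involved either decomposes over the pairwise disjoint nodes or is a finite direct sum of local pieces. First, fix pairwise disjoint small analytic neighborhoods \(B_k\) of the nodes \(p_k\) in \(\mathcal X\). Over each \(B_k\) the degeneration is a local ODP model in the sense of Section~\ref{sec:local-odp}. Properties of mixed Hodge modules and of the \(V\)-filtration along \(X_0\) are local, so it suffices to check them on each \(B_k\) and on the complement of \(\Sigma\).

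Item (1) is immediate, since \(U\) is smooth and \(\Q^H_U[3]\) is a variation of Hodge structure of weight \(3\), shifted. For item (2), use Lemma~\ref{lem:global-singular-object}. The underlying filtered \(\mathcal D\)-module of a finite direct sum is the direct sum of the underlying filtered \(\mathcal D\)-modules. The \(V\)-filtration of a direct sum is the direct sum of the \(V\)-filtrations, because the Kashiwara--Malgrange filtration is unique and therefore compatible with direct sums. Since the summands \(W_k^H\) have disjoint supports, restricting to \(B_k\) leaves only \(W_k^H\). So Lemma~\ref{lem:point-supported-filtered-block} applies summand by summand, and it shows that each \(W_k^H\) carries the standard point-supported \((\mathcal D,V)\)-structure.

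For item (3), I use the componentwise description from Proposition~\ref{prop:global-gluing-morphisms}. The map \(u_\Sigma\) is the tuple \((u_k)_k\), and the map \(v_\Sigma\) is the sum \(\sum_k v_k\) over the summands. Filtered \((\mathcal D,V)\)-morphisms are closed under forming tuples into a direct sum and sums out of one. Strictness with respect to \(F\) and \(V\) can be checked on the stalk at each point. Away from \(\Sigma\) the target of \(u_\Sigma\) and the source of \(v_\Sigma\) vanish, so there is nothing to check. Near \(p_k\) only the \(k\)-th component survives, and it is strict by Proposition~\ref{prop:local-filtered-admissibility}(3). Strictness for the Hodge filtration also follows globally from the fact that morphisms in \(MHM\) are strict, once \(u_\Sigma\) and \(v_\Sigma\) are known to be morphisms of mixed Hodge modules. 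For item (4), I would invoke Proposition~\ref{prop:global-monodromy-relation}. The identity \(v_\Sigma u_\Sigma=\sum_k v_ku_k\) is an identity of filtered morphisms, and on \(B_k\) it reduces to the filtered relation \(v_ku_k=N_k\) of Proposition~\ref{prop:local-filtered-admissibility}(4).

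The main obstacle is the identification of the global \(N\) with \(\sum_k N_k\) at the filtered level. The subtlety is that \(\psi^H_{\pi,1}(\mathcal M'_U)\) is globally supported on all of \(X_0\), not only near \(\Sigma\), so its localization near the nodes has to be handled carefully. My approach is to use that \(N\) is a morphism of \(\mathcal D\)-modules on \(X_0\), and so it commutes with restriction to the open sets \(B_k\cap X_0\). On each such open set \(N\) restricts to the local \(N_k\). Both sides of \(v_\Sigma u_\Sigma=N\) factor through the point-supported object \(\mathcal M''_\Sigma\), whose restriction to \(X_0\setminus\Sigma\) vanishes. The relation therefore only has to be verified on the open sets \(B_k\cap X_0\), and there it holds by the local results. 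The filtered compatibility follows in the same way, since \(F\) and \(V\) are compatible with restriction to open sets.
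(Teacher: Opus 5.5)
\emph{Relation to the paper's proof.} Your reduction is the paper's own. Items (2)--(4) are pushed componentwise onto Lemma~\ref{lem:point-supported-filtered-block} and Proposition~\ref{prop:local-filtered-admissibility} through the assembly of Proposition~\ref{prop:global-gluing-morphisms}. Your open-cover treatment of \(N\) is more careful than the paper's bare ``summing over the nodes''.

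Two of your own steps are still unsupported. First, you may not assume that \(N\) factors through \(\mathcal M''_\Sigma\), because that is essentially what \(v_\Sigma u_\Sigma=N\) asserts. What is true is that \(N=\Var^H\circ\can^H\) factors through \(\phi^H_{\pi,1}F\), which is supported on \(\Sigma\) by Lemma~\ref{lem:vanishing-support-finite}; hence \(N\) vanishes on \(X_0\setminus\Sigma\). Second, gluing the identity over your cover requires \(v_ku_k=N_k\) as an equality of morphisms on all of \(\psi^H_{\pi,1}F|_{B_k}\). Saito's theorem demands the same thing, namely \(vu=N\) on the whole of \(\psi_{g,1}\mathcal M'\). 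Proposition~\ref{prop:local-filtered-admissibility}(4) only asserts the relation ``on the ODP sector''. Note also that Saito's gluing along \(X_0=\pi^{-1}(0)\subset\mathcal X\) takes \(\mathcal M'\) on \(\mathcal X\setminus X_0\) and outputs a module on \(\mathcal X\). So item (1) is not the relevant hypothesis, and \(\psi^H_{\pi,1}(\mathcal M'_U)\) can only mean \(\psi^H_{\pi,1}F\).

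\emph{The decisive gap.} The real problem, which you share with the paper, is the local input: with the twist \((-1)\), item (4) cannot hold in the form Saito's theorem requires. Take the standard conifold model, with \(\mathcal X\) smooth and \(\pi=\sum_{i=1}^4x_i^2\) near each node. Use Saito's normalization \(F=\Q^H_{\mathcal X}[4]\), which is pure of weight \(4\). Then \(\psi:=\psi^H_{\pi,1}F\) restricts to \(\Q^H_U[3]\) on \(U\) and carries the monodromy filtration centered at \(3\). At each node \(\phi^H_{\pi,1}F\) has rank one with trivial monodromy, so \(N^2=\Var^H\circ N_\phi\circ\can^H=0\). Hence \(\psi\) has weights in \(\{2,3,4\}\), with \(W_2\psi\cong\bigoplus_k i_{k*}\Q^H_{\{p_k\}}(-1)\), \(\mathrm{Gr}^W_3\psi\cong IC^H_{X_0}\), and \(\mathrm{Gr}^W_4\psi\cong\phi^H_{\pi,1}F\cong\bigoplus_k i_{k*}\Q^H_{\{p_k\}}(-2)\).

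In Saito's convention \(\Q^H(-1)\) has weight \(2\), so \(\mathcal M''_\Sigma\) is pure of weight \(2\), while \(\psi(-1)\) has weights \(\ge 4\). Morphisms in \(MHM\) preserve \(W\), so \(v_\Sigma(\mathcal M''_\Sigma)\subset W_2(\psi(-1))=0\), that is, \(v_\Sigma=0\). Similarly \(u_\Sigma=0\). Strictness gives \(u_\Sigma(\psi)=u_\Sigma(W_2\psi)\). Every perverse map \(\psi\to i_{k*}\Q\) is a multiple of the \(p_k\)-component of \(\can\), which kills \(W_2\psi\), the image of \(\Var\).

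On the other hand \(N=\Var^H\circ\can^H\neq 0\) near every node. Indeed \(\can^H\) is onto \(\phi^H_{\pi,1}F\neq 0\) and \(\Var^H\) is injective, since \(F\) has no nonzero quotient or subobject supported on \(X_0\). Therefore \(v_\Sigma u_\Sigma=0\neq N\).

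Consequently Propositions~\ref{prop:uloc-nonzero} and \ref{prop:vloc}, on which Proposition~\ref{prop:local-filtered-admissibility} rests, fail in \(MHM\). Correspondingly \(\Ext^1_{MHM(X_0)}(i_{k*}\Q^H_{\{p_k\}}(-1),IC^H_{X_0})=0\): an extension with a weight-\(3\) subobject and a weight-\(2\) quotient splits via \(W_2\).

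\emph{What would work instead.} Your componentwise bookkeeping does go through for the realized perverse data. Hodge-theoretically, the consistent singular term is \(\phi^H_{\pi,1}F\cong\bigoplus_k i_{k*}\Q^H_{\{p_k\}}(-2)\) with \(u=\can^H\) and \(v=\Var^H\), for which \(vu=N\) is automatic. However, Saito's theorem applied to that datum simply returns \(F\) on \(\mathcal X\). The object on \(X_0\) with subobject \(IC^H_{X_0}\) and point-supported quotient is \(\psi/W_2\psi\), whose quotient is \(\bigoplus_k i_{k*}\Q^H_{\{p_k\}}(-2)\).
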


\begin{proof}
Part (1) is immediate. For part (2), \(\mathcal M''_\Sigma\) is the finite direct sum of the
point-supported Tate-twisted modules treated locally in Lemma~4.11. Since the nodes are pairwise
disjoint, the filtered structure on the direct sum is computed componentwise.

For part (3), the global maps \(u_\Sigma\) and \(v_\Sigma\) are assembled from the local maps
\(\{u_k\}\) and \(\{v_k\}\) by Proposition~6.2. Each local pair satisfies the filtered admissibility
statement of Proposition~4.12, including compatibility with the induced \(V\)-filtration on the
local ordinary-double-point sector. Finite direct sums preserve strictness and filtered compatibility
in this setting, so the assembled maps remain morphisms in the filtered \((\mathcal D,V)\)-module
setting and are strict with respect to the Hodge filtration.

For part (4), the identity
\[
v_k u_k = N
\]
holds on each local ordinary-double-point sector by Proposition~4.12. Summing over the finite node
set gives
\[
v_\Sigma u_\Sigma = N
\]
on the finite node-supported sector. Therefore the global datum satisfies the hypotheses of Saito's
divisor-case gluing theorem.
\end{proof}
\subsection{The global gluing datum}

We now package the results of Sections~6.1 and~6.2 into the form required by Saito's
principal-divisor gluing theorem.

\begin{theorem}[Global gluing datum]
% keep your existing \label{...} here
The data
\[
(\mathcal M'_U,\mathcal M''_\Sigma,u_\Sigma,v_\Sigma)
\]
form a valid divisor-gluing datum in the sense of Saito's principal-divisor gluing theorem.
\end{theorem}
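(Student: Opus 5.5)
The theorem is a packaging statement, so the plan is to check, one by one, the axioms of a divisor-gluing datum in Saito's principal-divisor formalism \cite[Prop.~0.3]{SaitoMHM}, taking $g=\pi$ and $Y=X_0$. Each axiom will be discharged by a result already proved in Section~6.2. Concretely, a gluing datum consists of four things: an object on the complement, an object on the divisor, a morphism $u:\psi_{\pi,1}^H(\mathcal M')\to\mathcal M''$, and a morphism $v:\mathcal M''\to\psi_{\pi,1}^H(\mathcal M')(-1)$. These must satisfy $vu=N$, and the whole package must be compatible with the filtered $(\mathcal D,V)$-structure.

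First, I will identify the objects. $\mathcal M'_U=\Q^H_U[3]$ is a mixed Hodge module on the smooth locus; this is part (1) of Proposition~\ref{prop:global-filtered-admissibility}. $\mathcal M''_\Sigma$ is a point-supported object of $MHM(X_0)$ by Lemma~\ref{lem:global-singular-object}. Next, the morphisms $u_\Sigma$ and $v_\Sigma$ have exactly the required sources and targets, including the Tate twist $(-1)$ on the target of $v_\Sigma$; this is Proposition~\ref{prop:global-gluing-morphisms}. Because they are assembled via the universal property of the finite direct sum, they are morphisms in $MHM$.

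Second, I will verify the relation $v_\Sigma u_\Sigma=N$. This is Proposition~\ref{prop:global-monodromy-relation}, with its filtered form given by part (4) of Proposition~\ref{prop:global-filtered-admissibility}. I will use that the nodes are disjoint, so the cross terms $v_k u_l$ for $k\neq l$ vanish: their image lives near $p_k$ while their source factors through a block supported at $p_l$. The composite is therefore $\sum_k v_ku_k=\sum_k N_k$, and this agrees with $N$ on the node-supported sector.

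Third, I will check filtered admissibility, meaning strictness for the Hodge filtration and compatibility with $V$. This is parts (2) and (3) of Proposition~\ref{prop:global-filtered-admissibility}. Those parts reduce to the local verification in Lemma~\ref{lem:point-supported-filtered-block} and Proposition~\ref{prop:local-filtered-admissibility}, since finite direct sums of strict $V$-compatible morphisms remain strict.

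The main obstacle is the mismatch between "$vu=N$ on the ODP sector" and Saito's requirement that $vu=N$ hold on all of $\psi_{\pi,1}^H(\mathcal M')$. My first attempt will be to argue that $N$ factors through the node-supported part. Away from $\Sigma$ the family is locally trivial, so $N$ vanishes there. Near each $p_k$, Picard--Lefschetz theory shows that $N$ has rank one and its image is the vanishing line. Hence $N$ is supported on the sector where the identity has already been established, and the relation extends to the full unipotent nearby-cycle object. If that factorization cannot be made precise at the level of mixed Hodge modules, the fallback is to replace $\psi_{\pi,1}^H(\mathcal M')$ by its node-supported direct summand in the gluing datum, and to justify that splitting by strictness of the weight filtration.
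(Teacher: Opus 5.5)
Your first three steps reproduce the paper's own proof (Lemma~\ref{lem:global-singular-object}, Proposition~\ref{prop:global-gluing-morphisms}, Proposition~\ref{prop:global-filtered-admissibility}). You are also right that ``\(vu=N\) on the ODP sector'' is weaker than what Saito requires; the paper's proof does not address this. But the argument already fails at the typing of the datum.

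With \(g=\pi\) and \(Y=X_0\), Saito's theorem requires three things: \(\mathcal M'\in MHM(\mathcal X\setminus X_0)\), \(u\) with source \(\psi^H_{\pi,1}\) of that same \(\mathcal M'\), and an output in \(MHM(\mathcal X)\), not in \(MHM(X_0)\). Checking that \(\mathcal M'_U=\Q^H_U[3]\) is a mixed Hodge module on the smooth locus verifies the wrong condition. The set \(U=X_0\setminus\Sigma\) lies inside \(Y\), not in its complement, and \(\psi^H_{\pi,1}\) of anything supported in \(\pi^{-1}(0)\) vanishes. In Section~4 the source of \(u_{\mathrm{loc}}\) is really \(\psi^H_{\pi,1}(F)\), the nearby cycles of \(F|_{\mathcal X\setminus X_0}\), which is not the first entry of the quadruple. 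Reading the datum instead as a gluing across \(\Sigma\) inside \(X_0\) does not help, because a finite set of points in a threefold is never the zero locus of a single function.

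Your repair of \(vu=N\) fails too. The first attempt is a non sequitur. Knowing that \(N\) factors through node-supported objects and that \(vu=N\) ``on the sector'' says nothing about \(vu\) off that sector unless the sector is a direct summand. The fallback needs exactly such a summand, none exists, and in fact the maps themselves are forced to vanish.

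Take the standard conifold model: smooth total space, \(\pi=x_1^2+x_2^2+x_3^2+x_4^2\) near each node, \(M=\Q^H_{\mathcal X}[4]\) and \(\mathcal M'=M|_{\mathcal X\setminus X_0}\). Then \(0\to\Q^H_{X_0}[3]\to\psi^H_{\pi,1}M\xrightarrow{\can}\phi^H_{\pi,1}M\to0\) is exact, \(\phi^H_{\pi,1}M\cong\bigoplus_k i_{k*}\Q^H(-2)\), and \(\Var\) is injective. Since \(W\) is the monodromy filtration of \(N\) centred at \(3\), this gives \(W_2\psi^H_{\pi,1}M=\bigoplus_k i_{k*}\Q^H(-1)\cong\operatorname{Im}(N)(1)\), \(\mathrm{Gr}^W_3=IC^H_{X_0}\) and \(\mathrm{Gr}^W_4\cong\bigoplus_k i_{k*}\Q^H(-2)\).

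The Hom-spaces \(\Hom_{\Perv}(\rat\psi^H_{\pi,1}M,i_{k*}\Q)\) and \(\Hom_{\Perv}(i_{k*}\Q,\rat\psi^H_{\pi,1}M)\) are both one-dimensional, spanned by \(\can\) and \(\Var\) composed with the \(k\)-th projection or inclusion. So a node-supported summand at \(p_k\) would have to be the \(p_k\)-part of \(W_2\), while its projection would be a multiple of \(\can\), which kills \(W_2\). This is absurd.

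For the same reason, every map \(\rat\psi^H_{\pi,1}M\to i_{k*}\Q\) kills \(W_2\). By strictness of \(W\), every morphism \(\psi^H_{\pi,1}M\to i_{k*}\Q^H(-1)\), whose target is pure of weight \(2\), is therefore zero. Since \(\psi^H_{\pi,1}M(-1)\) has weights \(4,5,6\), every morphism \(i_{k*}\Q^H(-1)\to\psi^H_{\pi,1}M(-1)\) is zero as well. Hence \(u_\Sigma=v_\Sigma=0\neq N\).

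The datum that does satisfy \(vu=N\) is \((\mathcal M',\phi^H_{\pi,1}M,\can,\Var)\), and it glues back to \(M\) on \(\mathcal X\). The natural refinement of \(\mathcal P\) is \(i^!M[1](1)\), with \(i:X_0\hookrightarrow\mathcal X\). It needs no gluing and is an extension of \(\bigoplus_k i_{k*}\Q^H(-2)\) by \(IC^H_{X_0}\).

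A model-free check confirms the problem. \(IC^H_{X_0}\) is pure of weight \(3\) and \(i_{k*}\Q^H(-1)\) is pure of weight \(2\). In any extension of the latter by the former, \(W_2\) maps isomorphically onto the quotient, so every such extension splits and none can realize to the nonsplit \(\mathcal P\).
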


\begin{proof}
Lemma~6.1 identifies the singular term as
\[
\mathcal M''_\Sigma=\bigoplus_{k=1}^r i_{k*}\Q^H_{\{p_k\}}(-1),
\]
and Proposition~6.2 provides the assembled global attaching maps \(u_\Sigma\) and \(v_\Sigma\).
By Proposition~6.4, these data satisfy the Hodge, weight, and Kashiwara--Malgrange \(V\)-filtration
compatibilities required by Saito's principal-divisor gluing theorem, together with the compatibility
relation
\[
v_\Sigma u_\Sigma = N.
\]
Therefore
\[
(\mathcal M'_U,\mathcal M''_\Sigma,u_\Sigma,v_\Sigma)
\]
is a valid divisor-gluing datum in the sense of \cite[Prop.~0.3]{SaitoMHM}.
\end{proof}

\subsection{Construction of the global corrected mixed Hodge module}

Applying Saito's gluing theorem to the datum of Theorem~6.5 produces the global mixed Hodge module
refining the corrected perverse object.

\begin{theorem}[Global existence] \label{thm:global-existence}
% keep your existing \label{...} here
There exists an object
\[
\mathcal P^H \in MHM(X_0)
\]
whose restriction to the smooth locus agrees with
\[
IC^H_{X_0}\big|_U \cong \Q^H_U[3]
\]
and whose singular quotient is
\[
\mathcal M''_\Sigma=\bigoplus_{k=1}^r i_{k*}\Q^H_{\{p_k\}}(-1).
\]
\end{theorem}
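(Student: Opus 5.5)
The plan is to apply Saito's principal-divisor gluing theorem \cite[Prop.~0.3]{SaitoMHM} to the global datum \((\mathcal M'_U,\mathcal M''_\Sigma,u_\Sigma,v_\Sigma)\) of the Global gluing datum theorem, in the same way that the Local ODP mixed-Hodge-module extension theorem was obtained from Proposition~\ref{prop:local-filtered-admissibility}. Saito's theorem says that the category of gluing data \((\mathcal M',\mathcal M'',u,v)\) with \(vu=N\) is equivalent to the category of mixed Hodge modules on the ambient space. So the datum, once admissibility is granted by Proposition~\ref{prop:global-filtered-admissibility}, produces a unique object up to isomorphism, which we call \(\mathcal P^H\). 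The remaining work is to read off its two claimed properties from the equivalence.

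\textbf{Restriction to \(U\).} Under Saito's equivalence, the open component of the datum is recovered by restriction to the complement. Hence
\[
\mathcal P^H|_U\cong \mathcal M'_U=\Q^H_U[3].
\]
On the other hand, \(IC^H_{X_0}=j_{!*}\Q^H_U[3]\) restricts to \(\Q^H_U[3]\) because \(U\) is smooth. Combining the two identifications gives the first assertion.

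\textbf{Singular quotient.} The closed component is recovered as \(\phi_{g,1}\) of the glued object. In the equivalence, the glued module's maximal quotient supported on \(\Sigma\) corresponds to the cokernel of \(u_\Sigma\), and its maximal sub supported on \(\Sigma\) corresponds to \(\ker v_\Sigma\). We then argue nodewise. On each summand \(W_k^H\), the composite \(v_ku_k=N_k\) is nonzero on the local ODP sector by Proposition~\ref{prop:local-normalization}, and \(v_k\) is nonzero by Proposition~\ref{prop:vloc}. Since \(W_k^H\) is simple of rank one, \(v_k\) is injective, and hence \(\ker v_\Sigma=0\). This shows that \(\mathcal P^H\) has no point-supported subobject. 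The middle extension \(j_{!*}\mathcal M'_U=IC^H_{X_0}\) then embeds in \(\mathcal P^H\), and the quotient \(\mathcal P^H/IC^H_{X_0}\) is supported on \(\Sigma\). We identify this quotient with \(\mathcal M''_\Sigma\) by checking stalks node by node, using Theorem~\ref{thm:local-odp-extension} at each \(p_k\). The local computation transports because all the data are direct sums over disjoint supports (Lemma~\ref{lem:global-singular-object}, Proposition~\ref{prop:global-gluing-morphisms}).

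\textbf{Main obstacle.} I expect the hardest step to be justifying that Saito's theorem applies in the form used. Prop.~0.3 glues across a principal divisor \(Y=g^{-1}(0)\) in an ambient space, with open part on \(X\setminus Y\). Here, however, the open object lives on \(U=X_0\setminus\Sigma\), and \(\Sigma\) is a finite set of points in a threefold rather than a divisor.

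My first attempt would work locally. Around each \(p_k\), choose a function \(g_k\) on an affine neighbourhood \(V_k\subset X_0\) whose zero locus meets \(\Sigma\) only in \(p_k\). Glue on each \(V_k\) using Saito's theorem relative to \(g_k\), obtaining the local blocks \(\mathcal P^H_{\mathrm{loc},k}\). Then patch these with \(IC^H_{X_0}|_{X_0\setminus \Sigma}\) over the overlaps, using that mixed Hodge modules form a stack for the Zariski topology. The local rigidity statement, Proposition~\ref{prop:local-odp-rigidity}, should make the patching isomorphisms unique up to the fixed scalars, so no cocycle obstruction arises.

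If this local approach cannot be made to match the \(u,v\) formalism, the fallback is to construct \(\mathcal P^H\) directly as an extension. I would realize it as the preimage of \(\mathcal M''_\Sigma\) under the natural map \(H^0j_*\Q^H_U[3]\to H^0j_*\Q^H_U[3]/IC^H_{X_0}\), which is point-supported. The local link computation in Proposition~\ref{prop:local-odp-ext-dim-upper-bound} identifies each nodal stalk of this quotient as one-dimensional. What then has to be checked is the Tate twist of that one-dimensional nodal stalk, namely whether it agrees with \((-1)\).
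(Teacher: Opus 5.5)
You follow the paper's route: apply Saito's gluing theorem to the paper's global datum and read off the restriction and the quotient. The argument has a genuine gap at the quotient step.

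Even granting that Saito's theorem applies to $(\mathcal M'_U,\mathcal M''_\Sigma,u_\Sigma,v_\Sigma)$, your identification of the singular quotient fails by the very criterion you state. The maximal $\Sigma$-supported quotient of the glued object is $\operatorname{coker}u_\Sigma$, and you never compute it. Each $u_k$ is nonzero (Proposition~\ref{prop:uloc-nonzero}) and has the simple object $W_k^H$ as target, so it is an epimorphism and $\operatorname{coker}u_\Sigma=0$. Together with your own $\ker v_\Sigma=0$, the glued object has neither a subobject nor a quotient supported on $\Sigma$. It is therefore $j_{!*}\mathcal M'_U=IC^H_{X_0}$, with zero singular quotient. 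A node-by-node comparison with Theorem~\ref{thm:local-odp-extension} cannot repair this, because that theorem reads its quotient off a datum of the same shape; the quotient equals $\mathcal M''$ only when $u=0$. The paper's proof does not close this gap either: it simply asserts that the glued object has quotient $\mathcal M''_\Sigma$. It also does not address the obstacle you rightly raise: $\Sigma$ is not a principal divisor in $X_0$, and $\psi_{\pi,1}$ of an object living on $U\subset\pi^{-1}(0)$ is undefined. Your patching repair would need new data built from $\psi_{g_k,1}$ along a surface $g_k^{-1}(0)$, which the maps of Section~4 are not.

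Your fallback fails at exactly the check you isolate. The nodal stalk of ${}^pH^0j_*\Q^H_U[3]/IC^H_{X_0}$ is $H^3(L)$, where the link $L\simeq S^2\times S^3$ is the circle bundle of $\mathcal O(-1)$ over the quadric $Q\cong\mathbb P^1\times\mathbb P^1$. The Gysin sequence gives $H^3(L)\cong\ker\bigl(H^2(Q)(-1)\to H^4(Q)\bigr)\cong\Q(-2)$, so $\mathcal M''_\Sigma$ admits no nonzero map into that quotient. More decisively, no construction can give a non-split object here. $\mathcal M''_\Sigma$ is pure of weight $2$ and $IC^H_{X_0}$ is pure of weight $3$. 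In any extension $0\to IC^H_{X_0}\to E\to\mathcal M''_\Sigma\to0$, strictness of $W$ yields $W_2E\xrightarrow{\sim}\mathcal M''_\Sigma$, which splits it. So the statement as written holds only trivially: $\mathcal P^H:=IC^H_{X_0}\oplus\mathcal M''_\Sigma$ is a complete proof of it. But that object realizes to the split perverse extension, not to the non-split $\mathcal P$ used in Theorem~\ref{thm:global-realization}. Since $\Ext^1_{MHM(X_0)}(i_{k*}\Q^H_{\{p_k\}}(a),IC^H_{X_0})\cong\Hom_{MHS}(\Q(a),H^3(L))$, a non-split lift exists only with twist $(-2)$.
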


\begin{proof}
By Theorem~6.5 and Proposition~6.4, the global ordinary-double-point package defines an admissible
filtered gluing datum in the sense of Saito's divisor-case gluing theorem \cite[Prop.~0.3]{SaitoMHM}.
Applying that theorem yields an object
\[
\mathcal P^H \in MHM(X_0)
\]
whose restriction to the smooth locus is \(\Q^H_U[3]\) and whose singular quotient is
\[
\mathcal M''_\Sigma=\bigoplus_{k=1}^r i_{k*}\Q^H_{\{p_k\}}(-1).
\]

The canonical middle extension of \(\Q^H_U[3]\) from \(U\) to \(X_0\) is \(IC^H_{X_0}\). Hence the
glued object fits into a short exact sequence
\[
0 \to IC^H_{X_0} \to \mathcal P^H \to \mathcal M''_\Sigma \to 0.
\]
This is exactly the required global corrected mixed Hodge module.
\end{proof}

\subsection{Realization of the global corrected object}

\begin{theorem}[Global realization]
\label{thm:global-realization}
The realization of the global corrected mixed Hodge module is the corrected perverse object:
\[
\rat(\mathcal P^H)\cong \mathcal P.
\]
\end{theorem}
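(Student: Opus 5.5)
The plan is to compare the two perverse sheaves through their extension classes, using the rigidity results of Section~5. First, apply the exact functor \(\rat\) to the short exact sequence of Theorem~\ref{thm:global-existence}. Using \(\rat(IC^H_{X_0})\cong IC_{X_0}\) and \(\rat(\mathcal M''_\Sigma)\cong Q_\Sigma=\bigoplus_k i_{k*}\Q_{\{p_k\}}\), this gives a perverse extension
\[
0\to IC_{X_0}\to \rat(\mathcal P^H)\to Q_\Sigma\to 0 .
\]
On the other side, the corrected object \(\mathcal P=\Cone(\var_F)[-1]\) fits into the analogous sequence
\[
0\to IC_{X_0}\to\mathcal P\to Q_\Sigma\to 0 .
\]
This is stated at the end of Section~5, and it comes from the single-node argument of \cite{RahmanSchoberPaper} applied near each node, since \(\phi_\pi(F)\) is supported on \(\Sigma\) by Lemma~\ref{lem:vanishing-support-finite}. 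Both objects are therefore classes in
\[
\Ext^1_{\Perv(X_0;\Q)}(Q_\Sigma,IC_{X_0})\cong\bigoplus_{k=1}^r\Q\,e_k
\]
by Corollary~\ref{cor:perverse-ext-node-basis-strong}. It suffices to show that the two classes have the same coefficient vector \((c_1,\dots,c_r)\), up to the scalar normalisation of the quotient maps.

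Second, compute the coefficients nodewise. By Lemma~\ref{lem:nodewise-perverse-ext-localization-strong}, the \(k\)-th coefficient of either class is read off after restricting to a small analytic neighbourhood \(X_{0,\mathrm{loc}}^{(k)}\) of \(p_k\). Because the other nodes lie outside this neighbourhood, both \(u_\Sigma\) and \(v_\Sigma\) restrict there to the local pair \((u_k,v_k)\). Hence \(\rat(\mathcal P^H)\) restricts to \(\rat(\mathcal P^H_{\mathrm{loc}})\), which is \(\mathcal P_{\mathrm{loc}}\) by Theorem~\ref{thm:local-odp-realization}. Likewise, \(\mathcal P\) restricts to \(\mathcal P_{\mathrm{loc}}\), because nearby and vanishing cycles and cones commute with restriction to open sets. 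Both local classes are therefore nonzero multiples of the generator \(e_{\mathrm{loc}}\) of Corollary~\ref{cor:local-odp-ext-dim-one}. In particular every coefficient \(c_k\) is nonzero for both classes.

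Third, pass from equality of classes up to scalars to an isomorphism of middle terms. Rescaling the \(k\)-th summand of \(Q_\Sigma\) by \(\lambda_k\in\Q^\times\) is an automorphism of \(Q_\Sigma\), and pulling back along it multiplies \(c_k\) by \(\lambda_k\). We may therefore rescale each summand separately so that the two coefficient vectors agree. Two extensions whose classes differ only by an automorphism of the quotient have isomorphic middle terms, so \(\rat(\mathcal P^H)\cong\mathcal P\). Alternatively, the scalars can be absorbed into the normalisation of \(u_k\) and \(v_k\) allowed by Proposition~\ref{prop:local-normalization}, which leaves \(v_ku_k=N\) unchanged.

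The main obstacle is the localisation in the second step. One must check carefully that Saito's gluing construction commutes with restriction to an open neighbourhood of \(p_k\), and that in this neighbourhood the realisation of the glued object is the local glued object. This requires realisation to be compatible with the gluing functor, i.e.\ \(\rat\) should send the MHM gluing datum to the perverse Beilinson/Verdier gluing datum \((\rat\,\mathcal M'_U,Q_\Sigma,\rat\,u_\Sigma,\rat\,v_\Sigma)\). I would verify this using Saito's compatibility of \(\rat\) with \(\psi^H\), \(\phi^H\), \(\can^H\) and \(\Var^H\), together with the fact that gluing is defined by a functorial cone or cohomology construction that \(\rat\) preserves. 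Once this compatibility is in place, the rest is the bookkeeping in the one-dimensional Ext-groups described above.
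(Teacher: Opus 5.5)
Your argument is correct given the results the paper has already established, but it takes a genuinely different route.

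The paper argues once, globally. It says \(\rat\) is compatible with nearby cycles and with Saito's gluing, identifies the realized datum \(\bigl(\Q_U[3],\,Q_\Sigma,\,\rat(u_\Sigma),\,\rat(v_\Sigma)\bigr)\) with the perverse gluing datum of \(\mathcal P\), and concludes that the glued objects coincide. You use that compatibility only node by node, via Theorem~\ref{thm:local-odp-realization} and the restriction of the glued object to \(B_k\). In place of a global identification of gluing data, you use the Section~5 computation \(\Ext^1_{\Perv(X_0;\Q)}(Q_\Sigma,IC_{X_0})\cong\bigoplus_k\Q\,e_k\) followed by a diagonal automorphism of \(Q_\Sigma\).

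The paper's route is shorter. When the two data match on the nose, it gives an isomorphism compatible with the inclusion of \(IC_{X_0}\) and the projection to \(Q_\Sigma\). However, it rests on knowing that \(\mathcal P=\Cone(\var_F)[-1]\) is itself the object glued from exactly that datum. Your route needs only two things: that \(\mathcal P\) is an extension of \(Q_\Sigma\) by \(IC_{X_0}\), which the paper imports from earlier work, and local information at each node. So it is a cleaner local-to-global reduction. The price is that it yields an abstract isomorphism, with the two classes agreeing only after rescaling the node summands. That suffices for the theorem as stated. Your alternative \((u_k,v_k)\mapsto(\lambda_k u_k,\lambda_k^{-1}v_k)\) keeps \(v_ku_k=N\) and upgrades the result to the equality of classes used in Proposition~\ref{prop:functorial-ext-comparison}.

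Two small points:
\begin{enumerate}
\item The step you call the main obstacle is already Proposition~\ref{prop:global-restricts-local-odp}, whose proof does not use the present theorem. Once you cite it, you need no global compatibility of \(\rat\) with gluing, only that \(\rat\) commutes with restriction to open sets.
\item To pass from \(\rat(\mathcal P^H)|_{B_k}\cong\mathcal P_{\mathrm{loc}}\) to a nonzero local class, use that \(\Hom(i_*\Q_{\{p\}},IC_{\mathrm{loc}})=0\). It follows that an extension of \(i_*\Q_{\{p\}}\) by \(IC_{\mathrm{loc}}\) splits exactly when \(\Hom(i_*\Q_{\{p\}},-)\) of its middle term is nonzero. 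So the middle term alone decides whether each coordinate vanishes.
\end{enumerate}
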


\begin{proof}
By construction, \(\mathcal P^H\) is obtained by gluing the bulk object \(\Q^H_U[3]\) to the
finite point-supported singular quotient object
\[
\mathcal M''_\Sigma
=
\bigoplus_{k=1}^r i_{k*}\Q^H_{\{p_k\}}(-1)
\]
using the assembled gluing morphisms \(u_\Sigma,v_\Sigma\). Realization is exact and compatible
with nearby cycles, vanishing cycles, and the gluing formalism. The realized singular quotient is
\[
\rat(\mathcal M''_\Sigma)
=
\bigoplus_{k=1}^r i_{k*}\Q_{\{p_k\}},
\]
and the realized local gluing maps are precisely the corrected perverse local maps from Section~4.
Hence the realized global gluing datum is the finite multi-node corrected perverse gluing datum,
and therefore
\[
\rat(\mathcal P^H)\cong \mathcal P.
\]
\end{proof}

\subsection{Global exact sequence}

\begin{lemma}
\label{lem:bulk-subobject}
The maximal subobject of \(\mathcal P^H\) whose restriction to the smooth locus is \(\Q^H_U[3]\) is
\(IC^H_{X_0}\).
\end{lemma}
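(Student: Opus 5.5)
We plan to prove the lemma by combining the characterization of the intermediate extension as the unique extension with no nonzero subobjects or quotients supported on \(\Sigma\) with the fact that the singular quotient of \(\mathcal P^H\) is point-supported. Throughout, \(IC^H_{X_0}=j_{!*}\Q^H_U[3]\) is the canonical middle extension used in the proof of Theorem~\ref{thm:global-existence}. We have in hand an injection \(\iota:IC^H_{X_0}\hookrightarrow\mathcal P^H\) whose cokernel is \(\mathcal M''_\Sigma\), and \(\mathcal M''_\Sigma\) is supported on \(\Sigma\). Applying the exact restriction \(j^*\) shows that \(j^*\iota\) is an isomorphism onto \(j^*\mathcal P^H\cong\Q^H_U[3]\). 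So \(IC^H_{X_0}\) is one subobject of the required kind. What remains is maximality, which we take to mean that every subobject \(\mathcal S\subset\mathcal P^H\) with \(j^*\mathcal S\cong\Q^H_U[3]\) and no nonzero quotient supported on \(\Sigma\) is contained in \(\iota(IC^H_{X_0})\).

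The key steps are as follows.

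\emph{Step 1.} Let \(\mathcal S\subset\mathcal P^H\) be such a subobject. Consider the composite \(\mathcal S\to\mathcal P^H\to\mathcal M''_\Sigma\). Its image is a quotient of \(\mathcal S\) supported on \(\Sigma\).

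\emph{Step 2.} By the defining property of the intermediate extension, that image is zero. Here we use the standard criterion \cite{BBD}, which transfers to \(MHM\) because \(\rat\) is exact and faithful: \(j_{!*}\) has no nonzero subobjects or quotients supported on the boundary, and any object \(\mathcal S\) with \(j^*\mathcal S\cong\Q^H_U[3]\) and no such quotients maps onto \(j_{!*}\).

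\emph{Step 3.} Since the composite in Step 1 vanishes, \(\mathcal S\) factors through \(\ker(\mathcal P^H\to\mathcal M''_\Sigma)=IC^H_{X_0}\), giving the required containment.

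\emph{Step 4.} If instead the lemma is read as a statement about the largest subobject \(\mathcal S\) whose quotient \(\mathcal P^H/\mathcal S\) is supported on \(\Sigma\) while \(\mathcal S\) has no point-supported quotient, we would argue in the other direction. In that reading we would compare \(\mathcal S\) with \(IC^H_{X_0}\) through \(\mathcal S\cap IC^H_{X_0}\) and \(\mathcal S+IC^H_{X_0}\), and use that \(IC^H_{X_0}\) has no nonzero point-supported subobjects or quotients. We would check both readings and present the one the later text uses.

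The main obstacle is that the statement is ambiguous as written. Without the condition "no nonzero quotient supported on \(\Sigma\)", the lemma is false whenever the extension splits at some node, since then \(IC^H_{X_0}\oplus i_{k*}\Q^H_{\{p_k\}}(-1)\) would be a larger subobject with the same restriction to \(U\). We would therefore first try to rule this out intrinsically, using the non-splitness established in Proposition~\ref{prop:local-odp-ext-nonzero} and Corollary~\ref{cor:nodewise-perverse-ext-dim-one-strong}. The argument would show that, since each nodewise class \(e_k\) in the expansion of Proposition~\ref{prop:global-perverse-class-node-expansion-strong} can be taken nonzero, \(\mathcal P^H\) has no nonzero subobject supported on \(\Sigma\). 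This reduces to checking \(\Hom(i_{k*}\Q_{\{p_k\}},\mathcal P)=0\) through faithfulness of \(\rat\). If that check succeeds, the unadorned maximality statement follows, because the image of any candidate \(\mathcal S\) in \(\mathcal M''_\Sigma\) would lift to a point-supported subobject of \(\mathcal P^H\).
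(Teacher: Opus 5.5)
Your Steps 1--3 are correct. They are in substance what the paper's two-line proof gestures at. The paper simply invokes that \(IC^H_{X_0}\) is the middle extension of \(\Q^H_U[3]\) ``with no additional singular quotient,'' which silently imposes exactly the hypothesis you add. Two remarks on those steps:
\begin{itemize}
\item Step 2 does not need the BBD criterion. The image is a \(\Sigma\)-supported quotient of \(\mathcal S\), so it vanishes by assumption.
\item Since \(IC^H_{X_0}\) itself has no nonzero \(\Sigma\)-supported quotient, \(\mathcal S\subset IC^H_{X_0}\) with \(j^*\mathcal S=j^*IC^H_{X_0}\) forces \(\mathcal S=IC^H_{X_0}\). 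So under your reading \(IC^H_{X_0}\) is the unique such subobject.
\end{itemize}
Step 4 adds nothing and should be dropped rather than left open.

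The genuine problem is your final paragraph. The literal statement is false for every \(r\ge 1\), not only when the extension splits at a node. \(\mathcal P^H\) is itself a subobject of \(\mathcal P^H\), and \(j^*\mathcal P^H\cong \Q^H_U[3]\) because \(j^*\mathcal M''_\Sigma=0\). Yet \(\mathcal P^H\neq IC^H_{X_0}\), since \(\mathcal M''_\Sigma\neq 0\). So no non-splitness argument can rescue the unadorned version.

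Your proposed mechanism also fails: the claim that the image of a candidate \(\mathcal S\) in \(\mathcal M''_\Sigma\) ``would lift to a point-supported subobject of \(\mathcal P^H\)'' is unfounded. For \(\mathcal S=\mathcal P^H\) the image is all of \(\mathcal M''_\Sigma\), and a lift would be a splitting of the extension, which is exactly what non-splitness rules out. Proving \(\Hom(i_{k*}\Q_{\{p_k\}},\mathcal P)=0\) therefore gives nothing toward maximality. Separately, nonvanishing of the nodewise components of \([\mathcal P]\) would require \(c_k\neq 0\), not merely \(e_k\neq 0\).

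The correct hypothesis-free statement is the dual one: \(IC^H_{X_0}\) is the \emph{minimal} subobject \(\mathcal S\subset\mathcal P^H\) with \(j^*\mathcal S=j^*\mathcal P^H\). Indeed, \(IC^H_{X_0}/(\mathcal S\cap IC^H_{X_0})\) is a \(\Sigma\)-supported quotient of \(IC^H_{X_0}\) and hence zero. Commit to either your hypothesis-augmented reading or this minimal version, and say explicitly that the lemma as written cannot hold.
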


\begin{proof}
By construction, \(\mathcal P^H\) restricts to \(\Q^H_U[3]\) on \(U\), and its only additional
singular support lies on the finite node set \(\Sigma\). The canonical middle extension of
\(\Q^H_U[3]\) across \(X_0\) with no additional singular quotient is \(IC^H_{X_0}\).
\end{proof}

\begin{theorem}[Global exact sequence]
\label{thm:global-exact-sequence}
The object \(\mathcal P^H\) fits into an exact sequence
\[
0\to IC^H_{X_0}\to \mathcal P^H\to \mathcal M''_\Sigma\to 0,
\]
that is,
\[
0\to IC^H_{X_0}\to \mathcal P^H\to \bigoplus_{k=1}^r i_{k*}\Q^H_{\{p_k\}}(-1)\to 0
\]
in \(MHM(X_0)\).
\end{theorem}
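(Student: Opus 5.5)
The plan is to obtain the exact sequence from the construction of \(\mathcal P^H\) in Theorem~\ref{thm:global-existence}, with Lemma~\ref{lem:bulk-subobject} identifying the kernel. Concretely, Saito's gluing theorem \cite[Prop.~0.3]{SaitoMHM}, applied to the datum \((\mathcal M'_U,\mathcal M''_\Sigma,u_\Sigma,v_\Sigma)\) of Proposition~\ref{prop:global-filtered-admissibility}, yields a morphism
\[
q:\mathcal P^H\longrightarrow \mathcal M''_\Sigma .
\]
This is the projection onto the divisor component of the gluing datum, and it is the abstract counterpart of \(\phi\)-type data. In the first step I would show that \(q\) is surjective in \(MHM(X_0)\). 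Since \(\rat\) is exact and faithful, it suffices to check surjectivity after realization, and this can be done one node at a time because the supports \(\{p_k\}\) are disjoint. At each node I would compare with the local block of Theorem~\ref{thm:local-odp-extension}: on a small neighborhood of \(p_k\), \(\mathcal P^H\) restricts to \(\mathcal P^H_{\mathrm{loc}}\), whose quotient map onto \(W^H_{\mathrm{loc}}\) is already surjective.

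In the second step I would identify \(\ker q\). Over \(U\) the morphism \(q\) vanishes, so \(\ker q|_U\cong\Q^H_U[3]\). Next I would show that \(\ker q\) has no nonzero subobject and no nonzero quotient supported on \(\Sigma\), so that it is the middle extension, namely \(IC^H_{X_0}\). On the quotient side this holds because \(q\) has absorbed the full point-supported cokernel, as seen from the rank count: the stalk of \(\rat(\mathcal P^H)\cong\mathcal P\) at \(p_k\) is exactly one dimension larger than that of \(IC_{X_0}\). On the subobject side I would use the zig-zag description of Proposition~\ref{prop:local-odp-ext-zigzag-map}. A point-supported subobject would appear as a kernel of \(\beta_E\), but the block form of \(\beta_E\) restricts on the \(IC\)-part to \(\beta\), which is injective. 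Lemma~\ref{lem:bulk-subobject} then gives \(\ker q\cong IC^H_{X_0}\) as a subobject of \(\mathcal P^H\).

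In the third step I would cross-check the sequence after realization. Applying \(\rat\) should give the perverse sequence \(0\to IC_{X_0}\to\mathcal P\to\bigoplus_k i_{k*}\Q_{\{p_k\}}\to 0\), which is consistent with Theorem~\ref{thm:global-realization}. Faithfulness of \(\rat\) then confirms that the morphisms in \(MHM(X_0)\) realize to the perverse ones.

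I expect the main obstacle to be the second step, showing that the kernel is exactly \(IC^H_{X_0}\) and not some other extension of \(\Q^H_U[3]\). In Saito's formalism, the glued object's maximal subobject without point-supported quotients is controlled by \(\operatorname{Im}(u_\Sigma)\) and \(\ker(v_\Sigma)\). My first approach would be to show, node by node, that \(u_k\) is surjective onto \(W^H_k\), which follows from Proposition~\ref{prop:uloc-nonzero} and the fact that \(W_k^H\) has rank one. That surjectivity forces the image of \(\mathcal P^H\to\mathcal M''_\Sigma\) to be everything and its kernel to be \(j_{!*}\). If this identification turns out to be subtle, I would fall back on checking it after realization via the MacPherson--Vilonen zig-zags, and then lift it using faithfulness of \(\rat\).
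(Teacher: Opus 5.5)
Your skeleton is the paper's. Its proof simply combines Theorem~\ref{thm:global-existence}, which asserts that $\mathcal P^H$ has singular quotient $\mathcal M''_\Sigma$, with Lemma~\ref{lem:bulk-subobject}, which identifies the bulk subobject as $IC^H_{X_0}$. Stopping there would put you at the paper's level of justification. The supplementary arguments you make load-bearing, however, contain steps that fail.

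First, Saito's divisor gluing does not give you a morphism $q:\mathcal P^H\to\mathcal M''_\Sigma$. The component $\mathcal M''$ of a gluing datum is the unipotent vanishing-cycle module $\phi_{g,1}$ of the glued module (with $u=\can$, $v=\Var$), not a quotient of it. So $q$ can only be taken from the statement of Theorem~\ref{thm:global-existence}, as the paper does.

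Second, your ``main obstacle'' plan runs backwards. Surjectivity of $u_k$ concerns a map out of $\psi^H_{\pi,1}(\mathcal M'_U)$ and says nothing directly about $q$, whose source is $\mathcal P^H$. Worse, in Saito's dictionary $\operatorname{coker}(u)$ corresponds to the largest quotient of the glued module supported on the divisor, and $\ker(v)$ to its largest subobject supported there. The $W_k^H$ are simple with pairwise distinct supports, so the nonzero maps of Propositions~\ref{prop:uloc-nonzero} and~\ref{prop:vloc} make $u_\Sigma$ an epimorphism and $v_\Sigma$ a monomorphism. That reasoning would therefore show the glued module has neither a nonzero quotient nor a nonzero subobject supported on the divisor, i.e.\ that it is a middle extension. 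This is the opposite of the exact sequence you want.

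On the subobject side, $\ker\beta_E$ is the wrong invariant. Since $\Hom(i_{k*}V,E)\cong\Hom(V,H^0(i_k^!E))$, point-supported subobjects are detected by $A_E=H^0(i_k^!E)$. Moreover, at an isolated point $A=H^0(i^!IC)=0$ and $B=H^0(i^*IC)=0$, so ``$\beta$ is injective'' is vacuous. The block form of Proposition~\ref{prop:local-odp-ext-zigzag-map} would then give $A_E\cong\Q$, i.e.\ a nonzero point-supported subobject. Indeed $0\to A\to A_E\to\Q\to 0$ is exact only for split extensions, because the connecting map $\Q\to H^1(i^!IC)$ is the extension class. That proposition cannot carry your argument.

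A correct replacement uses only inputs you already rely on.
\begin{enumerate}
\item Grant the perverse sequence $0\to IC_{X_0}\to\mathcal P\to\bigoplus_k i_{k*}\Q_{\{p_k\}}\to 0$ (which your stalk count presupposes) and $\rat(\mathcal P^H)\cong\mathcal P$. Then $\Hom_{\Perv(X_0;\Q)}(\mathcal P,i_{k*}\Q_{\{p_k\}})\cong\Q$, since $\Hom(IC_{X_0},i_{k*}\Q_{\{p_k\}})\cong H^0(i_k^*IC_{X_0})^\vee=0$.
\item Each component of $\rat(q)$ is nonzero, so it is a nonzero multiple of the perverse quotient map. This one-dimensionality, not faithfulness, is what identifies the realized morphism, and it gives $\rat(\ker q)\cong IC_{X_0}$.
\item Because $\rat$ is exact and faithful, $\ker q$ has no nonzero subobject or quotient supported on $\Sigma$.
\item Together with $\ker q|_U\cong\Q^H_U[3]$, this gives $\ker q\cong j_{!*}\Q^H_U[3]=IC^H_{X_0}$.
\end{enumerate}
It is this $j_{!*}$-characterization in $MHM(X_0)$, not faithfulness of $\rat$ alone, that lifts the identification from $\Perv(X_0;\Q)$ to $MHM(X_0)$. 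Your fallback ``lift it using faithfulness'' would not suffice as stated. Note also that this argument effectively proves Lemma~\ref{lem:bulk-subobject} rather than just citing it.
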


\begin{proof}
By Theorem~\ref{thm:global-existence}, \(\mathcal P^H\) has singular quotient
\[
\mathcal M''_\Sigma
=
\bigoplus_{k=1}^r i_{k*}\Q^H_{\{p_k\}}(-1).
\]
By Lemma~\ref{lem:bulk-subobject}, its bulk subobject is \(IC^H_{X_0}\). Therefore \(\mathcal P^H\)
fits into the stated exact sequence.
\end{proof}

\subsection{Global cone description}

\begin{proposition}[Global cone description]
\label{prop:global-cone-description}
In the derived category of mixed Hodge modules on \(X_0\), one has
\[
\mathcal P^H\simeq \Cone(v_\Sigma)[-1].
\]
\end{proposition}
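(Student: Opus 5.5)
The plan is to mirror the local argument of Proposition~\ref{prop:local-odp-cone} and assemble over the node set, using the direct-sum structure of $v_\Sigma$ from Proposition~\ref{prop:global-gluing-morphisms}. First, recall how Saito's divisor-case gluing theorem \cite[Prop.~0.3]{SaitoMHM} realizes the glued object. For a datum $(\mathcal M',\mathcal M'',u,v)$, the glued module is the cohomology of a complex built from $\psi_{\pi,1}^H(\mathcal M')$, $\mathcal M''$ and $\psi_{\pi,1}^H(\mathcal M')(-1)$, with differentials given by $u$ and $v$. The relation $vu=N$ is what makes this a complex. I would specialize this to $(\mathcal M'_U,\mathcal M''_\Sigma,u_\Sigma,v_\Sigma)$. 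Using Theorem~\ref{thm:global-existence} and Theorem~\ref{thm:global-exact-sequence}, I would then extract a distinguished triangle in $D^bMHM(X_0)$:
\[
\mathcal M''_\Sigma \xrightarrow{\;v_\Sigma\;} \psi_{\pi,1}^H(\mathcal M'_U)(-1) \longrightarrow \mathcal P^H[1] \xrightarrow{+1}.
\]
Once this triangle is in place, the axioms of a triangulated category identify $\mathcal P^H[1]$ with $\Cone(v_\Sigma)$ up to a (non-unique) isomorphism. Shifting by $[-1]$ then gives the claim.

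To produce the triangle, I would argue nodewise. Away from $\Sigma$, the term $\mathcal M''_\Sigma$ vanishes, so the triangle reduces there to the identification of $\mathcal P^H|_U$ with the open part. Near each $p_k$, the restriction of $v_\Sigma$ is exactly $v_k$, and Proposition~\ref{prop:local-odp-cone} gives the local triangle with cone $\mathcal P^H_{\mathrm{loc}}[1]$. Because the supports $\{p_k\}$ are pairwise disjoint and $\mathcal M''_\Sigma=\bigoplus_k W_k^H$, the cone of $v_\Sigma$ is computed summand by summand. Concretely, $\Cone(v_\Sigma)$ restricted to a small neighbourhood of $p_k$ is $\Cone(v_k)$, and the local cones patch along $U$, where all of them agree with the open term. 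Compatibility with realization is a useful cross-check: applying $\rat$ should recover the perverse description $\Cone(\var_F)[-1]$ of $\mathcal P$, consistently with Theorem~\ref{thm:global-realization}.

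The main obstacle is the gluing step from local to global. Cones in a triangulated category are not functorial, so isomorphisms with local cones over a cover cannot simply be glued. To get around this, I would avoid local-to-global patching in $D^b$. Instead, I would work with the explicit global complex coming from Saito's construction: its terms are global objects and its differentials are the global morphisms $u_\Sigma,v_\Sigma$. The triangle would then be the canonical triangle attached to the stupid truncation of this complex. I expect this to require identifying the cohomology of the complex with $\mathcal P^H$ via the exact sequence of Theorem~\ref{thm:global-exact-sequence}, and checking that the remaining cohomology vanishes using $v_\Sigma u_\Sigma=N$ (Proposition~\ref{prop:global-monodromy-relation}). I will not resolve here whether that vanishing holds; it is the step I expect to be delicate.
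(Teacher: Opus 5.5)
Your strategy is the paper's own. Its proof simply asserts that the global gluing datum determines a distinguished triangle $\mathcal M''_\Sigma\xrightarrow{v_\Sigma}\psi_{\pi,1}^H(\mathcal M'_U)(-1)\to\mathcal P^H[1]\xrightarrow{+1}$ and then reads off the cone. You are right that this triangle carries all the content. However, the step you leave open cannot be closed: no such triangle exists, and the proposition as stated is false.

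For a morphism $f\colon A\to B$ between objects of the heart $MHM(X_0)$, the long exact cohomology sequence of $A\to B\to\Cone(f)\xrightarrow{+1}$ gives $H^0(\Cone(f)[-1])\cong\ker f$ and $H^1(\Cone(f)[-1])\cong\operatorname{coker} f$, with all other cohomology objects zero. Hence $\mathcal P^H\simeq\Cone(v_\Sigma)[-1]$ would force two things: $v_\Sigma$ is an epimorphism, and $\mathcal P^H\cong\ker(v_\Sigma)\subseteq\mathcal M''_\Sigma$. But $\mathcal M''_\Sigma$ is supported on $\Sigma$, whereas $\mathcal P^H|_U\cong\Q^H_U[3]\neq0$ by Theorem~\ref{thm:global-existence}.

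Likewise, $\operatorname{coker}(v_\Sigma)$ restricts on $U$ to $\psi_{\pi,1}^H(\mathcal M'_U)(-1)|_U$, because the source of $v_\Sigma$ vanishes there. This is nonzero, since over $U$ the map $\pi$ is smooth and the unipotent nearby cycles are a shifted constant module. So the ``remaining cohomology'' you hope to kill using $v_\Sigma u_\Sigma=N$ is exactly $\operatorname{coker}(v_\Sigma)$, and it never vanishes. No relation involving $u_\Sigma$ can change this, because it is a property of $v_\Sigma$ alone. The same computation refutes Proposition~\ref{prop:local-odp-cone}, since $\ker v_{\mathrm{loc}}\subseteq W^H_{\mathrm{loc}}$ cannot contain $IC^H_{\mathrm{loc}}$. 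So your nodewise step also rests on a false input.

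Your recollection of Saito's gluing needs correcting too, and the correction shows where $\Cone(v)$ actually lives. The sequence $\psi_{g,1}\mathcal M'\xrightarrow{u}\mathcal M''\xrightarrow{v}\psi_{g,1}\mathcal M'(-1)$ is not a complex, since its composite is $vu=N\neq0$. The complex in Beilinson's and Saito's construction is $\psi_{g,1}\mathcal M'\to\Xi_g\mathcal M'\oplus\mathcal M''\to\psi_{g,1}\mathcal M'(-1)$, with maps $(\alpha,u)$ and $(\beta,-v)$. Here $\Xi_g$ is the maximal extension and $\beta\alpha=N$, so the relation $vu=N$ is exactly what makes $\beta\alpha-vu=0$.

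In that construction $\mathcal M'$ lives on $X\setminus Y$; in your application that is $\mathcal X\setminus X_0$, not $U\subset X_0$. The glued module $\mathcal M$ is the middle cohomology, an object on the ambient $X$. Stupid truncations of this complex give triangles involving $\Xi_g$ and $(\alpha,u)$, never $\Cone(v)$ alone. What $\Cone(v)[-1]$ computes in this formalism is a restriction of the glued module, not the module itself: $i^!\mathcal M\simeq[\phi_{g,1}\mathcal M\xrightarrow{\Var}\psi_{g,1}\mathcal M(-1)]$ in degrees $0,1$, which in general has cohomology in both degrees.

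If any statement of this shape is to be salvaged, the shift must go. $\Cone(v_\Sigma)$ lies in the heart exactly when $v_\Sigma$ is a monomorphism, and then it equals $\operatorname{coker}(v_\Sigma)$. Identifying that object with $\mathcal P^H$ would be a different claim needing its own proof.
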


\begin{proof}
The global gluing datum determines a distinguished triangle
\[
\mathcal M''_\Sigma
\longrightarrow
\psi_{\pi,1}^H(\mathcal M'_U)(-1)
\longrightarrow
\mathcal P^H[1]
\overset{+1}{\longrightarrow}.
\]
By definition of cone in the triangulated category, the third term is
\[
\Cone(v_\Sigma),
\]
hence
\[
\mathcal P^H\simeq \Cone(v_\Sigma)[-1].
\]
\end{proof}

\subsection{Global uniqueness and rigidity}

\begin{lemma}
\label{lem:global-class-from-local}
The global extension class of \(\mathcal P^H\) is determined by the collection of nodewise local
extension classes.
\end{lemma}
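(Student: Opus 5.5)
The plan is to combine the global exact sequence of Theorem~\ref{thm:global-exact-sequence} with the nodewise Ext decomposition of Lemma~\ref{lem:ext-additivity-mhm}. First, the exact sequence
\[
0\to IC^H_{X_0}\to \mathcal P^H\to \bigoplus_{k=1}^r i_{k*}\Q^H_{\{p_k\}}(-1)\to 0
\]
determines a Yoneda class
\[
[\mathcal P^H]\in \Ext^1_{MHM(X_0)}\bigl(Q_\Sigma^H,IC^H_{X_0}\bigr).
\]
Lemma~\ref{lem:ext-additivity-mhm} identifies this group with the direct sum of the nodewise groups. Concretely, the component \(\epsilon_k^H\) is the pullback of the extension along the inclusion of the \(k\)-th summand \(\iota_k:W_k^H\hookrightarrow Q_\Sigma^H\). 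That is, \(\epsilon_k^H\) is the class of the preimage of \(W_k^H\) in \(\mathcal P^H\). Since the decomposition is an isomorphism (Proposition~\ref{prop:nodewise-ext-decomposition}), the global class is recovered as \(\sum_k \iota_{k*}\)-components of the tuple \((\epsilon_1^H,\dots,\epsilon_r^H)\). This is the formal part of the argument.

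The substantive step is to show that each \(\epsilon_k^H\) is a \emph{local} class, namely the class of the local ODP extension of Theorem~\ref{thm:local-odp-extension} at \(p_k\). I would argue in three steps.

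\begin{enumerate}
\item Restrict the pulled-back extension \(0\to IC^H_{X_0}\to \mathcal E_k\to W_k^H\to 0\) to a small analytic neighborhood \(X_{0,k}\) of \(p_k\) avoiding the other nodes. This is compatible with the gluing construction, because \(u_\Sigma\) and \(v_\Sigma\) were assembled componentwise in Proposition~\ref{prop:global-gluing-morphisms}, so near \(p_k\) only \((u_k,v_k)\) appear.
\item Conclude that \(\mathcal E_k|_{X_{0,k}}\) is obtained from the local gluing datum of Theorem~\ref{thm:local-odp-gluing-datum}. By local rigidity (Proposition~\ref{prop:local-odp-rigidity}), it is therefore isomorphic to \(\mathcal P^H_{\mathrm{loc}}\) at \(p_k\).
\item Check that this restriction to \(X_{0,k}\) loses no information. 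Here I would imitate Lemma~\ref{lem:nodewise-perverse-ext-localization-strong} in \(MHM\), using that \(W_k^H\) is supported at \(p_k\) and that the extension is split away from \(p_k\) (it is just \(IC^H\) there).
\end{enumerate}

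For the restriction-to-neighborhood argument in the last step, I would use the localization triangle for \(j:X_0\setminus\{p_k\}\hookrightarrow X_0\) in \(D^bMHM\). It gives
\[
\Ext^1(i_{k*}\Q^H(-1),IC^H)\cong \Hom\bigl(\Q^H(-1),\,i_k^!IC^H[1]\bigr),
\]
which depends only on the stalk data of \(IC^H\) at \(p_k\) and is therefore local.

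The main obstacle is this last localization in the Hodge setting. Analytic neighborhoods are not algebraic, so \(MHM\) of a small ball is not literally available. I would first try to bypass this entirely with the \(i_k^!\) adjunction formula above, which is algebraic and already manifestly depends only on \(p_k\). Combined with faithfulness of \(\rat\) and Corollary~\ref{cor:nodewise-perverse-ext-dim-one-strong}, this would also show that the realization of \(\epsilon_k^H\) is the generator \(e_k\), so the global class is pinned down by the local ones.
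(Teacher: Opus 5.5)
Your first paragraph is the paper's entire proof. The Yoneda class lies in $\Ext^1_{MHM(X_0)}(Q^H_\Sigma,IC^H_{X_0})$, and finite additivity (Lemma~\ref{lem:ext-additivity-mhm}, Proposition~\ref{prop:nodewise-ext-decomposition}) makes it equivalent to the tuple of pullbacks $\epsilon^H_k=\iota_k^*[\mathcal P^H]$. The class is reassembled as $\sum_k\mathrm{pr}_k^*\epsilon^H_k$ along the projections $\mathrm{pr}_k$, not via ``$\iota_{k*}$''. In the paper the ``nodewise local extension classes'' are just these components, which live in global groups indexed by the nodes, so that paragraph already proves the lemma. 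The comparison with the analytic-local ODP block, which you call the substantive step, is not part of this lemma; the paper does it separately in Propositions~\ref{prop:global-restricts-local-odp} and~\ref{prop:global-rigidity}.

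Your extra material, however, ends in a claim that fails, and your own adjunction formula shows why. Since $i_k^!IC^H_{X_0}$ is concentrated in degrees $\ge 1$, the formula gives
\[
\Ext^1_{MHM(X_0)}\bigl(i_{k*}\Q^H_{\{p_k\}}(-1),IC^H_{X_0}\bigr)\cong\Hom_{MHS}\bigl(\Q(-1),H^1(i_k^!IC^H_{X_0})\bigr).
\]
Here $H^1(i_k^!IC^H_{X_0})\cong H^3(L_k;\Q)$ for the link $L_k\simeq S^2\times S^3$. This is $\Q(-2)$ by the Gysin sequence of the circle bundle over the quadric $\mathbb P^1\times\mathbb P^1$. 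In any case it has weight $\ge 4$, since $i_k^!$ does not decrease weights and $IC^H_{X_0}$ is pure of weight $3$. Because $\Q(-1)$ has weight $2$, the group is zero.

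There is an even more direct argument. $W_\bullet$ is exact on mixed Hodge modules, so take any extension $\mathcal E$ of the weight-$2$ object $i_{k*}\Q^H_{\{p_k\}}(-1)$ by the weight-$3$ object $IC^H_{X_0}$. The subobject $W_2\mathcal E$ maps isomorphically onto the quotient and splits the sequence.

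Hence every $\epsilon^H_k$ is zero, and $\rat(\epsilon^H_k)=0\neq e_k$. Faithfulness of $\rat$ cannot rescue this, since it controls $\Hom$-groups, not $\Ext^1$-groups.

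In particular your steps 2--3 cannot be completed as planned. The results you would import (Theorems~\ref{thm:local-odp-extension} and~\ref{thm:local-odp-realization}, Proposition~\ref{prop:local-odp-ext-nonzero}, Corollary~\ref{cor:nodewise-perverse-ext-dim-one-strong}) are mutually inconsistent when the quotient is twisted by $(-1)$. Among rank-one point-supported quotients, only $i_{k*}\Q^H_{\{p_k\}}(-2)$, matching $H^3(L_k)\cong\Q(-2)$, admits an extension with non-split realization. For this lemma, keep your first paragraph and drop the assertion about $e_k$.
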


\begin{proof}
By Proposition~\ref{prop:nodewise-ext-decomposition}, the global corrected extension class lies in
\[
\Ext^1_{MHM(X_0)}(\mathcal M''_\Sigma,IC^H_{X_0}),
\qquad
\mathcal M''_\Sigma=\bigoplus_{k=1}^r i_{k*}\Q^H_{\{p_k\}}(-1),
\]
and this group decomposes canonically as
\[
\Ext^1_{MHM(X_0)}(\mathcal M''_\Sigma,IC^H_{X_0})
\cong
\bigoplus_{k=1}^r
\Ext^1_{MHM(X_0)}
\bigl(
i_{k*}\Q^H_{\{p_k\}}(-1),IC^H_{X_0}
\bigr).
\]
Thus a global extension class is equivalent to a tuple of nodewise classes, one for each node
\(p_k\). In particular, two global extensions define the same class if and only if their
projections to all nodewise summands agree.
\end{proof}
\begin{proposition}[Recovery of the local ODP model from the global construction]
\label{prop:global-restricts-local-odp}
Let \(\mathcal P^H\) be the global corrected mixed Hodge module of
Theorem~\ref{thm:global-existence}. For each node \(p_k\in\Sigma\), the restriction of
\(\mathcal P^H\) to a sufficiently small analytic neighborhood of \(p_k\) is isomorphic to the
local mixed-Hodge-module ODP extension of Theorem~\ref{thm:local-odp-extension}.
\end{proposition}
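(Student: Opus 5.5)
The plan is to compare the two objects through the gluing data that produce them, rather than through the glued objects directly, and then to invoke the local rigidity statement of Proposition~\ref{prop:local-odp-rigidity}. Fix a node \(p_k\) and choose a sufficiently small analytic (Milnor) neighborhood \(B_k\) of \(p_k\) in \(\mathcal X\), with \(B_k\cap\Sigma=\{p_k\}\). Set \(X_{0,k}:=B_k\cap X_0\), and let \(\rho_k\) denote restriction to this open set. The first step is to record that restriction to an open subset is exact on \(MHM\), commutes with realization, and commutes with the unipotent nearby and vanishing cycle functors \(\psi^H_{\pi,1},\phi^H_{\pi,1}\) together with \(\can^H\), \(\Var^H\) and \(N\). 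The reason is that these are local constructions along the principal divisor \(X_0\). Consequently, the restriction of Saito's gluing functor to \(B_k\) is computed by gluing the restricted datum.

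The second step is to compute the restricted datum. On \(X_{0,k}\) we have \(\rho_k\mathcal M'_U=\Q^H_{U_{\mathrm{loc}}}[3]\). Since the summands \(W_l^H\) with \(l\neq k\) are supported away from \(B_k\), we also get \(\rho_k\mathcal M''_\Sigma=W_k^H\), which is the local block \(W^H_{\mathrm{loc}}\). By Proposition~\ref{prop:global-gluing-morphisms}, the components of \(u_\Sigma,v_\Sigma\) are the \(u_l,v_l\). After restriction only the \(k\)-th components survive, so
\[
\rho_k(u_\Sigma,v_\Sigma)=(u_k,v_k)=(u_{\mathrm{loc}},v_{\mathrm{loc}}),
\]
and these carry the normalization \(v_ku_k=N\) of Proposition~\ref{prop:local-normalization}. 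The restricted global datum is therefore literally the local ODP gluing datum of Theorem~\ref{thm:local-odp-gluing-datum}. By functoriality of gluing, \(\rho_k\mathcal P^H\) is isomorphic to \(\mathcal P^H_{\mathrm{loc}}\).

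As a cross-check, and as a fallback if one prefers not to argue via the gluing functor, restrict the exact sequence of Theorem~\ref{thm:global-exact-sequence}. The intersection complex satisfies \(\rho_k IC^H_{X_0}=IC^H_{\mathrm{loc}}\), because the intermediate extension is local. So \(\rho_k\mathcal P^H\) is an extension of \(W^H_{\mathrm{loc}}\) by \(IC^H_{\mathrm{loc}}\). By Theorem~\ref{thm:global-realization}, its realization is \(\rho_k\mathcal P\), which is the corrected local perverse block because \(\Cone(\var_F)[-1]\) is formed by local operations. Proposition~\ref{prop:local-odp-rigidity} then gives \(\rho_k\mathcal P^H\cong\mathcal P^H_{\mathrm{loc}}\). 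On the level of extension classes, this matches Lemma~\ref{lem:global-class-from-local}: the \(k\)-th nodewise class restricts to the local class, and the classes at the other nodes restrict to zero.

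The main obstacle is the first step: justifying that Saito's gluing, and the \(V\)-filtration data entering it, commute with restriction to an analytic neighborhood. \(MHM\) is formulated algebraically, so this requires working with analytic mixed Hodge modules on \(B_k\), or with an étale/Zariski neighborhood chosen to meet \(\Sigma\) only in \(p_k\). I would first try a Zariski open neighborhood of \(p_k\) that avoids the other nodes. There, restriction is an honest exact functor on \(MHM\), and it commutes with \(\psi^H,\phi^H\) by Saito's base-change compatibility for open immersions. I would pass to the analytic germ only afterwards, at the level of underlying filtered \(\mathcal D\)-modules and perverse sheaves, where the local model of Section~\ref{sec:local-odp} lives.
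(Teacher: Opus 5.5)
Your proposal is correct and takes essentially the same route as the paper. You restrict the assembled datum $(\mathcal M'_U,\mathcal M''_\Sigma,u_\Sigma,v_\Sigma)$ to a neighborhood of $p_k$ that meets $\Sigma$ only in $p_k$, and observe that only the $k$-th summand and the components $u_k,v_k$ survive, which is exactly the local datum of Theorem~\ref{thm:local-odp-gluing-datum}. You then conclude by gluing locally, while the paper additionally cites Proposition~\ref{prop:local-odp-rigidity} to identify the result with $\mathcal P^H_{\mathrm{loc}}$, as your fallback also does. Beyond what the paper does, you explicitly check that gluing and the $V$-filtration data are compatible with restriction, and you raise the algebraic-versus-analytic neighborhood issue; the paper takes both for granted.
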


\begin{proof}
Fix a node \(p_k\in\Sigma\), and choose a sufficiently small analytic neighborhood \(B_k\subset X_0\)
containing no singular point other than \(p_k\). By construction, the global singular quotient
\[
\mathcal M''_\Sigma=\bigoplus_{j=1}^r i_{j*}\Q^H_{\{p_j\}}(-1)
\]
restricts on \(B_k\) to the single local summand
\[
i_{k*}\Q^H_{\{p_k\}}(-1).
\]
Likewise, the global attaching maps \(u_\Sigma\) and \(v_\Sigma\) restrict on \(B_k\) to the local
attaching maps \(u_k\) and \(v_k\), because the global datum was assembled nodewise from the local
ODP blocks and all other summands vanish on \(B_k\).

Therefore the restriction of the global gluing datum to \(B_k\) is exactly the local gluing datum of
Theorem~\ref{thm:local-odp-gluing-datum}. Applying Saito's gluing theorem locally on \(B_k\), the
restricted object \(\mathcal P^H|_{B_k}\) is the mixed Hodge module produced from that local datum.
By Proposition~\ref{prop:local-odp-rigidity}, that local object is unique up to unique isomorphism,
hence
\[
\mathcal P^H|_{B_k}\cong \mathcal P^H_{\mathrm{loc},k}.
\]
This is exactly the asserted local recovery statement.
\end{proof}
\begin{proposition}[Global rigidity]
\label{prop:global-rigidity}
Let \(\mathcal E^H\in MHM(X_0)\) satisfy:
\begin{enumerate}
\item \(\rat(\mathcal E^H)\) is the corrected finite multi-node perverse extension;
\item its singular quotient is
\[
\mathcal M''_\Sigma
=
\bigoplus_{k=1}^r i_{k*}\Q^H_{\{p_k\}}(-1).
\]
\end{enumerate}
Then \(\mathcal E^H\cong \mathcal P^H\).
\end{proposition}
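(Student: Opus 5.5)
The plan is to reduce global rigidity to the nodewise rigidity already established, using the Ext decomposition of Section~5 and faithfulness of realization.

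First, hypothesis (2) places $\mathcal E^H$ in an exact sequence
\[
0\to K^H\to \mathcal E^H\to \mathcal M''_\Sigma\to 0 .
\]
I will identify the kernel $K^H$ with $IC^H_{X_0}$ as in Lemma~\ref{lem:bulk-subobject}. Hypothesis (1) forces $\rat(\mathcal E^H)|_U\cong \Q_U[3]$. By exactness and faithfulness of $\rat$, together with the fact that $\mathcal M''_\Sigma$ has no nonzero subobject or quotient supported away from $\Sigma$, the kernel is the middle extension of its restriction to $U$. This restriction is the unique Hodge lift $\Q^H_U[3]$, after possibly fixing the variation of Hodge structure on $U$, a point I would argue from the constant-sheaf realization on $U$. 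So $\mathcal E^H$ defines a class $[\mathcal E^H]\in\Ext^1_{MHM(X_0)}(\mathcal M''_\Sigma,IC^H_{X_0})$, and it suffices to show $[\mathcal E^H]=[\mathcal P^H]$, up to the scalar automorphisms of $\mathcal M''_\Sigma$.

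Second, by Lemma~\ref{lem:global-class-from-local} and Proposition~\ref{prop:nodewise-ext-decomposition}, this comparison splits into the $r$ nodewise components $\epsilon_k^H$. For each $k$ I restrict to a small neighborhood $B_k$ of $p_k$. There $\mathcal E^H|_{B_k}$ is a local mixed Hodge module with singular quotient $i_{k*}\Q^H_{\{p_k\}}(-1)$, and its realization is the local corrected ODP block. By Proposition~\ref{prop:global-restricts-local-odp} the same holds for $\mathcal P^H|_{B_k}$. Proposition~\ref{prop:local-odp-rigidity} then gives $\mathcal E^H|_{B_k}\cong\mathcal P^H|_{B_k}\cong \mathcal P^H_{\mathrm{loc},k}$. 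Using Lemma~\ref{lem:nodewise-perverse-ext-localization-strong}, transported to $MHM$ because the quotient is point-supported, the nodewise classes coincide up to a nonzero scalar $\lambda_k$. Rescaling the $k$-th summand of $\mathcal M''_\Sigma$ by $\lambda_k^{-1}$, which is an automorphism of the quotient, makes the two tuples equal. Hence the two extensions are equivalent, and in particular $\mathcal E^H\cong\mathcal P^H$.

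The main obstacle is the passage from perverse-level agreement to agreement of Ext classes in $MHM$. The realization map on $\Ext^1$ need not be injective, so two mixed Hodge extensions with the same realization could a priori differ. The first remedy I would try is to avoid comparing Ext classes directly and instead compare gluing data. Saito's gluing theorem \cite[Prop.~0.3]{SaitoMHM} gives an equivalence of categories, so $\mathcal E^H$ corresponds to a quadruple $(\Q^H_U[3],\mathcal M''_\Sigma,u',v')$. Each component $u'_k,v'_k$ lies in a Hom-space that injects under faithful $\rat$ into a one-dimensional perverse Hom-space (Propositions~\ref{prop:uloc-unique} and \ref{prop:vloc}), and its realization is nonzero by hypothesis (1). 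Therefore $u'_k=a_k u_k$ and $v'_k=b_k v_k$, and the relation $v'u'=N$ forces $a_kb_k=1$. The diagonal automorphism $\mathrm{diag}(a_k)$ of $\mathcal M''_\Sigma$ then identifies the two gluing data, and the equivalence of categories yields $\mathcal E^H\cong\mathcal P^H$. This route depends on the gluing correspondence being an equivalence and on the identification of the open part, which is the step I would scrutinize most.
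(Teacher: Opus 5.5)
\textbf{The kernel identification you deferred is a genuine gap, shared by the paper, and realization cannot close it.}

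Your second paragraph is essentially the paper's proof: restrict to $B_k$, apply Propositions~\ref{prop:local-odp-rigidity} and~\ref{prop:global-restricts-local-odp}, then conclude with Lemma~\ref{lem:global-class-from-local}. You are more careful than the paper about the scalars $\lambda_k$ and about localizing $\Ext^1$ in $MHM$. Your gluing-data variant is just the paper's local-rigidity argument run globally.

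The real problem is the step you deferred: identifying the kernel $K^H$ of $\mathcal E^H\to\mathcal M''_\Sigma$ with $IC^H_{X_0}$. The paper's proof has the same hole, since it simply asserts that $\mathcal E^H$ defines a class in $\Ext^1_{MHM(X_0)}(\mathcal M''_\Sigma,IC^H_{X_0})$. This cannot be settled ``from the constant-sheaf realization on $U$'', because $\rat$ forgets Tate twists. Exactness and faithfulness of $\rat$, plus a length count in $\rat(\mathcal E^H)$, show only that $K^H$ is simple with $\rat(K^H)\cong IC_{X_0}$. So $K^H|_U$ is a rank-one polarizable variation with constant underlying local system, hence $K^H\cong IC^H_{X_0}(m)$ for some $m\in\Z$ that the realization does not see.

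In fact weights fix $m$, and at the wrong value. By adjunction, and since $H^0i_k^!IC^H_{X_0}=0$, one has $\Ext^1_{MHM(X_0)}\bigl(i_{k*}\Q^H_{\{p_k\}}(-1),IC^H_{X_0}(m)\bigr)\cong\Hom_{\mathrm{MHS}}\bigl(\Q(-1),H^1i_k^!IC^H_{X_0}(m)\bigr)$. Moreover $H^1i_k^!IC^H_{X_0}\cong H^0i_k^*j_*\Q^H_U[3]\cong H^3(L_k)\cong\Q(-2)$ for the link $L_k\simeq S^2\times S^3$; this follows from the Gysin sequence over the exceptional quadric $\mathbb P^1\times\mathbb P^1$ of the blow-up. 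So this group is nonzero only for $m=1$. This has three consequences:
\begin{enumerate}
\item $\Ext^1_{MHM(X_0)}(\mathcal M''_\Sigma,IC^H_{X_0})=0$, since a weight-$2$ quotient cannot extend a weight-$3$ subobject nontrivially. Hence every extension as in Theorem~\ref{thm:global-exact-sequence} splits, and the nodewise comparison of Lemma~\ref{lem:global-class-from-local} takes place in the zero group.
\item If some nodewise class of $\rat(\mathcal E^H)$ is nonzero, as Proposition~\ref{prop:local-odp-ext-nonzero} asserts, then $K^H\cong IC^H_{X_0}(1)$. An example is $\mathcal E^H=(H^0j_*\Q^H_U[3])(1)$ when every nodewise class is nonzero. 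Its Jordan--H\"older factors are not those of $\mathcal P^H$, so $\mathcal E^H\not\cong\mathcal P^H$.
\item If instead all nodewise classes vanish, the objects $IC^H_{X_0}(m)\oplus\mathcal M''_\Sigma$ satisfy (1) and (2) for every $m$ and are pairwise non-isomorphic.
\end{enumerate}

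Either way, no argument routed through $\Ext^1(\mathcal M''_\Sigma,IC^H_{X_0})$ proves the statement as given. That includes yours, the paper's, and your gluing variant, whose open datum is $\Q^H_U(m)[3]$. At minimum you must add $\mathcal E^H|_U\cong\Q^H_U[3]$ as a hypothesis. The vanishing of that Ext group then makes the conclusion immediate. However, it also makes hypothesis (1) unsatisfiable whenever the perverse class is nonzero.
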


\begin{proof}
Both \(\mathcal E^H\) and \(\mathcal P^H\) define extension classes in
\[
\Ext^1_{MHM(X_0)}(\mathcal M''_\Sigma,IC^H_{X_0}).
\]
By Lemma \ref{lem:global-class-from-local}, each such global class is determined by its nodewise components under the
decomposition
\[
\Ext^1_{MHM(X_0)}(\mathcal M''_\Sigma,IC^H_{X_0})
\cong
\bigoplus_{k=1}^r
\Ext^1_{MHM(X_0)}
\bigl(
i_{k*}\Q^H_{\{p_k\}}(-1),IC^H_{X_0}
\bigr).
\]

Fix a node \(p_k\in\Sigma\), and choose a sufficiently small analytic neighborhood \(B_k\subset X_0\) containing no other node. Restriction is exact, so \(\mathcal E^H|_{B_k}\) fits into a short exact sequence
\[
0\to IC^H_{X_0}|_{B_k}\to \mathcal E^H|_{B_k}\to i_{k*}\Q^H_{\{p_k\}}(-1)\to 0.
\]
Moreover,
\[
\rat(\mathcal E^H|_{B_k}) \cong \rat(\mathcal E^H)|_{B_k} \cong \mathcal P|_{B_k},
\]
and \(\mathcal P|_{B_k}\) is the corrected local perverse ODP extension. Thus \(\mathcal E^H|_{B_k}\) has the same realized local corrected perverse block and the same point-supported singular quotient as the local object \(\mathcal P^H_{\mathrm{loc},k}\).

By Proposition \ref{prop:local-odp-rigidity}, the local mixed-Hodge-module extension with these properties is unique up to
isomorphism. Therefore
\[
\mathcal E^H|_{B_k}\cong \mathcal P^H_{\mathrm{loc},k}.
\]
By Proposition \ref{prop:global-restricts-local-odp}, the same holds for \(\mathcal P^H|_{B_k}\). Hence the \(k\)-th nodewise
extension component of \(\mathcal E^H\) agrees with that of \(\mathcal P^H\).

Since this holds for every node \(p_k\in\Sigma\), all nodewise components agree. Lemma \ref{lem:global-class-from-local} then
implies that the global extension classes of \(\mathcal E^H\) and \(\mathcal P^H\) coincide in
\[
\Ext^1_{MHM(X_0)}(\mathcal M''_\Sigma,IC^H_{X_0}),
\]
and therefore
\[
\mathcal E^H\cong \mathcal P^H.
\]
\end{proof}

%------------------------------------------------------------
\section{Hypercohomology and limiting mixed Hodge structures}

In this section we extract the Hodge-theoretic consequences of the global mixed Hodge module
extension constructed in Section~6. The key point is that the same nearby-cycle mixed-Hodge-module
formalism controls both the corrected object \(\mathcal P^H\) and the limiting mixed Hodge
structure of the degeneration. The singular quotient
\[
\mathcal V^H:=\bigoplus_{k=1}^r i_{k*}\Q^H_{\{p_k\}}(-1)
\]
is therefore not merely a point-supported correction term on the central fiber: it is the precise
mixed-Hodge-module refinement of the local vanishing contribution that appears in the limiting mixed
Hodge structure.

\subsection{Hypercohomology of nearby-cycle mixed Hodge modules} \label{subsec:hypercohomology-nearby-cycle-mixed-hodge}

Let
\[
F=\Q_{\mathcal X}[3].
\]
By Saito's theory, the nearby-cycle object
\[
\psi_\pi^H(F)
\]
is a mixed Hodge module on \(X_0\), and its hypercohomology carries the limiting mixed Hodge structure associated with the degeneration. More precisely, applying hypercohomology to the nearby-cycle mixed-Hodge-module formalism yields the limiting mixed Hodge structure on the cohomology of the nearby fiber in the sense of Schmid and Steenbrink \cite{Schmid,Steenbrink}.

\begin{lemma}
\label{lem:nearby-hypercohomology-lmhs}
For each \(m\), the mixed Hodge structure
\[
\mathbb H^m(X_0,\psi_\pi^H(F))
\]
is the limiting mixed Hodge structure associated with the degeneration in degree \(m\).
\end{lemma}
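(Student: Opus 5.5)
The lemma identifies the hypercohomology of the nearby-cycle mixed Hodge module with the limiting mixed Hodge structure, so the plan is to reduce it to Saito's comparison theorem for nearby cycles under proper pushforward. The point to settle first is properness: the statement only makes sense when \(\pi\) is proper, so we will impose that (after shrinking \(\Delta\)), in line with the convention in Section~2 that projective hypotheses are added when needed. Write \(a_{X_0}:X_0\to\mathrm{pt}\). Hypercohomology of a mixed Hodge module \(\mathcal M\) on \(X_0\) is then \(\mathbb H^m(X_0,\mathcal M)=H^m(a_{X_0*}\mathcal M)\), a mixed Hodge structure by Saito's formalism.

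\textbf{Step 1 (nearby cycles commute with proper pushforward).} Saito's theory gives a natural isomorphism
\[
H^m a_{X_0*}\,\psi^H_\pi(F)\;\cong\;\psi^H_{t}\bigl(H^m\pi_*F\bigr)
\]
for proper \(\pi\), where \(t\) is the coordinate on \(\Delta\). This isomorphism is compatible with \(T_s\) and with \(N\), and it holds up to the standard shift and Tate-twist normalization attached to \(F=\Q_{\mathcal X}[3]\). The Hodge-module input we need is the strictness/decomposition that makes \(\psi^H\) commute with \(H^m\pi_*\), which Saito proves for projective morphisms and extends to proper algebraic ones. We will cite this rather than reprove it.

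\textbf{Step 2 (identify the base object).} Over \(\Delta^*\), \(H^m\pi_*F\) is the admissible variation of Hodge structure \(R^{m+3}\pi_*\Q\), with the shift by \(3\) absorbed. On a curve, \(\psi^H_t\) of an admissible variation is, by Saito's description through the \(V\)-filtration, the limiting mixed Hodge structure. Its Hodge filtration is given by Deligne's canonical extension, and its weight filtration is the monodromy weight filtration of \(N\) centred at the appropriate weight; this is exactly the limit of Schmid and Steenbrink. Combining this with Step 1 yields the lemma in degree \(m\), after matching the index shift.

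\textbf{Step 3 (unipotent part and normalization).} The paper uses both \(\psi_\pi\) and \(\psi_{\pi,1}\). For an ordinary double point on a threefold the local monodromy satisfies \(T^2=\id\) or \(T=\id\) depending on parity, and here it is unipotent; for safety we will state the lemma for the full \(\psi^H_\pi\) with its semisimple part \(T_s\), as Steenbrink's construction does. We will also check that the perverse shift convention \(F=\Q_{\mathcal X}[3]\) moves the degree index by exactly \(3\) and does not change the Tate twist.

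\textbf{Main obstacle.} We expect the hardest part to be Step 1: making the base-change isomorphism precise and compatible with the weight filtration. The weights on \(\psi^H\) are relative monodromy filtrations, and showing that they push forward to the monodromy weight filtration on cohomology requires Saito's theorem on the direct images of weight-graded pieces. We would first try to quote it directly, and otherwise fall back on the Steenbrink-spectral-sequence comparison after a semistable reduction.
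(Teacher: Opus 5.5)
Your proposal is correct and follows the paper's route. The paper's proof is a one-line appeal to the standard compatibility of Saito's nearby-cycle functor with the Schmid--Steenbrink limiting mixed Hodge structure, and your Steps~1--2 (the base change $H^m a_{X_0*}\psi^H_\pi(F)\cong\psi^H_t(H^m\pi_*F)$ for proper $\pi$, followed by Saito's identification of $\psi^H_t$ of an admissible variation on the disk with the limit) spell out exactly that compatibility. You are right to make explicit the properness hypothesis and the degree shift by $3$, which the paper leaves implicit; and since the Step~1 isomorphism already lives in $MHM(\mathrm{pt})$, the weight compatibility you list as the main obstacle follows from the cited theorem and needs no separate argument.
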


\begin{proof}
This is the standard compatibility of nearby cycles in Saito's category with the classical limiting
mixed Hodge structure formalism.
\end{proof}

\subsection{The point-supported quotient and the vanishing sector}

By Theorem~\ref{thm:global-exact-sequence}, the global corrected mixed Hodge module fits into an
exact sequence
\[
0\to IC^H_{X_0}\to \mathcal P^H\to \mathcal V^H\to 0,
\qquad
\mathcal V^H:=\bigoplus_{k=1}^r i_{k*}\Q^H_{\{p_k\}}(-1).
\]
Thus \(\mathcal V^H\) is exactly the singular quotient object identified in Sections~5 and 6.

\begin{proposition}
\label{prop:vanishing-quotient-hypercohomology}
The quotient
\[
\mathcal V^H=\bigoplus_{k=1}^r i_{k*}\Q^H_{\{p_k\}}(-1)
\]
realizes the rank-\(r\) local vanishing contribution in the nearby-cycle formalism.
\end{proposition}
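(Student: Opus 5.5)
The plan is to make ``realizes the rank-\(r\) local vanishing contribution'' precise as an isomorphism of point-supported mixed Hodge modules between \(\mathcal V^H\) and the node-supported vanishing sector of \(\phi^H_{\pi,1}(F)\) singled out in Section~4, compatible with the gluing maps, and then to check it after taking hypercohomology. The first step is to compute hypercohomology of the quotient. Each summand \(i_{k*}\Q^H_{\{p_k\}}(-1)\) is a pushforward from a point, so
\[
\mathbb H^m\bigl(X_0,\mathcal V^H\bigr)\cong
\begin{cases}
\bigoplus_{k=1}^r \Q^H(-1) & m=0,\\
0 & m\neq 0.
\end{cases}
\]
This is a rank-\(r\) Hodge structure of Tate type, pure of weight \(2\), with one line for each node.

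The second step identifies this with the vanishing side. By Lemma~\ref{lem:vanishing-support-finite}, \(\phi^H_{\pi}(F)\) is supported on \(\Sigma\). It therefore splits canonically over the disjoint points as \(\bigoplus_k i_{k*}i_k^*\phi^H_\pi(F)\), and its hypercohomology is the direct sum of the stalks. By the Milnor-fiber computation of Section~2, each stalk is the rank-one reduced cohomology \(\widetilde H^3(F_{p_k};\Q)\). Proposition~\ref{prop:point-level-odp-quotient} identifies this local vanishing line with \(\Q^H(-1)\) through \(q_{\mathrm{pt}}\), and the maps \(q_{\mathrm{loc},k}\) assemble by Proposition~\ref{prop:global-gluing-morphisms} into a morphism from the node-supported vanishing sector to \(\mathcal V^H\). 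On each summand this morphism is a nonzero map between rank-one objects, so it is an isomorphism. This uses Lemma~\ref{lem:local-odp-zigzag} together with faithfulness and exactness of \(\rat\).

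The third step ties this to the limiting mixed Hodge structure. I would take the long exact sequence of hypercohomology attached to the triangle of Proposition~\ref{prop:global-cone-description}, namely \(\mathcal V^H\xrightarrow{v_\Sigma}\psi^H_{\pi,1}(F)(-1)\to\mathcal P^H[1]\). Lemma~\ref{lem:nearby-hypercohomology-lmhs} identifies the middle term with the limiting mixed Hodge structure, Tate-twisted. The map induced by \(v_\Sigma\) then sends the \(k\)-th node line to the class of the vanishing cycle \(\delta_k\), because \(v_k\) is induced from \(\Var^H\) and \(\Var\) sends the local vanishing class to \(\delta_k\) (Picard--Lefschetz). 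Its image is therefore the span of the \(\delta_k\), which is the rank-\(r\) local vanishing contribution of Section~2.

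The main obstacle is this last step. The vanishing cycles \(\delta_k\) can be linearly dependent in \(H_3(X_t)\), so the map on \(\mathbb H^0\) need not be injective. I would resolve this by stating ``realizes'' at the level of mixed Hodge modules and their hypercohomology, as the isomorphism constructed in the second step, which has rank exactly \(r\). The image in the limiting mixed Hodge structure would then be treated as a quotient of that rank-\(r\) space, and I would check weight compatibility: both sides sit in weight \(2\) after the twist, matching the action of \(N\).
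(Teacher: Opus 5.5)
Your Step 2 fails, and the reason is weights, so citing Section~4 cannot rescue it. The vanishing line of a threefold node carries the Hodge structure $\Q(-2)$, not $\Q(-1)$. In Saito's normalization, $\phi_{\pi,1}$ of the weight-$4$ module $\Q^H_{\mathcal X}[4]$ (your $F$, on the smooth total space of the local model $\pi=\sum x_i^2$) carries the monodromy filtration centered at $4$. Since $N=0$ on the rank-one stalk, that stalk is pure of weight $4$. Concretely, the Milnor fiber is the affine quadric $\{x_1^2+\dots+x_4^2=1\}$. The Gysin sequence for its smooth closure in $\mathbb P^4$, with boundary $\mathbb P^1\times\mathbb P^1$, gives $H^3\cong\ker\bigl(\Q(-2)^{2}\to\Q(-2)\bigr)\cong\Q(-2)$.

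Since $\Hom_{\mathrm{MHS}}(\Q(-2),\Q(-1))=0$, there is no nonzero morphism $i_{k*}i_k^*\phi^H_\pi(F)\to i_{k*}\Q^H_{\{p_k\}}(-1)$. So the argument ``nonzero between rank-one objects, hence an isomorphism'' has nothing to apply to. Lemma~\ref{lem:local-odp-zigzag} and faithfulness of $\rat$ do not help: $\rat$ is not full, so the evident nonzero map of skyscraper perverse sheaves does not lift. Nor does Proposition~\ref{prop:point-level-odp-quotient} supply the map. Its $q_{\mathrm{pt}}$ is defined on $A_{\mathrm{loc}}=\operatorname{Im}(\can^H)\cap\ker(\Var^H)$, not on the stalk of $\phi^H$. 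Moreover, $\ker(\Var^H)$ is identified with the largest subobject of $\Q^H_{\mathcal X}[4]$ supported on $X_0$, which is zero because $\Q_{\mathcal X}[4]$ is a simple perverse sheaf on the smooth connected total space. Hence $A_{\mathrm{loc}}=0$, and the same weight obstruction applies to that proposition itself.

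Step 3 fails for the same reason: your weight check is off by one twist. For a conifold degeneration, $N^2=0$ on $H^3(X_\infty)$, and the $\delta_k$ span $\operatorname{Im}N=W_2$ of the limiting mixed Hodge structure, which has weights $2,3,4$. So the $\delta_k$ have weight $2$ only before twisting. After the twist $(-1)$, the target of $\mathbb H^0(v_\Sigma)$ has weights $\geq 4$. Any morphism of mixed Hodge structures from $\bigoplus_k\Q(-1)$ into it is therefore zero, and $\mathbb H^0(v_\Sigma)$ cannot send the $k$-th line to $\delta_k$.

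The paper's own proof asserts much less. It combines the rank-one Milnor-fiber computation with the Section~4 identification of each local quotient as $i_{k*}\Q^H_{\{p_k\}}(-1)$ and sums over nodes. It never claims an isomorphism with the stalks of $\phi^H_\pi(F)$ or a map onto the $\delta_k$. Your attempt to make ``realizes'' precise is exactly what exposes the mismatch. The node-supported vanishing sector is $\bigoplus_k i_{k*}\Q^H_{\{p_k\}}(-2)=\mathcal V^H(-1)$. For that object your Step 2 holds by the direct computation above. Your Step 3 also holds with $\Var^H$ in place of $v_\Sigma$, including your handling of possible linear dependence among the $\delta_k$. Neither step holds for $\mathcal V^H$ as stated.
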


\begin{proof}
At each node \(p_k\), the local Milnor fiber has the homotopy type of \(S^3\), so the reduced
cohomology is rank one in degree \(3\) and vanishes otherwise. By the local construction of
Section~4, the corresponding mixed-Hodge-module refinement of the local singular quotient is
\[
i_{k*}\Q^H_{\{p_k\}}(-1).
\]
Summing over the finite node set gives the global quotient \(\mathcal V^H\). Since nearby and
vanishing cycles in \(MHM\) refine the corresponding perverse objects and their realization is
compatible with the local variation picture, this quotient is exactly the mixed-Hodge-module
realization of the rank-\(r\) local vanishing sector.
\end{proof}

\begin{corollary}
\label{cor:vanishing-quotient-realization}
Applying the realization functor to \(\mathcal V^H\) yields
\[
\rat(\mathcal V^H)\cong \bigoplus_{k=1}^r i_{k*}\Q_{\{p_k\}},
\]
the singular quotient in the corrected perverse extension.
\end{corollary}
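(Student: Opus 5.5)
The plan is a direct computation from the definition of \(\mathcal V^H\) together with the basic properties of \(\rat\) recalled in Section~3.

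First, I would use that \(\rat:MHM(X_0)\to\Perv(X_0;\Q)\) is an additive functor between additive categories. It is in fact exact, hence additive, so it commutes with finite direct sums. This gives
\[
\rat(\mathcal V^H)\cong\bigoplus_{k=1}^r \rat\bigl(i_{k*}\Q^H_{\{p_k\}}(-1)\bigr).
\]

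Second, I would identify each summand. Realization is compatible with direct images under closed immersions, so \(\rat(i_{k*}\Q^H_{\{p_k\}}(-1))\cong i_{k*}\rat(\Q^H_{\{p_k\}}(-1))\). The Tate twist changes only the Hodge and weight filtrations, not the underlying rational structure, so \(\rat(\Q^H_{\{p_k\}}(-1))\cong\Q_{\{p_k\}}\). This is exactly the identification already recorded in the subsection on point-supported mixed Hodge modules.

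Third, I would explain why this is the singular quotient of the corrected perverse extension. Apply the exact functor \(\rat\) to the global exact sequence of Theorem~\ref{thm:global-exact-sequence}. Then use \(\rat(IC^H_{X_0})\cong IC_{X_0}\) and \(\rat(\mathcal P^H)\cong\mathcal P\), the latter from Theorem~\ref{thm:global-realization}. The result is the short exact sequence
\[
0\to IC_{X_0}\to\mathcal P\to\bigoplus_{k=1}^r i_{k*}\Q_{\{p_k\}}\to 0
\]
in \(\Perv(X_0;\Q)\). So the realized object sits precisely in the quotient position of the corrected perverse extension.

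I do not expect a genuine obstacle. The only point needing care is the compatibility \(\rat\circ i_{k*}\cong i_{k*}\circ\rat\) for a closed point. This is part of Saito's formalism (\(\rat\) commutes with the standard functors), and I would simply cite it.
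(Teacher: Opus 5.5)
Your proposal is correct and follows the paper's proof. The paper also uses only exactness (hence additivity) of \(\rat\) to reduce to summands, together with the identity \(\rat(i_{k*}\Q^H_{\{p_k\}}(-1))\cong i_{k*}\Q_{\{p_k\}}\) from Section~3. Your third step, applying \(\rat\) to the sequence of Theorem~\ref{thm:global-exact-sequence} and citing Theorem~\ref{thm:global-realization}, is an extra the paper does not use, since it treats the phrase about the singular quotient as descriptive. That step makes the corollary depend on the much heavier global gluing results, so you can safely drop it.
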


\begin{proof}
This follows immediately from the exactness of \(\rat\) and the identity
\[
\rat(i_{k*}\Q^H_{\{p_k\}}(-1))\cong i_{k*}\Q_{\{p_k\}}.
\]
\end{proof}

\subsection{Hypercohomology / LMHS comparison theorem}

Write again
\[
\mathcal V^H:=\bigoplus_{k=1}^r i_{k*}\Q^H_{\{p_k\}}(-1).
\]
By Theorem~\ref{thm:global-exact-sequence}, the corrected mixed-Hodge-module object fits into an
exact sequence
\begin{equation}
0\longrightarrow IC^H_{X_0}\longrightarrow \mathcal P^H\longrightarrow \mathcal V^H\longrightarrow 0.
\label{eq:mhm-global-extension}
\end{equation}
Applying hypercohomology yields a long exact sequence in mixed Hodge structures
\begin{equation}
\cdots\to
\mathbb H^m(X_0,IC^H_{X_0})
\longrightarrow
\mathbb H^m(X_0,\mathcal P^H)
\longrightarrow
\mathbb H^m(X_0,\mathcal V^H)
\overset{\partial_m}{\longrightarrow}
\mathbb H^{m+1}(X_0,IC^H_{X_0})
\to\cdots.
\label{eq:hypercohomology-les-ph}
\end{equation}
The point of this subsection is to identify the extension carried by
Eqn.~\eqref{eq:mhm-global-extension} after passing to hypercohomology with the finite
vanishing-sector extension arising from the nearby-cycle description of the limiting mixed Hodge
structure.

By Proposition~\ref{prop:global-cone-description},
\[
\mathcal P^H\simeq \Cone(v_\Sigma)[-1].
\]
Thus \(\mathcal P^H\) is not an auxiliary object introduced after the fact: it is the global
corrected object canonically constructed from the same nearby-cycle and variation data that govern
the limiting mixed Hodge structure.

\begin{definition}
\label{def:lmhs-extension-class}
The \emph{LMHS extension class} of the finite multi-node degeneration is the extension class in the
category of mixed Hodge structures obtained by applying hypercohomology to the global corrected
mixed-Hodge-module extension Eqn.~\eqref{eq:mhm-global-extension} and restricting to the rank-\(r\)
local vanishing sector carried by \(\mathcal V^H\).
\end{definition}

\begin{proposition}
\label{prop:finite-vanishing-sector-triangle}
There is a distinguished triangle in \(D^bMHM(X_0)\)
\[
IC^H_{X_0}\longrightarrow \mathcal P^H\longrightarrow \mathcal V^H
\overset{+1}{\longrightarrow}
\]
whose hypercohomology long exact sequence is Eqn.~\eqref{eq:hypercohomology-les-ph}. Moreover,
this triangle is the finite vanishing-sector truncation of the nearby-cycle triangle determined by
the same global variation morphism \(v_\Sigma\).
\end{proposition}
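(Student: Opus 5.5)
The proof has two parts, and the first is formal. Any short exact sequence in the abelian category \(MHM(X_0)\) gives a distinguished triangle in \(D^bMHM(X_0)\). This is the standard fact that a short exact sequence in the heart of a t-structure (here, the natural one on \(D^bMHM(X_0)\)) yields a triangle whose connecting morphism \(\mathcal V^H\to IC^H_{X_0}[1]\) is the Yoneda class in \(\Ext^1_{MHM(X_0)}(\mathcal V^H,IC^H_{X_0})\). I would apply this to the exact sequence of Theorem~\ref{thm:global-exact-sequence}, i.e.\ Eqn.~\eqref{eq:mhm-global-extension}. Next I apply the cohomological functor \(\mathbb H^\bullet(X_0,-)=H^\bullet(a_{X_0*}-)\), where \(a_{X_0}\) is the map to a point. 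By Saito this lands in \(D^bMHM(\mathrm{pt})=D^b(\mathrm{MHS})\), so its cohomology long exact sequence is a sequence of mixed Hodge structures. By construction it is Eqn.~\eqref{eq:hypercohomology-les-ph}, with \(\partial_m\) induced by the connecting morphism.

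The second part is the identification with the nearby-cycle triangle. Here I use Proposition~\ref{prop:global-cone-description}, \(\mathcal P^H\simeq\Cone(v_\Sigma)[-1]\), which gives the triangle
\[
\mathcal P^H\to \mathcal V^H\xrightarrow{v_\Sigma}\psi^H_{\pi,1}(\mathcal M'_U)(-1)\xrightarrow{+1}.
\]
I would compare it with the triangle above via the octahedral axiom, applied to the composite of \(\mathcal P^H\to\mathcal V^H\) with the truncation of \(\psi^H_{\pi,1}(F)(-1)\) onto its finite vanishing sector. That sector is the direct sum of the local ODP sectors singled out in Section~4 and assembled in Proposition~\ref{prop:global-monodromy-relation}.

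Concretely, I would write \(\psi^H_{\pi,1}(F)(-1)\) as an extension whose point-supported piece is the image of \(v_\Sigma\). Since \(v_\Sigma\) is injective on \(\mathcal V^H\), the nodewise nonvanishing of Proposition~\ref{prop:vloc} shows this image is a copy of \(\mathcal V^H\). The quotient of \(\psi^H_{\pi,1}(F)(-1)\) by this copy restricts on \(U\) to \(\Q^H_U[3]\). Its intermediate part is then compared with \(IC^H_{X_0}\) using Lemma~\ref{lem:bulk-subobject}. The octahedron then exhibits \(IC^H_{X_0}\to\mathcal P^H\to\mathcal V^H\) as the triangle cut out by \(v_\Sigma\) on the vanishing sector, which is the "truncation" statement. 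Realization and Theorem~\ref{thm:global-realization} confirm that the same picture holds for \(\mathcal P=\Cone(\var_F)[-1]\) on the perverse side.

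The main obstacle is making "finite vanishing-sector truncation" precise. The obstacle is the step of producing a canonical morphism of triangles in \(D^bMHM(X_0)\), and not merely an abstract isomorphism of third terms. The octahedral axiom gives fill-ins only up to non-unique isomorphism. My first attempt at rigidifying them would use Proposition~\ref{prop:global-rigidity} together with the nodewise decomposition of Lemma~\ref{lem:global-class-from-local}. The idea is to check that the connecting class of the truncated nearby-cycle triangle and the class of \(\mathcal P^H\) agree nodewise, reducing on each \(B_k\) to Proposition~\ref{prop:global-restricts-local-odp}. Agreement of the two Ext classes then identifies the triangles compatibly with \(v_\Sigma\).
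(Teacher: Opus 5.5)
Your first part is correct and is the paper's argument: the short exact sequence \eqref{eq:mhm-global-extension} in the heart gives the triangle, and applying $a_{X_0*}$, which lands in $D^b$ of mixed Hodge structures, gives \eqref{eq:hypercohomology-les-ph}. For the ``truncation'' clause the paper constructs no morphism of triangles at all. It places Proposition~\ref{prop:global-cone-description} next to Theorem~\ref{thm:global-exact-sequence} and calls the resulting triangle the truncation ``in this sense''. You correctly locate the real content there, but your way of supplying it breaks down at a concrete step.

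The triangle you read off from $\mathcal P^H\simeq\Cone(v_\Sigma)[-1]$ is $\mathcal P^H\to\mathcal V^H\xrightarrow{v_\Sigma}\psi^H_{\pi,1}(\mathcal M'_U)(-1)\xrightarrow{+1}$. All three of its vertices lie in $MHM(X_0)$, so its cohomology sequence is a short exact sequence $0\to\mathcal P^H\to\mathcal V^H\to\psi^H_{\pi,1}(\mathcal M'_U)(-1)\to 0$. That is impossible: it embeds $\mathcal P^H$, whose restriction to $U$ is $\Q^H_U[3]\neq 0$, into the point-supported object $\mathcal V^H$. Your own observation that $v_\Sigma$ is a monomorphism (correct, given Proposition~\ref{prop:vloc}) makes this even more direct. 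It gives $\Cone(v_\Sigma)[-1]\cong\operatorname{coker}(v_\Sigma)[-1]$, which is concentrated in cohomological degree $1$ and so is not an object of the heart. Equivalently, if both triangles begin with the same quotient map $\mathcal P^H\to\mathcal V^H$, uniqueness of cones gives $IC^H_{X_0}[1]\cong\psi^H_{\pi,1}(\mathcal M'_U)(-1)$, which is absurd. So there is no octahedral fill-in to rigidify, because the two triangles you propose to compare are incompatible. You also switch midway from $\psi^H_{\pi,1}(\mathcal M'_U)$ to $\psi^H_{\pi,1}(F)$ without justification, and these are not the same object.

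The rigidification through Proposition~\ref{prop:global-rigidity} and Lemma~\ref{lem:global-class-from-local} cannot repair this, because the Ext group in which you compare classes is zero. $IC^H_{X_0}$ is pure of weight $3$, while $\mathcal V^H$ is pure of weight $2$ (in any case of weight $<3$). Morphisms in $MHM(X_0)$ are strict for $W$, so $W_2$ is exact. Applying it to \eqref{eq:mhm-global-extension} shows that $W_2\mathcal P^H\to\mathcal V^H$ is an isomorphism. Hence the sequence splits and $\Ext^1_{MHM(X_0)}(\mathcal V^H,IC^H_{X_0})=0$. This is also in direct tension with the non-splitness asserted in Proposition~\ref{prop:local-odp-ext-nonzero} after realization. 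Agreement of nodewise classes is therefore automatic and says nothing about compatibility with $v_\Sigma$. Your argument, like the paper's, establishes only the first assertion. With the inputs available here, the second clause cannot be obtained as a comparison of triangles in $D^bMHM(X_0)$.
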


\begin{proof}
The exact sequence \eqref{eq:mhm-global-extension} is an exact sequence in the abelian category
\(MHM(X_0)\), hence determines a distinguished triangle
\[
IC^H_{X_0}\longrightarrow \mathcal P^H\longrightarrow \mathcal V^H
\overset{+1}{\longrightarrow}
\]
in \(D^bMHM(X_0)\). Applying hypercohomology yields the long exact sequence
\eqref{eq:hypercohomology-les-ph}.

For the second assertion, Proposition~\ref{prop:global-cone-description} identifies
\(\mathcal P^H\) with \(\Cone(v_\Sigma)[-1]\), where \(v_\Sigma\) is the global morphism built from
the nearby-cycle and variation data along the finite set of nodes \(\Sigma\). By
Theorem~\ref{thm:global-exact-sequence}, the quotient of \(\mathcal P^H\) by \(IC^H_{X_0}\) is
precisely
\[
\mathcal V^H=\bigoplus_{k=1}^r i_{k*}\Q^H_{\{p_k\}}(-1),
\]
so the above distinguished triangle is exactly the triangle obtained by isolating the finite
point-supported vanishing contribution inside the nearby-cycle construction. In this sense it is
the finite vanishing-sector truncation of the nearby-cycle triangle.
\end{proof}

\begin{lemma}
\label{lem:comparison-diagram}
There is a commutative diagram of long exact sequences in mixed Hodge structures
\[
\begin{tikzcd}[column sep=large,row sep=large]
\mathbb H^m(X_0,IC^H_{X_0}) \arrow[r] \arrow[d,equal] &
\mathbb H^m(X_0,\mathcal P^H) \arrow[r] \arrow[d] &
\mathbb H^m(X_0,\mathcal V^H) \arrow[r,"\partial_m"] \arrow[d,equal] &
\mathbb H^{m+1}(X_0,IC^H_{X_0}) \arrow[d,equal] \\
\mathbb H^m(X_0,IC^H_{X_0}) \arrow[r] &
\mathbb H^m(X_0,\psi_\pi^H(F)) \arrow[r] &
\mathbb H^m(X_0,\mathcal V^H) \arrow[r] &
\mathbb H^{m+1}(X_0,IC^H_{X_0})
\end{tikzcd}
\]
whose bottom row is the nearby-cycle long exact sequence restricted to the finite vanishing sector.
\end{lemma}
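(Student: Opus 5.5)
The plan is to obtain the diagram by applying hypercohomology to a morphism of distinguished triangles in \(D^bMHM(X_0)\). Hypercohomology \(\mathbb H^\ast(X_0,-)=H^\ast(a_{X_0*}-)\) is a cohomological functor from \(D^bMHM(X_0)\) to mixed Hodge structures. So it suffices to construct a morphism of triangles
\[
\begin{tikzcd}
IC^H_{X_0} \arrow[r] \arrow[d,equal] & \mathcal P^H \arrow[r] \arrow[d,"\theta"] & \mathcal V^H \arrow[r,"+1"] \arrow[d,equal] & {} \\
IC^H_{X_0} \arrow[r] & \psi^H_\pi(F)_\Sigma \arrow[r] & \mathcal V^H \arrow[r,"+1"] & {}
\end{tikzcd}
\]
whose top row is the triangle of Proposition~\ref{prop:finite-vanishing-sector-triangle}. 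Here \(\psi^H_\pi(F)_\Sigma\) denotes the finite vanishing-sector part of the nearby-cycle module. Both LES rows, the three equalities, and the commutativity of every square then follow formally from functoriality. Commutativity of the square involving \(\partial_m\) comes from the morphism of triangles being compatible with the connecting maps.

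First I would define the bottom row. Via the gluing datum I would take the subobject of \(\psi^H_{\pi,1}(\mathcal M'_U)\) generated by \(IC^H_{X_0}\) together with the image of the local ODP sectors. The nearby-cycle triangle restricted to the sector is the extension of \(\mathcal V^H\) by \(IC^H_{X_0}\) obtained by pulling back along the inclusion \(v_\Sigma\) on each node. Concretely, I would take the pullback of the sequence \(0\to IC^H_{X_0}\to\psi^H\to \psi^H/IC^H\to0\) along the morphism \(\mathcal V^H\to\psi^H/IC^H\) induced by \(v_\Sigma\) (after the Tate twist). That pullback is an honest short exact sequence in \(MHM(X_0)\), hence gives a triangle. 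Its middle term maps to \(\psi^H_\pi(F)\), which is how the vertical arrow \(\mathbb H^m(\mathcal P^H)\to\mathbb H^m(\psi^H_\pi(F))\) in the statement arises.

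Next I would produce \(\theta\). Using Proposition~\ref{prop:global-cone-description}, \(\mathcal P^H\simeq\Cone(v_\Sigma)[-1]\), and the octahedral axiom applied to \(v_\Sigma\) gives a morphism from \(\Cone(v_\Sigma)[-1]\) to the pullback object compatible with the maps to and from \(IC^H_{X_0}\) and \(\mathcal V^H\). Alternatively, and more cleanly, I would compare extension classes. Both rows are extensions of \(\mathcal V^H\) by \(IC^H_{X_0}\), and by Lemma~\ref{lem:global-class-from-local} each class is determined nodewise. Proposition~\ref{prop:global-restricts-local-odp} and local rigidity (Proposition~\ref{prop:local-odp-rigidity}) identify the nodewise components of both rows with the local ODP class. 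The two extensions are therefore equivalent, and an equivalence of extensions is exactly a map \(\theta\) making the first two squares commute with identities on the outer terms. The third square then commutes by the axioms of a morphism of triangles.

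The main obstacle is identifying the nodewise class of the nearby-cycle sector row with the local ODP class. This requires checking that the local restriction of the pulled-back nearby-cycle extension has realization equal to the local corrected perverse block, so that Proposition~\ref{prop:local-odp-rigidity} applies. I would try to verify this on realizations through the MacPherson--Vilonen description: the local class lives in \(B/\operatorname{Im}\beta\), which is one-dimensional by Proposition~\ref{prop:local-odp-ext-dim-upper-bound}. It therefore suffices to show that the nearby-cycle sector extension is non-split at each node, which follows from nontriviality of \(\Var\) on the ODP vanishing line. Faithfulness of \(\rat\) together with the scalar normalization of Proposition~\ref{prop:local-normalization} should then fix the class in \(MHM\).
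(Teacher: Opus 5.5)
Your reduction to a morphism of distinguished triangles, followed by the cohomological functor \(\mathbb H^\bullet(X_0,-)\), has the same skeleton as the paper's proof. The paper likewise only posits a ``nearby-cycle triangle'' for the bottom row and an ``induced comparison morphism on the middle term''. The gap is that the bottom triangle you try to make explicit does not exist.

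Your pullback starts from \(0\to IC^H_{X_0}\to\psi^H\to\psi^H/IC^H\to 0\), but \(IC^H_{X_0}\) is not a subobject of the nearby-cycle module. Take a conifold degeneration with smooth total space (local model \(x_1^2+\dots+x_4^2=t\)). There the specialization triangle is a short exact sequence \(0\to\Q^H_{X_0}[3]\to\psi^H_\pi(F)\xrightarrow{\can}\phi^H_\pi(F)\to 0\) in \(MHM(X_0)\). Moreover, \(\Q^H_{X_0}[3]\) is itself a non-split extension \(0\to\mathcal V^H\to\Q^H_{X_0}[3]\to IC^H_{X_0}\to 0\). The kernel of \(\Q_{X_0}[3]\to IC_{X_0}\) is \(\bigoplus_k i_{k*}H^2(S^2\times S^3)\), with Hodge structure \(\Q(-1)\). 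The sequence cannot split because the stalk of \(\Q_{X_0}[3]\) at \(p_k\) is concentrated in degree \(-3\).

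This has three consequences near each node. First, \(\psi^H_\pi(F)\) has exactly three composition factors: \(\mathrm{Gr}^W_2\cong i_{k*}\Q^H_{\{p_k\}}(-1)\), \(\mathrm{Gr}^W_3\cong IC^H_{X_0}\) and \(\mathrm{Gr}^W_4\cong i_{k*}\Q^H_{\{p_k\}}(-2)\). Second, any morphism \(IC^H_{X_0}\to\psi^H_\pi(F)\) factors through \(\ker(\can)=\Q^H_{X_0}[3]\), since \(IC^H_{X_0}\) has no point-supported quotient. It therefore vanishes near the nodes, as otherwise it would split that sequence. Third, there is no triangle \(IC^H_{X_0}\to\psi^H_\pi(F)\to\mathcal V^H\xrightarrow{+1}\): it would be a short exact sequence in the heart with two composition factors at \(p_k\) instead of three.

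So the bottom row as stated, with \(\mathbb H^m(X_0,\psi^H_\pi(F))\) in the middle, is not the long exact sequence of any triangle, and your \(\psi^H_\pi(F)_\Sigma\) cannot be formed. Inside the nearby cycles, the \((-1)\)-twisted point modules sit as the socle \(W_2\), i.e.\ as a subobject of an extension with \(IC^H_{X_0}\) as quotient. That is the opposite of what the lemma needs.

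The construction of \(\theta\) has a separate defect. Proposition~\ref{prop:local-odp-rigidity} gives an isomorphism of middle objects, not equality of Yoneda classes, and a ladder with identities on \(IC^H_{X_0}\) and \(\mathcal V^H\) requires the latter. Also, faithfulness of \(\rat\) means injectivity on \(\Hom\), not on \(\Ext^1\): already over a point, \(\Ext^1_{MHS}(\Q(0),\Q(1))\neq 0\) realizes to zero.

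Here weights settle the matter against your plan. Since \(H^{\le 0}(i_k^!IC^H_{X_0})=0\), one has \(\Ext^1_{MHM(X_0)}(i_{k*}\Q^H_{\{p_k\}}(-1),IC^H_{X_0})\cong\Hom_{MHS}(\Q(-1),H^1(i_k^!IC^H_{X_0}))\). By the Gysin sequence of the link viewed as a circle bundle over the quadric surface, \(H^1(i_k^!IC^H_{X_0})\cong H^3(S^2\times S^3)\cong\Q(-2)\), so this group is zero. Every extension of \(\mathcal V^H\) by \(IC^H_{X_0}\) in \(MHM(X_0)\) therefore splits. Your proposed check, that the nearby-cycle sector extension is non-split at each node, cannot succeed with a \((-1)\)-twisted quotient.

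The non-split node-to-bulk extension that nearby cycles actually provide is \(0\to IC^H_{X_0}\to\psi^H_\pi(F)/W_2\to\phi^H_\pi(F)\to 0\). Its quotient is \(\bigoplus_k i_{k*}\Q^H_{\{p_k\}}(-2)\), which admits no nonzero morphism from \(\mathcal V^H\), so there is nothing to pull back along either.
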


\begin{proof}
The top row is the long exact sequence attached to the distinguished triangle of
Proposition~\ref{prop:finite-vanishing-sector-triangle}. For the bottom row, start with the
nearby-cycle triangle for \(\psi_\pi^H(F)\) and then pass to the finite point-supported vanishing
summand identified by
\[
\mathcal V^H=\bigoplus_{k=1}^r i_{k*}\Q^H_{\{p_k\}}(-1).
\]
Because Proposition~\ref{prop:global-cone-description} identifies \(\mathcal P^H\) with the cone of
the same global variation morphism \(v_\Sigma\), both rows are obtained functorially from the same
nearby-cycle/variation data after isolating the finite vanishing contribution. The vertical maps are
the evident identifications on the intersection-complex term and on the point-supported vanishing
term, together with the induced comparison morphism on the middle term. Naturality of the long
exact sequence associated to a distinguished triangle then gives the claimed commutative diagram.
\end{proof}

\begin{theorem}
\label{thm:hypercohomology-lmhs}
The extension induced by Eqn.~\eqref{eq:mhm-global-extension} on hypercohomology is canonically the LMHS
extension class of Definition~\ref{def:lmhs-extension-class}. Equivalently, the point-supported
quotient
\[
\mathcal V^H=\bigoplus_{k=1}^r i_{k*}\Q^H_{\{p_k\}}(-1)
\]
is the precise mixed-Hodge-module source of the rank-\(r\) local vanishing extension in the
limiting mixed Hodge structure.
\end{theorem}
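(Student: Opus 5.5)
The plan is to deduce the theorem from the comparison diagram of Lemma~\ref{lem:comparison-diagram}, read together with Definition~\ref{def:lmhs-extension-class}. First I fix the two extensions being compared. One is the class in mixed Hodge structures obtained by applying \(\mathbb H^m(X_0,-)\) to Eqn.~\eqref{eq:mhm-global-extension}. Concretely, this is the short exact sequence
\[
0\to \operatorname{coker}(\partial_{m-1})\to \mathbb H^m(X_0,\mathcal P^H)\to \ker(\partial_m)\to 0
\]
extracted from Eqn.~\eqref{eq:hypercohomology-les-ph}. Its right-hand term is a sub-MHS of \(\mathbb H^m(X_0,\mathcal V^H)=\bigoplus_k \mathbb H^m(\{p_k\},\Q^H(-1))\), which is concentrated in a single degree and is of Tate type \(\Q(-1)^{\oplus r}\). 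The other is the LMHS extension class, extracted in the same way from the bottom row of Lemma~\ref{lem:comparison-diagram}, i.e.\ from the nearby-cycle sequence restricted to the finite vanishing sector.

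Next I show the two classes agree. In the diagram of Lemma~\ref{lem:comparison-diagram} the outer vertical maps are identities, so naturality gives two facts. The connecting maps \(\partial_m\) coincide in both rows. The middle vertical map \(\mathbb H^m(X_0,\mathcal P^H)\to\mathbb H^m(X_0,\psi^H_\pi(F))\) is a morphism of MHS compatible with both the sub and the quotient terms. By the five lemma it is an isomorphism onto the vanishing-sector piece of the bottom row. A morphism of short exact sequences that is the identity on both ends forces equality of Yoneda classes in \(\Ext^1_{MHS}\), so the hypercohomology class of Eqn.~\eqref{eq:mhm-global-extension} is exactly the LMHS class. Canonicity holds because every map involved comes functorially from \(v_\Sigma\) via Proposition~\ref{prop:global-cone-description} and Proposition~\ref{prop:finite-vanishing-sector-triangle}. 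The only choices are the scalar normalizations of Proposition~\ref{prop:local-normalization}, which act diagonally on the \(\Q(-1)^{\oplus r}\) factor and change neither the subspace nor the class up to that identification.

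For the equivalent reformulation, I combine this identification with Proposition~\ref{prop:vanishing-quotient-hypercohomology} and Corollary~\ref{cor:vanishing-quotient-realization}. These identify \(\mathcal V^H\) with the rank-\(r\) vanishing sector on the MHM side and on the perverse side. By Lemma~\ref{lem:ext-additivity-mhm} the extension then splits nodewise as a sum of the contributions of the summands \(i_{k*}\Q^H_{\{p_k\}}(-1)\). So \(\mathcal V^H\) is the source of the vanishing extension in the LMHS.

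The main obstacle is the middle vertical arrow. Lemma~\ref{lem:comparison-diagram} supplies it only as an ``induced comparison morphism'', so I must check that it really is a morphism of MHS, i.e.\ that it comes from a morphism in \(D^bMHM(X_0)\) between \(\Cone(v_\Sigma)[-1]\) and the relevant truncation of \(\psi^H_{\pi,1}(F)\). My first attempt is to build it from the gluing triangle of Proposition~\ref{prop:global-cone-description}: map \(\Cone(v_\Sigma)[-1]\to\psi^H_{\pi,1}(\mathcal M'_U)(-1)[-1]\)-shifted data, using the octahedral axiom with the nearby-cycle triangle. I would then verify that this map commutes with the identity on \(IC^H_{X_0}\), using the uniqueness of morphisms into \(IC^H_{X_0}\) extending the identity on \(U\) (an intermediate-extension argument). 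Weight considerations, namely that \(\mathcal V^H\) is pure of weight \(2\) shifted, would then rule out spurious components.
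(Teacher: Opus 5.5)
Your skeleton matches the paper's up to the last step, but the paper closes differently. After Proposition~\ref{prop:finite-vanishing-sector-triangle} yields \eqref{eq:hypercohomology-les-ph}, the paper uses Lemma~\ref{lem:comparison-diagram} only to say that the $\partial_m$ are the nearby-cycle connecting maps. It then invokes Definition~\ref{def:lmhs-extension-class}, which \emph{defines} the LMHS class as the hypercohomology image of \eqref{eq:mhm-global-extension} on the $\mathcal V^H$-sector. So the first assertion is definitional, and the middle vertical arrow is never needed. You instead treat the bottom row as an independent definition and build a morphism of short exact sequences, and that is where the argument breaks. The five lemma applied to the diagram as stated does not give an isomorphism onto a ``vanishing-sector piece''. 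It gives $\mathbb H^m(X_0,\mathcal P^H)\cong\mathbb H^m(X_0,\psi^H_\pi(F))$ in every degree, i.e.\ the whole limiting MHS. That is false for $r\ge 1$. In the standard model with smooth total space, each node contributes two point-supported constituents to $\psi^H_{\pi,1}(F)$: $\operatorname{Im}N\cong i_{k*}\Q^H_{\{p_k\}}(-1)$ in $W_2$, and $\mathrm{Gr}^W_4\cong i_{k*}\Q^H_{\{p_k\}}(-2)$. It contributes only one to $\mathcal P^H$. The Euler characteristics of hypercohomology therefore differ by $r$, and the bottom row cannot be exact with those terms.

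You are right that the middle arrow is the crux, but it cannot carry a nontrivial class, and the weight argument you planned to use shows why. $\mathcal V^H$ is pure of weight $2$ (any reading of the twist convention puts it below $3$), while $IC^H_{X_0}$ is pure of weight $3$. Since $W$ is exact on $MHM(X_0)$, $W_2\mathcal P^H$ maps isomorphically onto $\mathcal V^H$, so it splits \eqref{eq:mhm-global-extension}. Equivalently, $\Ext^1_{MHM(X_0)}(i_{k*}\Q^H_{\{p_k\}}(-1),IC^H_{X_0})\cong\Hom_{MHS}(\Q(-1),H^3(L_k))=\Hom_{MHS}(\Q(-1),\Q(-2))=0$, where $L_k\cong S^2\times S^3$ is the link of $p_k$. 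Hence $[\mathcal P^H]=0$ and every $\partial_m$ vanishes. Inside $\psi^H_{\pi,1}(F)$ the modules $i_{k*}\Q^H_{\{p_k\}}(-1)$ sit below $IC^H_{X_0}$ (as $\operatorname{Im}N$), never above it. So no octahedral construction yields a nonsplit object of shape $IC^H_{X_0}\to\bullet\to\mathcal V^H$ inside the nearby-cycle package. The only version of the statement your route can reach is the tautological one the paper gets from Definition~\ref{def:lmhs-extension-class}.
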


\begin{proof}
By Proposition~\ref{prop:finite-vanishing-sector-triangle}, the exact sequence
\eqref{eq:mhm-global-extension} determines a distinguished triangle whose hypercohomology long exact
sequence is precisely \eqref{eq:hypercohomology-les-ph}. By
Lemma~\ref{lem:comparison-diagram}, this long exact sequence agrees with the nearby-cycle long exact
sequence after restriction to the finite point-supported vanishing sector. In particular, the
connecting morphisms
\[
\partial_m:\mathbb H^m(X_0,\mathcal V^H)\longrightarrow
\mathbb H^{m+1}(X_0,IC^H_{X_0})
\]
are exactly the connecting morphisms that define the finite vanishing-sector extension on the
limiting mixed Hodge structure.

By Definition~\ref{def:lmhs-extension-class}, the LMHS extension class is, by definition, the
extension class in \(MHS\) encoded by these connecting morphisms on the finite vanishing sector.
Hence the extension induced by \eqref{eq:mhm-global-extension} on hypercohomology is canonically the
LMHS extension class. The equivalent formulation follows immediately, since
\(\mathcal V^H\) is precisely the point-supported quotient term appearing in
\eqref{eq:mhm-global-extension}.
\end{proof}

\subsection{Extension classes and comparison questions}

Let
\[
[\mathcal P^H]\in
\Ext^1_{MHM(X_0)}(\mathcal V^H,IC^H_{X_0})
\]
denote the Yoneda class of Eqn. \eqref{eq:mhm-global-extension}. Applying the exact realization functor
to Eqn. \eqref{eq:mhm-global-extension} yields the perverse exact sequence
\begin{equation}
0\longrightarrow IC_{X_0}\longrightarrow \mathcal P\longrightarrow \mathcal V\longrightarrow 0,
\qquad
\mathcal V:=\bigoplus_{k=1}^r i_{k*}\Q_{\{p_k\}},
\label{eq:perv-extension-class}
\end{equation}
with Yoneda class
\[
[\mathcal P]\in
\Ext^1_{\Perv(X_0;\Q)}(\mathcal V,IC_{X_0}).
\]

\begin{lemma}
\label{lem:yoneda-hypercohomology}
The hypercohomology functor sends the Yoneda class \([\mathcal P^H]\) to the extension class in
\(MHS\) determined by the connecting morphisms of the long exact sequence
\eqref{eq:hypercohomology-les-ph}.
\end{lemma}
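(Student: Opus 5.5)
The plan is to realize the Yoneda class \([\mathcal P^H]\) as a morphism \(\mathcal V^H\to IC^H_{X_0}[1]\) in \(D^bMHM(X_0)\) and then apply the derived hypercohomology functor. By Saito's theory, \(\Ext^1_{MHM(X_0)}(\mathcal V^H,IC^H_{X_0})\cong\Hom_{D^bMHM(X_0)}(\mathcal V^H,IC^H_{X_0}[1])\). Under this identification the class of \eqref{eq:mhm-global-extension} is the connecting morphism \(\delta:\mathcal V^H\to IC^H_{X_0}[1]\) of the distinguished triangle of Proposition~\ref{prop:finite-vanishing-sector-triangle}. This is the standard correspondence between short exact sequences in an abelian category and triangles in its derived category, and I would simply cite it.

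Next, let \(a:X_0\to\mathrm{pt}\) be the structure map. I would use the triangulated functor \(a_*:D^bMHM(X_0)\to D^bMHM(\mathrm{pt})=D^b(MHS)\), whose cohomology objects are \(\mathbb H^m(X_0,-)\) with their mixed Hodge structures. Applying \(a_*\) to the triangle gives a distinguished triangle \(a_*IC^H_{X_0}\to a_*\mathcal P^H\to a_*\mathcal V^H\xrightarrow{a_*\delta}a_*IC^H_{X_0}[1]\). Taking \(H^m\) produces exactly \eqref{eq:hypercohomology-les-ph}, with \(\partial_m=H^m(a_*\delta)\). This is the general fact that the boundary maps of the long exact sequence of a triangle are the cohomologies of its third morphism.

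It then remains to say in what sense this is ``the extension class in \(MHS\) determined by the connecting morphisms.'' Since \(\mathcal V^H\) is point-supported, \(a_*\mathcal V^H\cong\bigoplus_k\Q^H(-1)\) is concentrated in degree \(0\). Hence \(a_*\delta\) amounts to a morphism \(\bigoplus_k\Q^H(-1)\to a_*IC^H_{X_0}[1]\). Composing it with the truncation \(\tau\) to degree \(1\) gives \(\partial_0\). The remaining information is an Ext class in \(MHS\): pulling back the short exact sequence of MHS \(0\to\operatorname{coker}\partial_{-1}\to\mathbb H^0(\mathcal P^H)\to\ker\partial_0\to0\) gives the extension data. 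I would define the ``extension class determined by the connecting morphisms'' to be the pair consisting of \(\partial_0\) (together with the \(\partial_m\), which vanish for \(m\ne0\) on the source) and this induced MHS extension. Both are functorially extracted from \(a_*\delta\), which proves the lemma.

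The main obstacle is this last step: making the target ``extension class in \(MHS\)'' precise enough for the statement to be a genuine identity rather than a tautology. I would handle it by using that \(D^b(MHS)\) has cohomological dimension one. Then \(a_*IC^H_{X_0}\) decomposes non-canonically into a sum of its cohomologies, and \(a_*\delta\) breaks into a degree-preserving part, which is \(\partial_0\), and a part lying in \(\Ext^1_{MHS}\). The Ext part is exactly the Yoneda class of the pulled-back short exact sequence of MHS. Checking that this decomposition is independent of the splitting, up to the ambiguity already present in the long exact sequence, is the only real verification needed.
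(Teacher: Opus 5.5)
Your proof is correct, and its first two steps are the paper's argument made explicit. The paper likewise passes from the short exact sequence to the distinguished triangle and applies $\mathbb H^\bullet(X_0,-)$. It then simply asserts that the image of $[\mathcal P^H]$ is ``the class represented by the connecting morphisms,'' without saying what that class is.

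You go further. You work with $a_*\delta\in\Hom_{D^b(MHS)}(a_*\mathcal V^H,a_*IC^H_{X_0}[1])$ and use $\Ext^2_{MHS}=0$ to separate it into $\partial_0$ and an $\Ext^1_{MHS}$-component. This buys something real, because the maps $\partial_m=H^m(a_*\delta)$ alone do not see the Hodge-theoretic extension. If $\partial_0=0$ they all vanish, yet a priori $\mathbb H^0(X_0,\mathcal P^H)$ may still be a nonsplit extension of $\mathbb H^0(X_0,\mathcal V^H)$ by $\mathbb H^0(X_0,IC^H_{X_0})$. Your reading is $\partial_0$ together with the class of $0\to\mathbb H^0(X_0,IC^H_{X_0})\to\mathbb H^0(X_0,\mathcal P^H)\to\ker\partial_0\to 0$. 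That is the version of the lemma with actual content.

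Be precise about the independence claim in your last step, though. The component of $a_*\delta$ in $\Ext^1_{MHS}(\mathbb H^0(X_0,\mathcal V^H),\mathbb H^0(X_0,IC^H_{X_0}))$ does depend on the splitting. Re-splitting $a_*IC^H_{X_0}$ by $e\in\Ext^1_{MHS}(\mathbb H^1(X_0,IC^H_{X_0}),\mathbb H^0(X_0,IC^H_{X_0}))$ changes it by $e\circ\partial_0$. Because $\Ext^2_{MHS}=0$, these changes form exactly the kernel of restriction to $\ker\partial_0$. So the splitting-independent datum is the restricted class, which your short exact sequence already supplies canonically. Its agreement (up to sign) with the restricted component of $a_*\delta$ is a standard truncation/octahedral check that you should cite or carry out.

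The same observation bears on Proposition~\ref{prop:functorial-ext-comparison}. The paper formulates the image there as a class in $\Ext^1_{MHS}(\mathbb H^\bullet(X_0,\mathcal V^H),\mathbb H^\bullet(X_0,IC^H_{X_0}))$. Your analysis shows this is canonical only after restriction to $\ker\partial_0$, or when $\partial_0=0$.
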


\begin{proof}
Let
\[
0\longrightarrow IC^H_{X_0}\longrightarrow \mathcal P^H\longrightarrow \mathcal V^H\longrightarrow 0
\]
be the exact sequence \eqref{eq:mhm-global-extension} in the abelian category \(MHM(X_0)\). Any
short exact sequence in the heart of a \(t\)-structure determines both a Yoneda class in
\(\Ext^1\) and a distinguished triangle in the derived category. Applying the cohomological functor
\(\mathbb H^\bullet(X_0,-)\) to this distinguished triangle yields the long exact sequence
\eqref{eq:hypercohomology-les-ph}.

The connecting morphisms in that long exact sequence are functorially determined by the original short exact sequence, hence by its Yoneda class \([\mathcal P^H]\). Equivalently, the image of \([\mathcal P^H]\) under the extension map induced by hypercohomology is the class represented by the connecting morphisms of \eqref{eq:hypercohomology-les-ph}. Therefore the hypercohomology functor sends the Yoneda class \([\mathcal P^H]\) to the extension class in \(MHS\) determined by those
connecting morphisms.
\end{proof}

\begin{proposition}
\label{prop:functorial-ext-comparison}
There are natural maps
\[
\Ext^1_{MHM(X_0)}(\mathcal V^H,IC^H_{X_0})
\overset{\rat}{\longrightarrow}
\Ext^1_{\Perv(X_0;\Q)}(\mathcal V,IC_{X_0})
\]
and
\[
\Ext^1_{MHM(X_0)}(\mathcal V^H,IC^H_{X_0})
\overset{\mathbb H^\bullet}{\longrightarrow}
\Ext^1_{MHS}\!\bigl(
\mathbb H^\bullet(X_0,\mathcal V^H),
\mathbb H^\bullet(X_0,IC^H_{X_0})
\bigr),
\]
and the class \([\mathcal P^H]\) maps to the perverse extension class \([\mathcal P]\) under the
first map and to the LMHS extension class under the second.
\end{proposition}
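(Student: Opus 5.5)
The plan is to construct each map as the effect on Yoneda extension classes of an exact (or cohomological) functor, and then read off where \([\mathcal P^H]\) goes by applying that functor to the representing short exact sequence \eqref{eq:mhm-global-extension}.

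\textbf{The map \(\rat\).} Since \(\rat:MHM(X_0)\to\Perv(X_0;\Q)\) is exact, it carries a short exact sequence \(0\to A\to E\to B\to 0\) in \(MHM(X_0)\) to a short exact sequence in \(\Perv(X_0;\Q)\). It also respects Yoneda equivalence, since a morphism of extensions is sent to a morphism of extensions, and it respects Baer sums, since exact functors preserve pullbacks and pushouts. Hence it induces a \(\Q\)-linear map
\[
\Ext^1_{MHM(X_0)}(\mathcal V^H,IC^H_{X_0})\to \Ext^1_{\Perv(X_0;\Q)}\bigl(\rat\mathcal V^H,\rat IC^H_{X_0}\bigr).
\]
To identify the target with \(\Ext^1_{\Perv(X_0;\Q)}(\mathcal V,IC_{X_0})\), I would use Corollary~\ref{cor:vanishing-quotient-realization} for \(\rat\mathcal V^H\cong\mathcal V\) and the standard identity \(\rat IC^H_{X_0}=IC_{X_0}\). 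Naturality in both arguments is then clear. Applying \(\rat\) to \eqref{eq:mhm-global-extension} gives a sequence whose middle term is \(\rat\mathcal P^H\cong\mathcal P\) by Theorem~\ref{thm:global-realization}. So the image of \([\mathcal P^H]\) is the class of \eqref{eq:perv-extension-class}, which is \([\mathcal P]\).

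\textbf{The map \(\mathbb H^\bullet\).} Here I would use the derived-category description. A class in \(\Ext^1_{MHM(X_0)}(\mathcal V^H,IC^H_{X_0})\) is a morphism \(\mathcal V^H\to IC^H_{X_0}[1]\) in \(D^bMHM(X_0)\). Applying the pushforward \(a_{X_0*}\) to the point, followed by the cohomology functors, gives morphisms
\[
\mathbb H^m(X_0,\mathcal V^H)\to\mathbb H^{m+1}(X_0,IC^H_{X_0})
\]
in \(MHS\); these are the connecting maps \(\partial_m\). The more refined version instead takes the triangle \(a_{X_0*}IC^H\to a_{X_0*}\mathcal P^H\to a_{X_0*}\mathcal V^H\to\) in \(D^bMHS\) and extracts extension classes of mixed Hodge structures from its long exact sequence. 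In either form, functoriality and linearity are inherited from those of \(a_{X_0*}\). Lemma~\ref{lem:yoneda-hypercohomology} shows that \([\mathcal P^H]\) goes to the class determined by the connecting morphisms of \eqref{eq:hypercohomology-les-ph}. Theorem~\ref{thm:hypercohomology-lmhs}, together with Definition~\ref{def:lmhs-extension-class}, identifies this class with the LMHS extension class.

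\textbf{Main obstacle.} The hard step is making the second target precise. The target as written, \(\Ext^1_{MHS}\) between graded hypercohomologies, is not literally where the image lives. The class \([\mathcal P^H]\) really determines two things: the connecting morphisms \(\partial_m\), and the extensions
\[
0\to\operatorname{coker}\partial_{m-1}\to\mathbb H^m(\mathcal P^H)\to\ker\partial_m\to 0.
\]
I would first try to reinterpret the target as a morphism group in \(D^bMHS\),
\[
\Hom_{D^bMHS}\bigl(a_*\mathcal V^H,a_*IC^H[1]\bigr),
\]
so that the map is simply \(a_*\) applied to morphisms. This is essentially \(\Ext^1\) in \(D^bMHS\) of the complexes. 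Since \(\mathcal V^H\) is point-supported, \(a_*\mathcal V^H\) is concentrated in degree \(0\) and equals \(\Q^H(-1)^{\oplus r}\), which simplifies the target considerably. I would check that Definition~\ref{def:lmhs-extension-class} is compatible with this interpretation.
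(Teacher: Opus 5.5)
Your proposal is correct and follows essentially the same route as the paper. Exactness of \(\rat\) induces the first map and carries \eqref{eq:mhm-global-extension} to \eqref{eq:perv-extension-class}, while for the second map you combine Lemma~\ref{lem:yoneda-hypercohomology} with Theorem~\ref{thm:hypercohomology-lmhs} and Definition~\ref{def:lmhs-extension-class}, exactly as the paper does. Your remark that the second target is properly \(\Hom_{D^b\mathrm{MHS}}(a_{X_0*}\mathcal V^H, a_{X_0*}IC^H_{X_0}[1])\), which records both the connecting maps \(\partial_m\) and the extensions of \(\ker\partial_m\) by \(\operatorname{coker}\partial_{m-1}\), makes precise a point the paper's proof leaves implicit, without changing the argument.
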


\begin{proof}
Because \(\rat:MHM(X_0)\to\Perv(X_0;\Q)\) is exact, it sends short exact sequences in
\(MHM(X_0)\) to short exact sequences in \(\Perv(X_0;\Q)\), and therefore induces the first map on
\(\Ext^1\). Applying \(\rat\) to Eqn.~\eqref{eq:mhm-global-extension} gives the perverse exact
sequence \eqref{eq:perv-extension-class}, so the class \([\mathcal P^H]\) maps to the realized
perverse extension class \([\mathcal P]\).

For the second map, the cohomological functor \(\mathbb H^\bullet(X_0,-)\) sends the short exact
sequence \eqref{eq:mhm-global-extension} to the long exact sequence
\eqref{eq:hypercohomology-les-ph}; by Lemma~\ref{lem:yoneda-hypercohomology}, the image of
\([\mathcal P^H]\) is the extension class in \(MHS\) encoded by the connecting morphisms of that
long exact sequence. By Theorem~\ref{thm:hypercohomology-lmhs}, this class is canonically the LMHS
extension class. This proves both assertions.
\end{proof}

\begin{theorem}
\label{thm:ext-class-identification}
The corrected extension is organized simultaneously at three levels:
\begin{enumerate}
\item as the mixed-Hodge-module extension class
\[
[\mathcal P^H]\in
\Ext^1_{MHM(X_0)}(\mathcal V^H,IC^H_{X_0});
\]
\item as its realized perverse extension class
\[
[\mathcal P]\in
\Ext^1_{\Perv(X_0;\Q)}(\mathcal V,IC_{X_0});
\]
\item as its induced LMHS extension class on hypercohomology in \(MHS\).
\end{enumerate}
Moreover, realization and hypercohomology relate these classes functorially, and all three arise
from the same nearby-cycle and variation data of the degeneration.
\end{theorem}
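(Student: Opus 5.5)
The theorem is essentially a packaging statement, so the plan is to assemble it from results already proved rather than introduce new constructions. Level (1) comes directly from Theorem~\ref{thm:global-exact-sequence}. It supplies the short exact sequence \eqref{eq:mhm-global-extension} in the abelian category \(MHM(X_0)\), and hence its Yoneda class \([\mathcal P^H]\in\Ext^1_{MHM(X_0)}(\mathcal V^H,IC^H_{X_0})\).

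For level (2), I would apply the first map of Proposition~\ref{prop:functorial-ext-comparison}. Exactness of \(\rat\) sends \eqref{eq:mhm-global-extension} to \eqref{eq:perv-extension-class}. Theorem~\ref{thm:global-realization} identifies the middle term with \(\mathcal P\), and Corollary~\ref{cor:vanishing-quotient-realization} identifies the quotient with \(\mathcal V\). One should also check that \(\rat(IC^H_{X_0})\cong IC_{X_0}\), which is standard for the middle extension. Together these show that the image of \([\mathcal P^H]\) is the corrected perverse class \([\mathcal P]\). I would add that, by Corollary~\ref{cor:perverse-ext-node-basis-strong} and Proposition~\ref{prop:global-perverse-class-node-expansion-strong}, this class expands as \(\sum_k c_k e_k\). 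This records the nodewise organization at the perverse level.

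For level (3), I would use the second map of Proposition~\ref{prop:functorial-ext-comparison}, together with Lemma~\ref{lem:yoneda-hypercohomology} and Theorem~\ref{thm:hypercohomology-lmhs}. Hypercohomology sends \([\mathcal P^H]\) to the class encoded by the connecting maps \(\partial_m\) of \eqref{eq:hypercohomology-les-ph}, and that class is the LMHS extension class of Definition~\ref{def:lmhs-extension-class}. Functoriality follows because both maps are induced by functors, \(\rat\) and \(\mathbb H^\bullet\), applied to a single short exact sequence. For compatibility, I would note that \(\mathbb H^\bullet\circ\rat\) computes the underlying rational extension of the MHS class. The underlying vector-space long exact sequence of \eqref{eq:hypercohomology-les-ph} is the hypercohomology sequence of \eqref{eq:perv-extension-class}, since the forgetful functor \(MHS\to\mathrm{Vect}_\Q\) commutes with \(\rat\) on \(\mathbb H^\bullet\).

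The final clause, that all three classes come from the same nearby-cycle and variation data, is the main obstacle. I would handle it through Proposition~\ref{prop:global-cone-description}, which gives \(\mathcal P^H\simeq\Cone(v_\Sigma)[-1]\) with \(v_\Sigma\) assembled from the \(\Var^H\)-induced local maps of Proposition~\ref{prop:vloc}. I would also use Proposition~\ref{prop:finite-vanishing-sector-triangle} and Lemma~\ref{lem:comparison-diagram}, which tie the hypercohomology sequence to the nearby-cycle sequence. Consequently all three classes are images of the single datum \(v_\Sigma\) under exact or cohomological functors.

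The delicate point is that the vertical comparison map in Lemma~\ref{lem:comparison-diagram} on the middle term has to come from an actual morphism \(\mathcal P^H\to\psi^H_\pi(F)\) in \(D^bMHM(X_0)\). Only then does naturality, rather than mere abstract isomorphism, hold. To obtain that morphism, I would first try taking the canonical map \(\Cone(v_\Sigma)[-1]\to\psi^H_{\pi,1}(\mathcal M'_U)\) from the cone triangle, and then composing with the inclusion of the unipotent part.
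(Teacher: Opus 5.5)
Through levels (1)--(3) and the functoriality clause, your argument is the paper's proof. It uses the Yoneda class of \eqref{eq:mhm-global-extension}, exactness of \(\rat\) for (2), Lemma~\ref{lem:yoneda-hypercohomology} with Theorem~\ref{thm:hypercohomology-lmhs} for (3), and Proposition~\ref{prop:global-cone-description} for the common-origin clause. Your extra checks are harmless. The paper stops there and never constructs a middle vertical map, and the step you add to construct one fails. The cone triangle of \(v_\Sigma\) is
\[
\Cone(v_\Sigma)[-1]\longrightarrow \mathcal M''_\Sigma\xrightarrow{\;v_\Sigma\;}\psi^H_{\pi,1}(\mathcal M'_U)(-1)\longrightarrow \Cone(v_\Sigma).
\]
So the only canonical arrow out of \(\Cone(v_\Sigma)[-1]\) goes to \(\mathcal M''_\Sigma\). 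The nearby-cycle term maps \emph{into} \(\Cone(v_\Sigma)[-1]\), and only after the shift \([-1]\) and with the twist \((-1)\). There is therefore no arrow \(\Cone(v_\Sigma)[-1]\to\psi^H_{\pi,1}(\mathcal M'_U)\) to compose with the inclusion of the unipotent part. The same triangle also shows that on \(U\) the object \(\Cone(v_\Sigma)[-1]\) is the nearby-cycle module placed in degree \(1\), whereas \(\mathcal P^H|_U\cong\Q^H_U[3]\) sits in degree \(0\). So Proposition~\ref{prop:global-cone-description} cannot be used literally either.

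More decisively, no comparison of the shape you want can exist, whether or not it comes from a morphism. Take a commutative diagram of long exact sequences as in Lemma~\ref{lem:comparison-diagram}, with identities on the \(IC^H_{X_0}\) and \(\mathcal V^H\) columns. By the five lemma every middle map is then an isomorphism, giving \(\mathbb H^m(X_0,\mathcal P^H)\cong H^{m+3}(X_t;\Q)\) for all \(m\). This fails for proper \(\pi\) and \(r\ge1\).

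Let \(\kappa\) be the number of independent linear relations among \(\delta_1,\dots,\delta_r\) in \(H_3(X_t;\Q)\). Up to homotopy, \(X_0\) is \(X_t\) with a \(4\)-cell attached along each vanishing sphere, so \(b_4(X_0)=b_2(X_t)+\kappa\) and \(b_3(X_0)=b_3(X_t)-r+\kappa\). The exact sequence \(0\to\bigoplus_k i_{k*}\Q_{\{p_k\}}\to\Q_{X_0}[3]\to IC_{X_0}\to0\) then gives \(\dim IH^2(X_0)=b_2(X_t)+\kappa\) and \(\dim IH^3(X_0)=b_3(X_t)-2(r-\kappa)\). From the global exact sequence, \(\mathbb H^{-1}(X_0,\mathcal P^H)=IH^2(X_0)\) and \(\dim\mathbb H^0(X_0,\mathcal P^H)\le\dim IH^3(X_0)+r\). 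Hence degree \(-1\) fails when \(\kappa>0\), and degree \(0\) fails when \(\kappa=0\).

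So your worry about Lemma~\ref{lem:comparison-diagram} is well founded, but it cannot be repaired by any comparison morphism; drop that step. The theorem as stated does not need it. By Definition~\ref{def:lmhs-extension-class}, the class in (3) \emph{is} the image of \([\mathcal P^H]\) under \(\mathbb H^\bullet\), and Lemma~\ref{lem:yoneda-hypercohomology} supplies the functoriality. The common-origin clause is better argued from Theorem~\ref{thm:global-existence}: \(\mathcal P^H\) is glued from \(u_\Sigma,v_\Sigma\), which are built from \(\can^H\) and \(\Var^H\). That avoids relying on the cone description.
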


\begin{proof}
The mixed-Hodge-module extension class in (1) is, by definition, the Yoneda class
\[
[\mathcal P^H]\in \Ext^1_{MHM(X_0)}(\mathcal V^H,IC^H_{X_0})
\]
of the exact sequence \eqref{eq:mhm-global-extension}. Since the realization functor is exact,
applying \(\rat\) to \eqref{eq:mhm-global-extension} yields the perverse exact sequence
\eqref{eq:perv-extension-class}; this identifies the class in (2) as the functorial image of the
class in (1).

Next, applying hypercohomology to \eqref{eq:mhm-global-extension} yields the long exact sequence
\eqref{eq:hypercohomology-les-ph}. By Lemma~\ref{lem:yoneda-hypercohomology}, the Yoneda class
\([\mathcal P^H]\) maps to the extension class in \(MHS\) determined by the connecting morphisms of
that long exact sequence. By Theorem~\ref{thm:hypercohomology-lmhs}, this induced class is exactly
the LMHS extension class. This proves the relation between (1) and (3).

Thus the three classes are related functorially: realization carries the mixed-Hodge-module
extension to the perverse extension, while hypercohomology carries it to the LMHS extension class.
Finally, all three descriptions arise from the same nearby-cycle and variation data because
\(\mathcal P^H\) is identified in Proposition~\ref{prop:global-cone-description} with the cone of
the global variation morphism, so both its realization and its induced hypercohomological extension
inherit that same origin.
\end{proof}

\subsection{Weight filtrations and local vanishing pieces}

We conclude by recording the weight-theoretic normalization relevant to the point-supported
quotient. By the conventions fixed earlier, the local vanishing contribution attached to a node is
represented by
\[
i_{k*}\Q^H_{\{p_k\}}(-1).
\]
The Tate twist \((-1)\) is the standard integral Tate twist in the category of mixed Hodge modules
and mixed Hodge structures, and it is compatible with the action of the nilpotent monodromy
operator
\[
N:\psi_{\pi,1}^H(\mathcal M)\to \psi_{\pi,1}^H(\mathcal M)(-1).
\]
In particular, the local singular quotient is normalized so that it agrees with the monodromy-theoretic shift built into Saito's nearby-cycle formalism. The present paper does not attempt a full analysis of the weight filtration on all of
\(\mathcal P^H\) or on all hypercohomology groups
\[
\mathbb H^m(X_0,\mathcal P^H).
\]
What is established is the precise weight-normalized form of the point-supported quotient and its role as the Hodge-theoretic refinement of the local vanishing sector. A fuller analysis of the weight filtration on the global object and its relation to stronger K\"ahler-package phenomena lies
beyond the scope of the present paper.

%-----------------------------------------------------------
\section{Auxiliary structural results}

This section gathers several structural consequences and auxiliary viewpoints that clarify the meaning of the mixed-Hodge-module extension constructed in the previous sections. None of the results below is needed for the existence or realization theorems themselves, but each of them
helps explain how the global object $\mathcal P^H$
fits simultaneously into the perverse-sheaf, mixed-Hodge-module, and later physical pictures.

\subsection{Recollement and perverse-side extension groups}

Let
\[
X_0=U\sqcup \Sigma,
\qquad
\Sigma=\{p_1,\dots,p_r\},
\]
with
\[
j:U\hookrightarrow X_0,
\qquad
i_k:\{p_k\}\hookrightarrow X_0.
\]
The category \(\Perv(X_0;\Q)\) admits the standard recollement description associated with this
open--closed decomposition \cite{BBD,MacPhersonVilonen1986,GMV1996}. In particular, every perverse
extension of \(\Q_U[3]\) is controlled by its restriction to the smooth locus together with a
point-supported contribution on \(\Sigma\).

In the single-node case, \cite{RahmanSchoberPaper} shows that the corrected perverse object fits into a canonical
short exact sequence
\[
0\to IC_{X_0}\to \mathcal P\to i_*\Q_{\{p\}}\to 0,
\]
and that the corresponding extension class lies in
\[
\Ext^1_{\Perv(X_0;\Q)}(i_*\Q_{\{p\}},IC_{X_0}).
\]
The finite multi-node case is the natural direct-sum generalization of that picture. The global
perverse extension
\[
0\to IC_{X_0}\to \mathcal P\to \bigoplus_{k=1}^r i_{k*}\Q_{\{p_k\}}\to 0
\]
is classified by an element of
\[
\Ext^1_{\Perv(X_0;\Q)}
\Bigl(
\bigoplus_{k=1}^r i_{k*}\Q_{\{p_k\}},
IC_{X_0}
\Bigr).
\]

Section~5 strengthens this description by isolating a distinguished nodewise family of perverse
extension classes and, in the ordinary double point case, a basis
\[
\{e_1,\dots,e_r\}
\]
for the corresponding nodewise extension space. Thus the global corrected perverse extension class
admits a concrete nodewise expansion
\[
[\mathcal P]_{\mathrm{perv}}=\sum_{k=1}^r c_k\,e_k,
\]
which records how the individual node sectors are assembled into a single corrected perverse object.

The mixed-Hodge-module extension constructed in the present paper is a refinement of exactly this
perverse extension class. More precisely, applying the realization functor to the exact sequence
\[
0\to IC^H_{X_0}\to \mathcal P^H\to \bigoplus_{k=1}^r i_{k*}\Q^H_{\{p_k\}}(-1)\to 0
\]
recovers the perverse-side extension class in the recollement category. Thus the present theorem
package should be viewed as an internal Hodge-theoretic lift of the same extension-theoretic
structure already visible in \(\Perv(X_0;\Q)\).

\subsection{Compatibility with Banagl--Budur--Maxim}

A useful methodological precedent for the present construction is the work of
Banagl--Budur--Maxim \cite{BanaglBudurMaxim}. In their setting, one starts with a perverse sheaf
constructed from nearby-cycle data for an isolated hypersurface singularity and shows that it
underlies a mixed Hodge module, so that its hypercohomology carries canonical mixed Hodge
structures. Their object is not the corrected perverse extension considered here, but the formal
pattern is closely related: a perverse object arising from degeneration data is shown to admit a
Hodge-theoretic refinement internal to Saito's theory.

The present paper differs from \cite{BanaglBudurMaxim} in both geometric focus and theorem content. The object we refine is the canonical corrected perverse extension $\mathcal P$ attached to a conifold degeneration with finitely many ordinary double points, rather than the
intersection-space complex attached to an isolated hypersurface singularity. Nonetheless, the
conceptual similarity is important. In both settings:
\begin{itemize}
\item nearby and vanishing cycles provide the underlying singular contribution;
\item the resulting perverse object is not arbitrary but canonically attached to the degeneration;
\item the mixed-Hodge-module formalism is the correct category in which to internalize the
Hodge-theoretic structure.
\end{itemize}

Accordingly, \cite{BanaglBudurMaxim} should be viewed not as a proof source for the specific
multi-node conifold theorem established here, but as a methodological model for the kind of
refinement the present paper carries out explicitly in the ordinary double point setting.

\subsection{Quiver-theoretic shadow of the multi-node extension}

The finite multi-node exact sequence
\[
0\to IC^H_{X_0}\to \mathcal P^H\to \bigoplus_{k=1}^r i_{k*}\Q^H_{\{p_k\}}(-1)\to 0
\]
suggests a natural algebraic shadow. Each node contributes a rank-one point-supported summand, and
the global extension class records how these local summands are coupled to the bulk object
\(IC^H_{X_0}\). Section~5 makes this more explicit on the perverse side: the nodewise extension space carries a
distinguished basis
\[
\{e_1,\dots,e_r\},
\]
and the corrected global perverse class expands as
\[
[\mathcal P]_{\mathrm{perv}}=\sum_{k=1}^r c_k\,e_k.
\]
This coefficient vector already provides the first algebraic shadow of a finite interaction graph or
quiver attached to the degeneration. The present paper does not attempt a full quiver-theoretic
classification of the mixed-Hodge-module extension, but the theorem package established above makes
such a picture mathematically natural.

The data
\[
\bigl(IC^H_{X_0},\{i_{k*}\Q^H_{\{p_k\}}(-1)\}_{k=1}^r,[\mathcal P^H]\bigr)
\]
thus has the formal structure of a bulk object together with finitely many localized rank-one
objects and a global extension class describing their coupling. This is the natural precursor of
any later quiver, schober, or wall-crossing refinement.

\subsection{Toward the domain-wall interpretation}

The theorem package of the present paper gives a precise mathematical foundation for the
bulk/localized-sector language proposed in the later physical interpretation. The object
\[
IC^H_{X_0}
\]
plays the role of the bulk geometric sector: it is the Hodge-module refinement of the intersection
complex of the singular fiber and therefore carries the part of the degeneration that persists away
from the nodes. By contrast, the quotient
\[
\bigoplus_{k=1}^r i_{k*}\Q^H_{\{p_k\}}(-1)
\]
is supported entirely at the singular points and records the rank-one local vanishing contribution
of each collapsing cycle.

The exact sequence
\[
0\to IC^H_{X_0}\to \mathcal P^H\to \bigoplus_{k=1}^r i_{k*}\Q^H_{\{p_k\}}(-1)\to 0
\]
therefore admits the following rigorous interpretation: it is a bulk/localized-sector coupling law
in the category of mixed Hodge modules. The nontriviality of the extension class means that the
localized vanishing-cycle sectors cannot be split off from the bulk geometry without changing the
global mixed-Hodge-module structure of the degeneration. After applying realization, one recovers
the corresponding perverse extension
\[
0\to IC_{X_0}\to \mathcal P\to \bigoplus_{k=1}^r i_{k*}\Q_{\{p_k\}}\to 0,
\]
which is exactly the sheaf-theoretic structure that later physical work interprets as the coupling
of localized framed sectors to a bulk geometric sector.

The significance of the present paper is that this interpretation no longer rests only on the
perverse side. Theorems~\ref{thm:global-exact-sequence} and \ref{thm:hypercohomology-lmhs} show
that the same localized quotient also contributes the vanishing part of the limiting mixed Hodge
structure on hypercohomology. Thus the bulk/localized-sector interpretation is simultaneously
visible in the mixed-Hodge-module extension, in its perverse realization, and in the Hodge-theoretic
degeneration data. This is the precise mathematical content that later physical applications may
safely use.

%------------------------------------------------------------
\section{Consequences and further directions}

\subsection{What has been proved}

The main result of the paper is the construction of a mixed Hodge module
\[
\mathcal P^H \in MHM(X_0)
\]
whose realization is the corrected perverse object associated with a finite multi-node conifold
degeneration. Its singular quotient is the finite point-supported object
\[
\bigoplus_{k=1}^r i_{k*}\Q^H_{\{p_k\}}(-1),
\]
and it fits into the corresponding exact sequence with \(IC^H_{X_0}\).

The paper also proves that this quotient is the same object that appears as the finite local
vanishing sector in the nearby-cycle formalism on the Hodge-theoretic side. In particular, the
corrected extension is compared at three levels: as an extension in mixed Hodge modules, as its
realized extension in perverse sheaves, and as an induced extension on hypercohomology in the
category of mixed Hodge structures.

Finally, the finite multi-node setting is shown to admit a nodewise organization of the relevant
extension spaces, both in mixed Hodge modules and on the perverse side. This gives the extension
data a form that is suitable for the later structural questions discussed below.

\subsection{Consequences for subsequent work}

The results proved here provide the mixed-Hodge-module framework needed for several later
directions.

First, the object
\[
\mathcal P^H \in MHM(X_0)
\]
makes it possible to formulate subsequent questions directly in mixed Hodge modules rather than only
through realization or through nearby-cycle heuristics.

Second, the quotient
\[
\bigoplus_{k=1}^r i_{k*}\Q^H_{\{p_k\}}(-1)
\]
now has three compatible roles: it is the singular quotient in \(MHM(X_0)\), its realization is the
finite node-supported quotient on the perverse side, and on hypercohomology it identifies the finite
local vanishing sector in the limiting mixed Hodge structure.

Third, the nodewise decomposition of the extension space gives the finite multi-node setting a
precise extension-theoretic organization. This is the structural input for later work on stronger
comparison theorems, nodewise generators, and possible quiver- or schober-type refinements.

In this sense, the present paper supplies the mixed-Hodge-module object and comparison framework on
which those later developments can build.

\subsection{Further directions}

The finite multi-node ordinary double point case treated here suggests several natural next problems.

\begin{enumerate}
\item \textbf{Sharper comparison of extension classes.}
The present paper relates the mixed-Hodge-module extension, its realized perverse extension, and the
induced extension on hypercohomology. A natural next step is to strengthen this into a more explicit
comparison theorem among extension classes in \(MHM(X_0)\), in \(Perv(X_0;\Q)\), and in the category
of mixed Hodge structures.

\item \textbf{Hodge-theoretic lifts of nodewise generators.}
Section~5 isolates distinguished nodewise extension classes on the perverse side. A natural problem
is to construct corresponding classes directly in
\[
\Ext^1_{MHM(X_0)}\bigl(i_{k*}\Q^H_{\{p_k\}}(-1),IC^H_{X_0}\bigr)
\]
and compare them functorially with the perverse and hypercohomological classes.

\item \textbf{Beyond finite ordinary double points.}
The next geometric extension is to pass from a finite set of ordinary double points to more general
stratified singular loci. In that setting the point-supported singular quotient used here would have
to be replaced by a more general singular contribution together with a corresponding extension of the
divisor-gluing construction.

\item \textbf{Kähler-package questions for \(H^\ast(X_0,\mathcal P^H)\).}
Once the mixed-Hodge-module refinement \(\mathcal P^H\) is available, it becomes meaningful to ask
whether its hypercohomology satisfies analogues of the classical package for intersection cohomology,
including Lefschetz-type statements and Hodge--Riemann-type relations.

\item \textbf{Quiver, schober, and wall-crossing refinements.}
The finite-node extension data already suggests a quiver-type organization of the localized node
contributions and their coupling to the bulk sector. Likewise, the relation of the corrected perverse
object to spherical-monodromy phenomena suggests a categorified refinement. The mixed-Hodge-module
construction proved here is the natural precursor to those higher-categorical structures.

\item \textbf{Full LMHS interpretation of the nodewise coefficients.}
The present paper identifies the point-supported quotient as the source of the finite vanishing sector
in the limiting mixed Hodge structure. A natural next question is to understand more explicitly the
coefficients with which the global corrected class is assembled from the nodewise sectors and to give
those coefficients a direct Hodge-theoretic interpretation.
\end{enumerate}

These directions all build on the same point: the corrected extension is now available as an object
internal to \(MHM(X_0)\), together with its realization and its induced hypercohomological comparison.
That is the foundational result established here.

\printbibliography
\end{document}